\definecolor{orange}{rgb}{1,0.5,0}
\DeclareMathAlphabet{\mathpzc}{OT1}{pzc}{L}{it} %stylizowany
\def\diam{\operatorname{diam}}
\theoremstyle{definition}
\newtheorem{definition}{Definition}[section]
\newtheorem{theorem}[definition]{Theorem}
\newtheorem{proposition}[definition]{Proposition}
\newtheorem{corollary}[definition]{Corollary}
\newtheorem{lemma}[definition]{Lemma}
\newtheorem{remark}[definition]{Remark}
\def\Im{\mathrm{Im\,}}
\def\H{\mathbb{H}}
\def\C{\mathbb{C}}
\def\cP{\mathcal{P}}
\def\geq{\geqslant}
\def\leq{\leqslant}
\def\R{\mathbb{R}}
\def\T{\mathbb{T}}
\def\Z{\mathbb{Z}}
\def\N{\mathbb{N}}
\def\id{\mathrm{id}}
\def\Tr{\mathrm{Tr}}
\def\Fix{\mathrm{Fix}}
\def\Q{\mathbb{Q}}
\def\epsilon{\varepsilon}
\def\mf{\mathfrak}
\def\Lie{\operatorname{Lie}}
\def\ad{\operatorname{ad}}
\def\Ad{\operatorname{Ad}}
\def\rank{\operatorname{rank}}
\def\diag{\operatorname{diag}}
\def\End{\operatorname{End}}
\def\inj{\operatorname{inj}}
\def\Kak{\operatorname{Kak}}
\def\Bow{\operatorname{Bow}}
\def\Isom{\operatorname{Isom}}
\def\vol{\operatorname{vol}}
\newcommand{\bea}{\begin{eqnarray}}
  \newcommand{\eea}{\end{eqnarray}}
  \newcommand{\beab}{\begin{eqnarray*}}
  \newcommand{\eeab}{\end{eqnarray*}}
  \newcommand{\be}{\begin{equation}}
  \newcommand{\ee}{\end{equation}}
\newcommand{\set}[1]{\left\lbrace #1 \right\rbrace}
\newcommand{\abs}[1]{\left| #1 \right|}
\newcommand{\mc}{\mathcal}
\newcommand{\norm}[1]{\abs{\abs{#1}}}
\newcommand{\of}{\circ}
\newcommand{\inner}[2]{\left\langle #1, #2 \right\rangle}
\newcommand{\algexp}{\exp_{\operatorname{alg}}}
\newcommand{\geomexp}{\exp_{\operatorname{geom}}}
\newcommand{\alglog}{\log_{\operatorname{alg}}}
\newcommand{\geomlog}{\log_{\operatorname{geom}}}
\newcommand{\mbf}{\mathbf}
\newcommand{\ve}{\epsilon}
\title{Kakutani equivalence of unipotent flows}
\author{Adam Kanigowski \and Kurt Vinhage\footnote{K. V. was supported by the National Science Foundation under Award DMS 1604796} \and Daren Wei\footnote{D. W. was partially supported by the NSF grant DMS-16-02409}}
\date{}
\begin{document}
\maketitle
\begin{abstract}We study Kakutani equivalence in the class of unipotent flows acting on finite volume quotients of semisimple Lie groups. For every such flow we compute the Kakutani invariant of M. Ratner, the value of which being explicitly given by the Jordan block structure of the unipotent element generating the flow. This{,} in particular{,} answers a question of M.\ Ratner. Moreover{,} it follows that the only standard unipotent flows are given by $\begin{pmatrix}1&t\\0&1\end{pmatrix}\times \id$ acting on $(SL(2,\R)\times G')\slash\Gamma'$, where $\Gamma'$ is an irreducible lattice in $SL(2,\R)\times G'$  (with the possibility that $G' = \set{e}$).
\end{abstract}
%\tableofcontents
\section{Introduction}
Classical ergodic theory studies representations of a group $G$ as automorphisms of  measure spaces{:} $g\mapsto T_g$.  For such group actions there is a natural notion of isomorphism: two group actions $(T_g)_{g\in G}$ on $(X,\mu)$ and $(S_g)_{g\in G}$ on $(Y,\nu)$ are {\it (measure theoretically) isomorphic} if there exists a (measure preserving, invertible) map $R:(X,\mu)\to (Y,\nu)$ such that
$$R\of T_g=S_g \of R,\; \mbox{ for all } g\in G.$$
  In what follows,  we will consider the classical setting, where the acting group is $\Z$ or $\R$, corresponding to the iteration of a single automorphism, or flow along a one-parameter measurable family of automorphisms, respectively. Classifying $\Z$ or $\R$ actions up to isomorphism is too difficult of a problem in full generality (see e.g.\ \cite{BFor}, \cite{FWeiss}, \cite{ForRudWeiss}). A much weaker equivalence relation than isomorphism is that of orbit equivalence. We recall, that $(T_g)_{g\in G}$ and $(S_g)_{g\in G}$ are called {\it  orbit equivalent} if there exists a measure preserving, invertible map $R$ taking $(T_g)_{g\in G}$ orbits to $(S_g)_{g\in G}$ orbits (as sets). This notion is however too weak as according to Dye's theorem, \cite{Dye1}, \cite{Dye2} it follows that for $G=\Z$ (or $\R$), any two ergodic measure preserving actions are orbit equivalent.

For $\Z$ and $\R$ actions, an equivalence relation weaker than isomorphism but stronger than orbit equivalence was introduced by S.\ Kakutani \cite{Kak}. Following \cite{Kak}, we say that two $\Z$ actions $T$ and $S$ are {\em Kakutani equivalent} if  there exist measurable sets $A\subset X$ and $B \subset Y$ such that $(T_{|A},A,\mu_A)$ and $(S_{|B},B,\nu_B)$ are isomorphic, where  $T_{|A}$ and $S_{|B}$ denote the corresponding {\em induced isomorphisms} and $\mu_A$ and $\nu_B$ denote the {\em induced measures}. Analogously, we say that two $\R$-actions $(T_t)_{t\in\R}$ and $(S_t)_{t\in\R}$ are {\em Kakutani equivalent} if there exists an
 $L^1(X,\mu)$ {\em time change} of $(T_t)_{t\in \R}$ which is isomorphic with $(S_t)_{t\in \R}$ (see Definition \ref{def:kakeq}). Kakutani originally called this notion weak isomorphism, but as this terminology was later used in a different context, modern treatments use Kakutani equivalence instead.

By Abramov's formula, \cite{Abr}, it follows that Kakutani equivalence preserves the class of zero-entropy, finite entropy systems and infinite entropy systems. In the present paper we study the zero entropy case.

A.\ Katok, \cite{KatMon}, showed that any two ergodic actions with discrete spectrum are Kakutani equivalent. In particular, we call an automorphism $T$ (a flow $(T_t)_{t\in\R}$) {\em standard} %\footnote{Standardness is also called loose Bernoullicity with zero entropy.}
 or {\it loosely Bernoulli of zero entropy} if it is Kakutani equivalent to an irrational rotation (to a linear flow on $\T^2$).\footnote{Notice that standardness implies ergodicity. By the above result of A.\ Katok, all irrational rotations (linear flows on $\T^2$) are Kakutani equivalent.}  Kakutani originally conjectured that all zero entropy systems were standard (although he did not use this terminology) \cite{Kak}. It turns out that the class of standard systems is quite broad, it contains all systems of local rank one \cite{Fer}  and is closed under factors, inverse limits and compact extensions, \cite{KatMon}, \cite{Rud1}, \cite{Bron}.  Hence, all distal systems are standard, and, in particular, all nil-systems are standard.

The first non-standard system of zero entropy was constructed by J.\ Feldman, \cite{Feld}, by the cutting and stacking method. Later,  A.\ Katok, \cite{KatMon}, and D.\ Ornstein, D.\ Rudolph, and B.\ Weiss, \cite{Rud1}, independently, constructed uncountably many non-Kakutani equivalent zero entropy systems. However, these systems were manufactured to be non-standard and were not systems of general interest. Instead, they were created via certain combinatorial constructions which were later shown to have smooth models. In fact, until now, Kakutani classification of smooth zero-entropy systems which were not created solely for this purpose, has only been answered in a few special cases by M. Ratner. Namely, in \cite{Ratner3}, it is shown that horocycle  flows $(h_t)_{t\in \R}$ acting on finite volume quotients of $SL(2,\R)$ are standard. Then, in \cite{Rat1}, it was shown that $h_t\times h_t$ acting on $SL(2,\R)\slash \Gamma \times SL(2,\R)\slash \Gamma$, the cartesian square of these systems, is not standard, for any (hyperbolic) cocompact lattice $\Gamma$. Finally, in \cite{Rat2}, it was shown that the product of $k$-copies of $(h_t)$ is not Kakutani equivalent to the product of $l$-copies with $k\neq l$. The method in \cite{Rat2} was to introduce, for a general flow $(T_t)$, an invariant of Kakutani equivalence, which was called  the {\em Kakutani invariant} and denoted by $e((T_t),\log)$, which then was estimated to be different for $(h_t)^k$ and $(h_t)^l$.

Notice that these examples come from a very specific class: unipotent flows on quotients of semisimple Lie groups. The study of the Kakutani invariant for these flows was suggested by M.\ Ratner (see Problem 1, \cite{RatICM}). In this class, all previous methods require the use of certain properties of the lattice action. As a result, results were limited to the very restricted class of products of $SL(2,\R)$ with reducible lattices. For many years, the study of the Kakutani equivalence for unipotent flows had no progress in view of these limitations. In fact, since the work of M. Ratner in the 1980s, no progress was made on the question of Kakutani equivalence for any naturally defined systems. The results of the present paper represent the first major step forward in over thirty years for our understanding of Kakutani equivalence of algebraic actions.
We show that for every unipotent flow on a semismiple Lie group quotient, the Kakutani invariant can be explicitly computed from the Jordan block structure of the unipotent element that generates the flow (see Definition \ref{def:chainbasis}). %To state our main theorem, we need some definitions.

%In this paper we compute the Kakutani invariant for all unipotent flows acting on compact quotients of semisimple Lie groups (see Theorem \ref{th1}).

There is a remarkable difference between the semisimple and nilpotent cases: for the semisimple case, there is a nontrivial but explicit formula for the Kakutani invariant in terms of the slow entropy. In particular, by Corollary \ref{cor1} there are very few unipotent flows which are standard. In the nilpotent case, the slow entropy can be arbitrarily large, but the resulting systems are always standard. At first glance this may be quite a surprise, since the local behavior of unipotent flows on quotients of semisimple groups  and nilmanifolds are very similar. In fact, there is a unified argument that shows that the slow entropy of these systems does not see the global structure of these groups (see \cite{KanigowskiVinhageWei}). However, in the nilmanifold case, the directions in which the maximal divergence is seen are not mixed with the directions which cause divergence, even after recurrence. These directions are central in the group and descend tori on the nilmanifold. But in the semisimple case, the directions in which divergence are seen are mixed with the remaining directions which cause divergence after they recur.

The proof we implement here is not an adaptation of Ratner's argument in \cite{Rat1} and \cite{Rat2}, which uses specific properties of hyperbolic lattices in $PSL(2,\R)$ and their boundary actions. Instead, we replace it by using {\it multi-scale analysis}, which controls orbits on intermediate scales, combined with a polynomial divergence property, which generalizes the divergence properties of horocycle flows and was first observed in \cite{witte}. This has many advantages over previously used technology, as it works for arbitrary semisimple groups. In particular, we use only very coarse properties of these groups such as exponential volume growth, estimates on the number of lattice points in balls, and the existence of certain renormalizing flows which interact in special ways with the unipotent flows (see Section \ref{sec:sl2}).

\subsection{Statement of Main Results}

In what follows $G$ is a semisimple linear Lie group\footnote{We do not lose much generality in assuming that $G$ is a linear Lie group, as any Lie group is a discrete cover of some matrix group} and $\Gamma$ is a lattice in $G$ (we do not assume that $\Gamma$ is cocompact). Let moreover $\mf g = \Lie(G)$ denote the Lie algebra of $G$. A flow $(\phi_t)$ on $G\slash \Gamma$ is called {\it unipotent}, if $\phi_t$ is the left translation action by $\exp(tU)$, where $U\in \mf g$ is such that ${\ad_U}^k = 0$ for some $k$, where $\ad_U\in \End(\mf g)$ is the adjoint operator, $\ad_U(X)=[U,X]$. The flow $(\phi_t)$ preserves Haar measure $\mu$ on $G\slash \Gamma$. We may also associate a list of numbers $(m_1,\dots,m_n)$ called the {\it chain structure} of $U$ which are the sizes of the Jordan blocks for $\ad_U$ (see Definition \ref{def:chainbasis}). Then, we have the following invariant which is the {\it growth rate} or {\it slow entropy} of $(\phi_t)$:
\be\label{eq:gru}
GR(U):=\frac{1}{2}\sum_{i=1}^n m_i(m_i+1).
\ee
As shown in \cite{KanigowskiVinhageWei}, the number $GR(U)$ describes the asymptotic orbit growth (both in the topological and metric category). Moreover (see Section \ref{uni:flows}), it follows that $GR(U)\geq 3$. The main theorem is the following (see Definition \ref{def:Kakutani}):

\begin{theorem}\label{th1}Let $G$ be a semisimple linear Lie group and
$(\phi_t)= L_{\exp(tU)}$ a unipotent flow on $G\slash \Gamma$. If $\Gamma$ is cocompact, we have
$$
e((\phi_t),\log)=GR(U)-3.
$$
For finite volume $\Gamma$, we have
$$GR(U)-4\leq e((\phi_t),\log)\leq GR(U)-3.$$
Moreover, if $GR(U)=3$, then $(\phi_t)$ is standard.
\end{theorem}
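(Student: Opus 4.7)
The theorem splits into three assertions: an upper bound $e((\phi_t),\log) \leq GR(U)-3$ valid for all finite-volume $\Gamma$, a matching lower bound in the cocompact case (with a loss of one in the non-uniform setting), and the standardness claim when $GR(U)=3$. The last of these is immediate once the upper bound is in hand: $e((\phi_t),\log) \geq 0$ holds for any flow, and by Ratner's characterization $e((\phi_t),\log)=0$ is equivalent to Kakutani equivalence with a linear flow on $\T^2$; hence substituting $GR(U)=3$ into the upper bound forces $e=0$ and standardness in both the cocompact and non-uniform cases. So only the two bounds require substantive work.

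For the upper bound I would construct an efficient $\bar f$-cover of $G/\Gamma$ by orbit segments of $(\phi_t)$ using multi-scale analysis. The key ingredient is the Jacobson--Morozov $\mathfrak{sl}_2$-triple $(U,H,V)$ in which $U$ sits, together with the renormalization identity
\[
 a_s \exp(tU) a_{-s} = \exp(e^{2s}\,tU), \qquad a_s := \exp(sH).
\]
Subdividing $[0,T]$ into dyadically many scales and renormalizing each scale to unit length, one matches orbits at unit length either by compactness (cocompact case) or an equidistribution estimate in the thick part (non-uniform case). At each scale the $SL(2,\R)$-triple provides three ``free'' directions of matching flexibility: one from the flow itself via time reparametrization (absorbed directly by the $\bar f$-metric), and two more from the transverse $H$ and $V$ directions. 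Combining these savings with the slow-entropy count from \cite{KanigowskiVinhageWei}, which says the number of Bowen $(T,\varepsilon)$-balls in $G/\Gamma$ grows as $T^{GR(U)}$, yields an $\bar f$-cover whose size is bounded by $(\log T)^{GR(U)-3}$ up to bounded factors, establishing $e((\phi_t),\log) \leq GR(U)-3$.

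For the lower bound I would exploit the polynomial divergence of unipotent flows. If $y=\exp(X)x$ with $X \in \mathfrak{g}$ small and transverse to the centralizer of $U$, then
\[
 \phi_s(y) = \exp\bigl(\mathrm{Ad}_{\exp(sU)}(X)\bigr)\phi_s(x) = \exp\bigl(e^{s\,\mathrm{ad}_U}X\bigr)\phi_s(x),
\]
which is polynomial in $s$ of degree governed by the Jordan chain of $X$ under $\mathrm{ad}_U$. A Fubini-type argument over the transverse directions forces any candidate $\bar f$-matching between orbits to draw from at least $T^{GR(U)-3}$ distinct Bowen balls, giving $e \geq GR(U)-3$ in the cocompact case. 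In the non-uniform setting, orbits can make excursions into cusps, where one transverse direction is effectively trivialised by the cusp's own unipotent radical; this provides additional matching flexibility and degrades the lower bound by one to $e \geq GR(U)-4$.

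The principal technical obstacle is the multi-scale construction for the upper bound: at each rescaling step, the nonlinear shearing that $\mathrm{ad}_U$ introduces on the transverse directions must be absorbed by matchings at finer scales, and the telescoping sum of scale-by-scale errors must stay below $\varepsilon$. In the non-uniform case the second serious difficulty is running this construction around cusp excursions, where the injectivity radius collapses and where one must simultaneously avoid double counting in the covering estimate and preserve the polynomial-divergence argument in the lower bound---this is precisely the source of the one-unit gap between $GR(U)-3$ and $GR(U)-4$ that the theorem records.
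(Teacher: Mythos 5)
Your proposal has the right basic intuitions — the $\mathfrak{sl}_2$-triple supplying three ``free'' directions, polynomial divergence governed by Jordan chains of $\ad_U$, a loss of one in the non-uniform case — but it inverts where the difficulty lies and omits the paper's central step. In the paper the upper bound requires no multi-scale construction: one defines an algebraic ``Kakutani--Bowen'' ball $\Kak(R,\epsilon,y)$ that is a factor $R^2$ larger than the Bowen ball (because the time change absorbs the $V$-induced quadratic shear and the $X$-induced linear shear in the $U$-direction), proves $\mu(\Kak(R,\epsilon,y))\asymp R^{2-GR(U)}$ (Lemma \ref{lem:B_Gdiv}), and shows that the pushforward union $\bigcup_{p\in[0,\kappa^3 R]}\phi_{-p}(\Kak(R,\kappa^5,y))$ lies inside the Kakutani ball $B_R(y,\epsilon,\cP_m)$ (Claims A and B of Section~\ref{sec:thmref}), so $\mu(B_R)\geq cR^{3-GR(U)}$ and hence $K_R\leq CR^{GR(U)-3}$. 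The genuinely hard part is the \emph{lower} bound, and your ``Fubini-type argument over the transverse directions'' does not engage with the obstruction there: an $(\epsilon,\cP)$-matching only constrains a large \emph{proportion} of $[0,R]$, not a contiguous block, so a priori the orbits could align, drift apart, and realign arbitrarily often, and counting transverse directions alone yields no lower bound on the number of Kakutani balls. The heart of the paper is Theorem \ref{prop:longblock}: any good matching must contain a single interval of length $\geq R^{1-\delta}$ on which the two points are close in the Kak sense. Its proof (Propositions \ref{prop:splitlong} and \ref{lem:splt}, Lemma \ref{lem:mesem}) is exactly the multi-scale combinatorial argument you gestured at, but deployed for the lower bound rather than the upper, and it ultimately rests on a Siegel-set lattice-point estimate (Lemmas \ref{lem:mesem2}--\ref{lm:MeasureEstimateCrucial}) which is where the inequality $\dim G - \dim C(X)\geq 4$ is actually used. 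Without this long-block theorem your lower bound does not follow.

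Two secondary points. The standardness assertion needs $e((\phi_t),u)=0$ for \emph{all} $u\in\mc F$ (Theorem \ref{th:LB}), not merely $u=\log$; this does hold because when $GR(U)=3$ the bound \eqref{eq:Lab} makes $K_R(\epsilon,\cP_m)$ independent of $R$, but your invocation of ``Ratner's characterization'' for $u=\log$ alone is not the correct criterion. Also, your explanation of the $GR(U)-4$ drop for non-cocompact $\Gamma$ — a cusp's unipotent radical trivialising a transverse direction — is not the mechanism in the paper. The sharper cocompact bound comes from replacing the crude union over $p,q\in[0,R]^2$ of Kak balls by a sparse union over only $R^{2\delta}$ grid values of $q$ (using the realignment Lemma \ref{lem:adu}), and then applying the Kak-ball measure estimate at the snapped points $\phi_{R_i}y$. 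That estimate requires a uniform lower bound on the injectivity radius at $\phi_{R_i}y$, which is automatic for cocompact $\Gamma$ but fails in the cusp, and that is the sole source of the one-unit gap.
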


By a direct computation, one gets $3k-4\leq e((h_t)^k,\log)\leq 3k-3$. This, in particular, generalises M.\ Ratner's result, \cite{Rat2} to any  lattice in $SL(2,\R)^k$. If the lattice is additionally cocompact, then $e((h_t)^k,\log)=3k-3$. Theorem \ref{th1} allows one to deduce the following immediately from Lemmas \ref{th2} and \ref{lem:GRge5}:

\begin{corollary}\label{cor1} The only ergodic unipotent flows on finite volume quotients of linear semisimple Lie groups which are standard are of the form $\phi_t=\begin{pmatrix}1&t\\0&1\end{pmatrix}\times \id$ acting on $(SL(2,\R)\times G')\slash \Gamma$,
where $\Gamma$ is irreducible.
\end{corollary}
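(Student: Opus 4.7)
The plan is to deduce the corollary directly from Theorem \ref{th1} together with the cited Lemmas \ref{th2} and \ref{lem:GRge5}, which translate the constraint $GR(U)\le 4$ imposed by standardness into algebraic information about the pair $(\mathfrak g, U)$.

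I would first assume $(\phi_t)$ is ergodic and standard, so by definition $e((\phi_t),\log)=0$. Theorem \ref{th1} immediately forces $GR(U)=3$ in the cocompact case and $GR(U)\in\{3,4\}$ in the general finite-volume case. To close the gap in the non-cocompact bound I would invoke Lemma \ref{lem:GRge5}: its role should be to show that the quantity $GR(U)=\tfrac12\sum_i m_i(m_i+1)$ cannot take the value $4$ on nontrivial unipotents in semisimple Lie algebras (among admissible chain partitions of $\ad_U$ the next attainable value above $3$ is $\ge 5$). Both cases therefore collapse to $GR(U)=3$.

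The last step is to apply Lemma \ref{th2}, which identifies the $U$ realising $GR(U)=3$. The equation $\tfrac12\sum_i m_i(m_i+1)=3$ admits essentially one solution, corresponding to a single nontrivial chain in $\ad_U$ together with trivial complementary blocks; equivalently, under the Jacobson--Morozov embedding, the associated $\mathfrak{sl}(2,\R)$-triple $(U,H,F)$ has its centraliser $\mathfrak g'$ in $\mathfrak g$ as a complement, so that $\mathfrak g\cong\mathfrak{sl}(2,\R)\oplus\mathfrak g'$ is a direct sum of ideals. Since $G$ is linear, this descends to $G\cong SL(2,\R)\times G'$ (up to an isogeny that does not affect the form of the flow), with $\exp(tU)=\begin{pmatrix}1&t\\0&1\end{pmatrix}\times\id$. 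Finally, ergodicity of $(\phi_t)$ forces $\Gamma$ to be irreducible: were $\Gamma=\Gamma_1\times\Gamma_2$, the quotient $(SL(2,\R)\times G')/\Gamma$ would split as a product of two probability spaces and $(\phi_t)$ would act trivially on the $G'/\Gamma_2$-factor, contradicting ergodicity.

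The main obstacle I anticipate is precisely the borderline case $GR(U)=4$ in the non-cocompact finite-volume setting, where Theorem \ref{th1} alone is not sharp; all the real content there is absorbed into the gap statement of Lemma \ref{lem:GRge5}. The remaining algebraic classification in Lemma \ref{th2} is then a fairly direct unwinding of the fact that the smallest nontrivial $\mathfrak{sl}(2,\R)$-module appearing in any adjoint representation is the $3$-dimensional one.
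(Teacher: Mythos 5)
Your proposal is correct and follows exactly the route the paper indicates: standardness gives $e((\phi_t),\log)=0$, so Theorem~\ref{th1} forces $GR(U)\in\{3,4\}$ (or $=3$ when $\Gamma$ is cocompact), Lemma~\ref{lem:GRge5} rules out $GR(U)=4$, and Lemma~\ref{th2} converts $GR(U)=3$ into the stated algebraic form with $\Gamma$ irreducible (the converse, that such flows are standard, is the final clause of Theorem~\ref{th1}). The paper leaves this as an ``immediate'' deduction, and your write-up is a faithful expansion of it.
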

Theorem \ref{th1}, gives a solution to M.\ Ratner's Problem 1 in \cite{RatICM} (see also \cite{KleinShahStarkov}):
\begin{corollary}\label{cor2}
Let $G$ be a linear semisimple Lie group with $\dim G>3$, and $G / \Gamma$ be a finite volume homogeneous space of $G$.

\begin{enumerate}[(i)]
\item There are ergodic unipotent flows on $G / \Gamma$ which are not standard.
\item If $G$ is simple, no unipotent flow on $G/\Gamma$ is standard.
\item If $G$ has real rank at least two, there are two unipotent flows on $G / \Gamma$ (which are not identity, but not necessarily ergodic) which are not Kakutani equivalent.
\item If $G \cong SL(d,\R)$, then there are at least $d-1$ flows on $G / \Gamma$ which are pairwise non-Kakutani equivalent.
\end{enumerate}
\end{corollary}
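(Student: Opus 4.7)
The plan is to deduce all four parts from Theorem \ref{th1} and Corollary \ref{cor1} by exhibiting unipotent elements of $\mathfrak{g}$ with controllable Jordan structure. The key principle from Theorem \ref{th1} is that two unipotent flows generated by $U_1, U_2 \in \mathfrak{g}$ are not Kakutani equivalent whenever $|GR(U_1) - GR(U_2)| \geq 2$, since their Kakutani invariants then lie in disjoint intervals $[GR(U_i) - 4, GR(U_i) - 3]$.

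Parts (i) and (ii) follow directly from Corollary \ref{cor1}. For (ii), a simple $G$ with $\dim G > 3$ cannot factor as $SL(2, \R) \times G'$ with $G'$ nontrivial, so by Corollary \ref{cor1} no unipotent flow on $G/\Gamma$ is standard. For (i), if $G$ has a simple noncompact factor of dimension $> 3$, I produce an ergodic unipotent in that factor (ergodicity by Moore) and apply (ii); otherwise every noncompact simple factor is locally $SL(2, \R)$ and $\dim G > 3$ forces at least two such factors, so $U = E_1 + E_2$, the diagonal horocycle in two factors, has $GR(U) \geq 6$, hence $e \geq 2 > 0$ and the flow is non-standard. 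Ergodicity here comes from Mautner/Ratner arguments depending on the structure of the lattice.

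For (iv), I take $G = SL(d, \R)$ and, for $k = 0, 1, \ldots, d-2$, let $U_k \in \mathfrak{sl}_d$ have Jordan type $(d-k, 1^k)$ on $\R^d$. Writing $\R^d = V \oplus W$ with $\dim V = d - k$ (single nilpotent Jordan block) and $\ad_{U_k}$ vanishing on $W$, I apply the Clebsch--Gordan rule $J_a \otimes J_b = J_{a+b-1} \oplus J_{a+b-3} \oplus \cdots \oplus J_{|a-b|+1}$ to $\mathfrak{sl}_d \subset (V \oplus W) \otimes (V \oplus W)^*$ to obtain
\[
GR(U_k) = \sum_{i=1}^{d-k-1} i(2i + 1) + k(d-k-1)(d-k),
\]
with consecutive gap $GR(U_k) - GR(U_{k+1}) = (d-k-1)(d-k+2k+1) \geq 4$. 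Thus all $d - 1$ values of $GR$ are pairwise separated by at least $2$, and the corresponding flows are pairwise non-Kakutani equivalent. Part (iii) follows the same template with only two unipotents: for $\rank_\R G \geq 2$, either some simple factor has real rank $\geq 2$ and then the principal nilpotent $U_{\mathrm{prin}}$ has $GR$ much larger than the minimal nilpotent $E_\theta$ (by a computation analogous to the $SL(d,\R)$ case, and with the explicit bound $GR(U_{\mathrm{prin}}) \geq 13$ in rank $\geq 2$), or $G$ has at least two noncompact simple factors and one compares unipotents supported in one versus two factors.

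The main obstacle is the Jordan-block bookkeeping in part (iv): one must track the decomposition of $\ad_{U_k}$ on $\mathfrak{sl}_d$, noting that $V \otimes V^*$ contributes the odd-sized blocks $J_1, J_3, \ldots, J_{2(d-k)-1}$, that $V \otimes W^*$ and $W \otimes V^*$ together contribute $2k$ copies of $J_{d-k}$, that $W \otimes W^*$ contributes only trivial $J_1$ blocks, and that one $J_1$ is subtracted for the tracelessness constraint; the final arithmetic identity for the consecutive gap is then routine. A secondary difficulty in (i) is verifying ergodicity of the chosen unipotent flow, which is handled by Moore/Mautner for irreducible lattices and a straightforward product argument for reducible ones.
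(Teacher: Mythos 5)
Your parts (ii) and (iv) follow essentially the paper's own route. In (iv) you use the identical family of unipotents (one Jordan block of each size on $\R^d$, padded by trivial blocks), and your Clebsch--Gordan computation reproduces the paper's closed formula $GR(U_l)=\frac{1}{6}l(4l+1)(l-1)+l(d-l)(l-1)$ after the change of index $l=d-k$; your consecutive-gap $(d-k-1)(d+k+1)$ is the paper's $l(2d-l)$ reindexed. Your ``$|GR_1-GR_2|\geq 2$ implies distinct Kakutani invariants'' criterion is also exactly the right reading of Theorem \ref{th1} in the non-cocompact case, and is what the paper uses tacitly. Part (iii) is a genuinely different route: you pit the minimal against the principal nilpotent and lean on explicit bounds (e.g.\ $GR(U_{\mathrm{prin}})\geq13$), whereas the paper compares $U=U_{\alpha_1}$ with $U'=U_{\alpha_1}+U_{\alpha_2}$ for two non-proportional simple roots, using a root-grading to show the $U$-chains are refinements of the $U'$-chains and that at least one chain splits. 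The paper's version avoids any case-checking over root systems; your version is more computational and the asserted inequality $GR(U_{\mathrm{prin}})-GR(E_\theta)\geq 2$ across all rank-$\geq2$ simple real forms, while true, is left unverified.

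The genuine gap is in part (i). A unipotent $U$ supported in a single simple factor $G_1$ of $G=G_1\times\cdots\times G_n$ is \emph{not} ergodic on $G/\Gamma$ when $\Gamma$ is reducible: if $\Gamma\sim\Gamma_1\times\Gamma_2$, the flow acts trivially on the $G_2/\Gamma_2$ coordinate. Moore's ergodicity theorem for one-factor subgroups requires $\Gamma$ irreducible, which the corollary does not assume. The same issue afflicts your second case: $E_1+E_2$ in two factors is not ergodic if there are three or more noncompact factors and $\Gamma$ splits accordingly. The appeal to ``Mautner/Ratner arguments depending on the structure of the lattice'' names the difficulty but does not fill it in. The paper's construction resolves this uniformly: it takes $U'=U+\sum U_{\alpha_i}$ with a nontrivial unipotent summand $U_{\alpha_i}$ in \emph{every} simple factor of $\mf g'$, so the projection to each factor is a nontrivial unipotent; ergodicity then follows for any lattice (from Howe--Moore on each factor together with mixing of unipotent flows), and $GR(U')>3$ because each $U_{\alpha_i}$ contributes its own Jacobson--Morozov triple. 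You should replace your one-factor / two-factor construction by a sum across all simple factors.
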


In fact, we expect the number of pairwise non-Kakutani equivalent flows on $SL(d,\R)/\Gamma$ to grow on the order of $d^3$ (see Remark \ref{rem:cubic-diff}). A proof of Corollary \ref{cor2} is given in Section \ref{uni:flows}. Moreover, our main result also allows one to construct algebraic examples which answer negatively the following question by A.\ Katok, \cite{KatMon}: if $T \of S=S \of T$ (i.e. $S\in C(T)$) and $T$ is standard, does it follow that $S$ is standard? The first such counterexamples were constructed by de la Rue in \cite{deLaRue}. However, these examples were Gaussian systems which are not known to have smooth finite dimensional models.

\begin{corollary}\label{cor3} Let $T=h_1\times \id$ and $S= h_1 \times h_1$ acting on $SL(2,\R)^2 / \Gamma$ with $\Gamma$ irreducible. Then $S$ and $T$ are ergodic and commute, with $T$ standard and $S$ non-standard.
\end{corollary}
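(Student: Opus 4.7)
The plan is to verify commutativity, ergodicity, standardness of $T$, and non-standardness of $S$ separately, reducing the last two to Theorem~\ref{th1} applied to the underlying flows.

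Commutativity is immediate, since both $T$ and $S$ are left translations by elements of the abelian one-parameter subgroup $\{(h_s, h_t) : s,t \in \R\} \subset SL(2,\R)^2$; a direct computation gives $T \of S = (h_2, h_1) = S \of T$. For ergodicity, note that $T$ and $S$ are time-$1$ maps of the unipotent flows $(h_t \times \id)_{t \in \R}$ and $(h_t \times h_t)_{t \in \R}$, respectively, both of which are non-compact one-parameter subgroups of $SL(2,\R)^2$. Since $\Gamma$ is irreducible and $SL(2,\R)^2$ has no compact simple factors, the Howe--Moore theorem implies that both flows are mixing on $SL(2,\R)^2/\Gamma$, from which ergodicity of the time-$1$ maps follows in the usual way: any $T$-invariant $L^2$ function $f$ satisfies $\la f,f\ra = \la f, T^n f\ra \to |\la f,1\ra|^2$ as $n\to\infty$.

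For standardness of $T$: the generating element $U \oplus 0 \in \mf{sl}(2,\R) \oplus \mf{sl}(2,\R)$ has Jordan structure consisting of the single nontrivial chain of $\ad_U$ on the first factor, together with trivial chains in the second. Since trivial chains contribute nothing, additivity of $GR$ under direct sums of $\ad$-actions gives $GR(U \oplus 0) = GR(U) = 3$, the horocyclic value. The last clause of Theorem~\ref{th1} then yields standardness of the flow $(h_t \times \id)_{t \in \R}$, and therefore of its time-$1$ map $T$. For non-standardness of $S$: the generator $U \oplus U$ carries two copies of the horocyclic chain, so $GR(U \oplus U) = 2 GR(U) = 6$. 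By Theorem~\ref{th1}, $e((h_t \times h_t), \log) \geq GR - 4 = 2 > 0$, so the flow $(h_t \times h_t)_{t \in \R}$ is non-standard.

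The main subtlety is passing from (non-)standardness of the flow to that of its time-$1$ map: a Kakutani equivalence of flows induces one on their time-$1$ maps, and, conversely, positivity of Ratner's invariant for the flow forces positivity for the time-$1$ map, hence its non-standardness. With these standard reductions in hand, all four assertions of the corollary follow.
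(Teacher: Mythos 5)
Your proof is correct and follows the approach the paper implicitly relies on; the paper states Corollary~\ref{cor3} without a separate proof, treating it as an immediate consequence of Theorem~\ref{th1} and the $GR$ formula. The $GR$ computations are right: $\ad_{U\oplus 0}$ on $\mf{sl}(2,\R)\oplus\mf{sl}(2,\R)$ has a single depth-$2$ chain (the Jacobson--Morozov chain in the first factor) together with only trivial chains in the second factor, giving $GR = 3$, while $\ad_{U\oplus U}$ (with respect to the diagonal $\mf{sl}(2,\R)$-triple) has two depth-$2$ chains, giving $GR = 6$. The ergodicity argument via Howe--Moore and the appeal to Theorem~\ref{th1} for (non-)standardness of the underlying flows are exactly what is needed.

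The one step you state too loosely is the flow-to-map reduction. A Kakutani equivalence of two flows (isomorphism after time change) does not in any direct sense ``induce'' a Kakutani equivalence of their time-$1$ maps as $\Z$-actions, so the clause ``a Kakutani equivalence of flows induces one on their time-$1$ maps'' is not a valid shortcut. What is actually being invoked is the theorem that, for an ergodic zero-entropy flow $(T_t)$ whose time-$1$ map is ergodic, $(T_t)$ is loosely Bernoulli if and only if $T_1$ is loosely Bernoulli (Ornstein--Rudolph--Weiss~\cite{Rud1}; see also Katok~\cite{KatMon}). Both directions of that equivalence are needed here: one to transfer standardness of $(h_t\times\id)$ to $T$, and the contrapositive of the other to transfer non-standardness of $(h_t\times h_t)$ to $S$. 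With that theorem cited in place of the ``induced equivalence'' phrase, your argument is complete and agrees with the intended proof.
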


 We finish the introduction with the following questions:

\textbf{Question 1.} When is the flow $(\phi_t)= L_{\exp(tU)}$ acting on $G\slash \Gamma$ Kakutani equivalent with its action on $G\slash \Gamma'$?

Notice that if $\Gamma$ and $\Gamma'$ are conjugated, then the actions are isomorphic and hence Kakutani equivalent. Therefore the interesting case is to consider question 1 for $\Gamma$ and $\Gamma'$ which are not algebraically related.

 The above question is a particular case of the following general question:

\textbf{Question 2.} Let $U\in \mf g$ and $U'\in \mf g'$. Is it true that if $GR(U)=GR(U')$ then the flows $(\phi_t)= L_{\exp(tU)}$ and $(\phi'_t)= L_{\exp(tU')}$ are Kakutani equivalent?

A positive answer to Question 2 would mean that the Kakutani invariant is a full invariant in the class of unipotent flows (the same way as Kolmogorov-Sinai entropy is a full invariant for Bernoulli shifts, \cite{Orn}). Notice also that Question 1 is a special case of Question 2.

Notice that in Theorem \ref{th1}, we use the $\log$ function to compute the Kakutani invariant $e((\phi_t),\log)$. In general (see \cite{Ratner4}), one may consider any  $u:[0,+\infty)\to [0+\infty)$  such that $\lim_{t\to+\infty} \frac{u(at)}{u(t)}=1$, for any $a>0$. We have the following general problem:

\textbf{Problem 1}: For any function $u$ as above  construct a flow $(T_t)$ such that $0<e((T_t),u)<+\infty$.

Notice that it is much harder to construct systems with a prescribed Kakutani invariant than with the Hamming one (this invariant is called slow entropy in \cite{KatThou}). Indeed, it follows from \cite{KatThou} that (for natural systems such as unipotent flows) slow entropy behaves well under taking products, which is not the case for the Kakutani invariant, as is demonstrated by considering $(h_t)$ on $SL(2,\R) / \Gamma$ and $(h_t \times h_t)$, first considered in  \cite{Rat1}.

\textbf{Acknowledgements} The authors would like to thank Anatole Katok for suggesting this problem and his encouragements in its development. The authors are also grateful to  Federico Rodriguez-Hertz, and Jean-Paul Thouvenot for offering their insight on the subject, as well as  Dmitry Dolgopyat and Mariusz Lema\'{n}czyk on their useful comments on a preliminary version of the paper.

\subsection{A Reader's Guide}

We write the paper with readers from two distinct fields in mind: ergodic theory and measurable invariants, and Lie groups and homogeneous dynamics. We therefore include a section to describe some standard tools from each (Sections \ref{sec:def} and \ref{sec:prelims}). In Section \ref{sec:more-defs}, we combine ideas from each of these fields to make definitions which allow us to analyze the decay rate of Kakutani balls. Section \ref{sec:orb} contains  some algebraic lemmas which are applied in Sections \ref{sec:thmref} and \ref{sec:technical}.
Since some proofs have a clear main idea but are technical, we have included outlines of each important reduction (before its proof) to explain what the technicalities mean intuitively.

%The strategy of proof is to adapt the methods of \cite{KanigowskiVinhageWei}, which calculated decay rates of Bowen balls, to the Kakutani setting.

The key  technique of the paper is developing new counting results for the Kakutani invariant. The main idea is that if two points are Kakutani close (which, in general, is very hard to control), then they are algebraically close on a long block.
 In particular, the first reduction of Theorem \ref{th1} is in Section \ref{sec:thmref} (Theorem \ref{prop:longblock}), which relates Kakutani balls to ``Bowen-like'' balls (Definition \ref{def:kakball}). These are algebraically, and not dynamically, defined and we can obtain good estimates on their decay rates. Therefore, the main purpose of Theorem \ref{prop:longblock} is to relate the dynamically defined Kaktuni balls with a more algebraic definition.

%The key difference between the slow entropy analysis in \cite{KanigowskiVinhageWei} and our setting here
The main difficulty with the Kakutani invariant is that the dynamical criterion for being in a Kakutani ball does not give us control over the full orbit. Therefore, the strength of Theorem \ref{prop:longblock} is the guarantee that this does happen: not only for a large proportion of time do we have closeness of orbits, but also for a very long interval.

The remainder of the paper is dedicated to the proof of Theorem \ref{prop:longblock}. A series of further reductions to prove Theorem \ref{prop:longblock} are made in Section \ref{sec:technical}. The main idea is the following: to guarantee a long interval in which orbits are close, and not just a large proportion of time as guaranteed by the Kakutani condition, one must show that orbits cannot align, separate, and realign in a negligible amount of time on large scales. Proposition \ref{prop:splitlong} is a way to guarantee that this cannot happen: for any matching of orbits, the smaller segments of matching times cannot take up a large portion of the matching interval.

Let us  point out that our technique is different from Ratner's methods from \cite{Rat1}, \cite{Rat2}. Indeed, the methods in \cite{Rat1} and \cite{Rat2} are crucially based on the fact that the lattice is a product of hyperbolic lattices in $SL(2,\R)$. Our method is based on controlling the algebraic (polynomial) divergence of the unipotent flow and not on controlling the behaviour of  the return times using finer properties of the lattice. The details will be explained more in future sections.

\begin{remark}
The only place were we use the fact that $G$ is linear is the computation in Appendix, where we compute products of elements from the $\mf{sl}(2,\R)$-triple in $G$. If $G$ is linear, it follows that the homomorphism $\phi: \mf{sl}(2,\R)\to \mf {g}$ lifts to a homomorphism $\Phi:SL(2,\R)\to G$ (and not just its universal cover, which is all that is guaranteed from general Lie theory), see Lemma \ref{lem:sl2-lift}. This allows us to make computations in $SL(2,\R)$ and conclude things about the corresponding products in $G$.
\end{remark}

\section{Preliminaries on Kakutani Equivalence}\label{sec:def}
In this section we will introduce some basic definitions.
%\subsection{Kakutani equivalence}
We first recall the definition of Kakutani equivalence. For a flow $(T_t)$ on  $(X,\mathscr{B},\nu)$ and a function $\alpha\in L^1_+(X,\mathscr{B},\nu)$, the flow $(T^\alpha_t)$ is called a {\it time change} of $(T_t)$ (along $\alpha$) if
$$
T^\alpha_t(x)=T_{u(t,x)}(x),
$$
where $u(t,x)$ is a (unique) solution to
$$
\int_0^{u(t,x)}\alpha(T_sx)ds=t.
$$
it follows that $(T^\alpha_t)$ preserves measure $d\bar{\nu}:=\alpha(\cdot)d\nu$.
\begin{definition}[Kakutani equivalence, \cite{Katok1}]\label{def:kakeq}
Two ergodic measure preserving flows $(T_t)$ on $(X,\mathscr{B},\nu)$ and $(S_t)$ on $(\tilde{X},\mathscr{C},\tilde{\nu})$ are {\it Kakutani equivalent}, if $(S_t)$ is isomorphic to $(T^\alpha_t)$ for some $\alpha\in L^1_+(X,\mathscr{B},\nu)$.
\end{definition}

Following \cite{Ratner4}, we will introduce the Kakutani invariant for an ergodic flow $(T_t)$ acting on a Lebesgue space $(X, \mathscr{B},\nu)$. For a finite measurable partition $\cP$ of $X$ and an element $x\in X$, we denote by $\cP(x)$ the atom of $\cP$  containing $x$ and let $I_R(x):=\{T_sx\;:\; s\in[0, R]\}$. Let $l$ denote the Lebesgue measure on $[0,R]$.

\begin{definition}[$(\epsilon,P)$-matchable, \cite{Ratner4}]\label{def:match}
For $x,y\in X$, $\epsilon>0$ and $R>1$, $I_R(x)$ and $I_R(y)$ are called $(\epsilon,\cP)$-matchable if there exists a subset $A=A(x,y)\subset[0,R]$, $l(A)>(1-\epsilon)R$ and an increasing absolutely continuous map $h=h(x,y)$ from $A$ onto $A'=A'(x,y)\subset[0,R]$, $l(A')>(1-\epsilon)R$ such that  $\cP(T_tx)=\cP(T_{h(t)}y)$ for all $t\in A$ and the derivative $h'=h'(x,y)$ satisfies
\begin{equation}
|h'(t)-1|<\epsilon\text{ for all }t\in A.
\end{equation}
 We call $h$ an {\it $(\epsilon, \cP)$-matching} from $I_R(x)$ onto $I_R(y)$.
\end{definition}

The Kakutani invariant is defined based on the above definition.

\begin{definition}[Kakutani invariant, \cite{Ratner4}]\label{def:Kakutani}
Define $$f_R(x,y,\cP)=\inf\{\epsilon>0:\text{$I_R(x)$ and $I_R(y)$ are $(\epsilon, P)$-matchable}\}.$$ Then denote $B_R(x,\epsilon,\cP)=\{y\in X:f_R(x,y,\cP)<\epsilon\}$ as $(R,\cP)$-ball of radius $\epsilon>0$ centered at $x\in X$, $R>1$. A family $\alpha_R(\epsilon,\cP)$ of $(R,\cP)$-balls of radius $\epsilon>0$ is called $(\epsilon,R,\cP)$-cover of $X$ if $\nu(\cup\alpha_R(\epsilon,P))>1-\epsilon$. Denote $K_R(\epsilon,\cP)=\inf|\alpha_R(\epsilon,P)|$ where $|A|$ denotes the cardinality of $A$ and infimum is taken over all $(\epsilon, R, \cP)$-covers of $X$. Let $\mc F$ denote the family of all nondecreasing functions from $\mathbb{R}^+$ onto itself, converging to $+\infty$. For $u\in \mc F$, we denote,
\begin{equation}
\begin{array}{rcl}
\beta(u,\epsilon,P) & = & \displaystyle\liminf_{R\to\infty}\frac{\log K_R(\epsilon, P)}{u(t)};\\
e(u,P) & = & \displaystyle\limsup_{\epsilon\to0}\beta(u,\epsilon,P);\\
e((T_t),u) & = &\displaystyle \sup_{P}e(u,P).
\end{array}
\end{equation}
\end{definition}

We also recall the following theorems, the first one is the generator theorem.

\begin{theorem} [Generator theorem, \cite{Ratner4}]\label{rat:gen}
Let $(T_t)$ be an ergodic measure-preserving flow on $(X,\mathscr{B},\nu)$ and let $\cP_1\leq\cP_2\leq\ldots$ be an increasing sequence of finite measurable partitions of $X$ such that $\vee_{n=1}^{\infty}\cP_n$ generates the $\sigma-$algebra $\mathscr{B}$. Then $e((T_t),u)=\sup_m e(u, \cP_m)$ for all $u\in \mc F$.
\end{theorem}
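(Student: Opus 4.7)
The direction $\sup_m e(u,\cP_m)\leq e((T_t),u)$ is immediate from the definition of $e((T_t),u)$ as a supremum over all finite partitions. My plan for the nontrivial direction is to fix an arbitrary finite measurable partition $P$ and prove that $e(u,P)\leq\sup_m e(u,\cP_m)$, by approximating $P$ in the partition metric by a $\cP_m$-measurable partition and then comparing Kakutani balls with a small loss in the $\eps$-parameter.

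For any $\delta>0$, the generating hypothesis $\bigvee_n\cP_n=\mathscr{B}$ supplies an index $m=m(\delta)$ and a $\cP_m$-measurable partition $P'=(P_1',\dots,P_k')$, with the same number of atoms as $P$, such that $d(P,P'):=\sum_i\nu(P_i\triangle P_i')<\delta$. Set $E:=\bigcup_i(P_i\triangle P_i')$, so that $\nu(E)<\delta$ and $P,P'$ agree pointwise on $X\setminus E$. Applying Birkhoff's theorem to $\mathbf 1_E$, the good set
\[
X_R:=\left\{x\in X:\tfrac{1}{R}\,l\{t\in[0,R]:T_tx\in E\}<2\delta\right\}
\]
has measure $\nu(X_R)>1-\delta$ for all $R$ sufficiently large. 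The key step is a transfer lemma: for $x,y\in X_R$, if $f_R(x,y,\cP_m)<\eps$ via a matching $h:A\to A'$, then removing from $A$ the set
\[
\{t\in A:T_tx\in E\}\cup h^{-1}\{s\in A':T_sy\in E\},
\]
whose Lebesgue measure is at most $2\delta R+2\delta R/(1-\eps)$ by the good-set estimate and $|h'-1|<\eps$, leaves a domain on which $\cP_m$-atoms agree under $h$, hence $P'$-atoms agree (since $P'$ is $\cP_m$-measurable), hence $P$-atoms agree (since both points are outside $E$). Thus $h$ restricts to an $(\eps+C\delta,P)$-matching for an absolute constant $C$, yielding $B_R(x,\eps,\cP_m)\cap X_R\subset B_R(x,\eps+C\delta,P)$.

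To finish, I would upgrade this inclusion to a covering inequality. Given an almost-optimal $(\eps,R,\cP_m)$-cover $\{B_R(x_i,\eps,\cP_m)\}$, for each $i$ with $B_R(x_i,\eps,\cP_m)\cap X_R\neq\emptyset$ I would pick $y_i$ in this intersection. A standard approximate triangle inequality for $f_R$ (obtained by composing two matchings and intersecting their domains) places $B_R(x_i,\eps,\cP_m)\cap X_R$ inside $B_R(y_i,4\eps,\cP_m)$, which by the transfer lemma lies in $B_R(y_i,4\eps+C\delta,P)$. Discarding balls that miss $X_R$ costs at most $\delta$ in coverage measure, so the resulting family is a $(4\eps+C\delta,R,P)$-cover of cardinality at most $K_R(\eps,\cP_m)$, giving $K_R(4\eps+C\delta,P)\leq K_R(\eps,\cP_m)$. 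Taking $\log$, dividing by $u(R)$, and applying $\liminf_R$ yields $\beta(u,4\eps+C\delta,P)\leq \beta(u,\eps,\cP_m)\leq e(u,\cP_m)$. Sending $\eps\to 0$ and $\delta\to 0$ (with $m=m(\delta)\to\infty$) gives $e(u,P)\leq\sup_m e(u,\cP_m)$, and taking the supremum over $P$ completes the proof.

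The main obstacle I expect is the careful bookkeeping of $\eps$-losses: the transfer lemma, the approximate triangle inequality, the center-swap, and the passage from the matching domain $A$ to its image $A'=h(A)$ each introduce their own small slack in the $\eps$-parameter, and one must verify that the accumulated error stays linear in $\eps+\delta$ so that the two limits can be taken consistently. No deep new idea is required beyond combining Birkhoff's theorem with the standard $\bar d$-metric technology from entropy theory, but the precise constants demand attention.
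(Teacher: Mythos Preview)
The paper does not give its own proof of this theorem; it is stated with attribution to Ratner \cite{Ratner4} and invoked as a black box. There is therefore nothing in the paper to compare against.

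Your argument is the standard one for generator theorems of this kind and is essentially correct. Two minor points of bookkeeping: first, the approximate triangle inequality for $f_R$ yields a factor of $5$ rather than $4$ (the paper records this explicitly in Remark~\ref{compef}), so your $4\eps$ should be $5\eps$; this is harmless since you only need the total loss to be linear in $\eps+\delta$. Second, when you pass from $\beta(u,\eps,\cP_{m(\delta)})$ to $e(u,\cP_{m(\delta)})$ you are implicitly using that $\beta(u,\cdot,\cP_m)$ is monotone nonincreasing in $\eps$ (so that $e(u,\cP_m)=\sup_{\eps>0}\beta(u,\eps,\cP_m)$); this is true because $K_R(\eps,\cP_m)$ is nonincreasing in $\eps$, but it is worth stating since otherwise a single value $\beta(u,\eps_0,\cP_m)$ need not be bounded by the $\limsup$. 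With these small clarifications the proof goes through.
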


The following theorem shows that the above quantity is an invariant of Kakutani equivalence.

%\begin{theorem}[\cite{Ratner4}]
%Let $S=(S_t)$ be obtained from an ergodic measure-preserving flow $T=(T_t)$ by a time change $\tau$ such that $\tau(T_tx)$ is continuous in $t$ for a.e. $x\in X$ and $L<\tau(x)<M$ for some $L,M>0$ and a.e. $x\in X$. Then $e((S_t),u)=e((T_t),u)$ for all $u\in F$ with $\lim_{t\to\infty}\frac{u(at)}{u(t)}=1$ for all $a>0$.
%\end{theorem}

%The above theorem, together with Kochergin's theorem (every $L^1$ function is cohomologous to a continuous function) yields that following corollary:
\begin{theorem}[\cite{Ratner4}]\label{cor:rat}
Let $(T_t)$ and $(S_t)$ be two ergodic Kakutani equivalent measure preserving flows on $(X,\mathscr{B},\nu)$ and $(\tilde{X},\tilde{\mathscr{B}},\tilde{\nu})$. Then
$$e((T_t),u)=e((S_t),u)$$
for all $u\in \mc F$ with
$$\lim_{t\to\infty}\frac{u(at)}{u(t)}=1\text{ for all }a>0.$$
\end{theorem}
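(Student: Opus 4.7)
My plan is to reduce the theorem to invariance under time changes, which is the nontrivial half of Kakutani equivalence. By Definition \ref{def:kakeq}, it suffices to show $e((T_t),u)=e((T_t^\alpha),u)$ for every $\alpha\in L^1_+(X,\nu)$, since any measure-theoretic isomorphism of flows sends partitions to partitions and $(\epsilon,\mathcal{P})$-matchings to $(\epsilon,\mathcal{P}')$-matchings, so $K_R(\epsilon,\mathcal{P})$ is preserved exactly. Renormalize so that $\int\alpha\,d\nu=1$ and set $\bar\nu=\alpha\,d\nu$. The central structural fact is that $(T_t)$ and $(T_t^\alpha)$ share their orbits set-theoretically, related by the cocycle $\tau(s,x)=\int_0^s\alpha(T_\sigma x)\,d\sigma$ via $T_s x=T^\alpha_{\tau(s,x)}x$, and by Birkhoff's theorem $\tau(R,x)/R\to 1$ for $\nu$-a.e.\ $x$. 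Since the inverse time change from $(T^\alpha_t)$ to $(T_t)$ uses $1/\alpha$, with $\int(1/\alpha)\,d\bar\nu=1$, the argument is symmetric and it suffices to prove $e((T_t^\alpha),u)\leq e((T_t),u)$.

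The second step is to translate matchings between the two flows using sufficiently fine partitions. Fix a small $\eta>0$. Using the generator theorem (Theorem \ref{rat:gen}), I may restrict attention to partitions $\mathcal{P}$ fine enough that on some ``good'' set $G\subset X$ with $\nu(G)>1-\eta$, $\alpha$ is bounded between $1/M$ and $M$ and oscillates by at most $\eta$ on atoms of $\mathcal{P}$ that meet $G$, while $\int_{X\setminus G}\alpha\,d\nu<\eta$. Given an $(\epsilon,\mathcal{P})$-matching $h:A\to A'$ for $(T_t)$ on $I_R(x)$, define the candidate $(T_t^\alpha)$-matching on $\tilde A=\tau(A,x)$ by $\tilde h(\tau(s,x))=\tau(h(s),y)$. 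The partition condition transfers exactly since orbits coincide. A direct computation gives $\tilde h'(\tau(s,x))=\alpha(T_{h(s)}y)\,h'(s)/\alpha(T_s x)$, which lies within $O(\eta)$ of $h'(s)\in(1-\epsilon,1+\epsilon)$ provided the matched pair lies in $G$; by Birkhoff applied to $\mathbf{1}_{X\setminus G}$ this ``good'' subset of $A$ has Lebesgue measure at least $(1-\epsilon-O(\eta))R$ for typical $x$. Similarly, Birkhoff applied to $\alpha\cdot\mathbf{1}_{X\setminus G}$ controls $|\tilde A|/\tau(R,x)\geq 1-O(\epsilon+\eta)$.

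Having produced, for typical $x$ and large $R$, an $(O(\epsilon+\eta),\mathcal{P})$-matching of $(T_t^\alpha)$ on $[0,\tau(R,x)]$ whenever there was an $(\epsilon,\mathcal{P})$-matching of $(T_t)$ on $[0,R]$, any $(\epsilon,R,\mathcal{P})$-cover for $(T_t)$ converts into an $(O(\epsilon+\eta),R',\mathcal{P})$-cover for $(T_t^\alpha)$ with $R'=\tau(R,x)$ close to $R$, modulo a small $\bar\nu$-set of $x$ where Birkhoff has not yet stabilized, which may be absorbed into the $\epsilon$-loss. This yields $\log K^\alpha_{R'}(O(\epsilon+\eta),\mathcal{P})\leq\log K_R(\epsilon,\mathcal{P})+O(1)$. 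Dividing by $u(R')$ and using the hypothesis $\lim_{t\to\infty}u(at)/u(t)=1$ to replace $u(R')$ by $u(R)$ asymptotically, then taking $\liminf_R$, $\limsup_{\epsilon\to 0}$, and finally $\sup_{\mathcal{P}}$ via Theorem \ref{rat:gen}, gives $e((T_t^\alpha),u)\leq e((T_t),u)$.

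The hardest part will be the bookkeeping around unboundedness of $\alpha$: both the derivative estimate and the length estimate are sensitive to orbit pieces where $\alpha$ is very large or very small, and one needs the truncation into $G$ together with Birkhoff averages of $\mathbf{1}_{X\setminus G}$ and $\alpha\mathbf{1}_{X\setminus G}$ to guarantee that such pieces contribute only $o(R)$ to orbit time and only a negligible perturbation to the cover count. These estimates depend on the partition-dependent constant $M$ but not on $R$, so they vanish in the large-$R$ limit, after which $\eta$ is sent to $0$. The hypothesis on $u$ enters precisely to ensure that the multiplicative scaling $R'/R\to 1$ in the time reparameterization does not introduce a discrepancy between $u(R)$ and $u(R')$ at the logarithmic level.
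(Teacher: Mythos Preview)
The paper does not give its own proof of this statement; it is quoted from \cite{Ratner4} as a known result and used as a black box throughout. There is therefore nothing in the paper to compare against.

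Your sketch follows the standard argument and is essentially correct in outline. A few points deserve care when fleshed out. First, since $R'=\tau(R,x)$ varies with $x$, the conversion of covers requires a uniform choice of time: use Birkhoff to find, for each $\eta>0$, a threshold $R_0$ and a set $E_\eta$ with $\nu(E_\eta)>1-\eta$ on which $\tau(R,x)/R\in(1-\eta,1+\eta)$ for all $R\ge R_0$, then compare $K^\alpha_{(1-\eta)R}$ with $K_R$ uniformly. Second, the covers are taken with respect to different measures ($\bar\nu$ versus $\nu$); on $G$ the densities are mutually bounded by $M$, so a $(1-\epsilon)$-cover for one measure is a $(1-O(M\epsilon+\eta))$-cover for the other, which is harmless in the limits. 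Third, the claim that a generating sequence can be chosen so that $\alpha$ oscillates by at most $\eta$ on atoms meeting $G$ requires Lusin's theorem (take $G$ compact with $\alpha|_G$ uniformly continuous); you should make this step explicit, and be sure to fix $M$ (hence $G$) \emph{before} choosing the oscillation bound $\eta'$ so that $M\eta'<\epsilon$, avoiding the apparent circularity in your parameter choices. These are all routine, and your identification of the delicate part---the $o(R)$ control of orbit time spent where $\alpha$ is unbounded, via Birkhoff averages of $\mathbf{1}_{X\setminus G}$ and $\alpha\mathbf{1}_{X\setminus G}$---is accurate.
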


Moreover, we have the following theorem (see e.g. \cite{Ratner4}):

\begin{theorem}\label{th:LB}
A zero-entropy ergodic measure preserving flow $(T_t)$ is standard if and only if $e((T_t),u)=0$ for all $u\in \mc F$.
\end{theorem}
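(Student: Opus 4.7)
The plan is to prove both implications separately, using the definition directly for the forward direction and reducing the reverse direction to classical loose-Bernoullicity theory.

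For the forward implication (standard $\Rightarrow$ $e((T_t),u)=0$ for all $u\in \mathcal{F}$), the strategy is to verify this first on the model $(R_t)$, a linear flow on $\T^2$. For nearby points $x,y\in\T^2$, the orbits $R_t x$ and $R_t y$ differ by the constant vector $y-x$, so for any finite measurable partition $\cP$ (approximable by one with nice boundaries) the atom sequences $\cP(R_t x)$ and $\cP(R_t y)$ disagree on a set of density at most $\mu(\cP\triangle(\cP+(y-x)))$, which is arbitrarily small once $\|y-x\|$ is small. Using the identity matching $h(t)=t$ (so that the derivative condition $|h'-1|<\epsilon$ is trivial), one concludes that $K_R(\epsilon,\cP)$ is bounded in $R$ by a constant depending only on $\epsilon$ and $\cP$. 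Hence $\beta(u,\epsilon,\cP)=0$ for every $u\in \mathcal{F}$. Since $(T_t)$ is by assumption isomorphic to some time change $R_t^\alpha$, and the Kakutani covering numbers transport across isomorphism and absorb the bi-Lipschitz reparametrization induced by a time change (into a slightly worse constant in $\epsilon$), the same bound yields $e((T_t),u)=0$ for every $u\in \mathcal{F}$.

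For the reverse implication, assume $e((T_t),u)=0$ for all $u\in \mathcal{F}$. Fix an increasing sequence $\cP_1\le\cP_2\le\cdots$ generating $\mathscr{B}$; by Theorem \ref{rat:gen}, $e(u,\cP_m)=0$ for every $m$ and every $u\in\mathcal{F}$. The next step is to upgrade this to uniform boundedness of $K_R(\epsilon,\cP_m)$ in $R$: if instead $K_R(\epsilon,\cP_m)\to\infty$ for some $m$ and some $\epsilon>0$, one can construct by a diagonal argument a function $u\in\mathcal{F}$ so slowly increasing that $\liminf_R \log K_R(\epsilon,\cP_m)/u(R)>0$, contradicting $e(u,\cP_m)=0$. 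Thus for every $m$ and every $\epsilon>0$ there are only finitely many $(R,\cP_m)$-Kakutani balls needed to $\epsilon$-cover $X$ for all $R$.

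The final step is to convert this uniform covering control into an actual Kakutani equivalence with an irrational rotation. Passing to a cross-section of $(T_t)$ (via Ambrose–Kakutani) produces an induced transformation for which the discrete analogue of $K_R(\epsilon,\cP_m)$ is controlled by the flow estimate, and this uniform finite covering in the $\bar f$-sense is exactly Feldman's finitely-determined condition. Feldman's theorem (and the Ornstein–Rudolph–Weiss version for flows) then gives that the zero-entropy ergodic system whose $\cP_m$-name distributions are finitely determined in $\bar f$ is loosely Bernoulli, hence Kakutani equivalent to any ergodic rotation; suspending back up yields that $(T_t)$ is Kakutani equivalent to a linear flow on $\T^2$, i.e.\ standard. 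The principal obstacle is this last step: translating the covering/matching language of Definition \ref{def:Kakutani} into Feldman's combinatorial $\bar f$-framework and verifying the hypotheses of the loose-Bernoullicity characterization precisely, which is why the argument must invoke the Feldman/Ornstein–Rudolph–Weiss classification rather than being self-contained.
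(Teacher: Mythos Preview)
The paper does not give its own proof of Theorem~\ref{th:LB}; it is quoted from \cite{Ratner4}. So there is no paper argument to compare against, and your proposal must stand on its own.

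In the forward direction, your treatment of the linear flow is fine, but the transfer step is not: a time change by an arbitrary $\alpha\in L^1_+$ is \emph{not} a bi-Lipschitz reparametrization, so you cannot ``absorb it into a slightly worse constant in $\epsilon$.'' What actually makes the transfer work is precisely the content of Theorem~\ref{cor:rat} (which is itself nontrivial and carries the hypothesis $u(at)/u(t)\to 1$); alternatively one argues directly from the loose-Bernoulli characterization (for a.e.\ $x,y$ one has $f_R(x,y,\cP)\to 0$), which yields that a single $\epsilon$-ball eventually covers $1-\epsilon$ of the space and hence $K_R(\epsilon,\cP)$ is bounded for every $\cP,\epsilon$ without passing through the model at all.

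The reverse direction has two genuine gaps. First, in your boundedness step you negate ``$K_R$ bounded'' as ``$K_R\to\infty$,'' but the correct negation is $\limsup_R K_R=\infty$. If $K_R$ is bounded along a subsequence, then for \emph{every} $u\in\mathcal F$ one has $\liminf_R \log K_R/u(R)=0$, and no diagonal choice of $u$ can produce a positive $\beta$; so your argument only yields $\liminf_R K_R(\epsilon,\cP_m)<\infty$, not boundedness. Second, even granting boundedness of $K_R$, the assertion that ``uniform finite covering in the $\bar f$-sense is exactly Feldman's finitely-determined condition'' is not correct: Feldman's (and ORW's) finitely-determined/finitely-fixed property is a stability condition on the process under perturbation of the name distribution, not a covering-number bound. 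The passage from bounded $K_R$ to the a.e.\ statement $f_R(x,y,\cP)\to 0$ (which is the zero-entropy LB characterization) requires an additional argument that you have not supplied, and this is exactly the substance of Ratner's result in \cite{Ratner4}.
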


We will also use the following definition of matching balls:

\begin{definition}\label{def:balls}Fix $\epsilon>0$, let $x,y\in M$ be $(\epsilon,\cP)$-matchable (see Definition \ref{def:match}) and let $h:A(x,y)\to A'(x,y)$ be an $(\epsilon,\cP)$-matching. For $u\in A(x,y)$ and $L>0$ let
$$
B(u,L):=\{r\in A(x,y)\;:\; r\geq u,\; r-u\leq L\}.
$$
denote the {\it matching ball} around $(u,h(u))$.
\end{definition}
Finally, we give a simple general remark, which we will use in the proof of Theorem \ref{th1}.
\begin{remark}\label{compef} If there exists a set $D\subset X$, such that for every $y\in D$, we have
$$
\mu(B_R(y,\epsilon,\cP)\cap D)\leq a(R,\epsilon),
$$
for $\epsilon<\mu(D)$, then  $K_R(\epsilon/5,\cP)\geq \frac{1}{a(R,\epsilon)}$.

On the other hand if for every $y\in D_\epsilon$, $\mu(D_\epsilon)\geq 1-\epsilon$, we have
$$
\mu(B_R(y,\epsilon,\cP))\geq b(R,\epsilon),
$$
then $K_R(5\epsilon,\cP)\leq \frac{1}{b(R,\epsilon)}$.

We recall also that  $f_t(\cdot,\cdot,\cP)$ does not define a metric (triangle inequality fails), however it is close to a metric: if $x\in
 B_R(y,\epsilon,\cP)$ and $y\in B_R(z,\epsilon,\cP)$, then $x\in
 B_R(z,5\epsilon,\cP)$.

\end{remark}

\section{Preliminaries on Homogeneous Spaces}
\label{sec:prelims}

In this section, we recall some basic facts from the theory of Lie groups and homogeneous spaces. Throughout the paper $G$ will denote a semisimple Lie group with Lie algebra $\mf g$. Given $g \in G$, let $L_g, R_g : G \to G$ denote the left and right translations on $G$. Let $\exp : \mf g \to G$ denote the exponential mapping of the Lie algebra $\mf g$ onto $G$. Then $\exp$ has a local inverse $\log$ sending a neighborhood of $e \in G$ to a neighborhood of $0 \in \mf g$.

\subsection{Metrics and Measures on Homogeneous Spaces}
\label{sec:metrics}

Let $\Gamma \subset G$ be a (discrete) subgroup. We introduce a metric on a the homogeneous space $G/\Gamma$ by first introducing a right invariant metric on $G$. Fix an inner product $\inner{\cdot}{\cdot}_0$ on $\mf g$, and define for $v,w \in T_gG$:

\[ \inner{v}{w} = \inner{dR_{g^{-1}}v}{dR_{g^{-1}}w}_0\]

By construction, $\inner{\cdot}{\cdot}$ is right invariant, so it induces a Riemannian metric on the space $G/\Gamma$. The Riemannian metric also has an associated exponential mapping $\geomexp : \mf g \to G$, which is $C^\infty$ and satisfies

\begin{equation}
d_0\geomexp = \id.
\end{equation}

Like the algebraic exponential, there is a local inverse of $\geomexp$ which we will denote by $\geomlog$. The following is immediate from the definition of the inner product.

\begin{lemma}
%\label{lem:chain-basis}
The Riemannian volume is a (right) Haar measure on $G$. In particular, it is independent of the metric $\inner{\cdot}{\cdot}_0$ when determining a probability measure on a homogeneous space.
\end{lemma}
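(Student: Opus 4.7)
The plan is to verify that the Riemannian metric constructed from the right-translation-invariant extension of $\inner{\cdot}{\cdot}_0$ is itself right-invariant, then use uniqueness of Haar measure to deduce both claims.

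First I would check directly that for any $h \in G$, the map $R_h : G \to G$ is an isometry of $(G, \inner{\cdot}{\cdot})$. Given $v, w \in T_g G$, compute
\[
\inner{dR_h v}{dR_h w}_{gh} = \inner{dR_{(gh)^{-1}}\, dR_h v}{dR_{(gh)^{-1}}\, dR_h w}_0 = \inner{dR_{g^{-1}} v}{dR_{g^{-1}} w}_0 = \inner{v}{w}_g,
\]
using $R_{(gh)^{-1}} \circ R_h = R_{g^{-1}}$ and the chain rule. Therefore $R_h$ is an isometry for every $h$, so it preserves the Riemannian volume form. This gives that the Riemannian volume is a right-invariant Radon measure on $G$, i.e.\ a right Haar measure.

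For the second assertion, I would invoke the uniqueness of Haar measure up to a positive scalar: any two right Haar measures on $G$ are positive scalar multiples of one another, and scalar multiples descend to $G/\Gamma$ as the same measure after normalisation. Explicitly, if $\inner{\cdot}{\cdot}_0$ and $\inner{\cdot}{\cdot}_0'$ are two inner products on $\mf g$ with associated Riemannian volumes $\mu$ and $\mu'$, then $\mu' = c\,\mu$ for some $c > 0$. Pushing forward via the quotient map $G \to G/\Gamma$ and normalising (when $\Gamma$ has finite covolume) kills the constant $c$, yielding the same probability measure on $G/\Gamma$ regardless of the choice of $\inner{\cdot}{\cdot}_0$.

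Neither step is an obstacle: the first is a one-line chain-rule computation and the second is a direct appeal to Haar uniqueness for locally compact groups. The only mild subtlety worth flagging explicitly is that the statement concerns a probability measure on $G/\Gamma$, so the proportionality constant produced by Haar uniqueness is absorbed by the normalisation step.
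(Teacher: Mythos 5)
Your proof is correct and follows precisely the route the paper intends: the paper states this lemma as "immediate from the definition of the inner product," and your two steps — verifying via the chain rule that each $R_h$ is an isometry (so Riemannian volume is right-invariant, hence a right Haar measure), then invoking uniqueness of Haar measure up to a scalar, which is absorbed when normalising to a probability measure on $G/\Gamma$ — are exactly the details being elided. Nothing further to flag.
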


\subsection{The Adjoint Representation}

 $G$ acts on itself by conjugation $C_g : h \mapsto g^{-1}hg$, and taking the derivative at the identity in the coordinate $h$ gives the {\it adjoint representation} of $G$ on $\mf g = T_eG$, $\Ad : G \to GL(\mf g)$. Taking the derivative of this map in the $g$ coordinate yields the Adjoint representation of the Lie algebra $\mf g$, $\ad : \mf g \to \End(\mf g)$, which coincides with the Lie bracket: $\ad(X)Y = [X,Y]$. The following are standard tools from the theory of Lie groups, which we write as a Lemma to reference.

\begin{lemma}
	\label{lem:adjoint}
	If $X,Y \in \mf g$,
	
	\[ \algexp(-X)\algexp(Y)\algexp(X)= \algexp(\Ad(\algexp(X))Y) \]
	
	\[ \exp(\ad(X)) := \sum_{k=0}^\infty \dfrac{\ad(X)^k}{k!}  = \Ad(\algexp(X))\]
\end{lemma}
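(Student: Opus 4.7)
The plan is to prove both identities by invoking the naturality of the exponential map with respect to Lie group homomorphisms: for any smooth Lie group homomorphism $\phi: G \to H$ one has $\phi \circ \algexp_G = \algexp_H \circ d\phi|_e$. This is a standard fact whose quick justification is that $t \mapsto \phi(\algexp(tZ))$ and $t \mapsto \algexp_H(t\, d\phi|_e Z)$ are both one-parameter subgroups of $H$ with the same velocity at $t=0$, hence coincide by the uniqueness of one-parameter subgroups. I would state this as a preliminary observation and then deploy it twice.

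For the first identity, I apply naturality to the inner automorphism $C_g : G \to G$, $C_g(h) = g^{-1} h g$. This map is a Lie group automorphism of $G$, and its derivative at the identity in the $h$-coordinate is, by the very definition given in Section \ref{sec:prelims}, exactly $\Ad(g)$. Naturality gives $C_g(\algexp(Y)) = \algexp(\Ad(g) Y)$, and specializing to $g = \algexp(X)$ yields
\[
\algexp(-X)\algexp(Y)\algexp(X) = C_{\algexp(X)}(\algexp(Y)) = \algexp\bigl(\Ad(\algexp(X))Y\bigr).
\]

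For the second identity, I apply naturality once more, now to the Lie group homomorphism $\Ad : G \to GL(\mf g)$. Its derivative at the identity is, again by definition as stated in the paragraph just before the lemma, the map $\ad : \mf g \to \End(\mf g)$. Naturality therefore yields $\Ad(\algexp(X)) = \algexp_{GL(\mf g)}(\ad(X))$, and since the exponential on the matrix group $GL(\mf g)$ is given explicitly by the power series, the right-hand side equals $\sum_{k=0}^\infty \ad(X)^k/k!$, which is the asserted formula.

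There is no genuine obstacle here: once naturality of $\exp$ is in hand, both identities reduce to identifying the derivatives of $C_g$ and $\Ad$ at the identity with $\Ad(g)$ and $\ad$ respectively, which is precisely how those objects are introduced in the paper. If one preferred a more concrete route, the remark preceding the lemma noting that $G$ is linear would allow a direct matrix-calculus proof: for (1) one differentiates the curve $t \mapsto \algexp(-X)\algexp(tY)\algexp(X)$ at $t = 0$ to identify it as the one-parameter subgroup generated by $\algexp(X)^{-1} Y \algexp(X) = \Ad(\algexp(X))Y$; for (2) one expands $\algexp(tX)^{-1} Y \algexp(tX)$ as a power series in $t$ and recognises the coefficient of $t^k/k!$ as the iterated bracket $\ad(X)^k Y$.
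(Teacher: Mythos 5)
Your proof is correct and is the standard argument; note, however, that the paper does not actually supply a proof of this lemma---it is explicitly presented as a statement of ``standard tools from the theory of Lie groups,'' stated only for later reference. Your two applications of the naturality of $\exp$ under Lie group homomorphisms (first to $C_g$, then to $\Ad$) are exactly what one would write if a proof were demanded, and they are consistent with the sign convention $C_g(h)=g^{-1}hg$ used in the paper. The linear-group remark at the end is a fine alternative but is not needed: naturality already handles the general case.
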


\subsection{Decompositions and Subgroups of Semisimple Groups}

\subsubsection{$\mf{sl}(2,\R)$ triples}
\label{sec:sl2}

Let $V = \begin{pmatrix} 0 & 0 \\ 1& 0 \end{pmatrix}$, $X = \begin{pmatrix} 1 & 0 \\ 0 & -1 \end{pmatrix}$ and $U = \begin{pmatrix} 0 & 1 \\ 0 & 0 \end{pmatrix}$ be the standard generators of $\mf{sl}(2,\R)$. Let $G$ be a simply-connected semisimple Lie group of rank $r$, and $\mf g = \Lie(G)$. We abusively let $U \in \mf g$ denote an arbitrary unipotent element (i.e., an element such that 0 is the only eigenvalue of $U$). This is because given any unipotent element, there exists a homomorphism $\varphi : \mf{sl}(2,\R) \to \mf g$ such that $\varphi(U)$  is this given element. While this homomorphism is not unique, it is unique up to automorphism of $\mf g$ fixing $U$. We therefore identify $\mf{sl}(2,\R)$ with its image under $\varphi$.

Given a subalgebra isomorphic to $\mf{sl}(2,\R)$ of $\mf g$, we may consider the action $\ad : \mf{sl}(2,\R) \to \End(\mf g)$ which maps $X \mapsto \ad_X$. Since it is a subalgebra, this is a representation of $\mf{sl}(2,\R)$. Since $\mf{sl}(2,\R)$ is a semisimple algebra, this representation splits as a sum of irreducible representations. The irreducible representations of $\mf{sl}(2,\R)$ classified up to isomorphism, with classes indexed by $\N$. Let $E_n$ be an $(n+1)$-dimensional real vector space generated by vectors $X_{2k-n}$, $k = 0,\dots, n$. Then there exist nonzero constants $a_{n,k}$ such that:

\begin{eqnarray}
\label{eqn:Uaction} \pi_n(U) X_{2k-n} = X_{2k-n+2} \\
\label{eqn:Xaction}  \pi_n(X) X_{2k-n} = (2k-n) X_{2k-n}\\
\label{eqn:Vaction}  \pi_n(V)X_{2k-n} = a_{n,k} X_{2k-n-2}
\end{eqnarray}

where we assume $U$ sends $X_{n}$ to 0 and $V$ sends $X_{-n}$ to 0. Note that the first three are special cases: $\pi_0$ is the trivial representation, $\pi_1$ is the standard representation and $\pi_2$ is the adjoint representation. Given elements $Y_1,\dots,Y_n \in \mf g$, let $C(Y_1,\dots,Y_n)$ denote the common centralizer of the $Y_i$. That is:

\[ C(Y_1,\dots,Y_n) = \set{ H \in \mf g : \ad_{Y_i}(H) = 0 \mbox{ for all } i=1,\dots,n} \]

The following is a straightforward finite-dimensional version of the Howe-Moore theorem:

\begin{lemma}\label{lem:com} If $A\in C(U,X)$, then $A\in C(U,V,X)$.
\end{lemma}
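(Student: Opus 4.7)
The plan is to decompose $\mf g$ under the adjoint action of the $\mf{sl}(2,\R)$-triple and analyze what the two conditions $\ad_U A = 0$ and $\ad_X A = 0$ force on each irreducible component. Since $\mf{sl}(2,\R)$ is semisimple, by the complete reducibility theorem the representation $\ad : \mf{sl}(2,\R) \to \End(\mf g)$ splits as a direct sum of irreducibles, each isomorphic to some $E_n$ with the action described in (\ref{eqn:Uaction})--(\ref{eqn:Vaction}). Write $A = \sum_\alpha A_\alpha$ with $A_\alpha$ in an irreducible summand isomorphic to some $E_{n(\alpha)}$. Because each summand is stable under the full $\mf{sl}(2,\R)$-action, the linear conditions $\ad_U A = \ad_X A = 0$ must hold component-wise, so each $A_\alpha$ lies in $\ker \pi_{n(\alpha)}(U) \cap \ker \pi_{n(\alpha)}(X)$.

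Next I would compute these two kernels on each $E_n$. From (\ref{eqn:Xaction}), $\pi_n(X)$ is diagonal in the basis $X_{2k-n}$ with eigenvalues $2k-n$, so $\ker \pi_n(X)$ is the $0$-weight space: it is spanned by $X_0$ if $n$ is even, and is trivial if $n$ is odd. From (\ref{eqn:Uaction}), $\pi_n(U)$ is a raising operator and $\ker \pi_n(U)$ is the line spanned by the highest weight vector $X_n$. Intersecting the two: for $n$ odd, the intersection is already zero; for $n = 2m > 0$, the $0$-weight vector $X_0$ is mapped by $\pi_n(U)$ to the nonzero vector $X_2$, so the intersection is again zero. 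The intersection is nonzero precisely when $n=0$, where $E_0$ is the trivial representation.

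Therefore each nonzero $A_\alpha$ must lie in a trivial summand $E_0$. But on a trivial summand every element of $\mf{sl}(2,\R)$ acts by zero, in particular $\pi_0(V) = 0$. Hence $\ad_V A = \sum_\alpha \pi_{n(\alpha)}(V) A_\alpha = 0$, which proves $A \in C(U,V,X)$.

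There is no real obstacle here beyond setting up the representation-theoretic decomposition carefully; the content of the lemma is simply that a $0$-weight vector annihilated by the raising operator must be a highest weight vector of weight $0$, which forces the containing irreducible to be trivial and therefore to be centralized by the lowering operator $V$ as well.
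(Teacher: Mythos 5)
Your proof is correct and follows essentially the same route as the paper's: decompose $\mf g$ into irreducible $\mf{sl}(2,\R)$-subrepresentations, observe that $\ad_X A = 0$ forces weight $0$ and $\ad_U A = 0$ forces highest weight, and conclude only trivial summands can contribute, on which $V$ also acts by zero. You are slightly more explicit in organizing the argument summand-by-summand, but the underlying idea and computation are identical.
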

\begin{proof}
Pick a basis $\set{X_{2i-m_j}^j : i = 0,\dots m_j \mbox{ and } j = 1,\dots, n}$ of $\mf g$ such that $X_{2i-m_j}^j$ span a representation $\pi_{m_j}$ of $\mf{sl}(2,\R)$ with relations determined by \eqref{eqn:Uaction}-\eqref{eqn:Vaction}. We may write $A = \sum a_{i,j}X_{2i-m_j}^j$, and notice that $\ad_X(A) = \sum a_{i,j}(2i-m_j)X_{2i-m_j,j}$. If $\ad_X(A) = 0$, then $a_{i,j} = 0$ unless $2i = m_j$. Furthermore, $\ad_U(A) = \sum a_{i,j} X_{2i-m_j+2,j}$. This implies that $a_{i,j} = 0$ unless $2i - m_j = m_j$. If $a_{i,j} \not= 0$, the first condition implies $i = m_j/2$ and the second implies $i = m_j$. The only way this occurs is when $i = m_j = 0$. That is, $A$ must be a sum of vectors spanning trivial representations, and $V$ must act trivially as well.
\end{proof}

The following lemma allows us to make computations in $SL(2,\R)$ directly for the corresponding elements of $G$:

\begin{lemma}
\label{lem:sl2-lift}
Let $H \subset GL(N,\R)$ be a Lie group and $\phi : \mf{sl}(2,\R) \to \Lie(H)$ be a homomorphism. Then there exists a unique $\widetilde{\phi} : SL(2,\R) \to H$ such that $d\widetilde{\phi} = \phi$
\end{lemma}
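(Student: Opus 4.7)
The plan is to use the standard lifting theorem from Lie theory, which gives a map out of the universal cover, and then use linearity of $H$ to show that the map descends to $SL(2,\R)$ itself. The non-trivial content lies in this second step, since $\pi_1(SL(2,\R)) = \Z$ is infinite so a priori a Lie algebra homomorphism $\phi$ need only lift to the universal cover $\widetilde{SL}(2,\R)$.

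First, since $\widetilde{SL}(2,\R)$ is simply connected and $\phi : \mf{sl}(2,\R) \to \Lie(H)$ is a Lie algebra homomorphism, the classical Lie integration theorem provides a unique Lie group homomorphism $\psi : \widetilde{SL}(2,\R) \to H$ with $d\psi = \phi$. It therefore suffices to show that $\psi$ is trivial on the kernel $K$ of the covering map $\widetilde{SL}(2,\R) \to SL(2,\R)$, so that $\psi$ descends to a (necessarily unique) homomorphism $\widetilde{\phi}: SL(2,\R) \to H$ with $d\widetilde{\phi} = \phi$.

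Composing $\psi$ with the inclusion $H \hookrightarrow GL(N,\R)$ yields a finite-dimensional representation $\rho: \widetilde{SL}(2,\R) \to GL(N,\R)$ whose derivative is $\iota \circ \phi$, where $\iota: \Lie(H) \hookrightarrow \mf{gl}(N,\R)$. The key classical fact is that every finite-dimensional representation of $\mf{sl}(2,\R)$ integrates to a representation of $SL(2,\R)$ (not merely its universal cover): by complete reducibility, the representation decomposes as a direct sum of the irreducibles $\pi_n$ appearing in equations \eqref{eqn:Uaction}--\eqref{eqn:Vaction}, and each $\pi_n$ is realized on $\mathrm{Sym}^n(\R^2)$ via the standard representation of $SL(2,\R)$. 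Consequently, $\iota\circ\phi$ integrates to some $\overline{\rho}: SL(2,\R) \to GL(N,\R)$, and by the uniqueness portion of the lifting theorem applied in $GL(N,\R)$, $\rho$ must equal the pullback of $\overline{\rho}$ along the covering map. In particular, $\rho$ is trivial on $K$.

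Since $H \hookrightarrow GL(N,\R)$ is injective, $\rho|_K$ trivial implies $\psi|_K$ trivial, so $\psi$ descends to $\widetilde{\phi} : SL(2,\R) \to H$ as required. Uniqueness of $\widetilde{\phi}$ is immediate from the connectedness of $SL(2,\R)$: any two homomorphisms $SL(2,\R) \to H$ with equal derivatives at the identity agree on $\exp(\mf{sl}(2,\R))$, which generates $SL(2,\R)$. The main subtle point is the factorization step, where linearity of $H$ is essential; the general abstract Lie group statement is false, as e.g.\ the identity map $\widetilde{SL}(2,\R) \to \widetilde{SL}(2,\R)$ does not descend.
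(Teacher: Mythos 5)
Your proof is correct and rests on the same key fact as the paper's: every finite-dimensional representation of $\mf{sl}(2,\R)$ integrates to $SL(2,\R)$ (not just its universal cover), via complete reducibility into the irreducibles $\pi_n$. You route the argument through $\widetilde{SL}(2,\R)$ and then show the integral homomorphism kills $\ker(\widetilde{SL}(2,\R)\to SL(2,\R))$, whereas the paper builds $\widetilde{\phi}$ directly on $SL(2,\R)$ by integrating on each irreducible summand of $\R^N$; your version has the small advantage of making explicit why the resulting map lands in $H$ rather than merely in $GL(N,\R)$.
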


\begin{proof}
Notice that $\phi$ induces a representation of $\mf{sl}(2,\R)$ on $\R^N$ which we denote by the same name, since $\phi(X)$ is a matrix in $\Lie(H) \subset \mf{gl}(N,\R)$ by definition. Then we may decompose $\R^N$ as a direct sum of irreducible subrepresentations $\R^N = \bigoplus_{i=1}^n E_i$. But it is known that each of the irreducible finite-dimensional represenations of $\mf{sl}(2,\R)$ lifts to a unique representation of $SL(2,\R)$. Therefore, we may lift $\phi$ by lifting in each subspace $E_i$, and taking the corresponding direct sum of representations.
\end{proof}

%\subsubsection{Cartan Decomposition}

%Let $X$ be the the diagonal element of $\mf{sl}(2,\R)$ as it sits inside $\mf g$. Then one may choose a split Cartan subalgebra (i.e., a maximal abelian subalgebra all of whose elements' $\ad$-action are diagonalizable over $\R$) $\mf a \subset \mf g$ such that $X \in \mf a$, since $X$ itself is diagonalizable over $\R$.

%\begin{proposition}
%Given a Cartan subalgebra $\mf a$, there exists a subalgebra $\mf k$ and a vector subspace $\mf p$ of $\mf g$ which contains $\mf a$, such that:

%\begin{enumerate}[(a)]
%\item $\mf g = \mf k \oplus \mf p$
%\item $K = \exp(\mf k)$ is a maximal compact subgroup of $G$
%\item $(k,P) \mapsto \exp(P)k$ is a diffeomorphism from $K \times \mf p$ onto $G$
%\item One may choose a metric as in Section \ref{sec:metrics} which is also left-invariant by $K$
%\item If $P \in \mf p$, there exists $k \in K$ such that $\Ad_k(P) \in \mf a$.
%\end{enumerate}
%\end{proposition}

\subsubsection{$SL(2,\R)$ and Hyperbolic Geometry}

Recall that the group $PSL(2,\R) = SL(2,\R) / \set{\pm \id}$ is isomorphic to $\Isom(\H^2)$, which gives a canonical action of $SL(2,\R)$ on $\H^2$.

\begin{lemma}
\label{lem:vol-entropy}
There exists $C > 0$ with the following property: If $0 < \ve < 1$ and $S \subset SL(2,\R)$ has $S \subset B(e,R)$, then the minimal number of $\ve$-balls in $SL(2,\R)$ required to cover $S$ is less than $C\ve^{-3}e^{2CR}$.
\end{lemma}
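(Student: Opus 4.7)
The plan is to give a standard volume-packing argument, combining a uniform lower bound on the volume of small balls with the exponential volume growth of balls in $SL(2,\R)$.

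First I would pass from covering to packing: choose a maximal $\ve/2$-separated subset $\set{x_1,\dots,x_N}\subset S$ with respect to the right-invariant Riemannian metric. By maximality the balls $B(x_i,\ve)$ cover $S$, so it suffices to bound $N$. By separation the balls $B(x_i,\ve/4)$ are pairwise disjoint, and since $S\subset B(e,R)$ and $\ve<1$ they all sit inside $B(e,R+1)$. Hence
\[ N\cdot\min_i \vol\bigl(B(x_i,\ve/4)\bigr)\ \leq\ \vol\bigl(B(e,R+1)\bigr). \]

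Next I would estimate the two sides separately. Because the metric is right invariant, $\vol(B(x,\ve/4))=\vol(B(e,\ve/4))$ for every $x$. Since $d_0\geomexp=\id$ and $\geomexp$ is a $C^\infty$ local diffeomorphism at $0\in\mf g$, there is a constant $c_0>0$, depending only on the fixed inner product on $\mf g$, such that $\vol(B(e,\delta))\geq c_0\delta^3$ for all $\delta\in(0,1]$; this uses only that $\dim SL(2,\R)=3$. For the numerator I would invoke the standard exponential volume growth of $SL(2,\R)$: using the Cartan / $KAK$ decomposition with $K=SO(2)$ and $A^+=\set{\diag(e^t,e^{-t}):t\geq 0\,}$, the Haar density in these coordinates is proportional to $\sinh(2t)$, so a word-length ball of $A^+$-radius $r$ has volume $\leq C_1 e^{2r}$. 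Comparing the Riemannian distance with this parameterization (the two are equivalent up to multiplicative and additive constants depending only on the inner product) yields $\vol(B(e,R+1))\leq C_2 e^{2C_3 R}$ for constants depending only on $\inner{\cdot}{\cdot}_0$.

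Combining the two estimates gives
\[ N\ \leq\ \tfrac{4^3 C_2}{c_0}\,\ve^{-3}\, e^{2C_3 R}, \]
and choosing $C:=\max\bigl(4^3 C_2/c_0,\,C_3\bigr)$ large enough yields the desired $N< C\ve^{-3}e^{2CR}$.

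The only real piece of bookkeeping is the comparison between the Riemannian ball $B(e,R+1)$ and the region carved out by the Cartan decomposition, together with the verification that a single constant $C$ can simultaneously serve both as the prefactor and as the growth rate in the exponent. Once that is unified, the rest is routine covering theory, and the statement follows in the stated form.
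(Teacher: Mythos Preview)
Your proof is correct and follows essentially the same packing/covering argument as the paper: take a maximal separated (equivalently, maximal disjoint) family of small balls, use right-invariance and $\dim SL(2,\R)=3$ to lower-bound their volumes by $c_0\ve^3$, and bound the volume of $B(e,R+1)$ exponentially via the $KAK$ decomposition. If anything, your write-up is more careful than the paper's about the constants and about comparing the Riemannian distance to the Cartan parameter.
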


\begin{proof}
Notice that $\mu(B(x,\ve)) = \mu(B(y,\ve)) > C_1\ve^3$ for the standard hyperbolic measure $\mu$ and all $x,y \in \H^2$, since $\ve$ is sufficiently small. Notice also that $\mu(B(x_0,R)) \le C_2e^{h_G R}$, where $h_G$ is larger the exponential volume growth rate for balls in $G$. Therefore, one has at most $C_2e^{2h_GR}/\mu(B(e,\ve/2)) = C_1^{-1}C_2(\ve/2)^{-3}e^{2h_GR}$ disjoint balls in $B(e,R)$. Any such set which is maximally chosen will also cover $B(e,R)$ when taking $\ve$-balls, so the result holds.
\end{proof}

The following Lemma gives estimates on distances of the horocycle flow on $SL(2,\R)$:

\begin{lemma}
\label{lem:dist-computation}
There exists $C > 0$, $t_0 > 1$ such that $d(e,\exp(tU)) \le C \log t$ if $t \ge t_0$.
\end{lemma}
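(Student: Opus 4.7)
The plan is to exploit the $\mathfrak{sl}(2,\R)$ renormalization: conjugation by $\exp(sX)$ rescales $U$ by the factor $e^{2s}$, so a long horocycle stroke can be traded for a bounded one at the cost of a geodesic segment of length $\sim \log t$. More precisely, Lemma \ref{lem:adjoint} together with the relation $\Ad(\exp(sX))U = e^{2s}U$ (which follows from $[X,U]=2U$) yields
$$\exp(tU) = \exp(-sX)\exp(te^{-2s}U)\exp(sX),$$
and choosing $s = \tfrac{1}{2}\log t$ makes the middle factor equal to $\exp(U)$.

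I would then bound $d(e,\exp(tU))$ by the length of the concatenated path $e \to \exp(-sX) \to \exp(-sX)\exp(U) \to \exp(-sX)\exp(U)\exp(sX) = \exp(tU)$, measured in the right-invariant Riemannian metric from Section \ref{sec:metrics}. For a leg of the form $\gamma(r) = g_0\exp(rY)$ the length integrand is $\|\gamma'(r)\gamma(r)^{-1}\|_0 = \|\Ad(\gamma(r))Y\|_0$. Direct computation on each of the three legs gives integrands $\|X\|_0$, $e^{-2s}\|U\|_0$, and $\|X - 2e^{-2s}U\|_0$ respectively, where the last uses $\Ad(\exp(U))X = X - 2U$ (from $[U,X]=-2U$, with the series truncating) followed by $\Ad(\exp(-sX))$ scaling the $U$-component by $e^{-2s}$. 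Integrating and summing,
$$d(e,\exp(tU)) \le s\|X\|_0 + e^{-2s}\|U\|_0 + s\,\|X - 2e^{-2s}U\|_0 \le 2s\|X\|_0 + O(1),$$
and the substitution $s = \tfrac{1}{2}\log t$ produces the bound $\le C\log t$ for all $t \ge t_0$.

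The calculation on the third leg is the only slightly delicate point: the nontrivial commutator correction $\Ad(\exp(U))X = X - 2U$ could in principle multiply the $s$-length factor and destroy the logarithmic bound, but the subsequent conjugation by $\exp(-sX)$ exponentially damps the extra $U$-component, so the integrated correction $2s e^{-2s}\|U\|_0$ remains uniformly bounded in $s \ge 0$. Everything else is routine bookkeeping with the $\mathfrak{sl}(2,\R)$ commutation relations in the right-invariant frame.
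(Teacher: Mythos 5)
Your high-level strategy — trade a long horocycle stroke for a bounded one via conjugation by $\exp(sX)$ with $s\sim\tfrac12\log t$, then estimate the length of a three-leg path — is genuinely different from the paper's, which instead uses the Cartan decomposition $\exp(tU)=k_1\exp(rX)k_2$ with $k_1,k_2\in SO(2)$ and computes $r$ from the singular values of $\exp(tU)$. Your route, once repaired, is arguably more conceptual and would transfer to any $\mf{sl}(2,\R)$-triple in a larger group without invoking hyperbolic geometry. However, as written the proof has a sign error that is load-bearing. Since $[X,U]=2U$, one has $\exp(sX)U\exp(-sX)=e^{2s}U$, and therefore $\exp(-sX)\exp(te^{-2s}U)\exp(sX)=\exp(te^{-4s}U)$, not $\exp(tU)$. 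With $s=\tfrac12\log t$ your path ends at $\exp(t^{-1}U)$, whose distance to $e$ is trivially $O(t^{-1})$; you have bounded the wrong quantity. The correct identity is $\exp(tU)=\exp(sX)\exp(te^{-2s}U)\exp(-sX)$.

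There is a second, compensating, issue hiding behind the first. You parametrize each leg by appending on the right, $\gamma(r)=g_0\exp(rY)$, and correctly observe that in the paper's right-invariant metric (Section~\ref{sec:metrics}) the integrand is then $\|\Ad(g_0)Y\|_0$. This conjugation by the fixed left factor is exactly why, once the identity is corrected to $\exp(sX)\exp(U)\exp(-sX)$, your second leg acquires a factor $e^{2s}=t$ and the estimate blows up — the two errors cancel to produce the right numerology, but neither step is correct. For a right-invariant metric you should instead prepend, $\gamma(r)=\exp(rY)g_0$, for which the integrand is the constant $\|Y\|_0$; equivalently, use right-invariance to pass directly to the triangle inequality
\[ d(e,g_1g_2g_3)\le d(g_2g_3,g_1g_2g_3)+d(g_3,g_2g_3)+d(e,g_3)=d(e,g_1)+d(e,g_2)+d(e,g_3), \]
applied to $g_1=\exp(sX)$, $g_2=\exp(U)$, $g_3=\exp(-sX)$. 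This gives $d(e,\exp(tU))\le 2s\|X\|_0+\|U\|_0=\|X\|_0\log t+O(1)$ cleanly, and makes the whole third-leg commutator bookkeeping unnecessary.
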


\begin{proof}
Write $\exp(tU) = \begin{pmatrix} 1 & t \\ 0 & 1 \end{pmatrix}$ as $\exp(tU) = k_1ak_2$, where $k_1,k_2 \in SO(2,\R)$ are rotation matrices, and $a = \exp(sX)$ is a diagonal matrix. Observe that $d(e,\exp(tU)) \le d(e,k_1ak_2) \le d(e,k_1) + d(k_1,k_1a) + d(k_1a,k_1ak_2) \le 2D + d(e,a)$, since the metric on $SL(2,\R)$ is right-invariant, and left-invariant under $SO(2,\R)$. But $d(e,a) = s$, and we may compute $s$ by finding the eigenvalues of $\exp(tU)\exp(tU)^T = k_1ak_2k_2^Ta^Tk_1^T = k_1a^2k_1^{-1}$. Notice that:

\[ \exp(tU)\exp(tU)^T = \begin{pmatrix}1+t^2 & t \\ t & 1 \end{pmatrix} \]

which has top eigenvalue $\frac{1}{2} \left(t^2+\sqrt{t^2+4} t+2\right)$. Therefore, the distance from $e$ to $\exp(sX)$ is $\frac{1}{2}\log\left(\frac{1}{2} \left(t^2+\sqrt{t^2+4} t+2\right)\right) \le 2\log t$. Therefore, by choosing $t_0$ large enough we get that $d(\exp(tU),e) \le 2\log t + 2D \le C\log t$.
\end{proof}

\subsubsection{Presentation of Group Elements}

Let $G$ be a Lie group, $\mf g = \Lie(G)$, and $\mf g = \mf e_1 \oplus \mf e_2 \oplus \dots \oplus \mf e_n$ be a vector subspace decomposition of $\mf g$. We do not require that the subspaces $\mf e_1$ are subalgebras or that they commute with one another. The following is an easy adaptation of the classical lemma that $\exp : \mf g \to G$ is a local diffeomorphism at 0.

\begin{lemma}
\label{lem:presentation}
If $g \in G$ is sufficiently close to $e \in G$, then there exists unique $X_i \in \mf e_i$ close to 0 such that $g = \exp(X_1)\exp(X_2) \dots \exp(X_n)$
\end{lemma}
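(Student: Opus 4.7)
The plan is to reduce this to the classical inverse function theorem by considering the map
\[
\Psi : \mf e_1 \oplus \mf e_2 \oplus \cdots \oplus \mf e_n \longrightarrow G, \qquad
\Psi(X_1, X_2, \ldots, X_n) = \algexp(X_1)\algexp(X_2)\cdots \algexp(X_n),
\]
where the domain is identified (as a vector space) with $\mf g$ via the given decomposition. The goal is to show that $\Psi$ is a local diffeomorphism at the origin, which delivers both existence and uniqueness of the $X_i$ in a neighborhood of $e$.

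First I would verify that $\Psi$ is smooth, which is immediate since each factor $\algexp|_{\mf e_i}$ is smooth and group multiplication is smooth. Next, I would compute the derivative $d\Psi_0$ at the origin $(0,\ldots,0)$. Taking the partial derivative with respect to the $i$-th factor while the others are fixed at $0$: the terms $\algexp(X_j) = e$ for $j \neq i$ drop out of the product, so the partial derivative in the $\mf e_i$ direction equals $d\algexp_0$ restricted to $\mf e_i$, which is just the inclusion $\mf e_i \hookrightarrow \mf g$. Summing the partial contributions yields
\[
d\Psi_0(Y_1, \ldots, Y_n) = Y_1 + Y_2 + \cdots + Y_n,
\]
which under the identification $\mf e_1 \oplus \cdots \oplus \mf e_n \cong \mf g$ is precisely the identity map. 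In particular $d\Psi_0$ is invertible.

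By the inverse function theorem there exist open neighborhoods $\mc U \subset \mf e_1 \oplus \cdots \oplus \mf e_n$ of $0$ and $\mc V \subset G$ of $e$ such that $\Psi : \mc U \to \mc V$ is a diffeomorphism. For any $g \in \mc V$, the preimage $\Psi^{-1}(g) = (X_1,\ldots,X_n)$ gives the desired decomposition, and the diffeomorphism property gives uniqueness among $(X_1,\ldots,X_n)$ with each $X_i$ close to $0$. There is no serious obstacle here; the only point to be careful about is that uniqueness is local (one could in principle have other preimages with large $X_i$), but the statement only asserts uniqueness for $X_i$ near $0$, which is exactly what the inverse function theorem provides.
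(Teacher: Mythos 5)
Your proof is correct and follows essentially the same route as the paper: define the product-of-exponentials map on $\mf e_1 \oplus \cdots \oplus \mf e_n \cong \mf g$, observe that its derivative at $0$ is the identity, and invoke the inverse function theorem. You spell out the derivative computation in more detail than the paper does, but there is no substantive difference.
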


\begin{proof}
Let $\varphi : \mf g \to G$ be the map defined via $\varphi(X) = \exp(X_1)\exp(X_2)\dots \exp(X_n)$, where $X = \sum X_i$ and $X_i \in \mf e_i$. One can easily check that $\varphi'(0) = \id$, and hence $\varphi$ has a local inverse at $e = \varphi(0)$ by the inverse function theorem.
\end{proof}

\subsection{Properties of unipotent flows}\label{uni:flows}

The following definition is important for describing the orbit growth of a unipotent flow (see \cite{KanigowskiVinhageWei}).
\begin{definition}
	\label{def:chainbasis}	
Let $\mf g$ be a Lie algebra and $U \in \mf g$ be a unipotent element. A {\it chain in $\mf g$ with respect to $U$ of depth $m$} is a linearly independent set $\set{ X_i : 0 \le j \le m}$  such that $X_0$ is in the centralizer of $U$ and:
	
	\[ \ad_U(X_i) = X_{i-1} \mbox{ for all }1 \le i \le m.\]

A chain basis of $\mf g$ with respect to $U$ is a basis of chains.  The sequence of depths $(m_1,\dots,m_n)$ of  chains is called the {\it chain structure} of $U$. We will denote the chain basis by $\{X_i^{1}\}_{i=0}^{m_1}, \{X_i^2\}_{i=0}^{m_2},\dots, \set{X_i^n}_{i=0}^{m_n}$. We will often denote chains using the notation:

\[ X_n \mapsto X_{n-1} \mapsto \dots \mapsto X_1 \mapsto X_0 \]
\end{definition}
%As shown in \cite{KanigowskiVinhageWei}, every unipotent element has a chain basis.

While every unipotent element $U$ has a chain basis, we will use special structures associated to semisimple groups to construct a canonical one. In particular, notice that the weight spaces for the representations of the $\mf{sl}(2,\R)$ triple can be taken as the chain basis by \eqref{eqn:Uaction}. We reindex them replacing the index $n-2k$ by $i$ for convenience. Therefore, the basis element $X_i^j$ is an eigenvector for $\ad_X$ with eigenvalue $m_j - 2i$. The elements $V \mapsto X \mapsto U$ may be taken as a chain, so there is always at least one chain of depth 2. We call this the {\it Jacobson-Morozov chain}. This implies that any unipotent flow in a semisimple homogeneous space has $GR(U) \ge 3$. Call any chain of depth 0 a {\it trivial chain}. Note that trivial chains span trivial subrepresentations of $\ad$. %By the Jakobson-Morozov theorem, \cite{--} it follows that there exists a homomorphism $J: \mf sl(2,\R)\to \mf g$, such that
%$$
%J\begin{pmatrix} 0&1\\0&0\end{pmatrix}=U,\;\;J\begin{pmatrix} 1&0\\0&-1\end{pmatrix}=X,\;\;J\begin{pmatrix} 0&0\\1&0\end{pmatrix}=V,
%$$
%where we have
%\begin{equation}\label{eq:sl2}
%[X,U]=2U\;\;\;[X,V]=-2V\;\;\;[U,V]=X.
%\end{equation}
%The elements $U,V,X\in \mf g$ are called the $\mf sl(2,\R)$ triple.

%\begin{remark}\label{grue}By exsistence of the above homomorphism, it follows that for every unipotent element $U$, we always have a chain of depth 2, i.e $V\mapsto X\mapsto U$ (for $\ad_U$). This implies that  $GR(U)\geq 3$.
%\end{remark}

%The following can be found in \cite{KanigowskiVinhageWei},

%\begin{lemma}\label{lem:X} It follows that every element from the chain basis is an eigenvector for $\ad_X$, in fact we have:
%$$
%\ad_X(X_i^{m_j})=\lambda_i^j X_{i}^{m_j},
%$$
%where $\lambda_i^j=m_j-2i$.
%\end{lemma}

%We have the following lemma which classifies all unipotent flows with $GR(U)=3$.% Let $(h_t)$ denote the flow on $SL(2,\R)$ generated by $\begin{pmatrix} 0&1\\0&0\end{pmatrix}$.
\begin{lemma}\label{th2} Let $\phi_t(g\Gamma)=\exp(tU)g\Gamma$ act on $G\slash \Gamma$ ergodically. The following are equivalent:
\begin{enumerate}
\item $GR(U)=3$.
\item The only nontrivial subrepresentation of $\ad$ is the Jacobson-Morozov representation
\item $\dim G-\dim C(X)\leq 3$
\item $\mf g \cong \mf{sl}(2,\R)\oplus \mf g'$,
$\Gamma$ is irreducible and under this isomorphism,  $U = \left(\begin{pmatrix} 0 & 1 \\ 0 & 0 \end{pmatrix}, \mbf{0}\right) \in \mf{sl}(2,\R)\oplus \mf g'$.
\end{enumerate}
\end{lemma}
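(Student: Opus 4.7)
The plan is to exploit the $\mf{sl}(2,\R)$-triple decomposition $\mf g = \bigoplus_{j=1}^n E_{m_j}$ from Section \ref{sec:sl2}, where each $E_{m_j}$ is the irreducible representation $\pi_{m_j}$ of $\mf{sl}(2,\R)$, of dimension $m_j+1$, and corresponds to a single chain of depth $m_j$ in the sense of Definition \ref{def:chainbasis}. Each summand contributes $\tfrac{m_j(m_j+1)}{2}$ to $GR(U)$. Since $U\ne 0$ (otherwise $\phi_t$ could not be ergodic on a nontrivial space), the $\mf{sl}(2,\R)$-triple itself spans a copy of the adjoint representation $E_2$ (the Jacobson--Morozov chain $V\mapsto X\mapsto U$), so $GR(U)\ge 3$ always. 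This already gives $(1)\Leftrightarrow(2)$: every additional nontrivial chain of depth $m\ge 1$ adds at least $\tfrac{m(m+1)}{2}\ge 1$, so $GR(U)=3$ iff the Jacobson--Morozov is the only nontrivial summand.

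For $(2)\Leftrightarrow(3)$, I would compute $\dim G - \dim C(X) = \dim\operatorname{im}(\ad_X)$ block by block using \eqref{eqn:Xaction}: on $E_m$, $\ad_X$ has eigenvalues $m-2k$ for $k=0,\dots,m$, so $\dim\operatorname{im}(\ad_X|_{E_m})$ equals $m$ when $m$ is even and $m+1$ when $m$ is odd. The Jacobson--Morozov $E_2$ alone contributes $2$, and any further nontrivial chain (depth $1$, $2$, or higher) contributes at least $2$ more. Hence $\dim G-\dim C(X)\le 3$ is equivalent to the Jacobson--Morozov being the only nontrivial summand, i.e.\ to $(2)$.

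The substantive work is $(2)\Leftrightarrow(4)$. The direction $(4)\Rightarrow(2)$ is immediate, since $\ad_U$ annihilates the $\mf g'$ factor and so the $\mf{sl}(2,\R)$-decomposition of $\mf g$ is $E_2\oplus E_0^{\oplus \dim \mf g'}$. For $(2)\Rightarrow(4)$, let $\mf m$ be the sum of all trivial subrepresentations; by Lemma \ref{lem:com} this coincides with $C(U,X)=C(U,X,V)$. The Jacobi identity implies $[\mf m,\mf m]\subset\mf m$ and $[\mf{sl}(2,\R),\mf m]=0$, so $\mf g=\mf{sl}(2,\R)\oplus\mf m$ is a Lie algebra direct sum; set $\mf g':=\mf m$. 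Linearity of $G$ combined with Lemma \ref{lem:sl2-lift} integrates the $\mf{sl}(2,\R)$ summand to a closed subgroup $SL(2,\R)\subset G$, giving $G\cong SL(2,\R)\times G'$ up to a finite central quotient. Irreducibility of $\Gamma$ is then forced by the ergodicity hypothesis: if $\Gamma$ were a product lattice $\Gamma_1\times\Gamma_2$, then $\phi_t$ would act as $h_t\times\id$ on $SL(2,\R)/\Gamma_1\times G'/\Gamma_2$, which is non-ergodic whenever $G'$ is nontrivial; conversely, Moore ergodicity for irreducible $\Gamma$ yields ergodicity of $\phi_t$, so the hypothesis of the lemma is consistent with~(4).

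The main obstacle is the globalization step in $(2)\Rightarrow(4)$: passing from the algebra direct sum to a clean group product $SL(2,\R)\times G'$ and then to irreducibility of $\Gamma$ requires controlling finite covers (this is precisely where linearity and Lemma \ref{lem:sl2-lift} enter, ensuring the $\mf{sl}(2,\R)$ subalgebra integrates to an honest $SL(2,\R)$ rather than only to its universal cover) and a careful use of the ergodicity assumption to pin down the structure of $\Gamma$ within the product decomposition. The algebraic equivalences $(1)\Leftrightarrow(2)\Leftrightarrow(3)$ are, by contrast, essentially bookkeeping with the irreducible decomposition.
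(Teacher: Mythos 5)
Your proposal is correct and follows essentially the same route as the paper's proof: decompose $\mf g$ into irreducible $\mf{sl}(2,\R)$-representations, identify (1), (2), (3) via the chain/eigenvalue bookkeeping (with the same parity observation showing $\dim G - \dim C(X) = 3$ is impossible), and for (2)$\Leftrightarrow$(4) observe that $\mf{sl}(2,\R)$ is an ideal with complement $C(U,X,V)$ and then invoke ergodicity to force irreducibility of $\Gamma$. Your extra discussion of the group-level integration via Lemma \ref{lem:sl2-lift} is more than the paper needs here (item (4) is stated at the Lie-algebra level), but it does not change the argument.
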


\begin{proof}
We show that 1. $\implies$ 2. $\implies$ 3. and 3. $\implies$ 2. $\implies$ 4. $\implies$ 1. That 1. $\implies$ 2. is a direct consequence of the definition of $GR$ and the fact that the Jacobson-Morozov representation has depth 2. Now assume 2. As discussed in Section \ref{sec:sl2}, the trivial chains span trivial subrepresentations for $\ad|_{\mf{sl}(2,\R)}$ and therefore $\ad_X$ also acts trivially. That is, the remaining chain basis elements are in $C(X)$.

Now suppose 3. We claim that this implies $\dim G - \dim C(X) = 2$.  Notice that we have the lower inequality since $U,V \in\mf g$, but neither $U$ nor $V$ commute with $X$. We must therefore rule out the case of 3. This implies that there exists exactly one more linearly independent element which fails to commute with $X$. But by considering each representation $\pi_n$ as described in Section \ref{sec:sl2}, we see that there are always an even number of linearly independent elements which fail to commute with $X$ in each chain basis. In particular, we have 2.

Now assume 2. We claim that in this case $\mf{sl}(2,\R)$ is an ideal in $\mf g$. Indeed, all basis elements which are not from the Jacobson-Morozov representation act trivially on $V$, $X$ and $U$ (since the Lie bracket is anti-commutative), and $V$, $X$ and $U$ act on each other by the standard $\mf{sl}(2,\R)$ relations. Since $\mf g$ is semisimple and $\mf{sl}(2,\R)$ is an ideal, there exists a complementary subalgebra $\mf g'$. Since the flow must be ergodic, $\Gamma$ must be irreducible. That is, we have 4.

One can see 4. $\implies$ 1. by direct computation.
\end{proof}

\begin{proof}[Proof of Corollary \ref{cor2}]
Let $G$ be a semisimple Lie group of dimension at least 4. Then $\mf g = \Lie(G) \not\cong \mf{sl}(2,\R)$. If $\mf g \not\cong \mf{sl}(2,\R) \oplus \mf g'$, by Lemma \ref{th2}, any unipotent $U$ has $GR(U) > 3$. In particular, any ergodic unipotent flow on the quotient of a simple Lie group other than covers of $SL(2,\R)$ are non-standard (this proves (ii)). If $\mf g = \mf{sl}(2,\R) \oplus \mf g'$ , then $\mf g'$ is also semisimple (since $\mf g$ is semisimple). Therefore, each simple factor has a Cartan subalgebra, with associated roots, and in particular, has unipotent elements of the root spaces. Take $U' = U + \sum U_{\alpha_i}$, where $U_{\alpha_i}$ is an element from a root space in each simple factor of $\mf g'$. By the Howe-Moore ergodicity theorem, the action of $U'$ is ergodic, and $GR(U') > 3$ since each $U_{\alpha_i}$ has its own $\mf{sl}(2,\R)$ triple in its semisimple factor (the $\mf{sl}(2,\R)$ triple for $\mf g$ will be the sum of the elements from each factor). Therefore, we have produced an ergodic unipotent flow which is non-standard (this proves (i)).

Finally, assume that $\rank_\R(G) \ge 2$, and let $\mf a$ denote a Cartan subalgebra of $\mf g$. We have shown $G$ carries an ergodic unipotent flow, so if there is one which is not ergodic, we produce two flows which are not Kakutani equivalent. Therefore, we need to produce elements $U$ and $U'$ such that $GR(U) \not= GR(U')$. If $G$ is not simple, we take some $U$ in one factor and $V$ in another, and set $U' = U+V$. This clearly yields two flows with different values for $GR$. If $G$ is simple and has rank at least 2, we may decompose $\mf g = \mf g_0 \oplus \bigoplus_{\alpha \in \Delta} \mf g_\alpha$ into a root space decomposition for some $\mf a \subset \mf g_0 \subset \mf g$. Here, $\mf g_0$ is the centralizer of $\mf a$, a split Cartan subalgebra of $\mf g$, and $\Delta$ is a set of $\R$-valued functionals on $\mf a$ such that if $X \in \mf a$ and $Y \in \mf g_\alpha$, then $[X,Y] = \alpha(X)Y$. Now take any two roots $\alpha_1, \alpha_2 \in \Delta$ which are non-proportional, and let $U_{\alpha_i} \in \mf g_{\alpha_i}$ be elements of the root spaces. We set $U = U_{\alpha_1}$ and $U' = U_{\alpha_1} + U_{\alpha_2}$.

Let $X_m \mapsto \dots \mapsto X_1 \mapsto X_0 \mapsto 0$ be any chain for $U'$. We will show that from this we may produce chains for $U$, hence we may choose a chain for $U$ as chains smaller than that of $U'$. If we show that there is at least one chain that is broken up into two smaller ones, then we get strict inequality, as desired. Write $X_i$ as a sum of root spaces $X_i = \sum_{\beta} X_1^\beta$, then $\ad_{U_{\alpha_1}}(X_i) = \sum_{\beta} \ad_{U_{\alpha_1}}(X_i^\beta) =\sum_{\beta}X_i^{\beta + \alpha_1}$. We may choose $\alpha_1$ and $\alpha_2$ as simple roots, and since all roots are integral linear combinations of the simple roots, every $\beta \in \Delta$ has uniquely determined integer coefficients for $\alpha_1$ and $\alpha_2$, call them $l_1(\beta)$ and $l_2(\beta)$. Let $\mf g_k = \displaystyle\bigoplus_{\beta: l_1(\beta)+l_2(\beta)=k} \mf g_{\beta}$. Then $\ad_U$ and $\ad_{U'}$ both map $\mf g_k$ to $\mf g_{k+1}$. Hence we may choose chain bases for both so that $X_i^j \in \mf g_k$ for some $k$. Notice that any chain for $U'$ which starts at $\mf g_k$ can be decomposed into chains for $U$, by starting a new chain for $U$ at $\ad_{U_{\alpha_2}}(X_i^j)$, if $X_i^j$ we a chain basis element for $U'$). In particular, the Jordan blocks for $U$ are shorter than the corresponding ones for $U'$. Finally, we need to find at least one chain for $U'$ which is broken into shorter ones for $U$. Consider the $\mf{sl}(2,\R)$-triple for $U_{\alpha_2}$, giving $V_{\alpha_2} \in \mf g_{-\alpha_2}$ and $X_{\alpha_2} \in \mf g_0$. Since $\alpha_1$ and $\alpha_2$ are both simple, $\alpha_1-\alpha_2\not\in\Delta$ (the integral coefficients for the simple roots are either all positive or all negative). In particular, $\ad_U(V_{\alpha_2}) = 0$. But $\ad_{U'}(V_{\alpha_2}) = X_{\alpha_2}$, showing that at least one chain is shorter for $U$ than for $U'$. This proves (iii).

If $G \cong SL(d,\R)$, an explicit formula for $GR(U)$ can be found. If $U_l \in \mf{sl}(d,\R)$ has one Jordan block of size $l$, then:

\[GR(U_l)=\frac{1}{6}l(4l+1)(l-1) + l(d-l)(l-1)\]
Since it acts via the representation $\pi_{l-1}$ on the off-diagonal blocks, with the main term coming from Corollary 1.13 of \cite{KanigowskiVinhageWei}. One easily confirms that these are distinct numbers for $l = 2,\dots,d$ by computing $GR(U_{l+1})-GR(U_l) = l(2d-l)$, giving $d-1$ flows with different Kakutani invariant. This proves (iv).

\end{proof}

\begin{remark}
\label{rem:cubic-diff}
In fact, the leading term of the general formula for $GR(U)$ obtained in \cite{KanigowskiVinhageWei} is cubic, and $GR(U_{\alpha_2})$ (in the notation above) grows linearly in $d$. We expect most numbers interpolating the cubic and linear growth to be possible by taking more involved Jordan block structures, and therefore expect the number of pairwise non-Kakutani equivalent flows on $SL(d,\R)$ to grow like $d^3$.
\end{remark}

\subsection{Minimal Growth Rates}

\begin{lemma}
\label{lem:GRge5}
If $G$ is a semisimple group, and $U$ is a unipotent element such that $GR(U) > 3$, then $GR(U) \ge 5$.
\end{lemma}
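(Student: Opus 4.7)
The plan is to decompose $\mf g$ into irreducible $\mf{sl}(2,\R)$-submodules under the adjoint action and use the non-degeneracy of the Killing form to rule out the single ``bad'' case. As discussed in Section \ref{sec:sl2}, this decomposition is exactly what gives the chain structure: a chain of depth $m$ corresponds to an irreducible summand $\pi_m$ of dimension $m+1$, contributing $m(m+1)/2$ to $GR(U)$. The Jacobson--Morozov triple always provides one copy of $\pi_2$, contributing $3$ to $GR(U)$, and by hypothesis $GR(U)>3$ forces an additional nontrivial summand.

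First I would observe that any additional irreducible summand of depth at least $2$ contributes at least $3$ more, so in this case $GR(U) \ge 6$. Similarly, two or more summands isomorphic to $\pi_1$ push $GR(U)$ to at least $3+1+1=5$. The only remaining configuration we need to exclude is: the Jacobson--Morozov $\pi_2$, exactly one copy of $\pi_1$, together with trivial summands $\pi_0$ (which contribute $0$); this would give $GR(U)=4$.

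The key step is to rule out this configuration, and I would argue via the Killing form $B$ on $\mf g$. It is symmetric, $\mf{sl}(2,\R)$-invariant, and non-degenerate (by Cartan's criterion, since $\mf g$ is semisimple). By Schur's lemma applied to $\mf{sl}(2,\R)$-representations, and using that every $\pi_n$ is self-dual, $B$ pairs two irreducible summands non-trivially only if they are isomorphic. On a single copy of $\pi_1$, the space of $\mf{sl}(2,\R)$-invariant bilinear forms is one-dimensional (since $\pi_1 \otimes \pi_1 \cong \pi_0 \oplus \pi_2$ has a unique trivial summand) and is spanned by the symplectic form, which is \emph{antisymmetric}. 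Hence the restriction of the symmetric form $B$ to a single $\pi_1 \times \pi_1$ block vanishes identically, forcing every nonzero vector in that $\pi_1$-summand to be $B$-orthogonal to the whole summand. For $B$ to remain non-degenerate, the Schur constraint then requires a second copy of $\pi_1$ against which to pair; therefore $\pi_1$-summands must occur in pairs.

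Combining the cases, whenever $GR(U) > 3$ one has either an additional summand of depth $\ge 2$ (giving $GR(U) \ge 6$) or at least two copies of $\pi_1$ (giving $GR(U) \ge 5$). I do not expect any serious obstacle: the argument is a short application of $\mf{sl}(2,\R)$-representation theory and the non-degeneracy of the Killing form. The only point requiring modest care is the parity (symmetric vs.\ antisymmetric) of the unique invariant pairing on $\pi_n$, which is standard.
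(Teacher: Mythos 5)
Your argument is correct, and it takes a genuinely different route from the paper. The paper assumes $GR(U)=4$ for contradiction, then uses the fact that the unique nontrivial block outside the Jacobson--Morozov triple spans a one-dimensional root space, chooses a Cartan involution $\theta$, and shows $\theta$ must fix the block, producing a resonance $\alpha=-2\beta$ between the root $\alpha$ of $U$ and the root $\beta$ of the short chain; it then rules this out by a case-by-case check of real forms (any algebra with such a resonance has $\dim \mf g_\beta > 1$). Your approach replaces all of this with a single structural observation: the Killing form is a nondegenerate, symmetric, $\mf{sl}(2,\R)$-invariant bilinear form, and since the only invariant bilinear form on $\pi_1$ is the (antisymmetric) symplectic form, Schur's lemma forces the $\pi_1$-isotypic component to carry a nondegenerate alternating form on its multiplicity space, so $\pi_1$ must appear with even multiplicity. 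This is cleaner and entirely intrinsic---no root data, no Cartan involution, no case analysis. What the paper's approach buys instead is explicit information about which roots are involved, which it uses in the remark immediately after to describe the two ways the bound $GR(U)=5$ is realized ($SL(3,\R)$ versus $SU(2,1)$, distinguished by whether $\theta$ permutes or fixes the two $\pi_1$-blocks); your argument recovers the parity constraint but not that finer picture. One small stylistic note: it is worth saying explicitly that the real irreducible representations $\pi_n$ of $\mf{sl}(2,\R)$ are absolutely irreducible, so the endomorphism algebra is $\R$ and the space of invariant pairings $\pi_n\otimes\pi_n\to\R$ really is one-dimensional; this is the precise version of the Schur step you invoke.
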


\begin{proof}
Let $\set{V,X,U} \subset \mf g$ be an $\mf{sl}(2,\R)$ triple for $U$. For contradiction, assume that $GR(U) = 4$. Since there is always one Jordan block of $\ad_U$ of size 3 (corresponding to $V \to X \to U$), there must be only one other Jordan block of length 2, and all other Jordan blocks are trivial. In particular, the eigenvalues of $\ad_X$ are $\pm 2$, $\pm 1$ and 0, with each of the nonzero eigenspaces being simple.

Since $\ad_X$ acts $\R$-semisimply on $\mf g$, and is contained in a split Cartan subalgebra $\mf a$. Since each of the eigenspaces of $\ad_X$ are simple, they must be roots of $\mf g$.  We may therefore choose a Cartan involution $\theta : \mf g \to \mf g$ such that $\mf g = \mf k \oplus \mf p$, with $\mf k = \Fix(\theta)$ a maximal compact subgroup of $\mf g$ and $\mf p$ is a vector subspace of $\mf g$ containing $\mf a$ such that every element is $\ad$-semisimple (the $-1$ eigenspace of $\theta$). Furthermore, if $\Delta_+$ is a set of positive roots for $\mf a$, for each $\alpha \in \Delta_+$, there exists $X_\alpha$ and $X_{-\alpha} := \theta(X_\alpha)$ such that $X_\alpha + X_{-\alpha}$ generates $\mf p$. In particular, $X_\alpha$ is not fixed by $\theta$, and the $\mf{sl}(2,\R)$-triple is invariant under $\theta$. Let $V_1 \to V_2$ be the Jordan block for $\ad_X$ of length 2. Then $V_1$ and $V_2$ generate root spaces, and since $\ad_U(V_1) = V_2$, we know that if $\alpha$ is the root corresponding to $U$ and $\beta$ is the root for $V_1$, then $\alpha+\beta$ is the root for $V_2$. However, $\theta(V_2) \to \theta(V_1)$ is also a nontrivial block, and since there are only two blocks for $\ad_U$, it must coincide with $V_1 \to V_2$. Therefore, $\alpha + \beta = -\beta$ and $\alpha = -2\beta$. Any semisimple group which has resonance of the form $\alpha = -2\beta$ must also have the corresponding $\beta$-subspace with dimension greater than 2 (this can be checked case-by-case for the non-split real forms). This contradicts the simplicity of the root spaces for $\alpha$ and $\beta$, and we arrive at a contradiction.
\end{proof}

\begin{remark}
This lower bound is sharp, and can appear in two ways. Given a matrix group or algebra, we let $E_{ij}$ denote the matrix with $1$ in the $(i,j)$ position and 0 in every other entry. First, if one takes the flow generated by $E_{12}$ in $SL(3,\R) / \Gamma$, one can see that other than the $\mf{sl}(2,\R)$-triple, there are exactly two nontrivial Jordan blocks for the action: $E_{23} \to E_{13} \to 0$ and $E_{32} \to E_{31} \to 0$. In this case the Cartan involution does not reverse the direction of the Jordan block, and in fact these Jordan blocks are permuted by the Cartan involution. If the Jordan blocks are not fixed by this involution, then there must be more than one.

The other example is that of $SU(2,1)$, with Lie algebra

\[ \mf{su}(2,1) \cong \set{ \begin{pmatrix} z & it & w_1 \\ is & -\bar{z} & w_2 \\ -\bar{w}_2 & -\bar{w}_1 & -2\Im(z) \end{pmatrix} : z,w_1,w_2 \in \C, \; t,s \in \R} \]

Then consider the flow generated by $iE_{12}$, with corresponding Cartan subalgebra $\mf a = \set{\diag(t,-t,0) : t \in \R}$. Notice that there is an $\mf{sl}(2,\R)$ triple which is the usual $\mf{sl}(2,\R)$ triple in the top left $2 \times 2$ block, with the unipotent elements scaled by $i$. The diagonal matrix $\diag(i\theta,i\theta,-2i\theta)$ commutes with the flow, and is a trivial block of size one. Since $w_2$ can take values in $\C$, there are two blocks coming from this root space (which is two dimensional):

\[ E_{23} - E_{31} \to i(E_{13} + E_{32}) \to 0 \qquad iE_{23} + iE_{31} \to -E_{13} + E_{32} \to 0 \]

Notice that in this case, while there is still more than one such Jordan block, the Cartan involution fixes each one, so in the proof we must use the fact that every algebra for which $\alpha$ and $2\alpha$ are roots has $\dim(\mf g_\alpha) > 1$. In both examples, the growth rate is equal to 5.
\end{remark}

\section{Some additional definitions}
\label{sec:more-defs}

Let $G$ be a semisimple Lie group and $\Gamma \subset G$ be a finite volume lattice as considered in Section \ref{sec:prelims}, with projection $\pi : G \to G / \Gamma$. We fix a fundamental domain $F \subset G$ for $G/ \Gamma$. Then every point of $G / \Gamma$ has at least one lift to $F$, and there is a unique lift on an open, dense subset of $G / \Gamma$ (corresponding to the interior of $F$).

%by choosing the {\it Dirichlet domain}. Let:
%\[ F = \set{ g \in G : d(g,e) \le d(g\gamma,e) \mbox{ for all }\gamma \in \Gamma} \]

Notice that $\pi : F \to G / \Gamma$ is a measurable isomorphism, so we can think of the left action $G$ on $G / \Gamma$ equivalently as an action on $F$. In particular, if $x,y \in G/ \Gamma$, then they are cosets $x = \widetilde{x}\Gamma$ and $y = \widetilde{y}\Gamma$ for some unique (except for points on the boundary of $F$) $\tilde{x},\tilde{y} \in F$. Then $gx = y$ is equivalent to $g\tilde{x} = \tilde{y}\gamma$ for some $\gamma \in \Gamma$. This allows us to consider the homogeneous flow $(\phi_t)$ on $F$ instead on $G / \Gamma$, as we shall in Section \ref{sec:technical}. which preserves the Haar measure $\mu_F$. We may lift the metric on $G / \Gamma$ to $F / \sim$ by setting $d_F(\tilde{x},\tilde{y}):=\inf_{\gamma\in \Gamma}d_G(\tilde{x},\tilde{y}\gamma)$, where $F / \sim$ is the topological quotient space of $F$ by the usual relation.

For a point $y\in G / \Gamma$ let ${\rm inj}(y)$ denote the injectivity radius of $y$, i.e.
$$
{\rm inj}(y):=\sup \{r\geq 0 : B_G(y,r)\cap B_G(y,r)\gamma = \emptyset \mbox{ for all }\gamma \not= e\}.
$$
For a set $K\subset G / \Gamma$ let ${\rm inj}(K)=\inf_{y\in K}{\rm inj}(y)$.
We have the following classical lemma which we state here for reference.
\begin{lemma}\label{lem:sys}
For every $\epsilon>0$ there exists a compact set $K_\epsilon\subset F$,
 $\mu(K_\epsilon)>1-\epsilon$ and such that
$$
\kappa(\epsilon):={\rm inj}(K_\epsilon)=\inf_{\gamma\in \Gamma\setminus\{e\}}\inf_{z\in K_\epsilon}d_G(z\gamma z^{-1},e)>0.
$$
\end{lemma}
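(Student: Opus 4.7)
The plan is to realize $K_\epsilon$ as a sublevel set of the ``shortest loop'' function. First I would rewrite the stated identity in a more convenient form. Since the metric on $G$ is right-invariant we have $d_G(y,y\gamma)=d_G(e,y\gamma y^{-1})$, so $B_G(y,r)\cap B_G(y,r)\gamma\ne\emptyset$ iff $d_G(e,y\gamma y^{-1})<2r$. Hence $\operatorname{inj}(y)=\tfrac{1}{2}\rho(y)$, where
\[
\rho(y):=\inf_{\gamma\in\Gamma\setminus\{e\}}d_G(e,y\gamma y^{-1}),
\]
so the equality in the lemma is a tautology once one notes that a factor of $2$ is harmless and $\rho$ descends to $G/\Gamma$ (it only depends on the coset $y\Gamma$, as $\gamma$ ranges over all of $\Gamma\setminus\{e\}$). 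The task thus becomes: produce a compact $K_r\subset G/\Gamma$ of measure $>1-\epsilon$ on which $\rho\ge r$, and pull it back to $F$.

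Next I would establish the closedness of $K_r:=\{y\in G/\Gamma:\rho(y)\ge r\}$. For each fixed $\gamma\in\Gamma$ the map $y\mapsto d_G(e,y\gamma y^{-1})$ is continuous, so $\rho$, being an infimum of continuous functions, is upper semi-continuous; hence $K_r$ is closed. Then I would show that $K_r$ is in fact \emph{compact}. For cocompact $\Gamma$ this is automatic. For a general finite-volume lattice in a semisimple group, this is a standard consequence of reduction theory (Borel--Harish-Chandra), or equivalently the thick--thin decomposition of $G/\Gamma$: any sequence $y_n$ escaping every compact set in $G/\Gamma$ must enter a cusp neighbourhood, and in every cusp neighbourhood one can find non-trivial $\gamma_n\in\Gamma$ with $y_n\gamma_n y_n^{-1}\to e$, contradicting $\rho(y_n)\ge r$. (An alternative, and essentially equivalent, formulation is a Mahler-type compactness criterion: the only obstruction to precompactness of a set of lattice translates is the appearance of arbitrarily short non-trivial loops.)

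Finally, since $\Gamma$ is discrete, for every $y\in G/\Gamma$ there is a neighbourhood of $e$ meeting $y\Gamma y^{-1}$ only in $e$, so $\rho(y)>0$. Therefore $G/\Gamma=\bigcup_{r>0}K_r$, and by monotone convergence of measures $\mu(K_r)\nearrow 1$ as $r\to 0^{+}$. Choose $r(\epsilon)>0$ with $\mu(K_{r(\epsilon)})>1-\epsilon$ and set $\kappa(\epsilon):=r(\epsilon)$, $K_\epsilon:=K_{r(\epsilon)}\cap F$. Since $\pi:F\to G/\Gamma$ is a measurable isomorphism preserving $\mu$, this $K_\epsilon$ has the required measure, and by construction $\inf_{z\in K_\epsilon}\inf_{\gamma\ne e}d_G(e,z\gamma z^{-1})\ge r(\epsilon)=\kappa(\epsilon)>0$.

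The only non-routine ingredient is the compactness of $K_r$, which is where the input from reduction theory is genuinely used; every other step is topological or measure-theoretic. In the uniform lattice case the argument is essentially empty (one can take $K_\epsilon=F$ with $\kappa(\epsilon)$ the systole of $G/\Gamma$), so the real content of the lemma is handling the cusps of a non-uniform lattice via the thick--thin decomposition.
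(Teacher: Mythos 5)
The paper does not actually prove this lemma; it is stated as a ``classical lemma which we state here for reference,'' so there is no argument of the authors' to compare against. Your proof supplies a correct and standard justification, and it is the natural one: the positivity of the conjugation--length function $\rho(y)=\inf_{\gamma\ne e}d_G(e,y\gamma y^{-1})$, its upper semi-continuity, and the precompactness of its superlevel sets via Mahler/Kazhdan--Margulis (equivalently, reduction theory and the thick--thin decomposition) are exactly the ingredients one wants. Two minor points are worth flagging. First, as you note yourself, $\operatorname{inj}(y)=\tfrac12\rho(y)$ with the paper's definition of $\operatorname{inj}$, so the displayed equality $\operatorname{inj}(K_\epsilon)=\inf_{\gamma,z}d_G(z\gamma z^{-1},e)$ is off by a constant factor; this is a cosmetic slip in the lemma's statement (not in your argument), and the only content actually used downstream is the positivity and the measure bound, for which the factor of $2$ is irrelevant. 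Second, you write $K_\epsilon:=K_{r(\epsilon)}\cap F$, but $K_{r(\epsilon)}\subset G/\Gamma$ while $F\subset G$; you should take $K_\epsilon:=\pi^{-1}(K_{r(\epsilon)})\cap F$, and since for a general measurable fundamental domain this intersection need not be compact in $G$, either work with the closure of $F$ (harmless here since the boundary has measure zero) or, more cleanly, choose any compact lift of $K_{r(\epsilon)}$ and adjust on a null set --- the lower bound on $\rho$ only depends on the coset, so this costs nothing. With these bookkeeping adjustments the argument is complete.
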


%\hfill $\square$

\subsection{Kakutani-Bowen Balls}
\label{sec:kak-bowen-balls}

%For $z\in G$, $z=\exp(\sum_{i}a_iX_i)$ let $a_V, a_X$ denote the  $V$ and $X$ coordinates of $x$.
Given a unipotent element $U \in \mf g$, we may use the results of Sections \ref{sec:sl2} and \ref{uni:flows} to obtain a basis $\set{V,X,U} \cup \set{X_i^j : i = 1,\dots,m_j, j = 1,\dots,n}$, where $\set{V,X,U}$ generates a subalgebra of $\mf g$ and satisfy the standard relations for the $\mf{sl}(2,\R)$-triple, and the remaining elements are chains for $U$. We may therefore apply Lemma \ref{lem:presentation} to write elements of $G$ sufficiently close to $e$ as

\begin{equation}
\label{eq:coefficients}
g =  \exp(a_V(g)V) \exp(a_X(g)X)\exp\left( a_U(g)U + \sum a_{ij}(g) X_i^j\right)
\end{equation}

Let $\tau(g) = \exp\left( a_U(g)U + \sum a_{ij}(g) X_i^j\right)$ be the {\it standard chain component} of $g$. The following definition combines dynamical and algebraic features, which is critical to our analysis of the Kakutani balls $B_R(x,\ve,\mc P)$ (see Theorem \ref{prop:longblock}).

\begin{definition}[Kakutani-Bowen Balls]\label{def:kakball}
For $\epsilon>0$, $R>0$ let
$$
\begin{array}{c}
\Bow(R,\epsilon,e):=\{g\in G:d_G(\exp(rU)g\exp(-rU),e)<\epsilon, \text{ for every } r\in [0,R]\}.
\end{array}$$

be the {\it Bowen ball} of $e \in G$ for $U$. If $x,y\in G / \Gamma$ we say that
$x\in \Kak(R,\epsilon,y)$
%\[Kak(R,\epsilon,y) = \set{ x \in F : \mbox{there exists } \gamma \in \Gamma \mbox{ such that } \abs{a(x\gamma y^{-1})} < \ve/R, \abs{b(x \gamma y^{-1})} < \ve, c(x \gamma y^{-1}) \in B^{VX}(R,\ve,e)} \]
%if there exists $\gamma\in \Gamma$ such that if $g = x\gamma y^{-1}$, then
if and only if %there exists $\gamma \in \Gamma$ such that if
$x = gy$ and
\begin{enumerate}[(a)]
\item \label{a-condition}$\abs{a_V(g)} < \ve / R$
\item $\abs{a_X(g)} < \ve$
\item $\tau(g) \in \Bow(R,\ve,e)$
\end{enumerate}
%\be\label{eq:kakball}
%x=\exp(aV)\exp(bX)\tilde{z}y\gamma,
%\ee
%with $|a|< \frac{\epsilon}{R}, |b|< \epsilon$ and $\tilde{z}\in B^{VX}(R,\epsilon,e)$
where $a_V(\cdot)$, $a_X(\cdot)$ and $\tau(\cdot)$ are as in \eqref{eq:coefficients}.
\end{definition}

We will often consider Kakutani balls as subsets of $F$, since points in $F$ are in one-to-one correspondence with points in a compact subset $K\subset G /\Gamma$ (except for those on the boundary).  Furthermore, if $\ve$ is sufficiently small, depending only on $\inj(K)$, if $\tilde{y}$ is any point of $G$ such that $\tilde{y}\Gamma = y$, $\Kak(R,\ve,y)$ lifts uniquely to a neighborhood of $\tilde{y}$. The definition of $\Kak(R,\epsilon,y)$ has the following explanation. We will see that points that differ in the direction $X_i^k$ will see polynomial divergence in the direction $X_j^k$ with degree $i-j$ for $j < i$. Since $\set{V,X,U}$ is taken as a chain for $U$, points that differ in $V$ direction split with quadratic speed in the direction of $U$ and with linear speed in the direction of $X$. Such points can be easily matched with the $f_t$-metric (even though they can not by $\bar{d}$-Bowen metric) as long as we don't see divergence in $X$ direction, since we are allowed to correct by the flow. This is the reason why in the definition of Kak we take $|a_V|\leq \frac{\epsilon}{R}$. Similarly, points differences in $X$ direction yield linear divergence, but  only in the $U$ direction and hence the control required on the $X$ coefficient does not grow.

Since the condition for being in a Kakutani ball does not tell us the direction of divergence, we make the following additional definitions which allow us to distinguish points whose divergence is first seen in the $\mf{sl}(2,\R)$-triple components, and those which see divergence in the other chains first.

Fix $\delta > 0$, and set
\be\label{emj}
\Kak^{1,\delta}(R,\epsilon,y):=\Kak(R,\epsilon,y)\cap\set{x\in  G / \Gamma: |a_V(g)|<R^{-(1+10\delta)}\text{ in Definition \ref{def:kakball}} }
\ee
and
\be\label{emj2}
\Kak^{2,\delta}(R,\epsilon,y):= \Kak(R,\epsilon,y)\setminus \Kak^{1,\delta}(R,\epsilon,y).
\ee

For sufficiently large $R$, $\Kak^{1,\delta}$ are exactly points in which the divergence is guaranteed to be seen first in the other chains (due to the increased control on $a_V$). $\Kak^{2,\delta}$ is therefore the points in which we are guaranteed to see some divergence in the $X$ direction, since in this case $\abs{a_V} \in [R^{-(1+10\delta)},\ve/R]$.

\begin{definition}[Splitting time]\label{def:splittime} For $x,y\in G / \Gamma$ define the {\it splitting time} of $x,y$
\be\label{eq:splitnew}
S(x,y,\epsilon):=\sup \{R\geq 0\;:\;x\in \Kak(R,\epsilon,y)\}.
\ee
\end{definition}

The following observation is a straightforward consequence of continuity of $(\phi_t)$: there exists a function $f : \R\to \R$ such that $f(m)\to +\infty$ as $m\to +\infty$ and
\be \label{lem:contphi}
S(x,y,\epsilon)>f(m) \text{ as } d_{G / \Gamma}(x,y)<m^{-1}.
\ee
We also have the following general definition which establishes a useful notation when dealing with matching of $x,y$. A priori, given a matching between $x$ and $y$, the points may have long periods of matching, diverge for a small amount of time, and realign to have another long period of matching. This is exactly what happens for the classical horocycle flow. The following definition identifies that maximal interval on which the matching could be extended before seeing divergence and waiting for another realignment.

\begin{definition}\label{def:matchsplit} Fix a partition $\cP$ of $G / \Gamma$ and $x,y\in G / \Gamma$ which are $(\eta, \cP)$-matchable (with matching function $h$). For $u\in A(x,y)$ denote $x_u=\phi_u x\in G / \Gamma$, $y_u=\phi_{h(u)}y\in G / \Gamma$ and let for $\epsilon>0$
$$
S(u,\epsilon)=S(x_u,y_u,\epsilon).
$$
\end{definition}

\subsection{Summary of notations}
\begin{center}
\begin{tabular}{|l|l|}
\hline
$G$ & A fixed semisimple linear Lie group \\
\hline
$\mf g$ & The Lie algebra of $G$ \\
\hline
$\Gamma$ & A lattice in $G$ \\
\hline
$x,y,z$ & Points of $G / \Gamma$ \\
\hline
$\tilde{x},\tilde{y},\tilde{z}$ & Lifts of $x,y,z$ to $G$ \\
\hline
$g,h$ & Elements of $G$ \\
\hline
$\inj(G/\Gamma)$ & The largest number such that if $d_{G/\Gamma}(x,y) < \inj(G/\Gamma)$, \\ & $x = gy$ for a unique $g \in B_G(e,\inj(G/\Gamma))$ \\
\hline
$\set{V,X,U}$ & Fixed generators of a subalgebra isomorphic to $\mf{sl}(2,\R)$ \\
\hline
$\phi_t$ & The left translation action by $\exp(tU)$ \\
\hline
$GR(U)$ & The polynomial slow entropy of $\phi_t$ (ie, the growth rate \\ & for the number of Bowen balls to cover $M$) \\
\hline
$\set{X_i^j}_{i=1}^{m_j}$, $j = 1,\dots,n$ & Vectors generating $\mf g$ together with $\set{V,X,U}$, having \\
 & certain relations with $\set{V,X,U}$ (see Section \ref{sec:sl2}) \\
\hline
$\Bow(R,\ve,y)$ & The Bowen ball around $y$ of radius $\ve$ up to time $R$ \\
\hline
$B_R(y,\ve,\mc P)$ & The Kakutani ball around $y$, ie the set of all points\\
 &  $x$ which are  $(\ve,\mc P)$-matchable with $y$ \\
\hline
$\Kak(R,\ve,y)$ & Intuitively, points which stay $\ve$-close to $y$ after lifting to \\
 & $G$, allowing correction of $x$ by the flow (see Section \ref{sec:kak-bowen-balls})\\
\hline
$\Kak^{1,\delta}(R,\ve,y)$ & A set of points in $\Kak(R,\ve,y)$ which see non-orbit \\ & divergence in directions other than $X$ first \\
\hline
$\Kak^{2,\delta}(R,\ve,y)$ & The remaining points of $\Kak(R,\ve,y)$ \\
\hline
$S(x,y,\ve)$ & The first time $x,y$ split and cannot be made close  \\ & by applying $\phi_t$ to $y$ in the universal cover \\
\hline
$S(u,\ve)$ & With a fixed matching of $x$ and $y$, $S(x_u,y_u,\ve)$, \\ & where $x_u$ and $y_u$  is the matching at time $u$\\

\hline
\end{tabular}
\end{center}

\section{Orbit divergence estimates}
\label{sec:orb}

In this section we state results on orbit divergence for unipotent flows. These results play an important role in the proofs of Theorem \ref{th1} and \ref{prop:longblock}.

%We will first introduce some definitions. For $y\in M$ and $R>0$ let  $z\in N^{V,\delta'}_R(y,\epsilon)$  if $d_M(y,z)<\epsilon$ and the $V$ distance of $y$ and $z$ is less than $R^{-1-\delta'}$.\footnote{Since $d_M(y,z)<\epsilon$, for some $\gamma\in \Gamma$,  the element $y\gamma^{-1}z^{-1})\in G$ can be written as in \eqref{eq:formz}. By the $V$ distance we mean the $a_V$ coordinate. Hence the condition is $|a_V|<R^{-1-\delta'}$.}
%Define,
%\be \label{eq:goodtube}
%T(R1,\epsilon,\delta',y):=S_R(y,\epsilon)\cap N^{V,\delta'}_R(y,\epsilon).
%\ee
 We recall that following lemmas, which will be used in the proof:

\begin{lemma}
	\label{lem:polynomial-coefficients}
	Let $p(t) = \sum_{k=0}^d a_kt^k$ be a polynomial of degree $d$. There exists $C(d)$ such that if $\abs{p(t)} < \epsilon$ for all $t \in [0,T]$, then $\abs{a_k} < C(d)T^{-k}\epsilon$ for all $k = 0,\dots,d$. Conversely, if $\abs{a_k} < C(d)^{-1}T^{-k}\epsilon$ for all $k$, then $\abs{p(t)} < \epsilon$ for all $t \in [0,T]$.
\end{lemma}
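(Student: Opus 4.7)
The plan is to reduce both implications to the case $T=1$ by rescaling, and then use that on the finite-dimensional space of polynomials of degree at most $d$, the sup norm and the coefficient-max norm are equivalent, with comparison constant depending only on $d$.

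First I would substitute $s = t/T$ and define $q(s) := p(Ts) = \sum_{k=0}^{d} (a_k T^k) s^k$. Writing $b_k := a_k T^k$, the hypothesis $|p(t)|<\epsilon$ for $t\in[0,T]$ becomes $|q(s)|<\epsilon$ for $s\in[0,1]$, and the desired conclusion $|a_k|<C(d)T^{-k}\epsilon$ becomes $|b_k|<C(d)\epsilon$. Thus it suffices to prove the normalized statement: there exists $C(d)$ such that whenever a degree-$d$ polynomial $q(s)=\sum b_k s^k$ satisfies $\|q\|_{C^0[0,1]}<\epsilon$, every coefficient satisfies $|b_k|<C(d)\epsilon$.

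For the forward normalized claim I would argue by linear inversion. Choose any $d+1$ distinct interpolation nodes $s_0,\dots,s_d \in [0,1]$ (for instance $s_j=j/d$). The Vandermonde matrix $V=(s_j^k)_{j,k=0}^{d}$ is invertible, and its inverse depends only on $d$. Since $q(s_j)=\sum_k V_{jk} b_k$, we obtain $b_k = \sum_j (V^{-1})_{kj} q(s_j)$, whence
\[
|b_k| \le \left(\sum_{j=0}^{d} |(V^{-1})_{kj}|\right)\cdot \max_j |q(s_j)| \le C(d)\,\epsilon,
\]
with $C(d) := \max_k \sum_j |(V^{-1})_{kj}|$. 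The converse implication is a one-line triangle inequality: if $|b_k|<C(d)^{-1}\epsilon$ for all $k$, then for every $s\in[0,1]$,
\[
|q(s)| \le \sum_{k=0}^{d} |b_k| s^k \le (d+1)\,C(d)^{-1}\epsilon,
\]
so replacing $C(d)$ by $\max(C(d), d+1)$ if necessary handles both directions simultaneously.

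There is essentially no obstacle here; the only thing to watch is that the same constant $C(d)$ is used for both directions (easily arranged by taking the maximum of the two constants produced above), and that the rescaling argument relies solely on the homogeneity $t^k \mapsto T^k s^k$, which is precisely why the $T^{-k}$ weights are the natural ones.
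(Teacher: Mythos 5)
The paper states this lemma without proof, treating it as a standard fact (equivalence of the coefficient-max norm and the sup norm on the finite-dimensional space of polynomials of degree at most $d$, with the correct $T^{-k}$ homogeneity obtained by rescaling). Your proof is correct: the reduction to $T=1$ via $q(s)=p(Ts)$ is exactly the right move, the Vandermonde inversion gives the forward bound with a constant depending only on $d$ and the chosen nodes, and the converse plus the $\max(C(d),d+1)$ adjustment handles the remaining direction; this is a clean and complete argument.
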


Let $g = \exp\left(a_U(g)U + \sum_{j=1}^n\sum_{i=0}^{m_j} a_{ij}(g)X_i^j\right)$.%, $g_s = \exp(sX)$ and $u_t = \exp(tU)$.
The following formulas are important for computing divergence rates:

\begin{multline}
\label{eq:conj-formula}
\exp(sX)\exp(tU)g \exp(-tU)\exp(-sX) \\= \exp\left( e^{2s}a_U(g)U + \sum_{j=1}^n \sum_{i=0}^{m_j}\left(\sum_{k=0}^{m_j-i} e^{(m_j-2i)s}\frac{t^k}{k!}a_{(k+i)j}(g)\right)X_i^j\right)
\end{multline}

\begin{multline}
\label{eq:conj-formula2}
\exp(tU)\exp(sX) g\exp(-sX)\exp(-tU) \\= \exp\left( e^{2s}a_U(g)U + \sum_{j=1}^n \sum_{i=0}^{m_j}\left(\sum_{k=0}^{m_j-i} e^{(m_j-2(k+i))s}\frac{t^k}{k!}a_{(k+i)j}(g)\right)X_i^j\right)
\end{multline}

Equations \eqref{eq:conj-formula} and \eqref{eq:conj-formula2} follow from Lemma \ref{lem:adjoint} and the choice of chain basis made in section \ref{sec:sl2}. The proof of the following lemma is almost identical to that of \cite[Proposition 3.7]{KanigowskiVinhageWei}, so we provide only a sketch.

\begin{lemma}\label{lem:B_Gdiv} There exists $\epsilon_0$ such that for every $y\in G / \Gamma$ and every $\epsilon\in(0, \min(\ve_0,{\rm inj}(y)/3))$, we have
\be\label{eq:eq2}
C(\ve_0)R^{-GR(U)+2}\geq \mu(\Kak(R,\epsilon,y))\geq C(\ve_0)R^{-GR(U)+2}.
\ee
\end{lemma}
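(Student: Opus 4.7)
The plan is to reduce the problem to a Lebesgue volume computation in the coordinates $(a_V, a_X, a_U, a_{ij})$ provided by Lemma \ref{lem:presentation}. Since $\epsilon < \inj(y)/3$, the Kakutani--Bowen ball lifts injectively to an open subset of $G$ via $x = gy$, and locally near $y$ the Haar measure on $G/\Gamma$ agrees with the right Haar measure on $G$, which in these coordinates is absolutely continuous with respect to Lebesgue measure with density bounded away from $0$ and $\infty$ on any fixed small neighborhood of $e$. Conditions (a) and (b) of Definition \ref{def:kakball} directly contribute volume factors of $\epsilon/R$ and $\epsilon$, so the essential task is to evaluate the contribution of condition (c).

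For that, I would apply \eqref{eq:conj-formula} with $s = 0$ to write
\[
\exp(rU)\,\tau(g)\,\exp(-rU) = \exp\!\left( a_U(g)\,U + \sum_{j,i} P_{ij}(r)\, X_i^j \right), \qquad P_{ij}(r) = \sum_{k=0}^{m_j-i}\frac{r^k}{k!}\,a_{(k+i)j}(g),
\]
and then use the bi-Lipschitz property of $\exp$ near $0 \in \mf g$ to translate $\tau(g) \in \Bow(R,\epsilon,e)$ into the coefficient conditions $|a_U(g)| < C\epsilon$ and $|P_{ij}(r)| < C\epsilon$ for all $r \in [0,R]$ and all $(i,j)$. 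The upper bound in \eqref{eq:eq2} then follows from the forward direction of Lemma \ref{lem:polynomial-coefficients} applied to the degree-$m_j$ polynomial $P_{0j}(r)$, whose coefficients involve the full list $\{a_{lj}(g)\}_{l=0}^{m_j}$: this yields $|a_{lj}(g)| \leq C\,R^{-l}\epsilon$, contributing at most $\prod_{l=0}^{m_j} R^{-l}\epsilon \asymp R^{-m_j(m_j+1)/2}\epsilon^{m_j+1}$ to the volume of chain $j$.

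For the matching lower bound, I would verify directly that if $|a_{lj}(g)| < c\,R^{-l}\epsilon$ for all $l,j$ with $c > 0$ sufficiently small (depending only on $\max_j m_j$), then for any $r \in [0,R]$
\[
|P_{ij}(r)| \le \sum_{k=0}^{m_j-i} \frac{R^k}{k!}\, c\,R^{-(k+i)}\epsilon \le c\,e\,\epsilon\,R^{-i} < \epsilon,
\]
placing the corresponding tuple in $\Kak(R,\epsilon,y)$. Multiplying the contributions from all coordinates yields an $R$-exponent of $-1 - \sum_{j} m_j(m_j+1)/2$, where the sum is taken over chains other than the Jacobson--Morozov chain (whose vectors $V,X,U$ were already absorbed into the $a_V, a_X, a_U$ factors). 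Since the JM chain has depth $2$ and contributes exactly $3$ to $GR(U)$ via \eqref{eq:gru}, the exponent equals $-1 - (GR(U) - 3) = 2 - GR(U)$, establishing \eqref{eq:eq2}. The one subtlety I expect to require some care is checking that the density of Haar against Lebesgue in the chosen chart is uniformly bounded independent of $y$; this follows from right-invariance once the chart is centered at $e \in G$ and translated to $y$, but must be set up with the varying injectivity radius kept in mind.
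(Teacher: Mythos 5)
Your proposal is correct and follows essentially the same route as the paper's sketch: both reduce $\Kak(R,\epsilon,y)$ to a product of coordinate intervals in the chain basis via \eqref{eq:conj-formula} with $s=0$, apply Lemma \ref{lem:polynomial-coefficients} in both directions to sandwich the ball between inner and outer hypercubes decaying like $R^{-i}$, and conclude using boundedness of the density relating Haar measure and Lebesgue measure in the chart of Lemma \ref{lem:presentation}. The explicit lower-bound verification and the closing remark about uniformity of the Jacobian are just spelled-out versions of what the paper compresses into one line, so the two arguments coincide.
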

\begin{proof}[Sketch of Proof]
%Let $\pi:G\to F$ be the natural projection map.Notice that by definition
%$$Kak(R,\epsilon,y)=\pi\left(\bigcup_{|a|\leq \frac{\epsilon}{R},|b|\leq \epsilon}\exp(aV)\exp(bX)B^{VX}(R,\epsilon,e)y\right).
%$$

Since $\ve < \inj(y)/3$, %Since $\epsilon\leq {\rm inj}(G / \Gamma)$
it follows that the projection $\pi : G \to G / \Gamma$ is injective on $\Kak(R,\ve,y) \subset B(y,3\ve)$.
Write $x = gy$, and note that the coefficients of $X_i^k$ for $\log(\exp(tU)\tau(g)\exp(-tU))$ are all polynomials of degree $i$ by \eqref{eq:conj-formula}. Therefore, by Lemma \ref{lem:polynomial-coefficients} and \eqref{eq:conj-formula} with $s = 0$, a sufficient condition for $\tau(g)$ is that $\abs{a_{ik}} \le C'(\ve) R^{-i}$, and a necessary one is that $\abs{a_{ik}} \le C'(\ve)^{-1}R^{-i}$ (by shrinking $\ve_0$ if necessary to absorb the constant $C(d)$).  Let $L : U \to \R V \oplus \R X \oplus \left(\R U \oplus \displaystyle\bigoplus_{i,k} \R X_i^k\right)$ be the inverse function provided by Lemma \ref{lem:presentation}. Then we have shown that

\begin{eqnarray*} [-\ve/R,\ve/R] \times [-\ve,\ve] \times \left( [-\ve,\ve] \times \prod_{i,k} \left[-\frac{\ve}{C'(\ve)R^i},\frac{\ve}{C'(\ve)R^i}\right] \right) & \subset & L(\Kak(R,\ve,y)) \\
 \subset  [-\ve/R,\ve/R] \times [-\ve,\ve] \times \left( [-\ve,\ve] \times \prod_{i,k} \left[\frac{-C'(\ve)\ve}{R^i},\frac{C'(\ve)\ve}{R^i}\right] \right)
\end{eqnarray*}

Notice that the hypercubes which contain and are contained in $L(\Kak(R,\ve,y))$ decay with the rate prescribed. Since the Jacobian of $L$ is bounded above and below in a neighborhood of $e$ in $G$, we get the desired decay rate.

%For the lower bound notice that if $|a-a'|\geq \frac{1}{R^2}$ or $|b-b'|\geq \frac{1}{R}$, then
%\be\label{eq:lowd}
%\exp(aV)\exp(bX)B^{VX}(R,\epsilon,y)\cap \exp(a'V)\exp(b'X)B^{VX}(R,\epsilon,y)=\emptyset.
%\ee
%Indeed, this follows from the fact that under the above conditions on $a,a',b,b'$, we have
%$$
%\exp(-b'X)\exp((a-a')V)\exp(bX)\notin B^{VX}(R,2\epsilon,\textcolor[rgb]{0.00,0.00,1.00}{e}).
%$$
%The last equation just follows by a direct computation, as the RHS is equal to
%$$
%\exp((b-b')X)\exp(\textcolor[rgb]{0.00,0.00,1.00}{e^{-2b}}(a-a')V),
%$$
%and it follows that there exist $R_0\in [0,R]$\todo{The existence of $R_0$ can be replaced by $R$, as when $t=R$, we have the contradiction, right? } such that $$d_G(\phi_{R_0}\exp((b-b')X)\exp(-e^{2b}(a-a')V)\phi_{-R_0},e)>2\epsilon.$$ By \eqref{eq:lowd}, Definition \ref{def:kakball} and \eqref{B_Gbon} it follows that
%$$
%\mu(Kak(R,\epsilon,d))\geq \epsilon^2R^2C(\epsilon)R^{-GR(U)}.
%$$
%This finishes the proof of the lower bound.
\end{proof}

In the study of slow entropy, ie the covering rate for $G / \Gamma$ via Bowen balls, the result analogous to Lemma \ref{lem:B_Gdiv} is sufficient to estimate the number of Bowen balls to cover the space. However, Kakutani balls have a more complicated behavior, since we only insist that the points are close for a large proportion of times. The remaining lemmas help to show that if points stay together for a certain interval, then that amount of time can be quantified, and that each such interval has a long interval afterwards in which the points diverge, but in a controlled way as to avoid recurrence.

The following lemma allows us to explictly describe an optimal matching function when $x$ and $y$ are sufficiently close.

\begin{lemma}\label{lem:slren}There exists $\epsilon_1>0$ such that if we let  $h=\exp(a_VV)\exp(a_XX)\in G$ and $\psi(t):=te^{a_X}/(e^{-a_X}-a_Ve^{a_X}t)$ with $|a_X|<\epsilon_1$, then for every $|t|\in [0,\epsilon_1 a_V^{-1}]$, we have
\begin{equation}\label{eq:sl2spl}
\exp(\psi(t)U)h\exp(-tU)=\exp(\alpha_tV)\exp(\beta_tX),
\end{equation}
with $|\alpha_t|\leq 2|a_V|$ and $|\beta_t|\leq 2(|a_X|+|a_V||t|)$.
\end{lemma}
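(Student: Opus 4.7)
The claim only involves products of elements from the $\mf{sl}(2,\R)$-triple $\set{V,X,U}\subset \mf g$, so by Lemma \ref{lem:sl2-lift} the identity lifts to a corresponding identity in $SL(2,\R)$, and it suffices to verify it there. Using the standard matrix representatives
\[
\exp(sU)=\begin{pmatrix}1&s\\0&1\end{pmatrix},\quad \exp(sV)=\begin{pmatrix}1&0\\s&1\end{pmatrix},\quad \exp(sX)=\begin{pmatrix}e^{s}&0\\0&e^{-s}\end{pmatrix},
\]
I would first compute $h=\exp(a_VV)\exp(a_XX)$ explicitly and then perform the matrix multiplication
\[
M:=\exp(\psi(t)U)\,h\,\exp(-tU),
\]
keeping track of each of the four entries as expressions in $t,a_V,a_X,\psi(t)$.

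The defining requirement that $M$ lies in the $VX$-subgroup amounts to the vanishing of the upper right entry of $M$. A short calculation shows that this is equivalent to $\psi(t)(e^{-a_X}-ta_Ve^{a_X})=te^{a_X}$, and solving for $\psi$ recovers precisely the formula given in the statement. Conversely, with $\psi$ chosen this way, one checks that $M$ is lower triangular, so it can be written as $\exp(\alpha_tV)\exp(\beta_tX)$, with $\alpha_t$ and $\beta_t$ read off from the $(1,1)$, $(2,1)$ and $(2,2)$ entries of $M$. Writing $e^{-\beta_t}=e^{-a_X}(1-ta_Ve^{2a_X})$ and $\alpha_t=a_V/(1+\psi(t)a_V)$, the two quantities are explicit closed-form expressions in $t,a_V,a_X$.

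It remains to estimate $|\alpha_t|$ and $|\beta_t|$. The assumption $|t|\le\epsilon_1 a_V^{-1}$ gives $|ta_V|\le\epsilon_1$, and combined with $|a_X|<\epsilon_1$ it follows that $|ta_Ve^{2a_X}|\le\epsilon_1 e^{2\epsilon_1}$, which is small if $\epsilon_1$ is chosen sufficiently small. In particular the denominators appearing in $\psi(t)$ and $\alpha_t$ do not vanish and are bounded below by $1/2$, yielding $|\alpha_t|\le 2|a_V|$ immediately. For $\beta_t=a_X-\log(1-ta_Ve^{2a_X})$, the elementary estimate $|\log(1-x)|\le |x|(1+|x|)$ for $|x|\le 1/2$ gives $|\beta_t|\le |a_X|+|ta_V|e^{2|a_X|}(1+\epsilon_1 e^{2\epsilon_1})$; shrinking $\epsilon_1$ so that $e^{2\epsilon_1}(1+\epsilon_1 e^{2\epsilon_1})\le 2$ then yields $|\beta_t|\le |a_X|+2|a_V||t|\le 2(|a_X|+|a_V||t|)$.

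There is no real conceptual obstacle: the content of the lemma is the explicit formula for $\psi$, which is forced by the single equation requiring $M$ to have vanishing upper right entry. The only delicate point is calibrating $\epsilon_1$ so that the two Taylor-type estimates both fit under the clean multiplicative constant $2$ appearing in the statement.
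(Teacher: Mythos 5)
Your proposal is correct and follows essentially the same route as the paper: lift to $SL(2,\R)$ via Lemma \ref{lem:sl2-lift}, multiply the explicit $2\times2$ matrices, impose vanishing of the upper-right entry to recover the formula for $\psi$, read off $\alpha_t$ and $\beta_t$ from the lower-triangular product, and bound them using the smallness of $|a_X|$ and $|ta_V|$. You spell out the Taylor-type estimates a bit more explicitly than the paper does, but the structure and content of the argument are identical.
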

\begin{proof}
By Lemma \ref{lem:sl2-lift}, we may make computations in $SL(2,\R)$ and conclude the relevant relations in $G$. Abusing the notation in this proof slightly, we let $V,X, U \in \mf sl(2,\R)$ denote the generators of the opposite horocycle flow, geodesic flow, and horocycle flow respectively.
By a direction computation and the definition of $\psi(\cdot)$, we have
\begin{equation}
\exp(\psi(t)U)\exp(a_VV)\exp(a_XX)\exp(-tU)=\left(
                                           \begin{array}{cc}
                                             e^{a_X}+\psi(t)a_Ve^{a_X} & 0\\
                                             a_Ve^{a_X} & e^{-a_X}-a_Ve^{a_X}t \\
                                           \end{array}
                                         \right).
\end{equation}

Let $\alpha_t$ and $\beta_t$ be defined so that $$\exp(\alpha_t V)\exp(\beta_t X)=\exp(\psi(t)U)\exp(a_VV)\exp(a_XX)\exp(-tU).$$
Direct computation shows that
\begin{equation}
\begin{aligned}
&\alpha_t e^{\beta_t}=a_Ve^{a_X},\\
&e^{-\beta_t}=e^{-a_X}-a_Ve^{a_X}t.
\end{aligned}
\end{equation}
Thus we have
\begin{equation}
\begin{aligned}
\alpha_t=a_Ve^{a_X}(e^{-a_X}-a_Ve^{a_X}t),\\
\beta_t=-\log(e^{-a_X}-a_Ve^{a_X}t).
\end{aligned}
\end{equation}
Since $|a_X|<\epsilon_1$ and $|t|\in [0,\epsilon_1 a_V^{-1}]$, we have
$$|\alpha_t|<2|a_V|, |\beta_t|\leq2(|a_X|+|a_V||t|).$$
This finishes the proof.
\end{proof}

\begin{remark}\label{rem:inv} We will use the following property of $\psi(\cdot)$ which follows by a direct computation: under the above assumptions if additionally $|a_X|<\epsilon^2$, then for every $t\in [0,\epsilon^{2}a_V^{-1}]$ we have
$$
\psi'(t)\in(1-\epsilon,1+\epsilon).
$$
\end{remark}

\begin{lemma}\label{lem:adu}There exists $\epsilon_1>0$ such that for every $\epsilon\in (0,\epsilon_1]$ and for every $x,y\in G / \Gamma$ if
$x\in \Kak(R,\epsilon^3,y)$, then for every $|L|\in [0,R/3]$ there exists $|\ell|<R$ such that
$$
\phi_{\ell}(x)\in \Kak(R/2,\epsilon,\phi_Ly).
$$

\end{lemma}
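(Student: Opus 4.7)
The plan is to produce $\ell$ explicitly by applying Lemma \ref{lem:slren} to correct the $\mathfrak{sl}(2,\R)$-triple part of the relative position. Writing $x = gy$ with $g = \exp(a_V V)\exp(a_X X)\tau(g)$ as in Definition \ref{def:kakball}, the hypothesis $x\in \Kak(R,\epsilon^3,y)$ gives $|a_V| < \epsilon^3/R$, $|a_X| < \epsilon^3$, and $\tau(g) \in \Bow(R,\epsilon^3,e)$. Observe the identity
\[
\phi_\ell(x) \;=\; \exp(\ell U)\,g\,\exp(-LU)\;\phi_L y,
\]
so the task reduces to showing that, for a well-chosen $\ell$, the element $g_L := \exp(\ell U)\,g\,\exp(-LU)$ satisfies the defining conditions of $\Kak(R/2,\epsilon,e)$.

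The choice is $\ell := \psi(L)$, with $\psi$ from Lemma \ref{lem:slren}. Since $|a_V| < \epsilon^3/R$, the required bound $|L| \leq \epsilon_1 |a_V|^{-1}$ is comfortably satisfied when $|L| \leq R/3$ and $\epsilon \leq \epsilon_1^{1/2}$. Lemma \ref{lem:slren} then yields
\[
\exp(\ell U)\exp(a_V V)\exp(a_X X)\exp(-LU) \;=\; \exp(\alpha_L V)\exp(\beta_L X),
\]
with $|\alpha_L| \leq 2|a_V| \leq 2\epsilon^3/R$ and $|\beta_L| \leq 2(|a_X|+|a_V||L|) \leq 3\epsilon^3$. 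Inserting $\exp(-LU)\exp(LU)$ on the right and using formula \eqref{eq:conj-formula} with $s=0$ (which shows that $\exp(LU)\tau(g)\exp(-LU)$ still lies in $\R U \oplus \bigoplus_{i,j}\R X_i^j$), we factor
\[
g_L \;=\; \exp(\alpha_L V)\exp(\beta_L X)\cdot \bigl[\exp(LU)\tau(g)\exp(-LU)\bigr],
\]
so that $a_V(g_L)=\alpha_L$, $a_X(g_L)=\beta_L$, and $\tau(g_L)=\exp(LU)\tau(g)\exp(-LU)$.

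Checking the three Kakutani conditions for $g_L$ is then routine: condition (a) becomes $|\alpha_L| \leq 2\epsilon^3/R < \epsilon/(R/2)$ for $\epsilon \leq 1$; condition (b) becomes $|\beta_L| \leq 3\epsilon^3 < \epsilon$ for $\epsilon$ small. For condition (c), write, for $r \in [0,R/2]$,
\[
\exp(rU)\tau(g_L)\exp(-rU) \;=\; \exp((r+L)U)\,\tau(g)\,\exp(-(r+L)U).
\]
The main (and essentially the only) subtlety is that $L$ may be negative, so $r+L$ may be negative while the Bowen ball is defined only for non-negative times. This is handled by Lemma \ref{lem:polynomial-coefficients}: by \eqref{eq:conj-formula}, the coordinates of $\log(\exp(sU)\tau(g)\exp(-sU))$ in the chain basis are polynomials of degree $\leq m_j$ in $s$; the hypothesis $\tau(g) \in \Bow(R,\epsilon^3,e)$ gives the coefficient bounds $|a_{ij}(\tau(g))| \leq C R^{-i}\epsilon^3$, which via the converse direction of Lemma \ref{lem:polynomial-coefficients} imply a uniform bound $\leq C'\epsilon^3 < \epsilon$ for \emph{all} $|s|\leq R$. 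Since $|r+L| \leq R/2+R/3 < R$, condition (c) follows. Finally, the explicit formula $\psi(L)=Le^{a_X}/(e^{-a_X}-a_V e^{a_X}L)$ with $|a_X|,|a_V L|\ll 1$ gives $|\ell|\leq 2|L|\leq 2R/3<R$, completing the proof. The main obstacle is precisely the sign issue in the Bowen condition, which is dissolved by exploiting the polynomial nature of the divergence rather than any dynamical information.
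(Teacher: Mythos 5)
Your proof is correct and follows essentially the same scheme as the paper's: decompose $g$ via Lemma \ref{lem:presentation}, choose $\ell = \psi(L)$, use Lemma \ref{lem:slren} to recombine the $V,X$ parts, and conjugate $\tau(g)$ by $\exp(LU)$. However, you identify and carefully dispose of a subtlety that the paper's proof glosses over: the case $L<0$. The paper's argument ``$d_G(\exp(tU)m_\ell\exp(-tU),e)=d_G(\exp((t+L)U)g\exp(-(t+L)U),e)<\epsilon^3$ since $g\in\Bow(R,\epsilon^3,e)$'' is only literally justified when $t+L\ge 0$; for $L<0$ and small $t$ the argument $t+L$ lies outside the interval $[0,R]$ on which the Bowen condition is imposed. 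Your fix --- pass through Lemma \ref{lem:polynomial-coefficients} to convert the one-sided Bowen control into coefficient bounds $|a_{ij}|\lesssim R^{-i}\epsilon^3$, and then use the polynomial form from \eqref{eq:conj-formula} to get a uniform bound for all $|s|\le R$ --- is exactly the right way to extend to negative times, and is needed since the lemma is later invoked with $L=R_j-q$ of either sign. One small caveat in the write-up: the stated ``converse direction'' of Lemma \ref{lem:polynomial-coefficients} only asserts a bound on $[0,T]$, so you should either observe that the same coefficient bounds apply to $p(-t)$ (yielding the bound on $[-T,0]$) or simply sum $|a_k||s|^k\le\sum C(d)R^{-k}\epsilon^3 R^k$ directly; this loses a factor of $C(d)^2$ over the original $\epsilon^3$, absorbed by taking $\epsilon$ small. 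The rest of the bookkeeping (bounds on $\alpha_L,\beta_L,\ell$, invariance of the chain subspace under $\ad_U$) matches the paper and is correct.
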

\begin{proof}
By Definition \ref{def:kakball}, it follows that we may write
$$
x=\exp(a_VV)\exp(bX)gy
$$
where $|a_V|\leq \epsilon^3R^{-1}$, $|b|\leq \epsilon^3$ and $g \in \Bow(R,\epsilon^3,e)$, with $g$ having no $V$ or $X$ component.
 Therefore, for every $\ell \geq 0$
\be\label{eq:nfgt}
\exp(\ell U)x= \big(\exp(\ell U)\exp(a_VV)\exp(bX)\exp(-L U)\big)\big(\exp(L U)g\exp(-L U)\big)\exp(LU) y
\ee
Moreover by Lemma \ref{lem:slren} if we define $\ell:=\psi(L)$ (by the bound on $a_V$ this is well defined), then $|\ell|\leq R/2$ (see Remark \ref{rem:inv}), and we have
$$
\exp(\ell U)\exp(a_VV)\exp(bX)\exp(-LU)=\exp(b_LV)\exp(c_LX),
$$
where $|b_L|\leq \epsilon R^{-1}$, and $|c_L|<\epsilon$.
Notice also that if $m_\ell=\exp(L U)g\exp(-LU)$ then for every $t\in [0,R-L]$, we have
\be\label{emmel}
d_G(\exp(tU)m_\ell\exp(-tU),e)=d_G(\exp((t+L)U)g\exp(-(t+L)U),e)<\epsilon^3
\ee
the last inequality since $g \in \Bow(R,\epsilon^3,e)$.
%Moreover, by the Definition \ref{def:chainbasis} it follows that the $V$ and $X$ coordinates of $m_\ell$ are $0$, and conjugating by $\exp((t+L)U)$.
%since the spaces generated by each chain are invariant for $\ad(tU)$.
Therefore $m_l\in\Bow(R/2,\epsilon^3,e)$ and the $V$ and $X$ coordinates of $m_l$ are zero since the spaces generated by each chain are invariant for $\ad(tU)$ (see Definition \ref{def:kakball}). Therefore

\[ \phi_\ell x = \exp(\ell U) x = \exp(\ell U) \exp(a_VV) \exp(bX) g y = \exp(b_L V)\exp(c_L X)m_\ell \phi_L y \]

% and by \eqref{eq:nfgt},  for some $\gamma',\gamma''\in \Gamma$, we have
%$$
%\phi_{\ell} x=\exp(\ell U)x\gamma'=\exp(a_LV)\exp(b_LX)M_\ell\exp(LU) y\gamma\gamma' =
%\exp(a_LV)\exp(b_LX)M_\ell \phi_Ly \gamma''.
%$$
which by Definition \ref{def:kakball} and  \eqref{emmel} implies that
$\phi_{\ell} x\in \Kak(R/2,\epsilon, \phi_Ly)$. This finishes the proof.

\end{proof}
%Before we state the next lemma let us recall a basic fact about polynomial (see e.g. Lemma 3.4. in \cite{KanigowskiVinhageWei}):

The following Lemma quantifies the renormalization phenomenon related to the relation $[X,U] = 2U$. Recall Definition \ref{def:kakball} and equation \eqref{eq:coefficients}.

\begin{lemma}\label{lem:dirs}
Let $g \in \Bow(R,\ve,e)$ be such that $a_V(g) = a_X(g) = 0$.%, and $g_s = \exp(sX)$.
There exists  $C>0$ (independent of $\ve$) such that for every $\epsilon\in (0,\epsilon_1]$, every $\delta'\in [0,C^{-2})$, every  $R>0$ and every $s\in [0,\frac{1}{2}(1+\delta')\log (R)]$, we have
$$
\exp(-sX)g\exp(sX) \in \Bow(R^{1/2-C\delta'},\epsilon^{1/3},e)
$$
 and $a_V(\exp(-sX)g\exp(sX)) = a_X(\exp(-sX)g\exp(sX))=0$.
\end{lemma}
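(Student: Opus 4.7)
The plan is to read off $g' := \exp(-sX)g\exp(sX)$ directly from Lemma \ref{lem:adjoint}: since $\ad_X$ acts diagonally on the chain basis with eigenvalue $m_j - 2i$ on $X_i^j$ (and eigenvalue $2$ on $U$), and preserves the subspaces spanned by $V$, $X$, and each chain separately, one has
\[
g' = \exp\!\left(e^{-2s}a_U(g)\, U + \sum_{j,i} e^{-(m_j-2i)s}\, a_{ij}(g)\, X_i^j\right).
\]
This immediately gives the final assertion $a_V(g') = a_X(g') = 0$. The remaining task is the Bowen-ball estimate, which I would carry out by polynomial bookkeeping: convert the Bowen hypothesis on $g$ to coefficient bounds via Lemma \ref{lem:polynomial-coefficients}, substitute into the analogous polynomial identity for $g'$, and check that the scaling $s \le \tfrac12(1+\delta')\log R$, $R' := R^{1/2-C\delta'}$, $\delta' < C^{-2}$ forces the resulting sums to be small.

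More concretely, expanding $\exp(tU)g\exp(-tU)$ via \eqref{eq:conj-formula} at $s=0$, the $X_l^j$-coefficient of its logarithm is the polynomial $\sum_{k=0}^{m_j-l}\tfrac{t^k}{k!}a_{(k+l)j}(g)$ of degree $\le m_j - l$. The hypothesis $g \in \Bow(R,\ve,e)$ makes this polynomial of modulus $\lesssim \ve$ on $[0,R]$, so Lemma \ref{lem:polynomial-coefficients} yields $|a_{ij}(g)| \lesssim R^{-i}\ve$ (factorials are absorbed since $m_j \le \dim\mf g$ is bounded), and similarly $|a_U(g)| \lesssim \ve$. Substituting the relation $a_{ij}(g') = e^{-(m_j-2i)s}a_{ij}(g)$ into the same expansion for $g'$, the $X_l^j$-coefficient of $\log(\exp(tU)g'\exp(-tU))$ on $[0,R']$ is bounded by
\[
\sum_{k=0}^{m_j-l}\frac{(R')^k}{k!}\, e^{-(m_j-2(k+l))s}\,|a_{(k+l)j}(g)| \;\lesssim\; \ve \sum_{k} (R')^k R^{-(k+l)} e^{(2(k+l)-m_j)s}.
\]
The $V$- and $X$-components of this logarithm vanish identically (the chains and the $\mf{sl}(2,\R)$-triple span independent $\ad_U$-invariant subspaces), and its $U$-coefficient is $a_U(g') = e^{-2s}a_U(g)$, which is $\le \ve$.

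The heart of the argument, and the main obstacle, is verifying that every summand in the display above is $O(1)$. Taking logarithms base $R$, writing $\sigma := s/\log R \in [0,(1+\delta')/2]$, and worst-casing over the sign of $2(k+l) - m_j$ (take $\sigma$ maximal if the sign is nonnegative, $\sigma = 0$ otherwise), the exponent of a typical summand in the nontrivial case reduces, after using $k + l \le m_j$, to the inequality
\[
k(1-2C\delta') \le m_j(1-\delta'),
\]
which holds for every $0 \le k \le m_j$ as soon as $C \ge 1/2$; the restriction $\delta' < C^{-2}$ then provides the slack needed to absorb the absolute constants from Lemma \ref{lem:polynomial-coefficients} and from the conversion $(\text{finite sum of } \lesssim \ve \text{ terms}) \le \ve^{1/3}$ (valid for $\ve$ small, possibly after shrinking $\ve_1$). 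Hence each coefficient of $\log(\exp(tU)g'\exp(-tU))$ is $\le \ve^{1/3}$ uniformly on $[0,R']$, which gives $g' \in \Bow(R',\ve^{1/3},e)$ as claimed.
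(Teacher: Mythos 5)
Your proof is correct and follows essentially the same route as the paper: both convert the Bowen hypothesis on $g$ into coefficient bounds $|a_{ij}(g)|\lesssim \ve R^{-i}$ via Lemma \ref{lem:polynomial-coefficients}, apply the $\ad_X$-eigenvalue scaling under conjugation (which also gives $a_V=a_X=0$ for free), and verify term-by-term that the exponents in the resulting polynomial bounds are nonpositive on $t\in[0,R^{1/2-C\delta'}]$ for $s$ in the admissible range. Your explicit case split on the sign of $2(k+l)-m_j$ when choosing the worst-case $s$ is, if anything, slightly cleaner bookkeeping than the paper's single substitution of the extremal $s$.
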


\begin{proof}
Since $a_V(g) = a_X(g) = 0$, we can write $g=\exp\left(a_U(g)U + \sum_{i,j}a_{ij}(g)X_i^{j}\right)$,  where $\{X_i^0\}_{i=1}^{m_0},\ldots,\{X_i^n\}_{i=1}^{m_n}$ are standard chains, each of which span a finite-dimensional representation of $\mf{sl}(2,\R)$ (see Definition \ref{def:chainbasis}). Note that the claims that $$a_V(\exp(-sX)g\exp(sX)) = a_X(\exp(-sX)g\exp(sX)) = 0$$ follow from the fact that each chain $\set{X_i^j}$ spans a finite-dimensional representation of $\mf{sl}(2,\R)$ and that $U$ is an eigenvector for both $\ad_X$ and $\ad_U$.
Since $g \in \Bow(R,\epsilon,e)$ it follows that for $0\leq t\leq R$, $d_G(\exp(tU)g\exp(-tU),e)\leq\epsilon$. Since $d_G(\exp(tU)g\exp(-tU),e)<\epsilon$ for $0\leq t\leq R$ it follows by Lemma \ref{lem:polynomial-coefficients} applied to the $X_0^j$ terms of \eqref{eq:conj-formula} with $s = 0$ that
\begin{equation}\label{eq:unipotentActResult}
|a_{ij}(g)|\leq\frac{C(d,\ve)\epsilon}{R^{i}}
\end{equation}
for all $j$, where $C(d,\ve)$ is determined by Lemma \ref{lem:polynomial-coefficients} and the norm of $d(\geomlog \of \exp)$ on $B(e,\ve_1)$ (since $d(e,\exp(Y)) = \norm{\geomlog(\exp(Y))}$).

%Recall that we choose the chains $\set{X_i^j}_{i=0}^{m_j}$ to span finite-dimensional representations of $\mf{sl}(2,\R)$, with each $X_i^j$ being an eigenspace for $\ad_X$ with eigenvalue $m_j - 2i$. Therefore, by Lemma \ref{lem:adjoint}, we get:

%\begin{equation}\label{eq:geodesicAct}
%\begin{aligned}
%R_s(z):=\exp(-sX)z\exp(sX)=\exp(e^{-s}a_U(z)U + \sum_{i,j}e^{-(m_j-2i)s}a_{ij}(z)X_i^j)
%\end{aligned}
%\end{equation}

Let $R_s(g) = \exp(-sX) g \exp(sX)$. We need to show that $R_s(g)\in \Bow(R^{1/2-C\delta'},\epsilon^{1/3},e)$. Hence we only need to show that for every
$0\leq t\leq R^{1/2-C\delta'}$,
$$
d_G(\exp(tU)R_s(g)\exp(-tU),e)<\epsilon^{1/3}.
$$
%Notice that by an analogous computation to \eqref{eq:unipotentAct} with $R_s(z)$ instead of $z$ and by \eqref{eq:geodesicAct}, it follows that
%\be\label{este} \exp(tU)R_s(z)\exp(-tU) = \exp\left( e^{-s}a_U(z)U + \sum_{j=1}^n\sum_{i=0}^{m_j} \sum_{k=0}^{m_j-i} \frac{t^k}{k!} e^{-(m_j-2i)s}a_{(i+k)j}(z) X_i^j\right) \ee.

Notice that by \eqref{eq:conj-formula2},

\[ a_{ij}(s,t) = \sum_{k=0}^{m_j-i} \frac{t^k}{k!}e^{-(m_j-2(i+k))s}a_{(i+k)j}(g)\]

denotes the coefficient of $X_i^{j}$ for $\exp(tU)R_s(g)\exp(-tU)$ at time $t$. We will control each coefficient $\alpha_{ijk} = \frac{1}{k!}e^{-(m_j-2(k+i)s)}a_{(k+i)j}(g)$.

Our bound on $a_{ij}(g)$, \eqref{eq:unipotentActResult}, and the condition that $0 \le s \le \frac{1}{2}(1+\delta')\log(R)$ gives:

\[ \abs{\alpha_{ijk}} \le \frac{C(d,\ve_1)\ve}{k!} R^{-\frac{1}{2}(m_j-2(k+i))(1+\delta')-(k+i)} \le C(d,\ve_1)\ve R^{-\frac{1}{2}m_j+(k + i-\frac{1}{2}m_j)\delta'}\]

Therefore, by Lemma \ref{lem:polynomial-coefficients}, since $k+i \le m_j$ and since the maximal power of $t$ by $a_{ij}(s,t)$ is $m_j-i$ (and therefore  $t^k\leq R^{(1/2-C\delta')(m_j-i)}$, we have
\begin{multline*}
 \sup_{t \in [0,R^{1/2-C\delta'}]} a_{ij}(s,t) \le m_j C_1(d,\ve_1)\ve R^{(1/2-C\delta')(m_j-i)-\frac{1}{2}m_j(1-\delta')}=\\m_j C_1(d,\ve_1) R^{-1/2i-C\delta'(m_j-i)+1/2m_j\delta'}
\end{multline*}
Notice that  $-1/2i-C\delta'(m_j-i)+1/2m_j\delta'\leq 0$.

%We may choose $C'$ sufficiently large (and hence $\delta'$ sufficiently small to guarantee the exponent of $R$ above is negative.
Therefore, if $\ve$ is sufficiently small, we can guarantee that $\sup_{t \in [0,R^{1/2-C\delta'}]} a_{ij}(s,t)$ can be made less than $\ve^{1/3}/C''$ for arbitrary $C''$. So by choosing $\ve$ sufficiently small, we may guarantee $R_s(g) \in \Bow(R^{1/2-C\delta'},\ve^{1/3},e)$.
\end{proof}

%For $Y=\sum_{i,j} a_{ij}X_i^j \in \mf g$ let $Y_C:=\displaystyle\sum_{X_i^j\in C(U)}a_{ij}X_i^j = \sum_j a_{0j}X_0^j$. %Recall also that $B_{G}(e,\epsilon):=\{x\in G: d_{G}(e,x)<\epsilon\}$, we have

\begin{lemma}\label{lem:B_Ggr}There exist constants $C_2,\epsilon_1,R_0>0$ such that for every $\epsilon\leq \epsilon_1$,  every $R\geq R_0$,  $s\leq  \frac{1}{2}\log (R)$,  and for every $g  = \exp(Y)\in \Bow(R,\ve,e)$,

$$
\exp(-sX)g\exp(sX)\in \exp(Y_C)B_{G}(e,C_2\epsilon R^{-\frac{1}{2}}).
$$
for some $Y_C \in C(U)$. Moreover, for $s=\frac{1}{2}\log (R)$, the same holds for some $Y_C \in C(U,X)$.
\end{lemma}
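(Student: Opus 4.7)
The plan is to expand $g = \exp(Y)$ in the chain basis as $Y = a_V V + a_X X + a_U U + \sum_{i,j} a_{ij} X_i^j$, read off coefficient bounds from the Bowen condition via Lemma \ref{lem:polynomial-coefficients}, and then explicitly conjugate by $\exp(sX)$ using the fact that the basis elements are $\ad_X$-eigenvectors with known weights. More precisely, I would first compute $\Ad(\exp(tU))Y$ by Lemma \ref{lem:adjoint}, so that the coefficient of each basis vector in $\Ad(\exp(tU))Y$ becomes a polynomial in $t$; for instance, the $U$-coefficient is $a_U - 2a_X t - a_V t^2$ and the $X_l^j$-coefficient is $\sum_{k=0}^{m_j-l}\tfrac{t^k}{k!}a_{l+k,j}$. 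Since $g \in \Bow(R,\ve,e)$, each such polynomial has sup-norm $O(\ve)$ on $[0,R]$, and Lemma \ref{lem:polynomial-coefficients} then yields
\[
|a_V| \le C\ve R^{-2}, \quad |a_X| \le C\ve R^{-1}, \quad |a_U| \le C\ve, \quad |a_{ij}| \le C\, i!\, \ve R^{-i}.
\]

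Next, since $V$, $X$, $U$, and $X_i^j$ are $\ad_X$-eigenvectors with eigenvalues $-2$, $0$, $2$, and $m_j - 2i$, the conjugation is simply
\[
\Ad(\exp(-sX))Y = e^{2s}a_V V + a_X X + e^{-2s}a_U U + \sum_{i,j} e^{-(m_j-2i)s}a_{ij}\, X_i^j.
\]
I would then set $Y_C := e^{-2s}a_U U + \sum_j e^{-m_j s}a_{0j} X_0^j \in C(U)$ and let $Y_\perp$ collect the remaining terms. For $s \le \tfrac{1}{2}\log R$, I would check term by term that every coefficient of $Y_\perp$ is bounded by $C_2\ve R^{-1/2}$: the $V$-coefficient is at most $R\cdot \ve R^{-2} = \ve R^{-1}$, the $X$-coefficient is $\ve R^{-1}$, and for $X_i^j$ with $i \ge 1$ the worst case is $i > m_j/2$, giving at most $R^{(2i-m_j)/2}\cdot \ve R^{-i} = \ve R^{-m_j/2} \le \ve R^{-1/2}$ (since $i \ge 1$ forces $m_j \ge 1$). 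A BCH expansion---or equivalently Lemma \ref{lem:presentation} applied to a vector-space decomposition $\mf g = C(U)\oplus \mf e$---then converts the additive splitting $\exp(Y_C + Y_\perp)$ into a multiplicative one $\exp(Y_C)\cdot h$ with $\|\geomlog(h)\| = O(\|Y_\perp\|)$, so $h \in B_G(e, C_2\ve R^{-1/2})$.

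For the moreover statement at $s = \tfrac{1}{2}\log R$, I would absorb the non-$C(U,X)$ pieces of $Y_C$ directly into the error: the $U$-component contributes $R^{-1}a_U = O(\ve R^{-1})$ and each $X_0^j$-component with $m_j \ge 1$ contributes $R^{-m_j/2}a_{0j} = O(\ve R^{-1/2})$, both well within the allotted error. What remains in $Y_C$ is then supported only on the trivial chain tops $\{X_0^j : m_j = 0\}$, which is exactly $C(U,V,X)$ by Lemma \ref{lem:com}, and hence contained in $C(U,X)$. The main technical care throughout is the bookkeeping of weights against the polynomial coefficient bounds; I do not anticipate a deeper conceptual obstacle.
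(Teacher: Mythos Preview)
Your proposal is correct and follows essentially the same route as the paper's proof: both expand $g$ in the chain basis, use Lemma~\ref{lem:polynomial-coefficients} to read off $|a_{ij}|\le C\ve R^{-i}$ from the Bowen condition, conjugate by $\exp(-sX)$ via the $\ad_X$-eigenvalue structure, and then separate the $i=0$ terms (in $C(U)$) from the $i\ge 1$ terms, which are all $O(\ve R^{-1/2})$. The paper simply absorbs $\{V,X,U\}$ into the chain-basis notation as the $j=0$ chain rather than writing them out separately, and its $Y_C$ is written with the unconjugated coefficients $a_{0j}(g)$ (a harmless slip, since either choice lies in $C(U)$); otherwise the arguments coincide.
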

\begin{proof}Recall, (see \eqref{eq:conj-formula} and \eqref{eq:unipotentActResult}) that if $g=\exp\left(\sum_{i,j}a_{ij}(g)X_i^j\right)\in \Bow(R,\epsilon,e)$, then

$$
|a_{ij}(g)|\leq\frac{C(d)i!\epsilon}{R^i},
$$
and
\be\label{edsw}
\exp(-sX)g\exp(sX)=\exp\left(\sum_{j=1}^n\sum_{i=0}^{m_j} e^{-s(m_j-2i)}a_{ij}(g)X_i^j\right),
\ee
 Therefore, if $s\leq \frac{1}{2}\log R$, we get the following bound on the $X_i^j$ coefficient of \\ $\exp(-sX)g\exp(sX)$ (denoting $s=c_s\log (R)$, so that $c_s < \frac{1}{2}$)
$$
|e^{-s(m_j-2i)}a_{ij}(g)|\leq C(d)i!\epsilon R^{-i-c_s(m_j-2i)}.
$$
Moreover, if $m_j-2i\geq 0$, then $-i-c_s(m_j-2i)\leq -i$ and if $m_j-2i<0$, then   $-i-c_s(m_j-2i)\leq -i-\frac{1}{2}(m_j-2i)=-\frac{1}{2}m_j$ and recall that $m_j\geq i$. Therefore, if $i>0$, then for every $j$, the coefficient by $X_i^j$ in \eqref{edsw} is at most $C(d)i!\epsilon R^{-1/2}$. Notice that by Definition \ref{def:chainbasis}, $X^j_0\in C(U)$. Let $Y_C = \sum_{j=0}^n a_{0j}(g)X_0^j$. % and let $Z_C$ be the $C(U)$ part of $z$).
 Pulling $Y_C$ out of the expression for $g$ does not cost much, since all expressions given by Lemma \ref{lem:presentation} are tangent to the identity. In particular, $\exp(A + B) = \exp(A + O(\abs{A}\cdot\abs{B}))\exp(B)$. Therefore,
$$
\exp(-sX)g\exp(sX)\exp(-Z_C)\in B_{G}(e,C_2\epsilon R^{-\frac{1}{2}}).
$$

Moreover, for $s=\frac{1}{2}\log R$ it follows that $-i-\frac{1}{2}(m_j-2i)=-\frac{1}{2}m_j\leq -\frac{1}{2}$, unless $m_j=0$, but chains of length $0$ correspond to vectors in $C(U,X)$ (they must span trivial representations, see Section \ref{sec:sl2})  and are absorbed in $Y_C$. This finishes the proof of the second part.

\end{proof}

Before we state next lemmas, we need the following general lemma about polynomials. We use the following technical tool in the proof:

\begin{lemma}[Brudnyi-Ganzburg inequality \cite{Brudnyi}]
\label{lem:brudnyi}
Let $V \subset \R$ be an interval, and $\omega \subset V$ a measurable subset. Then for any polynomial $p$ of degree at most $d$:

\[ \sup_V \abs{p} \le \left(\dfrac{4\abs{V}}{\abs{\omega}}\right)^d \sup_\omega \abs{p}\]
\end{lemma}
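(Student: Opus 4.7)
My plan is to reduce the statement to the classical Remez inequality on a standard interval, and then estimate the Chebyshev polynomial explicitly.

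First I would normalize by an affine change of variable $T : V \to [-1,1]$: the set $\omega$ maps to $\omega' \subset [-1,1]$ with $|\omega'|/2 = |\omega|/|V|$, and $p \circ T^{-1}$ is a polynomial of the same degree as $p$ with the same range of values. So it suffices to prove
$$\sup_{[-1,1]} |p| \;\le\; (8/|\omega'|)^d \sup_{\omega'} |p|$$
in the case $V = [-1,1]$, since the factor $|V|$ is reinstated by unwinding the affine change.

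Second, I would invoke the classical Remez inequality: for every measurable $\omega \subset [-1,1]$ and every polynomial $p$ of degree at most $d$,
$$\sup_{[-1,1]} |p| \;\le\; T_d\!\left(\frac{4-|\omega|}{|\omega|}\right)\sup_\omega |p|,$$
where $T_d$ denotes the Chebyshev polynomial of the first kind. This is the one substantive step; the classical argument uses a compactness argument to extract an extremal polynomial $p^{*}$ achieving the largest possible ratio $\sup_{[-1,1]}|p|/\sup_\omega|p|$ under the constraint that $\omega$ has prescribed measure, and then an equi-oscillation/perturbation argument (in the spirit of the Chebyshev alternation theorem) to identify $p^{*}$ as a rescaled Chebyshev polynomial. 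Alternatively one can reduce to the case where $\omega$ is an interval by a symmetrization/rearrangement argument and then compute directly.

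Third, the identity $T_d(\cosh\theta) = \cosh(d\theta)$ gives the bound $T_d(x) \le (2x)^d$ for every $x \ge 1$. Since $\tfrac{4-|\omega|}{|\omega|} \le \tfrac{4}{|\omega|}$, this produces $T_d\!\left(\tfrac{4-|\omega|}{|\omega|}\right) \le (8/|\omega|)^d$, and combining with the reduction to $V=[-1,1]$ yields the claimed estimate $(4|V|/|\omega|)^d$.

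The main obstacle is the extremal property of the Chebyshev polynomial that underlies Remez: identifying $T_d$ as the extremizer is delicate, and without it one only obtains a weaker bound (e.g.\ from Lagrange interpolation on $d+1$ well-spread points of $\omega$, which would give something like $((d+1)\cdot C|V|/|\omega|)^d$ but with an undesirable $d$-dependent constant). Once Remez is granted as a black box, the remainder is a direct computation.
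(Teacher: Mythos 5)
The paper gives no proof of this lemma at all; it is stated with a bare citation to the Brudnyi reference and used as a black box. Your derivation from the classical Remez inequality is correct and fills that gap. The bookkeeping checks out: under the affine map $T : V \to [-1,1]$ one has $|\omega'| = 2|\omega|/|V|$; the Remez parameter is $s = 2 - |\omega'|$, giving the argument $(2+s)/(2-s) = (4-|\omega'|)/|\omega'|$; the bound $T_d(x) = \tfrac12\bigl[(x+\sqrt{x^2-1})^d + (x-\sqrt{x^2-1})^d\bigr] \le (2x)^d$ for $x \ge 1$ then yields $(8/|\omega'|)^d = (4|V|/|\omega|)^d$ exactly as claimed. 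Your description of how Remez itself is established (compactness to extract an extremal polynomial plus an equi-oscillation/perturbation argument, or reduction to the case where $\omega$ is an interval at one end of $[-1,1]$ where the extremal is a shifted Chebyshev polynomial) is accurate, though of course you are taking that step as known rather than re-proving it --- which is a reasonable level of detail for a lemma the paper itself only cites. The one aside worth flagging: you remark that Lagrange interpolation on $d+1$ spread points of $\omega$ gives a $d$-dependent constant; that is true, but note it would not actually suffice here, since the paper uses the lemma with a clean $(4|V|/|\omega|)^{1/d}$ behaviour in Lemma \ref{lem:pol}, so having the sharp $4^d$ constant (or any $d$-independent base) is what makes the subsequent $N^{\eta/d}$ gain usable.
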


\begin{lemma}\label{lem:pol} Let $p$ be a polynomial with $\deg(p) \le d$. There exists $C_1(d)>0$ such that for every $\eta>0$ if $N>0$ satisfies $|p(N)|\geq \epsilon$ (for $\epsilon\leq \epsilon_0$) and $\abs{p(0)}< C(d)^{-1}\ve$ (recall $C(d)$ is defined in Lemma \ref{lem:polynomial-coefficients}), then
$$
\left|w\in [0,N^{1+\eta}]: |p(w)|\leq 10 \epsilon\right|\leq C_1(d)N^{(1+\eta-\eta/d)}.
$$
\end{lemma}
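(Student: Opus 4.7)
The plan is to combine the Brudnyi--Ganzburg inequality (Lemma \ref{lem:brudnyi}) with a lower bound on $\sup_V \abs{p}$ where $V := [0, N^{1+\eta}]$, exploiting the hypothesis that $\abs{p(0)}$ is strictly smaller than $\abs{p(N)}$. Applying Brudnyi--Ganzburg directly to $V$ and $\Omega := \set{w \in V : \abs{p(w)} \leq 10\epsilon}$ yields
\[
\sup_V \abs{p} \;\leq\; \left(\dfrac{4 N^{1+\eta}}{|\Omega|}\right)^d \cdot 10\epsilon,
\]
so the key is to produce a lower bound of the form $\sup_V \abs{p} \geq c(d)\, \epsilon\, N^\eta$. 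Rearranging this then immediately gives $|\Omega| \leq C_1(d)\, N^{1+\eta - \eta/d}$ with $C_1(d) = 4(10/c(d))^{1/d}$.

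I would prove the lower bound by expanding $p(w) = \sum_{k=0}^d a_k w^k$ and arguing as follows. Setting $A := \sup_V \abs{p}$, Lemma \ref{lem:polynomial-coefficients} applied on the interval $V$ of length $N^{1+\eta}$ gives $|a_k| \leq C(d)(N^{1+\eta})^{-k} A$, so $|a_k N^k| \leq C(d) N^{-k\eta} A$, and (using $N^\eta \geq 2$ to sum the geometric series)
\[
\Bigl| \sum_{k=1}^{d} a_k N^k \Bigr| \;\leq\; 2 d\, C(d)\, N^{-\eta} A.
\]
On the other hand, since $a_0 = p(0)$ and $p(N) = a_0 + \sum_{k \geq 1} a_k N^k$, the hypotheses $|p(N)| \geq \epsilon$ and $|p(0)| < C(d)^{-1} \epsilon$ (we may enlarge $C(d)$ so that $C(d) \geq 2$) yield $|\sum_{k\geq 1} a_k N^k| \geq \epsilon/2$. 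Combining these two inequalities forces $A \geq \epsilon N^\eta / (4 d\, C(d))$, giving the desired lower bound on $\sup_V \abs{p}$.

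To finish, I would dispose of the regime $N^\eta < 2$ by the trivial estimate $|\Omega| \leq N^{1+\eta} \leq 2 N \leq 2\, N^{1+\eta - \eta/d}$, which is absorbed into the final $C_1(d)$. The main obstacle is really the lower bound on $\sup_V \abs{p}$: a naive application of Brudnyi--Ganzburg using only $|p(N)| \geq \epsilon$ gives merely $\sup_V \abs{p} \geq \epsilon$ and recovers the useless trivial bound $|\Omega| \leq O(N^{1+\eta})$. The crucial point is that the separation between $|p(0)|$ and $|p(N)|$ prevents $p$ from being small across the long interval $V$: it forces the higher-order coefficients $a_k N^k$ ($k \geq 1$) to carry a definite amount of mass, and Lemma \ref{lem:polynomial-coefficients} then amplifies this into the extra factor of $N^\eta$ in $\sup_V \abs{p}$, which is exactly the gain needed to convert the trivial exponent $1+\eta$ into $1+\eta - \eta/d$.
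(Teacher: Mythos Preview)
Your proof is correct and follows essentially the same approach as the paper: both establish the key lower bound $\sup_{[0,N^{1+\eta}]}\abs{p}\gtrsim \epsilon N^{\eta}$ via Lemma~\ref{lem:polynomial-coefficients} (the paper argues contrapositively using both directions of that lemma, you argue directly by bounding $\sum_{k\ge 1}a_kN^k$ from above and below), and then feed this into the Brudnyi--Ganzburg inequality. Your explicit treatment of the regime $N^{\eta}<2$ is a nice bit of extra care that the paper omits.
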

\begin{proof} Notice that since $p(N)\geq \epsilon$ it follows that for some $t\in[0,N^{1+\eta}]$, we have $|p(t)|\geq C(d)^{-2}\epsilon N^{\eta}$. Indeed, if $|p(t)|< C(d)^{-2}\epsilon N^{\eta}$ on $[0,N^{1+\eta}]$, then by Lemma \ref{lem:polynomial-coefficients} above, the coefficients of $p(\cdot)$ satisfy $|a_k|<C(d)^{-1}\epsilon N^\eta N^{-(1+\eta)k}$. Using this to control coefficients $k > 0$ and the assumption on $p(0)$ for $k = 0$ we may apply the converse of Lemma \ref{lem:polynomial-coefficients}, it follows that $p(N)<\epsilon$.
Then let $\omega:=\{t\in [0,N^{1+\eta}]\;:\; |p(t)|\leq 10\epsilon\}$. By the Brudnyi-Ganzburg inequality, and the above estimate, it follows that
$$
|\omega|\leq \frac{4\sup_{\omega}|p(t)|^{1/d}|V|}{\sup_{V}|p(t)|^{1/d}}.
$$
So
$$
|\omega|\leq 4\frac{(10\epsilon)^{1/d}N^{1+\eta}}{(C(d)^{-2}\epsilon N^{\eta})^{1/d}}\leq 40C(d)^2N^{1+\eta-\eta/d}.
$$
Setting $C_1(d)=40C(d)^2$ finishes the proof.

\end{proof}

The above lemma and the orbit divergence estimate yields the following corollary:

\begin{corollary}\label{corpol}There exists $C_2(d) > 0$ and
$\ve_3>0$ such that for every $\eta>0$ and $\ve_3>\epsilon>0$ if $g\notin \Bow(R,\epsilon,e)$ and $d_G(g,e) < C(d)^{-1}\ve$ (recall $C(d)$ is defined in Lemma \ref{lem:polynomial-coefficients}) then
$$
\left|w\in[0,R^{1+\eta}]\;:\; d_G(\exp(wU)g\exp(-wU),e)<10\epsilon\right|\leq C_2(d)R^{(1+\eta-\eta/d)}.
$$
\end{corollary}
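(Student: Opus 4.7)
The strategy is to reduce the orbit divergence in $G$ to the behavior of a single scalar polynomial of degree at most $d$, and then argue along the lines of Lemma~\ref{lem:pol}, using the Brudnyi--Ganzburg inequality, but on the interval $[0,R^{1+\eta}]$ itself rather than $[0,N^{1+\eta}]$ for the possibly much smaller $N$ where divergence is first witnessed.

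First, I would write $g=\exp(Y)$ with $\|Y\|\asymp d_G(g,e)$. Lemma~\ref{lem:adjoint} and the nilpotency of $\ad_U$ give
\[
\exp(wU)\,g\,\exp(-wU) \;=\; \exp\!\left(\sum_{k\geq 0}\frac{w^k}{k!}\ad_U^k(Y)\right).
\]
Expanding the exponent in the chain basis $\{V,X,U\}\cup\{X_i^j\}$, each basis coefficient is a polynomial $p_l(w)$ in $w$ of degree at most $d$, where $d$ bounds the nilpotency index of $\ad_U$ on $\mf g$. Since $\exp$ is a local diffeomorphism at $0$, we have $d_G(\exp(wU)\,g\,\exp(-wU),e)\asymp\max_l|p_l(w)|$ throughout the relevant range. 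Because $g\notin\Bow(R,\epsilon,e)$, there exist $r_0\in[0,R]$ and an index $l_0$ with $|p_{l_0}(r_0)|\geq c_0\epsilon$, for a constant $c_0=c_0(\dim\mf g)$. The hypothesis $d_G(g,e)<C(d)^{-1}\epsilon$ then gives $|p_{l_0}(0)|\leq \|Y\|<C(d)^{-1}\epsilon$, which by enlarging $C(d)$ in the statement we may assume is bounded by $c_0\epsilon/(10(d+1))$.

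The technical heart is the following claim, which replaces the amplification step in the proof of Lemma~\ref{lem:pol}:
\[
\sup_{w\in[0,R^{1+\eta}]}|p_{l_0}(w)| \;\geq\; C'(d)\,\epsilon\,R^{\eta}.
\]
If the sup were smaller, Lemma~\ref{lem:polynomial-coefficients} applied on $[0,R^{1+\eta}]$ would force $|a_k|<C(d)C'(d)\epsilon R^{\eta-(1+\eta)k}$ on every coefficient of $p_{l_0}$. Using $r_0\leq R$, so that $R^{\eta-(1+\eta)k}r_0^{k}\leq R^{-\eta(k-1)}\leq 1$ for $k\geq 1$, one obtains
\[
|p_{l_0}(r_0)| \;\leq\; |a_0|\;+\;\sum_{k=1}^{d}|a_k|r_0^{k} \;\leq\; \frac{c_0\epsilon}{10(d+1)}\;+\;d\,C(d)\,C'(d)\,\epsilon,
\]
and choosing $C'(d)=c_0/(3dC(d))$ contradicts $|p_{l_0}(r_0)|\geq c_0\epsilon$. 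The role of the smallness of $|a_0|$ is crucial here: a constant term of order $\epsilon R^\eta$ would erase the $R^\eta$ gain.

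With the claim in hand, apply Lemma~\ref{lem:brudnyi} on $V=[0,R^{1+\eta}]$ with $\omega=\{w\in V:|p_{l_0}(w)|\leq 10\epsilon\}$:
\[
C'(d)\,\epsilon\,R^{\eta}\;\leq\;\sup_{V}|p_{l_0}|\;\leq\;\left(\frac{4R^{1+\eta}}{|\omega|}\right)^{d}\cdot 10\epsilon,
\]
whence $|\omega|\leq 4(10/C'(d))^{1/d}R^{1+\eta-\eta/d}=:C_2(d)R^{1+\eta-\eta/d}$. Since $d_G(\exp(wU)g\exp(-wU),e)<10\epsilon$ implies $|p_{l_0}(w)|\leq C\cdot 10\epsilon$ for the fixed comparison constant $C$, the set appearing in the corollary is contained in $\omega$ (after absorbing $C$ into the choice of $10$), and the corollary follows. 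The main obstacle throughout is the claim: the hypothesis $d_G(g,e)<C(d)^{-1}\epsilon$ is precisely what is needed so that the constant term of $p_{l_0}$ cannot saturate the bound and spoil the amplification by $R^\eta$.
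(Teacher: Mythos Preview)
Your proof is correct and follows essentially the same approach as the paper: reduce to a single scalar polynomial coordinate $p_{l_0}$ of the conjugated element, amplify to get $\sup_{[0,R^{1+\eta}]}|p_{l_0}|\gtrsim \epsilon R^\eta$, and apply the Brudnyi--Ganzburg inequality. The paper packages the amplification step as an invocation of Lemma~\ref{lem:pol} (with $N$ taken to be a point in $[0,R]$ where $|p_{0j}|\geq\epsilon$), whereas you unfold that argument inline; and the paper singles out the top chain coefficient $p_{0j}$ with the remark that $|p_{ij}|\le|p_{0j}|$ for small $\epsilon$, whereas you simply take whichever coordinate witnesses the divergence --- but these are presentational differences, not substantive ones.
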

\begin{proof} Recall that if $g=\exp(\sum_{i,j}a_{ij}X_i^j)$, then
$\exp(wU)g\exp(-wU)$ will have the coefficients of $X_i^j$ as polynomials in $w$ by \eqref{eq:conj-formula}. Let $p_{ij}(w)$ denote the polynomial for $X^j_i$ and let $R\geq w\geq 0$ be the largest number such that $g \in \Bow(w,\epsilon,e)$, then analogously to \eqref{eq:unipotentActResult} it follows that there exists $j$ such that
$$
|p_{0j}(w)|\geq \epsilon.
$$
Indeed, this follows from the fact that if $\epsilon$ is small enough (depending only on $U$), then $|p_{ij}(w)|\leq |p_{0j}(w)|$.

 Therefore using Lemma \ref{lem:pol}, we have
$$
\left|w\in [0,R^{1+\eta}]: |p_{0j}(w)|\leq 10\epsilon\right|\leq C_1(d)R^{(1+\eta-\eta/d)}.
$$
Since $d_G(\exp(wU)g\exp(-wU),e)<10\epsilon$ implies in particular that every coordinate is less than $10\epsilon$ (by taking $\log$ and since Jacobian is close to $1$ around $e$). Let $C_2(d)=C_1(d)$.
This finishes the proof.
\end{proof}

Denote $
L_U=\max\{m_j : j = 1,\dots, n\}
$ to be the depth of the longest chain for $U$.
%where $X$ is the geodesic element in $\mathfrak{sl}(2,\R)$-triple of $U$.

\begin{lemma}\label{lem:nonsl} For every $\frac{1}{2L_U^2+2L_U+1}>\eta'>0$, every $\epsilon\in [0,\epsilon_0]$ there exist $R_{\epsilon,\eta'}$ such that for every $R\geq R_{\epsilon,\eta'}$, every $g\in \Bow(R,\epsilon,e)%\setminus \Bow(2R,\epsilon,e)
$ such that $a_V(g) = a_X(g) = 0$ and $d_G(g,e) < \ve/C(d)$ and every $t\in [0, R^{1+\eta'}]$ for which
$$
d_G(\exp(tU)g\exp(-tU),e)>10\epsilon,
$$
there exists $\abs{s} \in[0, 2(L_U+1)\eta' \log R]$ and $C_t := a_U(t)U + \sum_{j=0}^n c_j(t)X_0^j$ such that $\max_j \abs{c_j(t)} \ge \ve/n$ and
$$
d_G(\exp(-sX)\exp(tU)g\exp(-tU)\exp(sX),\exp(C_t))<R^{-\eta'},
$$
\end{lemma}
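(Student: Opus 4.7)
Plan: The strategy is to expand $\exp(tU)g\exp(-tU)$ in the chain basis, observe that at times $t\le R^{1+\eta'}$ the $X_i^j$ coefficients with $i\ge 1$ are already automatically negligible, so that the divergence $d_G>10\ve$ must be registered entirely in the centralizing directions $X_0^j\in C(U)$. The role of $s$ is then to use the $\ad_X$-weight $m_j$ of $X_0^j$ to rescale the largest surviving $X_0^j$-coefficient down to exactly $\ve/n$, while simultaneously keeping all $X_i^j$-coefficients ($i\ge 1$) below $R^{-\eta'}$.

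Concretely, writing $g=\exp\bigl(a_U(g)U+\sum_{i,j}a_{ij}(g)X_i^j\bigr)$ and applying \eqref{eq:conj-formula} at $s=0$, one has $\exp(tU)g\exp(-tU)=\exp\bigl(a_U(g)U+\sum_{i,j}p_{ij}(t)X_i^j\bigr)$ with $p_{ij}(t)=\sum_{k=0}^{m_j-i}\frac{t^k}{k!}a_{(i+k)j}(g)$. The Bowen ball condition and Lemma \ref{lem:polynomial-coefficients} applied to $p_{0j}$ on $[0,R]$ give $|a_{kj}(g)|\le C(d)k!R^{-k}\ve$, whence for $t\le R^{1+\eta'}$,
\[ |p_{ij}(t)|\le C'(d)\ve\, R^{\eta'(m_j-i)-i}. \qquad (\ast) \]
For $i\ge 1$ and $\eta'<1/L_U$, $(\ast)$ forces $|p_{ij}(t)|\ll\ve$ uniformly for $R\ge R_{\ve,\eta'}$. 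Together with $|a_U(g)|\le\ve/C(d)$ and the elementary bound $d_G(\exp(tU)g\exp(-tU),e)\le\|a_U(g)U+\sum p_{ij}(t)X_i^j\|$ coming from the length of the algebraic exponential path, the hypothesis $d_G>10\ve$ forces $\max_j|p_{0j}(t)|\ge\ve/n$.

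Let $j_0$ realize this maximum and define $s^*\ge 0$ by $e^{-m_{j_0}s^*}|p_{0j_0}(t)|=\ve/n$; the upper bound in $(\ast)$ gives $s^*\le\eta'\log R+O(1)\le 2(L_U+1)\eta'\log R$ for $R\ge R_{\ve,\eta'}$. Applying \eqref{eq:conj-formula2}, the conjugation $\exp(-s^*X)\exp(tU)g\exp(-tU)\exp(s^*X)$ rescales each $p_{ij}(t)$ to $\tilde p_{ij}(t)=e^{-(m_j-2i)s^*}p_{ij}(t)$. Setting $C_t:=a_U(t)U+\sum_j\tilde p_{0j}(t)X_0^j$ with $a_U(t)=e^{-2s^*}a_U(g)$, the construction yields $\max_j|c_j(t)|=\ve/n$ and $\|C_t\|\le 2\ve$. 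For $i\ge 1$ the bound $|\tilde p_{ij}(t)|<R^{-\eta'}/C''$ is verified in two cases: if $2i\le m_j$ then $e^{(2i-m_j)s^*}\le 1$ and $(\ast)$ directly suffices provided $\eta'<1/L_U$; if $2i>m_j$, substituting $e^{(2i-m_j)s^*}\le R^{(2i-m_j)\cdot 2(L_U+1)\eta'}$ into $(\ast)$ produces $|\tilde p_{ij}(t)|\le C'\ve R^{[(4L_U+3)i-(2L_U+1)m_j+1]\eta'-i}$, whose exponent is $\le -\eta'$ whenever $\eta'\le i/[(4L_U+3)i-(2L_U+1)m_j+1]$; the worst case $i=m_j=L_U$ requires $\eta'\le L_U/(2L_U^2+2L_U+1)$, which is implied by the hypothesis.

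Finally, writing $Y''=C_t+Z$ with $Z=\sum_{i\ge 1,j}\tilde p_{ij}(t)X_i^j$ of norm $<R^{-\eta'}/C''$, Baker--Campbell--Hausdorff gives $\exp(Y'')\exp(-C_t)=\exp(W)$ with $\|W\|\le(1+O(\|C_t\|))\|Z\|<R^{-\eta'}$, so by right-invariance $d_G(\exp(Y''),\exp(C_t))<R^{-\eta'}$. The \emph{main obstacle} is the combinatorial bookkeeping in the sub-case $2i>m_j$: the rescaling $e^{(2i-m_j)s^*}$ amplifies rather than dampens the already non-negligible polynomial bound for $p_{ij}$, and the sharp hypothesis $\eta'<1/(2L_U^2+2L_U+1)$ reflects exactly the extremal balance at $i=m_j=L_U$ between the allowed scaling window $2(L_U+1)\eta'\log R$ for $s^*$ and the most contracted $\ad_X$-eigendirection.
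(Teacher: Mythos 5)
Your overall strategy — expand $\exp(-sX)\exp(tU)g\exp(-tU)\exp(sX)$ in the chain basis, bound the $i\ge 1$ coefficients using Lemma \ref{lem:polynomial-coefficients}, infer that a non-negligible $X_0^j$-coefficient must exist from the divergence hypothesis, and then use the $\ad_X$-weights to locate $s$ — matches the paper's. The crucial difference, and the place where your argument has a genuine gap, is in how $s$ is chosen.

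You define $s^*$ by solving $e^{-m_{j_0}s^*}|p_{0j_0}(t)|=\ve/n$ for the single arg-max coordinate $j_0$. This makes $|c_{j_0}(t)|=\ve/n$, but it gives no control whatsoever on the other coordinates. In fact, for a chain $j$ with $m_j<m_{j_0}$, the coefficient decays more slowly under conjugation by $\exp(-s^*X)$, so
\[
|p_{0j}(t,s^*)| = e^{-m_j s^*}|p_{0j}(t,0)| \le e^{(m_{j_0}-m_j)s^*}\cdot\ve/n,
\]
which can be as large as $R^{(L_U-1)\eta'+o(1)}\ve/n$ for your range of $s^*$; more crudely, $|p_{0j}(t,s^*)|\le|p_{0j}(t,0)|\le 2R^{(L_U+1)\eta')}\ve/n$ is the only generic bound. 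Thus your claims ``$\max_j|c_j(t)|=\ve/n$'' and ``$\|C_t\|\le 2\ve$'' are both false in general: $\|C_t\|$ can grow polynomially in $R$. This poisons your final step. You invoke Baker--Campbell--Hausdorff to get $\|W\|\le(1+O(\|C_t\|))\|Z\|$, but the first BCH correction alone satisfies $\|[Z,C_t]\|\approx\|Z\|\cdot\|C_t\|$, which for $L_U\ge 3$ is $\gtrsim R^{-2\eta'}\cdot R^{(L_U-1)\eta'}\ve=\ve R^{(L_U-3)\eta'}$, already comparable to or larger than the target $R^{-\eta'}$; the higher-order terms involving $\ad_{C_t}^k Z$ are even worse when $\|C_t\|\to\infty$. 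The needed ``Lipschitz constant of $\log$ in a bounded neighborhood of $e$'' estimate simply isn't available when the point $\exp(-sX)\exp(tU)g\exp(-tU)\exp(sX)$ may drift out of any fixed neighborhood of the identity.

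The paper sidesteps this by an intermediate-value argument on the full vector $\zeta(s)=(p_{01}(t,s),\dots,p_{0n}(t,s))$: one checks $\|\zeta(0)\|$ is too big, $\|\zeta(D\eta'\log R)\|\le 2\ve$, and by continuity picks $s$ with $\|\zeta(s)\|=2\ve$ exactly. That choice forces \emph{all} the $|c_j(t)|$ to be $\le 2\ve$ simultaneously, so $\|C_t\|=O(\ve)$ and the Lipschitz/BCH comparison is legitimate. If you want to repair your argument, replace ``solve for the single coordinate $j_0$'' with ``take the smallest $s\ge 0$ for which $\max_j|p_{0j}(t,s)|\le 2\ve$'' (which is the IVT point in disguise): this keeps $\max_j|c_j(t)|\ge\ve/n$ (it equals $2\ve$), bounds $\|C_t\|$ uniformly, and the remainder of your computations go through. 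Two secondary, non-fatal issues: you should also exclude the possibility that $j_0$ is a trivial chain ($m_{j_0}=0$), since then $s^*$ is undefined --- this is ruled out exactly as in the paper because such coefficients are constant and bounded by $\ve/C(d)$; and your ``worst case $i=m_j=L_U$'' analysis is not the actual minimiser of the constraint (it occurs at $i=m_j=1$), though with your tighter bound $s^*\le(1+o(1))\eta'\log R$ the hypothesis on $\eta'$ still suffices.
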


\begin{proof}
Write $g = \exp\left(a_U(g)U + \sum_{j=1}^n \sum_{i=0}^{m_j} a_{ij}(g)X_i^j\right)$, and let
 \[p_{ij}(t,s) = \sum_{k=0}^{m_j-i} \frac{t^k}{k!}e^{-(m_j-2i)s}a_{(k+i)j}\] be the coefficient of $X_i^j$ for $\exp(-sX)\exp(tU)g\exp(-tU)\exp(sX)$ as determined by \eqref{eq:conj-formula}. The condition that $g \in \Bow(R,\ve,e)$ implies that $\abs{a_{kj}} < C(d)\ve k! / R^k$ by Lemma \ref{lem:polynomial-coefficients}. Thus, if $t \le R^{1+\eta'}$,

\[ t^k/k! a_{(k+i)j} \le \frac{R^{(1+\eta')k}}{R^{k+i}}C(d)\ve k! = R^{k\eta' - i}C(d)\ve k! < R^{1/2 - i}\ve/(m_j+1) \]
if $R$ is sufficiently large.

Thus, if $i \ge 1$, $\abs{p_{ij}(t,0)} < R^{-1/2}\ve$. Similarly for sufficiently large $R$, $\abs{p_{0j}(t,0)} \le (m_j+1)C(d)^2\ve R^{m_j\eta'} \le C(d)^2\ve R^{L_U\eta'} < 2R^{(L_U+1)\eta'}\ve/n$ for every $j$

By assumption, $t$ is such that $d_G(\exp(tU)g\exp(-tU),e) > 10 \ve$. Therefore, there exists some $j$ such that $p_{0j}(t,0) \ge 8\ve/n$. Let $D = 2(L_U+1)$, and consider the function $\zeta : s \mapsto (p_{01}(t,s),\dots,p_{0n}(t,s)) \in \R^n$. Since $p_{0j}(t,0) \ge 8\ve/n$ for some $j$, it follows that $\zeta(0) \not\in [0,7\ve/n]^n$. But since $\abs{p_{0j}(t,0)} < 2R^{(L_U + 1)\eta'}\ve/n$, we get

\[\abs{p_{0j}(t,D\eta'\log R)}  = R^{-m_jD\eta'}\abs{p_{0,j}(t,0)}  < 2R^{(L_U+1)(1 - 2m_j)\eta'}\le 2\ve/n\]

for every $j$, provided $m_j \ge 1$. If $m_j = 0$, the only term appearing is $X_0^j$ which is constant in both $t$ and $s$. Therefore its coefficient is bounded by $\ve / C(d)$ by the assumption that $d(z,e) < \ve/C(d)$. Therefore, $\norm{\zeta(D\eta'\log R)} \le 2\ve$.

By continuity of $\zeta$, we may therefore choose $s \in [0,D\eta'\log R]$ such that $\norm{\zeta(s)} = 2\ve$. Then, since $\abs{p_{ij}(t,0)} < R^{-1/2}\ve$, we get that $\abs{p_{ij}(t,s)} \le R^{-(m_j - 2i)D\eta'- 1/2}\ve < R^{-2\eta'}$ for $i \ge 1$ if $\eta'$ is sufficiently small. Setting $C_t = e^{-2s}a_U(z) U + \sum_{j=1}^n p_{0j}(t,s)X_0^j$ gives that:

\begin{multline*}
 d_G(\exp(-sX)\exp(tU)z\exp(-tU)\exp(sX),\exp(C_t)) \le \\\ell \norm{ \sum_{j=1}^n \sum_{i=1}^{m_j} p_{ij}(t,s)X_i^j} \le \ell R^{-2\eta'} \sum m_j< R^{-\eta'}
\end{multline*}

Here, $\ell$ is the Lipshitz constant for $\log : B_G(e,100n\ve) \to \mf g$ and the last inequality holds if $R$ is chosen sufficiently large. Finally, notice that since $\zeta(s) = 2\ve$, there exists some $j$ such that $p_{ij}(t,s) > \ve/n$.
\end{proof}

\section{Proof of Theorem \ref{th1}}
\label{sec:thmref}

In this section we will prove Theorem \ref{th1}. The proof is rather technical and consists of several steps, which we will divide into subsections to improve readability. The following Theorem is a crucial step in the proof of Theorem \ref{th1} since it shows that for most points, being Kakutani close implies that there exists a long block on which they are close in the metric on $G / \Gamma$ (see Definition \ref{def:kakball}).

\textbf{Sequence of Partitions.} Let $(K_m)_{m\in \N}\subset G/\Gamma$ be a family of compact sets such that $\mu((G/\Gamma)\setminus K_m)\to 0$.
 Let $\cP_m$ be a partition of $G/\Gamma$ such that $K_m^c$ is one atom of $\cP_m$ and
the atoms of $\cP_m\cap K_m$ are sets with diameter in $[\frac{1}{2m^2},\frac{1}{m^2}]$ (with smooth boundaries). It is clear that $\vee_{m\geq 1}\cP_m$ generates the $\sigma$-algebra (and we can use Theorem \ref{rat:gen}).
Fix a compact set $K_0$, with $\mu(K_0)\geq 1-10^{-3}$.
Let $\epsilon'>0$ be a small constant fixed from now on, in particular $\epsilon'<\min(\epsilon_1,\epsilon_0)$.
We have the following theorem:
\begin{theorem}\label{prop:longblock}
Let $(\phi_t)$ be the flow generated by $U$ with $GR(U)>3$.
There exists $\delta_{0}>0$ such that for every $0<\delta\leq \delta_{0}$ there exists a set $E=E_\delta\subset G/\Gamma$, $\mu(E_\delta)>99/100$ and $m_\delta, R_\delta\in \N$ such that for every $m\geq m_\delta$, $R\geq R_\delta$ and every $x,y\in E$ which are $(\frac{1}{100},\cP_m)$-matchable there exists $u\in A(x,y)$ such that (see Definition \ref{def:matchsplit})
$$
S(u,\epsilon')\geq R^{1-\delta},
$$
and moreover $x_u=\phi_ux, y_u=\phi_{h(u)}y\in K_0$.
\end{theorem}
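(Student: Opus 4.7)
The plan is to locate one matching time $u$ at which the relative displacement $g_u\in G$ defined by $\phi_u x=g_u\phi_{h(u)}y$ satisfies the full algebraic Kakutani--Bowen condition at scale $R^{1-\delta}$. I would take $E_\delta$ to be the intersection of the compact set $K_{\epsilon'}$ from Lemma \ref{lem:sys} with the set of points satisfying uniform Birkhoff estimates for both $K_0$ and $K_m$: for all $R\geq R_\delta$ and all $m\geq m_\delta$,
\[|\{t\in[0,R]:\phi_t x\in K_0\cap K_m\}|\geq(1-10^{-2})R.\]
Since $\mu(K_0)\geq 1-10^{-3}$ and $\mu(K_m)\to 1$, Birkhoff's theorem combined with Egorov yields $\mu(E_\delta)>99/100$ for $R_\delta,m_\delta$ large. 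For $x,y\in E_\delta$ with an $(1/100,\cP_m)$-matching $h:A\to A'$, the subset $A_0\subset A$ of matching times $u$ with both $\phi_u x,\phi_{h(u)} y\in K_0$ has $|A_0|>(1-3\cdot 10^{-2})R$. At each such $u$, the matching condition forces $\phi_u x$ and $\phi_{h(u)} y$ into a common atom of diameter $<m^{-2}$, so there is a unique lift $g_u\in G$ with $d_G(g_u,e)<Cm^{-2}$.

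\textbf{From instantaneous to long-interval closeness.} The direct application of continuity \eqref{lem:contphi} gives $S(u,\epsilon')\geq f(m_\delta)$, which is bounded independently of $R$ and therefore insufficient. The essential new input is to exploit the whole matched interval rather than the snapshot at $u$. From
\[g_{u+\tau}=\exp(\tau U)\,g_u\,\exp\bigl(-(h(u+\tau)-h(u))U\bigr)=\exp(\tau U)g_u\exp(-\tau U)\cdot\exp(-\xi U),\]
with $|\xi|\leq\tau/100$ arising from $|h'-1|<1/100$, the matching condition is equivalent to $d_G(g_{u+\tau},e)<Cm^{-2}$ for every $u+\tau\in A$. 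Since $A$ has density $99/100$ in $[0,R]$, the Brudnyi--Ganzburg inequality (Lemma \ref{lem:brudnyi}), applied coefficientwise to the polynomial $t\mapsto a_{ij}(\exp(tU)g_u\exp(-tU))$ of degree $m_j\leq L_U$ given by \eqref{eq:conj-formula}, should promote pointwise bounds on $A$ to uniform bounds on $[0,R]$, absorbing the density loss into a bounded multiplicative constant. Feeding this back into Lemma \ref{lem:polynomial-coefficients} would give $|a_{ij}(g_u)|\lesssim R^{-i}$ together with $|a_V(g_u)|\lesssim R^{-2}$ and $|a_X(g_u)|\lesssim 1$, which is exactly the Kakutani-Bowen condition at scale $R^{1-\delta}$ for the chosen $u$. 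This is the ``long-block'' content to be formalized.

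\textbf{Main obstacle.} The hardest step is controlling the time-discrepancy factor $\exp(-\xi U)$ and ruling out the scenario in which the matching fragments into very many tiny blocks that are individually realigned using only a negligible portion of $[0,R]$. A priori, Lemma \ref{lem:slren} shows that a $V$-discrepancy of size $a_V$ permits realignment over a window of length $\epsilon' |a_V|^{-1}$, so short blocks could conceivably be stitched together. To prevent this, I would split along the decomposition $\Kak=\Kak^{1,\delta}\cup\Kak^{2,\delta}$ of \eqref{emj}--\eqref{emj2}: in the $\Kak^{2,\delta}$ regime the renormalization window provided by $a_V\in[R^{-(1+10\delta)},\epsilon'/R]$ is already comparable to $R^{1-\delta}$, so one realignment exhausts most of $[0,R]$; in the $\Kak^{1,\delta}$ regime, Lemma \ref{lem:nonsl} reduces divergence to finitely many centralizing configurations after $O(L_U\log R)$ peelings, and Corollary \ref{corpol} forces each divergence-realignment cycle to consume a polynomial share $R^{\eta/L_U}$ of measure from $A$. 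A pigeonhole/counting argument, which I expect to be formalized as Proposition \ref{prop:splitlong}, then shows that the gap budget $R/100$ cannot sustain a partition of the matched interval into blocks all shorter than $R^{1-\delta}$, yielding the desired $u\in A_0$ with $S(u,\epsilon')\geq R^{1-\delta}$.
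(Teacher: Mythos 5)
There is a genuine gap in the step ``From instantaneous to long-interval closeness.'' You claim that Brudnyi--Ganzburg, applied to $t\mapsto a_{ij}(\exp(tU)g_u\exp(-tU))$, promotes the pointwise bound $d_G(g_{u+\tau},e)<Cm^{-2}$ for $u+\tau\in A$ to a uniform bound on $[0,R]$. But the matching condition only controls $g_{u+\tau}=\exp(\tau U)g_u\exp(-(h(u+\tau)-h(u))U)$, and this is not even well defined as a single element of $G$ across gaps in $A$: on either side of a gap the unique small lift of $\phi_{u+\tau}x\cdot(\phi_{h(u+\tau)}y)^{-1}$ can differ by a nontrivial $\gamma\in\Gamma$, so $g_{u+\tau}$ is not obtained from $g_u$ by conjugation, and its coefficients are not polynomials in $\tau$ beyond the connected block of $A$ containing $u$. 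This is not a technicality --- it is precisely the phenomenon the theorem must rule out: the horocycle flow on $SL(2,\R)/\Gamma$ is standard exactly because such realignments via $\gamma\neq e$ do occur abundantly, letting orbits match on a dense set of scattered blocks while never being algebraically close on any long block. Your argument as stated would also ``prove'' the theorem for $GR(U)=3$, where it is false.

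The paper avoids this by never asserting that a single $u$ controls $[0,R]$. Instead, it bins matching times into $C_{j,R,m}(x,y)=\set{u:2^j\leq S(u,\epsilon')<2^{j+1}}$ and shows (Propositions \ref{prop:splitlong} and \ref{lem:splt}) that each bin with $2^j\leq R^{1-40\delta}$ has measure at most $c(d)R\,2^{-\delta j/d}$, which sums to $o(R)$; pigeonhole then forces a large-$j$ bin to be nonempty. Your ``Main obstacle'' paragraph correctly anticipates the $\Kak^1/\Kak^2$ dichotomy and even the role of Corollary \ref{corpol} for the $\Kak^1$ case, but your account of the $\Kak^2$ case is off: it is not that ``one realignment exhausts most of $[0,R]$''; the real obstruction is ruling out that two nearby times $u,u'$ with $|u-u'|\sim 2^{j(1+O(\delta))}$ both realign via (possibly distinct) lattice elements. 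This is Lemma \ref{lem:mesem}, which is the analytic heart of the whole proof and is where the hypothesis $\dim G-\dim C(X)>3$ (equivalently $GR(U)>3$) is used, via a lattice-point count (\cite{Gor-Nev}), Siegel-set geometry (Proposition \ref{prop:measureOfPertubationSet}), and the disjointness Lemma \ref{lm:MeasureEstimateCrucial}. Your sketch never invokes $GR(U)>3$ and so cannot be complete as written.
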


The proof of Theorem \ref{th1} can be deduced from Theorem \ref{prop:longblock}. We will first give a conditional proof of Theorem \ref{th1} (assuming that Theorem \ref{prop:longblock} holds) and then prove Theorem \ref{prop:longblock} in a separate section.  The proof of Theorem \ref{th1} is divided into two parts:  (i) the upper bound on the number of balls and (ii) lower bound on the number of balls. It follows that we only need Theorem \ref{prop:longblock} for (ii). In the proof we will use Remark
\ref{compef}.

\textbf{Outline of the proof:} The proof shows first that $e(({\phi_t}),\log) \le GR(U) - 3$ and then that $e((\phi_t),\log) \ge GR(U)-3$. For the proof of the upper bound, we will apply Lemma \ref{lem:B_Gdiv}. To do so, we need to relate $\Kak(R,\ve,y)$ and $B_R(y,\ve,\mc P)$. The key idea is this: $\Kak(R,\ve,y)$ is defined in such a way that all coordinates are controlled in such a way that if $x \in \Kak(R,\ve,y)$, for each $u \in [0,R]$, we can apply $\phi_t$ to bring $\phi_u(x)$ close to $\phi_u(y)$, with $t$ well-controlled. The decay rate of $\Kak$, however, is 1 away from the claimed decay rate for $B_R(y,\ve,\mc P)$. This is because the definition of $\Kak$ assumes that $x$ and $y$ are close initially, while for $B_R(y,\ve,\mc P)$, we only require a forward orbit of $x$ (which is small relative to $R$, but can depend {\it linearly} on $R$) to be close to $y$. This accounts for the slower decay rate of $B_R$ versus $\Kak$.

\textbf{Claim A} is the relationship between $\Kak$ and $B_R$: if $x\in \bigcup_{p=0}^{\epsilon^3R}\phi_{-p}(\Kak(R,\epsilon^5,y))$, then they are in one Kakutani ball, i.e. $x\in B_R(y,\epsilon, \cP_m)$. Then \textbf{Claim B} shows that the forward orbits of $\Kak(R,\ve^5,y)$ do not overlap giving the desired rate with Lemma \ref{lem:B_Gdiv}.

For the lower bound we use Theorem \ref{prop:longblock} to show that if $x\in B_R(y,\epsilon,\cP_m)$, then \eqref{eq:logu} holds. This together with the upper bound in Lemma \ref{lem:B_Gdiv} implies the lower bound.

\begin{proof}[Proof of Theorem \ref{th1}]

We will separately prove the upper bound which will follow from general estimates on asymptotic divergence of orbits and the lower bound which is a consequence of Theorem \ref{prop:longblock}.
Fix $m\in \N$ (this also fixes the partition $\cP_m$) and let $\epsilon>0$, $\epsilon<m^{-3}$. Let $K_{\epsilon^5}\subset F$, with $\mu(K_{\epsilon^5})>1-\epsilon^5$ be as in Lemma \ref{lem:sys} and let  $\kappa=\min(\kappa(\epsilon^5),\epsilon^3)$.

\paragraph{Upper bound on the number of balls.}
Let $V^m_{\epsilon^2}$ be the $\epsilon^2$ neighborhood of the boundary of $\cP_m$. Since the boundaries are smooth, it follows that
$\mu(V^m_{\epsilon^2})={\rm O}(m^2\epsilon^2)$. Applying the ergodic theorem to $(\phi_t)$ and the set $\chi_{V^m_{\epsilon^2}}$, we obtain a set $D_\epsilon$ such that
 $\mu(D_\epsilon)>1-\epsilon^2$ and a number $N_\epsilon>0$ such that for every $R\geq N_\epsilon$ and every $y\in D_\epsilon$, we have
\be\label{eq:bet}
|\{t\in[0,R]\;:\; \phi_t(y)\in V^m_{\epsilon^2}\}|\leq \frac{\epsilon R}{2}.
\ee

%Let $K_0'\subset K_0$
%(see Lemma \ref{lem:sys})  be a compact set such that $V(K_0',\eta)\subset K_0$ (for a set $A\in M$ $V(A,\eta)$ denotes the $\eta$ neighborhood (in the metric $d_M$) of $A$).
%By ergodic theorem for $\chi_{K_0'}$ it follows there exists a set $D^2_\epsilon$, $\mu(D^2_\epsilon)>1-\epsilon^2$ and a number $N^2_\epsilon>0$ such that for every $R\geq N^2_\epsilon$, every $R'\geq \frac{\delta'}{10}$ and every $y\in D^2_\epsilon$, we have
%\be\label{eq:bet2}
%|\{t\in[R,R+R']\;:\; g_t(y)\in K_0'\}|\geq \frac{R'\mu(K'_0)}{2}.
%\ee

%Let $D_\epsilon=D^1_\epsilon\cap D^2_\epsilon$ and $N_\epsilon=\max(N^1_\epsilon,N^2_\epsilon)$.

The upper bound will follow from the following two claims:

\textbf{Claim A.} For every $y\in D_\epsilon\cap g_{-\frac{1}{2}\log(\epsilon^2 R)}(K_{\epsilon^5})$, every $R\geq N_\epsilon$ and every $x\in G/\Gamma$ if there exists $p\in [0,\kappa^3 R]$ such that if
$$\phi_px\in \Kak(R,\kappa^5,y)$$
then $x\in B_R(y,\epsilon, \cP_m)$ (see Definition \ref{def:Kakutani}).

\textbf{Claim B.} For every $y\in D_\epsilon\cap g_{-\frac{1}{2}\log(\epsilon^2 R)}(K_{\epsilon^5})$ and every $p,q\in [0,\kappa^3 R]$, with $|p-q|\geq 1$, we have
$$
\phi_{-p}(\Kak(R,\kappa^5,y))\cap \phi_{-q}(\Kak(R,\kappa^5,y))=\emptyset.
$$
Before we prove the claims, let us show how they imply the upper bound.

Take $y\in D_\epsilon\cap K_{\epsilon^5}\cap g_{-\frac{1}{2}\log(\epsilon^2 R)}(K_{\epsilon^5})$. By \textbf{Claim A} it follows that
$$
\bigcup_{p\in [0,\kappa^3 R]}\phi_{-p}(\Kak(R,\kappa^5,y))\subset B_R(y,\epsilon, \cP_m).
$$
Therefore by \textbf{Claim B.} and Lemma \ref{lem:B_Gdiv} (since $y\in K_{\epsilon^5}$, we have ${\rm inj}(y)\geq \kappa$ see Lemma \ref{lem:sys}), we have
\begin{multline*}
\mu(B_R(y,\epsilon, \cP_m))\geq \mu\left(\bigcup_{p\in [0,\kappa^3 R]}\phi_{-p}(\Kak(R,\kappa^5,y))\right)\geq \\\kappa^3 R\mu(\Kak(R,\kappa^5,y))\geq
\kappa^{d+3}R^{-GR(U)+3}.
\end{multline*}
Since this holds for every $y\in D_\epsilon\cap K_{\epsilon^5}\cap g_{-\frac{1}{2}\log( \epsilon^2 R)}(K_{\epsilon^5})$ and $\mu(D_\epsilon\cap K_{\epsilon^5}\cap g_{-\frac{1}{2}\log( \epsilon^2 R)}(K_{\epsilon^5}))\geq 1-\epsilon$(since ($g_s$) preserves $\mu$) it follows that for some $C(\epsilon)>0$ depending on $\epsilon$ only and by Remark \ref{compef}, we have
\be\label{eq:Lab}
K_R(5\epsilon,\cP_m)\leq C(\epsilon)R^{GR(U)-3}.
\ee
Therefore $\beta(\log, 5\epsilon,\cP_m)\leq GR(U)-3$ and so by Theorem \ref{rat:gen},
$e((\phi_t),\log)\leq GR(U)-3$.

 Notice moreover, that if $GR(U)=3$, then by \eqref{eq:Lab} it follows that the number of balls does not depend on $R$. Therefore $e((\phi_t),u)=0$ for every function $u\in \mc F$. By Theorem \ref{th:LB} it follows that if $GR(U)=3$, then $(\phi_t)$ is standard.

 So it remains to prove \textbf{Claim A} and \textbf{Claim B.}

\textbf{Proof of Claim A.} Take $y\in D_\epsilon\cap g_{-\frac{1}{2}\log(\epsilon^2 R)}(K_{\epsilon^5})$ and let $p\in [0,\kappa^3 R]$ be such that $\exp(pU)x \in \Kak(R,\kappa^5,y)$.

This by Definition \ref{def:kakball} implies that %for some $\gamma \in \Gamma$,
for some $|b|<\kappa^5$, $|a|<\frac{\kappa^5}{R}$, and $g \in \Bow(R,\kappa^5,e)$ satisfying $a_V(g) = a_X(g) = 0$, we have
%\be\label{eq:tdec}
%x' =\exp(aV)\exp(bX)gy,
%\ee

%Hence
\be\label{eq:tdec2}
x=\exp(-pU)\exp(aV)\exp(bX)gy.
\ee
%for $\gamma''=\gamma'\gamma^{-1}$.}

Let $\psi(t):=\frac{te^{b}}{e^{-b}-ae^{b}t}$ be as in Lemma \ref{lem:slren}, $h(t)=\psi(t)+p$, $A(x,y):= \{t\in[0,R]\;:\; \phi_t(y)\notin V^m_{\epsilon^2}\}$ (notice that by \eqref{eq:bet}, we have $|A(x,y)|\geq (1-\epsilon)R$).
Moreover (see Remark \ref{rem:inv}), $|h'(t)-1|<\epsilon$ for every $t\in [0,R]$ and hence $h$ satisfies the condition to be a $(\epsilon,\cP_m)$-matching function. We will show that for every $t\in [0,R]$, we have
\be\label{dgf}
d_{G/\Gamma}(\phi_t y,\phi_{h(t)}x)\leq \epsilon^3.
\ee

This by right invariance, the definition of $h(\cdot)$ and \eqref{eq:tdec2} follows by showing
$$
d_G(e,\exp(\psi(t)U)\exp(aV)\exp(bX)\exp(-tU)\exp(tU)g\exp(-tU))\leq\epsilon^3.
$$
Since $g \in \Bow(R,\kappa^5,e)$ it follows that $d_G(\exp(tU)g\exp(-tU),e)<\kappa^5<\epsilon^5$ for $t \in [0,R]$. Moreover,
by  Lemma \ref{lem:slren}, we have
$$
d_G(\exp(\psi(t)U)\exp(aV)\exp(bX)\exp(-tU),e)<\epsilon^4.
$$
The two above inequalities finish the proof of \eqref{dgf}.  By \eqref{dgf}, for every $t\in A(x,y)$ (see \eqref{eq:bet}), we have
$\cP_m(\phi_ty)=\cP_m(\phi_{h(t)}x)$. Since  $|(A(x,y)|\geq (1-\epsilon)R$, it follows that  $x\in B_R(y,\epsilon,\cP_m)$. This finishes the proof of \textbf{Claim A.}

\textbf{Proof of Claim B.} We will argue by contradiction assuming that there exists $x \in \phi_{q-p}(\Kak(R,\kappa^5,y))\cap \Kak(R,\kappa^5,y)$, with $\kappa^3 R\geq |p-q|\geq 1$ and $y\in D_\epsilon\cap g_{-\frac{1}{2}\log(\epsilon^2 R)}(K_{\epsilon^5})$.
This, by the Definition \ref{def:kakball} in particular means (denoting $r=p-q$) that
$$
x=\exp(aV)\exp(bX)gy,
$$
and,
$$
\exp(-rU)x=\exp(a'V)\exp(b'X)g'y,
$$
where $g,g' \in \Bow(R,\kappa^5,e)$ satisfy $a_V(\cdot) = a_X(\cdot) = 0$, $|a|,|a'|\leq \frac{\kappa^5}{R}$ and $|b|,|b'|\leq\kappa^5$. Choose lifts $\tilde{x},\tilde{y} \in G$ of $x,y\in G/ \Gamma$ minimizing $d_G(\tilde{x},\tilde{y})$. In particular since $\kappa < \inj(K_{\epsilon^5})$,

\begin{equation}
\tilde{x} = \exp(aV)\exp(vX)g\tilde{y} \qquad \exp(-rU)\tilde{x}=\exp(a'V)\exp(b'X)g'\tilde{y}\gamma
\end{equation}

for some $\gamma \in \Gamma$. Therefore using the second equality to express $\tilde{y}\gamma$ and the first to express $\tilde{y}^{-1}$, we get

$$
\tilde{y}\gamma \tilde{y}^{-1}= {g'}^{-1}\exp(-b'X)\exp(-a'V)\exp(rU)\exp(aV)\exp(bX)g.
$$

Notice that since $|r|\geq 1$ and all the other terms on the RHS are $\kappa$ small, it follows that $\gamma \neq e$.
Multiplying on the left by $\exp(-sX)$ and on the right by $\exp(sX)$
with $s=\frac{1}{2}\log \epsilon^2 R$ gives %(for some $\gamma''\in \Gamma$)
\begin{multline}\label{eq:mul1}
(\exp(-sX)\tilde{y})\gamma (\exp(-sX)\tilde{y})^{-1}=\exp(-sX){g'}^{-1}\exp(sX)\exp(-b'X)\cdot \\
\exp(e^{2s}a'V)\exp(e^{-2s}rU)\exp(e^{2s}aV)\exp(bX)\exp(-sX)g\exp(sX).
\end{multline}
By the definition of $s$ it follows that $\max(|e^{2s}a'|,|b'|,|e^{-2s}r|,|e^{2s}a|,|b|)\leq \kappa^3$. Moreover, by Lemma \ref{lem:dirs}  (with $\delta'=0$) it follows that for $w\in\{ g,g'^{-1}\}$ (since each such $w\in \Bow(R,\kappa^5,e)$)
$$
d_G(\exp(-sX)w\exp(sX),e)<\kappa^{4}.
$$

Therefore, the RHS of \eqref{eq:mul1} is $\kappa^2$ close to $e$. However by definition, $g_sy\in K_{\epsilon^5}$ and hence, by Lemma \ref{lem:sys}, it follows that
$d_G((\exp(-sX)\tilde{y})\gamma (\exp(-sX\tilde{y}))^{-1},e)\geq \kappa$. This contradiction finishes the proof of the upper bound.

\paragraph{Lower bound on the number of balls.}  Notice that from the upper bound estimates (in particular, \eqref{eq:Lab}), it follows that if $GR(U)=3$, then $(\phi_t)$ is standard. Hence in what follows we assume that $GR(U)>3$, which, by Lemma \ref{th2} is equivalent to $\dim G-\dim(C(X))-3>0$ and we can use Theorem \ref{prop:longblock}.

Fix $\delta>0$, $0 < \ve < 1/100$, and $m \ge m_\delta$ satisfying $1/m^2 < \ve'/C(d)$ and $R\geq R_\delta$ and assume that $x,y\in E_\delta$ are $(\epsilon,\cP_m)$-matchable.
Using Theorem \ref{prop:longblock} and Definition \ref{def:matchsplit} it follows that there exists $p,q\in [0,R]$ (in fact $p=u$ and $q=h(u)$ where $h$ is the matching function) such that
$$
\phi_px\in \Kak(R^{1-\delta}, \epsilon',\phi_q y).
$$
This implies that

$$
x\in \phi_{-p}\left(\Kak(R^{1-\delta}, \epsilon',\phi_q y)\right).
$$

Therefore,
$$
B_R(y,\epsilon,\cP_m)\cap E_\delta \subset \bigcup_{p,q}\phi_{-p}\left(\Kak(R^{1-\delta}, \epsilon',\phi_q y)\right).
$$
Since $\phi_qy\in K_0$ (by Theorem \ref{prop:longblock}), we have ${\rm inj}(\phi_qy)>c_0=\inj(K_0)$.
Therefore, since $\epsilon'$ is small enough (in particular $\epsilon'<c_0$), it follows by Lemma \ref{lem:B_Gdiv} that
$$
\mu(B_R(y,1/100,\cP_m)\cap E_\delta)\leq R^2 \max_{q \in [0,R]}\mu(\Kak(R^{1-\delta}, \epsilon',\phi_q y)\leq c(\epsilon')R^{2-(1-\delta)(GR(U)-2)}.
$$
So the number of balls needed to cover $1-\epsilon$ of space is at least $R^{(1-\delta)(GR(U)-2)-2}$.  Therefore $\beta(\log,\epsilon,\cP_m)\geq (1-\delta)(GR(U)-2)-2$ and since the sequence $(\cP_m)$ is generating $e((\phi_t),\log)\geq (1-\delta)(GR(U)-2)-2$. The proof for general $\Gamma$ is finished by taking limit as $\delta$ goes to $0$.

Now assume that $\Gamma$ is cocompact and let $(R_i)_{i=1}^{R^{2\delta}}$ be given by $R_i=iR^{1-2\delta}$. We will show that any $x,y\in E_\delta$ which are $(1/100,\cP_m)$-matchable have to satisfy
 \be
 \label{eq:logu}
x\in \bigcup_{p\in [-2R,2R]}\phi_{-p}\left(\bigcup_{i=1}^{R^{2\delta}}
\Kak(R^{1-4\delta}, \epsilon'^{1/3},\phi_{R_i}y)\right)
\ee
Before we give the proof of \eqref{eq:logu}, let us show how it implies the lower bound. By \eqref{eq:logu}, we have
$$
B_R(y,\epsilon,\cP_m)\cap E_\delta \subset \bigcup_{p\in [-2R,2R]}\phi_{-p}\left(\bigcup_{i=1}^{R^{2\delta}}
\Kak(R^{1-4\delta}, \epsilon'^{1/3},\phi_{R_i}y)\right).
$$
Since $\ve' < \inj(G/\Gamma)$, by Lemma \ref{lem:B_Gdiv}, for some $c(\epsilon')>0$ (depending on $\ve'$ only)

\begin{multline*}
\mu(B_R(y,\epsilon,\cP_m)\cap E_\delta)\leq 4R^{1+2\delta}\max_{i}(\mu(\Kak(R^{1-4\delta}, \epsilon'^{1/3},\phi_{R_i}y)))\leq \\
c(\epsilon')R^{1+2\delta}R^{-(1-4\delta)(GR(U)-2)}.
\end{multline*}
This gives
$$
\mu(B_R(y,\epsilon,\cP_m)\cap E_\delta)\leq c(\epsilon')R^{U(\delta)},
$$
where $U(\delta)=1+2\delta-(1-4\delta)(GR(U)-2)$. Hence the number of balls needed to cover $1-\epsilon$ of space, i.e.  $K_R(\epsilon,\cP_m)$ is at least $C(m)R^{-U(\delta)}$ (for some constant depending on $m$ only). Therefore $\beta(\log,\epsilon,\cP_m)\geq -U(\delta)$ and since the sequence $(\cP_m)$ is generating $e((\phi_t),\log)\geq -U(\delta)$. The proof is finished by taking limit as $\delta$ goes to $0$, since $-U(0)=GR(U)-3$. So it remains to show \eqref{eq:logu}.

Using Theorem \ref{prop:longblock} and Definition \ref{def:matchsplit} it follows that there exists $p,q\in [0,R]$ (in fact $p=u$ and $q=h(u)$ where $h$ is the matching function) such that
$$
\phi_px\in \Kak(R^{1-\delta}, \epsilon',\phi_q y)
$$
Let $R_j$ be the number minimizing $|q-R_i|$ (over all $i$). To finish the proof of \eqref{eq:logu} it is enough to show that there exists $\ell=\ell(p,q)$, $|\ell|\leq R$ such that (since $|p+\ell|<2R$)
$$
\phi_{p+\ell}x\subset \Kak(R^{1-2\delta},\epsilon'^{1/3}, \phi_{R_j}(y)).
$$
This however follows by Lemma \ref{lem:adu} with $\epsilon^3=\epsilon'$, %$\eta=\delta$,
$z=\phi_px$, $y=\phi_qy$, $L=R_j-q$. This finishes the proof of Theorem \ref{th1}.

\end{proof}

So it remains to prove Theorem \ref{prop:longblock}

\section{Proof of Theorem \ref{prop:longblock}}
\label{sec:technical}
\textbf{Outline of the proof:} Assume that $x,y\in G\slash \Gamma$ are $(1/100,\cP_m)$-matchable. Then every matching arrow (of $\phi_ux$ and $\phi_{h(u)}y$) can be parametrized by $j=j(u)\in \N$ and $w\in \{1,2\}$: $j\in \N$  measures the splitting time of $\phi_ux$ and $\phi_{h(u)}y$ in exponential scale (see the set $C_{j,R,m}(x,y)$ below). Moreover $w\in \{1,2\}$ gives the direction which is responsible for the splitting, i.e. if $w=1$ then the splitting is definitely produced by directions different than $V$ and if $w=2$ then the splitting might be (but not necesarilly has to be) produced by $V$ (see \eqref{eq:aj1} and \eqref{eq:aj2}). Propositions \ref{prop:splitlong} and \ref{lem:splt} are purely of combinatorial nature (no dynamics involved, just a counting argument). Proposition \ref{prop:splitlong} states, that if for every $j$ (sufficiently large) the measure of arrows with label $j,w$, for $w\in \{1,2\}$ is exponentially small (see \textbf{b.}) than the total measure of the matching is also small (since the series is summable over $j$). Proposition \ref{lem:splt} states that if in every window of size $2^{j(1+c\delta)}$ for $c=2$ (in (A)) and $c=40$ (in (B)) the relative measure of arrows with label $j$ is exponentially small, then the total measure of arrows with label $j$ has to be small.
\\

Fix $\delta>0$, $m>0$ and $R,j\in \R_+$. Assume that $x,y\in G/\Gamma$ are $(1/100,\cP_m)$-matchable. Let $A(x,y)\subset [0,R]$ denote the matching set and $h:A(x,y)\to [0,R]$ the matching function. We define two sets which will play a crucial role in the proof. Recalling Definitions and \ref{def:matchsplit} and \ref{def:splittime}, we define
$$
C_{j,R,m}(x,y):=\{u\in A(x,y)\;:\;d_G(x_u,y_u)<2m^{-2}\text{ and } 2^{j}\leq S(u,\epsilon')<2^{j+1}\}.
$$
Let (see Definition \ref{def:matchsplit} and \eqref{emj}, \eqref{emj2})
\be\label{eq:aj1}
C_{j,R,m}^1(x,y):=C_{j,R,m}\cap\{u\in A(x,y)\;:\;x_u\in \Kak^{1,\delta}(2^j,\epsilon',y_u)\}
\ee
and
\be\label{eq:aj2}
C_{j,R,m}^2(x,y):=C_{j,R,m}\cap\{u\in A(x,y)\;:\;x_u\in \Kak^{2,\delta}(2^j,\epsilon',y_u)\}.
\ee
By definition, $C_{j,R,m}(x,y)=C_{j,R,m}^1(x,y)\cup C_{j,R,m}^2(x,y)$.
\begin{remark}\label{rem:nwa} Recall that the partition $(\cP_m)$ is given by a compact set $K_m\subset G/\Gamma$ (and we divide $K_m$ into sets of diameter $\in [\frac{1}{2m^2}, \frac{1}{m^2}]$). It follows that for $u\in A(x,y,)$ satisfying $x_u=\phi_ux\in K_m$ and $y_u=\phi_{h(u)}y\in K_m$, we have $d_{G/\Gamma}(x_u,y_u)\leq \frac{c}{m^2}$ and hence $x_u\in \operatorname{Kak}(R_m,\epsilon',y_u)$, where the $R_m$ grows to $+\infty$ with $m$. Therefore, relatively on the compact set $K_m$, the sets $\{C_{j,R,m}(x,y)\}_{j\geq R_m}$ partition the matching.
\end{remark}
Let $d$ and be as in Lemma \ref{lem:pol}.  The following proposition implies Theorem \ref{prop:longblock}:
\begin{proposition}\label{prop:splitlong}There exists $c(d)>0$ and $\delta_0>0$ such that for every $\delta_0>\delta>0$ there exists a set $E_\delta\subset G/\Gamma$, $\mu(E_\delta)\geq \frac{99}{100}$ and $m_\delta, R_\delta\in \N$ such that for every $m\geq m_\delta$, $R\geq R_\delta$ and every $x,y\in E_\delta$ there exist $W_R(x,y)\subset A(x,y)$ satisfying:
\begin{enumerate}
\item[(\textbf{a}).] $|W_R(x,y)|\geq \frac{99}{100}R$;
\item[(\textbf{b}).] for every $p\in W_R(x,y)$, we have $\phi_px, \phi_{h(p)}y\in K_0$  (recall that $K_0\subset K_m$ is a fixed compact set and $K_m$ is the compact part of $\cP_m$);
\item[(\textbf{c}).]for every $j\in \N$ satisfying $2^j\leq R^{1-40\delta}$ any $(1/100,\cP_m)$-matching of $x$ and $y$ (with matching function $h$), we have for $w=1,2$
$$
\left|C_{j,R,m}^w(x,y)\cap W_R(x,y)\cap h^{-1}(W_R(x,y))\right|\leq \frac{c(d)R}{2^{d^{-1}\delta j}}.
$$
\end{enumerate}
\end{proposition}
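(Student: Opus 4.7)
The plan is a two-layer argument combining a local density estimate with a combinatorial covering. First, I would define $E_\delta$ via Birkhoff's ergodic theorem applied to $\chi_{K_0^c}$: since $\mu(K_0^c) \le 10^{-3}$, this gives a set $E_\delta$ with $\mu(E_\delta) \ge 99/100$ and a threshold $R_\delta$ such that for $R \ge R_\delta$ and $x \in E_\delta$ the Lebesgue measure of $\{p \in [0,R] : \phi_p x \notin K_0\}$ is at most $R/500$. Setting $W_R(x,y) := \{p \in A(x,y) : \phi_p x \in K_0, \phi_{h(p)} y \in K_0\}$ and using $\abs{A(x,y)} \ge 99R/100$ immediately gives (\textbf{a}) and (\textbf{b}).

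For (\textbf{c}), the core is a local density estimate: for each reference time $u \in C^w_{j,R,m}(x,y) \cap W_R(x,y) \cap h^{-1}(W_R(x,y))$ and each window $I_u = [u, u + 2^{j(1+c_w\delta)}]$ (with $c_1 = 2$ and $c_2 = 40$), I want to show
\[
\abs{ C^w_{j,R,m}(x,y) \cap I_u} \le c(d) \cdot 2^{j(1+c_w\delta) - c_w \delta j / d}.
\]
Granted this, Proposition \ref{lem:splt} (the combinatorial covering of $[0,R]$ by windows of size $2^{j(1+c_w\delta)}$) assembles the local estimates into the claimed global bound $c(d)R/2^{\delta j/d}$, separately for $w=1,2$.

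The local density estimate rests on the orbit divergence lemmas of Section \ref{sec:orb}. Write $x_{u'} = \exp((u'-u)U)\exp(a_V V)\exp(a_X X)g\exp(-(h(u')-h(u))U)\,y_{u'}$, where $g$ is the standard chain component controlling the $\Bow$ part. In case $w = 1$, the constraint $\abs{a_V} < 2^{-j(1+10\delta)}$ makes the $V$-contribution negligible throughout $I_u$ (since $\abs{a_V}\cdot 2^{j(1+2\delta)} \to 0$); by Lemma \ref{lem:nonsl} applied at the reference moment $u$, some chain coefficient of $g$ must be of size comparable to $\ve'$, and for $u' \in C^1_{j,R,m}$ the corresponding polynomial in $u'-u$ produced by \eqref{eq:conj-formula} must drop back below $10\ve'$. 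Corollary \ref{corpol} then bounds the measure of such $u'-u$ by $C\,2^{j(1+2\delta - 2\delta/d)}$. In case $w=2$, the $V$-component is non-negligible and the analysis proceeds in two steps: Lemma \ref{lem:slren} absorbs the $V$-contribution into a small reparameterization $\psi$, at the cost of introducing a linearly growing $X$-coefficient; then Lemma \ref{lem:B_Ggr} (conjugation by $\exp(sX)$ with $s \sim \frac{1}{2}\log 2^j$) pushes the remaining Bowen content into the centralizer of $U$ up to a residual of size $2^{-j/2}$. The residual is essentially a low-degree polynomial in $u'-u$, and Corollary \ref{corpol} applies again. The larger constant $c_2 = 40$ absorbs the $\log 2^j$-losses from the $X$-renormalization together with the slack introduced by $\psi$.

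The main obstacle is case $w = 2$, where the coupling between the $V$, $X$, and chain coefficients under both the flow and the matching function $h$ makes the bookkeeping delicate: each renormalization (absorbing $V$ into $\psi$, or conjugating by $X$) shifts control between the coordinate directions, and one must verify that after these renormalizations the relating element between $x_{u'}$ and $y_{u'}$ is still governed by a non-trivial polynomial in $u'-u$ of uniformly bounded degree, so that Lemma \ref{lem:pol} applies. I would organize the argument by tracking only the centralizer component (in the spirit of Lemma \ref{lem:B_Ggr}) and identifying, at the reference splitting time $u$, an explicit chain direction of size $\gtrsim \ve'$ whose coefficient as a function of $u'-u$ is the polynomial we must avoid zeros of.
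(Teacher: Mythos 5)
Your proposal correctly reproduces the paper's two-layer structure: reduce to a local window estimate (the paper's Proposition~\ref{lem:splt}) and then assemble by covering $[0,R]$ by windows. The construction of $E_\delta$ and $W_R(x,y)$ via the ergodic theorem and the verification of (\textbf{a}) and (\textbf{b}) are also essentially what the paper does. Your treatment of $w=1$ matches the paper's proof of part (A) of Proposition~\ref{lem:splt}: locate a nonzero chain coefficient via Lemma~\ref{lem:nonsl}, realize it as a polynomial in the window variable, and apply the Brudnyi--Ganzburg-type bound of Corollary~\ref{corpol}.

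The genuine gap is in case $w=2$. You propose to again reduce to Corollary~\ref{corpol} after absorbing the $V$-coefficient with Lemma~\ref{lem:slren} and performing an $X$-renormalization via Lemma~\ref{lem:B_Ggr}. This cannot work, for two reasons. First, the reparameterization $\psi$ in Lemma~\ref{lem:slren} is only controlled for $\abs{t}\le \ve^2 a_V^{-1}$, and in $\Kak^{2,\delta}$ one has $\abs{a_V}$ as large as $\ve/2^j$; this limits you to windows of length roughly $2^j$, well short of the needed $2^{j(1+40\delta)}$. Second, and more fundamentally, in $\Kak^{2,\delta}$ the splitting is produced by the $V$-coordinate, which is exactly the direction the matching function $h$ is allowed to correct. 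After the correction, there is no longer a nontrivial polynomial in $u'-u$ lower-bounded away from zero in the chain directions on which Lemma~\ref{lem:pol} could bite, so Corollary~\ref{corpol} does not apply. What prevents $C^2$-times from recurring in the window is not polynomial escape but a \emph{non-recurrence} property of the ambient lattice: the paper's part (B) is proved by Corollary~\ref{cor:mesem}, which says that for $y$ in the full-measure set $E'_\delta$ one simply cannot have $\phi_p x\in \Kak^{2,\delta}(2^j,\ve',\phi_q y)$ with $p,q$ in the window $[2^{j(1+20\delta)},2^{j(1+40\delta)+1}]$. This in turn rests on Lemma~\ref{lem:mesem}, whose proof requires the Siegel-set measure estimates (Proposition~\ref{prop:measureOfPertubationSet}), the lattice point count \eqref{cardg}, and, crucially, the dimension inequality $\dim G - \dim C(X) > 3$ via Lemma~\ref{lm:MeasureEstimateCrucial}. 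Your proposal bypasses all of this machinery, which is precisely the new technical heart of the paper; without it, the $w=2$ estimate is not obtained.

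Incidentally, even setting the mechanism aside, the estimate your approach would produce for $w=2$ is of the form $2^{j(1+40\delta-40\delta/d)}$ per window, whereas the paper obtains $2^{j(1+20\delta)}$ per window. These are both of the form needed after the covering step, but they arise from completely different phenomena, and only the latter is actually provable by the available tools.
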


We will prove Proposition \ref{prop:splitlong} in a separate subsection. Let us now show how the above proposition implies Theorem \ref{prop:longblock}.
\begin{proof}[Proof of Theorem \ref{prop:longblock}]

Fix $\delta>0$, $R\geq R_{\delta}$, $m\geq m_{\delta}$, $x,y\in E_\delta$ and a  $(\frac{1}{100},\cP_m)$-good matching of $x$ and $y$ with the set $A(x,y)$ and the matching function $h$. Notice that by \textbf{(b)}, the definition of $K_m$ (the atoms of $\cP_m\cap K_m$ have diameter less that $m^{-2}$) and the definition of $C_{j,R,m}(x,y)$, we have
$$
A(x,y)\cap W_R(x,y)\subset \bigcup_{j\in \N} C_{j,R,m}(x,y).
$$
Moreover, by \eqref{lem:contphi}
and the definition of $C_{j,R,m}(x,y)$, for $j\leq \log_2 f(m)$ (See Remark \ref{rem:nwa}),
\begin{equation}\label{wpan}
C_{j,R,m}(x,y)=\emptyset.
\end{equation}

%such that $\bar{f}_R^{P_m}((x,y),(x',y'))<\frac{1}{100}$.

%\textcolor[rgb]{1.00,0.00,0.00}{Recall that $\cup_{j}A_j^{R,m}$ contains all arrows in $\frac{1}{100}-good$ matching $(i_s,j_s)_{s=0}^R$.}
Hence, by (\textbf{a}), (\textbf{b}) and the definition of $C_{j,R,m}(x,y)$, we have (recall also that $h$ is the matching function, hence it is absolutely continuous)
\begin{equation}\label{ala}
\begin{aligned}
\frac{1}{m}&>\bar{f}_R^{P_m}(x,y)\\
&\geq 1-\frac{|((W_R(x,y))^c\cup h^{-1}(W_R(x,y))^c)\cap[0,R]|}{R}-\\
&\frac{1}{R}\sum_{j\geq0}|C_{j,R,m}(x,y)\cap W_R(x,y)\cap h^{-1}(W_R(x,y))|\\
&\geq\frac{9}{10}-\frac{1}{R}\sum_{j\geq0}|C_{j,R,m}(x,y)\cap  W_R(x,y)\cap h^{-1}(W_R(x,y)) |
\end{aligned}
\end{equation}

Let $j_R$ be such that,
\begin{equation}\label{er}
2^{j_R}\leq R^{1-40\delta}<2^{j_R+1}.
\end{equation}

By  \eqref{wpan}, (\textbf{c}) and $C_{j,R,m}(x,y)=C_{j,R,m}^1(x,y)\cup C_{j,R,m}^2(x,y)$, we have
\begin{equation}
\begin{aligned}
\frac{1}{R}\sum_{j< j_R}|C_{j,R,m}(x,y)\cap W_R(x,y)\cap h^{-1}(W_R(x,y))|&\leq\\
\frac{1}{R}\sum_{\log f(m)\leq j\leq j_R-1}\frac{2c(d)R}{2^{d^{-1}\delta j}}
&\leq\frac{1}{1000},
\end{aligned}
\end{equation}
by enlarging $m$ if necessary (since $f(m)$ goes to $\infty$).
Therefore and by \eqref{ala} there exists $j_1\geq j_R$ such that
\begin{equation}
C_{j_1,R,m}(x,y)\cap W_R(x,y)\cap h^{-1}(W_R(x,y))\neq\emptyset.
\end{equation}
By definition of $C_{j_1,R,m}(x,y)$ and \eqref{er} it follows that there exists $u\in A(x,y)$ such that
\begin{equation}
S(u,\epsilon')\geq 2^{j_R}\geq\frac{1}{2}R^{1-40\delta}\geq R^{1-41\delta}.
\end{equation}
Since $\delta>0$ is arbitrary (changing $\delta'=41\delta$), this finishes the proof of Theorem \ref{prop:longblock}.
\end{proof}

\subsection{Proof of Proposition \ref{prop:splitlong}}
We will formulate a proposition which will imply Proposition \ref{prop:splitlong}.
\begin{proposition}\label{lem:splt} There exists $c'(d)>0$ and $\delta_0>0$ such that for every $\delta_0>\delta>0$  there exists a set $E_\delta\subset G/\Gamma$, $\mu(E_\delta)\geq \frac{99}{100}$ and $m_\delta, R_\delta\in \N$ such that for every $m\geq m_\delta$, $R\geq R_\delta$ and every $x,y\in E_\delta$ there exist $W_R(x,y)\subset [0,R]$ such that (\textbf{a}) and (\textbf{b}) holds for every $(1/100,\cP_m)$-matching of $x$, $y$ (with matching function $h$) and for every $j\in \N$ satisfying $2^j\leq R^{1-40\delta}$, we have
\begin{enumerate}
\item[(A)] for every $u\in W_R(x,y)$ (see Definition \ref{def:balls})
$$
|C_{j,R,m}^1(x,y)\cap B(u,2^{(1+2\delta)j})\cap W_R(x,y)\cap h^{-1}(W_R(x,y))|\leq c'(d) 2^{(1+2\delta-2\delta/d)j};
$$
\item[(B)] for every $u\in W_R(x,y)$
$$
|C_{j,R,m}^2(x,y)\cap B(u,2^{(1+40\delta)j})\cap W_R(x,y)\cap h^{-1}(W_R(x,y))|\leq 2^{(1+20\delta)j};
$$

\end{enumerate}

\end{proposition}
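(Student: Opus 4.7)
The plan is to combine the polynomial divergence estimates from Corollary \ref{corpol} (and its underlying Lemma \ref{lem:pol}) with an algebraic identity relating matching times on a single interval, then handle cases (A) and (B) by applying these polynomial bounds with different effective degrees.

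First I would construct the good sets. Fix a compact set $K_0 \subset G/\Gamma$ with $\mu(K_0) \ge 1 - 10^{-3}$ and positive injectivity radius on a slight thickening of $K_0$. Define $E_\delta$ via the pointwise ergodic theorem applied to $\mathbf{1}_{K_0^c}$: for $x \in E_\delta$ one has $|\{u \in [0,R] : \phi_u x \notin K_0\}| \le 2 \cdot 10^{-3} R$ for all $R \ge R_\delta$, and $\mu(E_\delta) \ge 99/100$. Set
$$W_R(x,y) := \{ u \in A(x,y) : \phi_u x \in K_0 \text{ and } \phi_{h(u)} y \in K_0 \}.$$
Then for $x,y \in E_\delta$ one obtains $|W_R(x,y)| \ge (99/100) R$, giving (\textbf{a}) and (\textbf{b}).

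For the key algebraic identity, fix $u_0 \in W_R \cap h^{-1}(W_R)$ and consider $u = u_0 + t$ with $|t| \le 2^{(1+c\delta)j}$ also in that set. Choose lifts $\tilde{x}_v, \tilde{y}_v \in G$ of $x_v, y_v$ (uniquely determined on $K_0$) and set $g_v := \tilde{x}_v \tilde{y}_v^{-1}$. By the flow structure on the coset,
$$g_u = \exp(tU)\, g_{u_0}\, \exp(-t'U),$$
where $t' = h(u) - h(u_0)$ satisfies $|t - t'| \le t/100$. Decomposing $g_{u_0} = \exp(a_V V) \exp(a_X X) \tau(g_{u_0})$ per \eqref{eq:coefficients}, one analyzes the conjugation of each factor by $\exp(tU)$ to read off the coefficients of $g_u$ in the basis of Section \ref{sec:sl2}, with Lemmas \ref{lem:polynomial-coefficients}--\ref{lem:brudnyi} as the core tools.

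For case (A), both $g_{u_0}$ and $g_u$ lie in $\Kak^{1,\delta}(2^j, \varepsilon', e)$, so $|a_V(g_{u_0})| < 2^{-(1+10\delta)j}$, which makes its quadratic contribution to the $U$-coefficient of $\phi_t g_{u_0} \phi_{-t}$ absorbable into the time-reparametrization slack built into the Kak condition. The splitting therefore must come from a non-$\mf{sl}(2,\R)$ chain direction in $\tau(g_{u_0})$, i.e.\ $\tau(g_{u_0}) \notin \Bow(2^{j+1}, C\varepsilon', e)$. Applying Corollary \ref{corpol} with parameters $R = 2^j$, $\eta = 2\delta$, and polynomial degree $d$, the measure of $t \in [0, 2^{(1+2\delta)j}]$ for which $\phi_t \tau(g_{u_0}) \phi_{-t}$ stays $10\varepsilon'$-close to $e$ is at most $C_2(d) \cdot 2^{(1+2\delta - 2\delta/d)j}$, establishing (A).

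For case (B), $|a_V(g_{u_0})| \in [2^{-(1+10\delta)j}, \varepsilon'/2^j]$, so the dominant divergence is governed by the degree-$2$ polynomial in $t$ coming from the Jacobson--Morozov chain $V \to X \to U$. Use Lemma \ref{lem:slren} to isolate the reparametrization $\psi$ that absorbs the $\mf{sl}(2,\R)$-drift, reducing the constraint ``$g_u \in \Kak^{2,\delta}(2^j, \varepsilon', e)$'' to a polynomial inequality of degree $2$ in $t$ on the coefficients of the residual $\mf{sl}(2,\R)$-component. Since the splitting condition forces this polynomial to reach value $\ge \varepsilon'/2$ for some time $\le 2^{j+1}$, Lemma \ref{lem:pol} applied with degree $d=2$ and $\eta = 40\delta$ yields that the set of valid $t \in [0, 2^{(1+40\delta)j}]$ has measure at most $C_1(2) \cdot 2^{(1+40\delta - 20\delta)j} \le 2^{(1+20\delta)j}$, establishing (B).

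The main obstacle will be case (B): carefully combining the time reparametrization from Lemma \ref{lem:slren} with Lemma \ref{lem:dirs} so that exactly one nondegenerate degree-$2$ polynomial inequality in $t$ controls the count. In particular, one must decouple the $V$-direction contribution from possible concurrent chain-direction divergences (where a subpopulation could plausibly be controlled via the case-(A) mechanism instead), and verify uniformly in the parameters $(\delta, m, R, j)$ that the reparametrized polynomial indeed attains a nontrivial value by some time $\le 2^{j+1}$. Making all the thresholds $\delta_0, m_\delta, R_\delta$ compatible across Corollary \ref{corpol}, Lemma \ref{lem:sys}, and the injectivity radius bound on $K_0$ is a bookkeeping nuisance but essentially routine once the main inequality is set up correctly.
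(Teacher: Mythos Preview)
Your treatment of case (A) is in the right spirit but incomplete, and your approach to case (B) has a genuine gap that cannot be repaired along the lines you suggest.

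\textbf{The serious problem is (B).} You propose to control $C_{j,R,m}^2$ by a degree-$2$ polynomial estimate coming from the Jacobson--Morozov chain. But after you absorb the $U$-direction via Lemma \ref{lem:slren}, the remaining $\mf{sl}(2,\R)$ constraint on $g_u$ is governed by the $X$-coefficient, which is only degree $1$ in $t$ (the $V$-coefficient is essentially constant). A degree-$1$ bound does handle the $\gamma=e$ case, but that is not where the difficulty lies. The real obstruction in (B) is that once the orbits separate in the $X$-direction around time $2^j$, they can \emph{re-align through a nontrivial lattice element} $\gamma\neq e$ at some later time in the window $[2^{(1+20\delta)j},2^{(1+40\delta)j}]$. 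Your identity $g_u=\exp(tU)g_{u_0}\exp(-t'U)$ silently assumes $\gamma=e$; nothing in a polynomial-divergence argument rules out the $\gamma\neq e$ return. The paper deals with this via Lemma \ref{lem:mesem} (and its Corollary \ref{cor:mesem}): a measure/lattice-counting statement showing that the set of $y$ admitting such a return is small. That lemma is the \emph{only} place in the paper where the hypothesis $GR(U)>3$ (equivalently $\dim G-\dim C(X)\ge 4$) enters. Your argument never invokes this hypothesis, and indeed would apply verbatim to the horocycle flow on $SL(2,\R)/\Gamma$, where $GR(U)=3$ and the flow is standard---so your (B) argument must fail somewhere, and this is where.

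\textbf{On (A).} Your use of Corollary \ref{corpol} is the right core idea, but you again ignore the possibility that the small element $g_u$ realizing $x_u=g_u y_u$ differs from $\exp(tU)g_{u_0}\exp(-t'U)$ by a conjugated lattice element. The paper splits into Case I ($\gamma=e$), where Corollary \ref{corpol} and Lemma \ref{lem:nonsl} give the bound, and Case II ($\gamma\neq e$), which is excluded via a geodesic-recurrence argument (equations \eqref{comps}--\eqref{sys2}). Your construction of $E_\delta$ via the ergodic theorem for $\mathbf 1_{K_0}$ alone is not sufficient: the paper's $E_\delta$ is built from $E'_\delta$ (coming from Lemma \ref{lem:mesem}) together with a set $ER_\delta$ ensuring $g_s$-recurrence to $K_0$ on the right time-scales, and both ingredients are used.

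In short: (A) is missing the $\gamma\neq e$ sub-case and the supporting recurrence set, while (B) requires an entirely different mechanism (Lemma \ref{lem:mesem}) that you have not invoked and that cannot be replaced by a Brudnyi--Ganzburg estimate.
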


Before we prove Proposition \ref{lem:splt} let us show how it implies Proposition \ref{prop:splitlong}.

\begin{proof}[Proof of Proposition \ref{prop:splitlong}]
Notice that by assumptions of Proposition \ref{lem:splt} it follows that we only need to prove (\textbf{c}) in Proposition \ref{prop:splitlong} (with $w=1,2$ in (\textbf{c})). The proof for $w=1$ uses (A) and the proof for $w=2$ uses (B). Since the proofs in both cases follow the same lines, we will give the proof in case $w=1$.
Fix $j$ as in \textbf{(c)}. Divide the interval $[0,R]$ into disjoint intervals $I_1,\ldots,I_k$ of length $2^{(1+2\delta)j}$ in the following way. Fix the smallest element $$u_1\in C_{j,R,m}^1(x,y)\cap W_R(x,y)\cap h^{-1}(W_R(x,y)).$$
 Let $I_1$ be an interval with right endpoint $u_1$ and length $l_j^1:=2^{(1+2\delta)j}$ . Now inductively for $u> 1$, we pick $u_w$ to be the smallest element in $C_{j,R,m}^1(x,y)\cap W_R(x,y)\cap h^{-1}(W_R(x,y))\setminus(I_1\cup...\cup I_{u-1})$. As $u_w$ satisfies $(A)$, we let $I_w$  be the interval with right endpoint $u_w$ and length $l_j^1$. We continue until we cover $C_{j,R,m}^1(x,y)\cap W_R(x,y)\cap h^{-1}(W_R(x,y))$.

Since $2^j\leq R^{1-40\delta}$, we have $2^{(1+2\delta) j}<R^{1-\delta}\ll R$ and hence $k>1$. Moreover by definition, we have
\begin{equation}\label{qwe}
k \leq \left[\frac{R}{l_j^1}\right] +2.
\end{equation}

Notice that by Definition \ref{def:balls}, the fact that $C_{j,R,m}^1(x,y)\subset A(x,y)$ and the definition of $(I_i)$ it follows that
\begin{multline*}
C_{j,R,m}^1(x,y)\cap I_i\cap W_R(x,y)\cap h^{-1}(W_R(x,y))\subset \\
\subset C_{j,R,m}^1(x,y)\cap B(u_i,2^{(1+2\delta)j})\cap W_R(x,y)\cap h^{-1}(W_R(x,y))
\end{multline*}
Therefore and by (A), we have
\begin{equation}\label{eq1}
|C_{j,R,m}^1(x,y)\cap I_i\cap W_R(x,y)\cap h^{-1}(W_R(x,y))|\leq c'(d)2^{(1+2\delta-2\delta/d)j}.
\end{equation}

Summing over $i\in\{1,\ldots,k\}$ by \eqref{eq1} and \eqref{qwe},  we get
$$
| C_{j,R,m}^1(x,y)\cap W_R(x,y)\cap h^{-1}(W_R(x,y)|\leq
(\frac{R}{l_j^1}+2)c'(d)2^{(1+2\delta-2\delta/d)j}\leq\frac{c(d)R}{2^{d^{-1}\delta j}},
$$
the last inequality by the definition of $l_j^1$ and defining $c(d)=2c'(d)$.
This finishes the proof.

\end{proof}

\subsection{Proof of Proposition \ref{lem:splt}}
In this section we will prove Proposition \ref{lem:splt}. It is the most technical part of the paper. We will start by giving an outline of the proof:

\textbf{Outline of the proof of Proposition \ref{lem:splt}:} The arguments are very different in proving (A) and (B). All the difficulty in proving (B) is transferred to Lemma \ref{lem:mesem}. Indeed,  Lemma \ref{lem:mesem}  directly implies Corollary \ref{cor:mesem} which implies (B). We will give the outline of proof of Lemma \ref{lem:mesem} in the next section. The method in proving (A) is based on the fact that if $x\in \Kak^{1,\delta}(2^j,\epsilon',y)$ then the first non-orbit divergence occurs in a direction other than $V$, $X$ or $U$ (the $V$ coordinate is too small by definition of $\Kak^{1,\delta}$) and consequently the direction in which $x,y$ split belongs to the centralizer $C(U)$ but is different than the flow direction. Since $\bar{f}$- metric allows one to ``slide'' along the orbits of points (ie, $U$ direction) only, there is no way to correct the splitting in the $C(U)$ direction if it is different than the flow (which is the case for $(A)$). Let us also stress out that the condition $\dim G-\dim(C(X))-3>0$ is only used in the proof of Lemma \ref{lem:mesem}.

We divide the proof in several subsections.
For $j\in \N$ let (see \eqref{emj})
\be\label{eq:jbadset}
G(\delta,j,y)=\Kak^{2,\delta}(2^j,\epsilon',y)\cap\left(\bigcup_{p,q\in[2^{j(1+20\delta)},2^{j(1+40\delta)+1}]} \phi_{-p}(\Kak^{2,\delta}(2^j,\epsilon',\phi_qy))\right),
\ee

We have the following lemma:
\begin{lemma}\label{lem:mesem} For $j\in \N$ let
$$
EG(\delta,j)=\{y\in G / \Gamma \;:\; G(\delta,j,y)= \emptyset\}.
$$
There exists $j_\delta>0$ such that for $j\geq j_\delta$,
 $$
\mu(EG(\delta,j))\geq 1-j^{-2}.
$$
\end{lemma}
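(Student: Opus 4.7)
} The strategy is to translate the condition $G(\delta,j,y)\neq\emptyset$ into the existence of a nontrivial lattice element in a controlled conjugate of $\Gamma$, then estimate the measure of the bad set by summing over discretized parameters $(p,q)$ and over lattice elements.

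First, I unfold the definition. If $y \in EG(\delta,j)^c$, pick $x = g_1 y$ with $g_1 \in \Kak^{2,\delta}(2^j,\epsilon',e)$ and $p,q \in [2^{j(1+20\delta)},2^{j(1+40\delta)+1}]$ such that $\phi_p x = g_2 \phi_q y$ with $g_2 \in \Kak^{2,\delta}(2^j,\epsilon',e)$. Lifting to the universal cover produces a lattice element
\[
\gamma \;=\; \tilde y^{-1}\,\exp(-qU)\,g_2^{-1}\,\exp(pU)\,g_1\,\tilde y \;\in\; \Gamma.
\]
I would first rule out $\gamma = e$ by the argument used in the proof of Claim B in Theorem \ref{th1}: $\gamma=e$ forces $g_2 = \exp(pU)g_1\exp(-qU)$, which is incompatible with the $\Kak^{2,\delta}$ bounds on $g_2$ for $p,q$ of the specified size, since the conjugation would drive $g_1$ far from identity.

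Next, I would apply the $X$-renormalization $\tilde y_s := \exp(-sX)\tilde y$ with $s$ chosen so that $e^{-2s}\cdot 2^{j(1+40\delta)}$ is of order one. Using $\Ad(\exp(sX))U = e^{2s}U$, the factors $\exp(\pm pU), \exp(\pm qU)$ become $O(1)$ elements under conjugation. By Lemma \ref{lem:dirs} (applied with $\delta' = 40\delta$) and Lemma \ref{lem:B_Ggr}, the standard chain components of $g_1, g_2$ land in $\Bow(2^{j(1/2-C\delta)},(\epsilon')^{1/3},e)$ after conjugation, hence in a much smaller set. The $V$-coefficients expand by $e^{2s}$, but because $\Kak^{2,\delta}$ imposes the upper bound $|a_V(g_i)| \le \epsilon'/2^j$, the expanded $V$-coefficient is at most $\epsilon' \cdot 2^{j\cdot 40\delta}$, which is still small on the scale of $G$. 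Consequently, $\tilde y_s \gamma \tilde y_s^{-1}$ lies in a thin set $\Omega_j$ whose Haar volume is polynomially bounded in $2^j$, with an exponent controlled by $\delta$ and depending crucially on $\dim\mathfrak g - \dim C(X)$.

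Then I would discretize $(p,q)$ to a $2^{-j}$-grid, giving at most $O(2^{2j(1+40\delta)})$ cells, and estimate the measure of the bad set via Fubini. The number of lattice elements $\gamma$ which can arise is bounded using Lemma \ref{lem:vol-entropy} applied in a suitable restricted subgroup, and for each such $\gamma$ the set of $y$ for which $\tilde y_s \gamma \tilde y_s^{-1} \in \Omega_j$ has measure controlled by the volume of the centralizer orbit through $\gamma$. The assumption $GR(U)>3$, equivalent to $\dim\mathfrak g - \dim C(X) > 3$ by Lemma \ref{th2}, enters precisely here: it guarantees the existence of non-trivial chains beyond the Jacobson-Morozov triple, which contribute extra codimension to the constraint $\tilde y_s \gamma \tilde y_s^{-1} \in \Omega_j$ and produce a net negative exponent in $j$. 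For $\delta \le \delta_0$ sufficiently small, this yields $\mu(EG(\delta,j)^c) \le j^{-2}$ for $j \ge j_\delta$.

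The main obstacle is the step 2 renormalization: conjugation by $\exp(-sX)$ cannot simultaneously contract both $\exp(pU)$ and $\exp(a_V V)$ factors, because $U$ and $V$ are opposite eigenvectors of $\ad_X$. The resolution exploits the narrow range $|a_V(g_i)| \le \epsilon'/2^j$ forced by the Kakutani condition together with the upper bound $p,q \le 2^{j(1+40\delta)+1}$: only a single scale $s \sim \tfrac12 j(1+40\delta)\log 2$ balances these competing effects, and this is only compatible with an $\Omega_j$ of sufficiently small volume when $\delta$ is small. Carrying out the full accounting requires a careful application of Lemma \ref{lem:nonsl} to track the contribution of each chain $\{X_i^k\}$ and of Corollary \ref{corpol} to discard recurrences to within $\epsilon'$ in the $U$-direction on a large measure set.
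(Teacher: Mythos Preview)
Your overall plan---unfold to obtain a lattice element, renormalize by $\exp(-sX)$, discretize, count lattice points, and estimate the measure of each fiber---matches the paper's architecture. However, two essential mechanisms are missing, and without them the final measure bound cannot be obtained.

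First, the heart of the paper's argument is an explicit $SL(2,\R)$ computation (properties \textbf{P1}, \textbf{P2} in the Appendix): with the renormalization $s=\tfrac12\log(2^j)$ (not your choice $s\sim\tfrac12 j(1+40\delta)\log 2$), the element $m=m(p',q',a',b,c',d)$ built from the rescaled parameters satisfies $m^2=h\exp(s_0X)h^{-1}$ with $|s_0|\in[40j\delta-5,160j\delta+6]$ and $h$ of controlled form. The point is not merely that $m$ is at distance $O(j\delta)$ from $e$; it is that $m^2$ is \emph{conjugate to a specific geodesic element} $\exp(s_0X)$ with $|s_0|$ bounded \emph{below}. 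This lower bound is what drives the disjointness step, and it comes from a trace computation that uses both the lower bound $|a'|,|c'|\geq 2^{-10j\delta}$ (forced by the $\Kak^{2,\delta}$ condition) and the lower bound $p',q'\geq 2^{20j\delta}$. Your renormalization loses the latter (your $p',q'$ become $O(1)$), and in any case you never isolate the conjugacy class of the $SL(2,\R)$-part.

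Second, your sentence ``the set of $y$ for which $\tilde y_s\gamma\tilde y_s^{-1}\in\Omega_j$ has measure controlled by the volume of the centralizer orbit through $\gamma$'' is where the real work lies, and the actual mechanism is different. The paper reduces the constraint to $\tilde z\gamma'\tilde z^{-1}\in B_G(e,2^{-j(1/2-K'_U\delta)})\exp(s_iX)\exp(C(U,X)_{2\epsilon'^{1/3}})$ and then proves (Lemma~\ref{lm:MeasureEstimateCrucial}) that translates of the solution set by $\exp(\sum t_{kl}X_l^k)$, with $l\neq m_k/2$, are pairwise disjoint once the $t_{kl}$ are $2^{-j(1/2-D_U\delta)}$-separated. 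The reason is that conjugating $\exp(s_iX)$ by such an element produces a displacement $\sim (1-e^{(m_k-2l)s_i})t_{kl}$ in the $X_l^k$-direction, which is large precisely because $|s_i|$ is bounded below and $m_k-2l\neq 0$. This yields $\sim 2^{(j/2)(\dim G-\dim C(X))}$ disjoint translates, and $\dim G-\dim C(X)\geq 4$ is exactly what beats the $T(j)\cdot|\{\gamma\}|\sim 2^{3j/2+Cj\delta}$ count. Your proposal neither identifies the $C(X)$-aligned structure of the target set nor the disjoint-translate mechanism; a generic ``codimension from extra chains'' argument does not suffice. Finally, Lemma~\ref{lem:nonsl} and Corollary~\ref{corpol} play no role here---they are used only for part~(A) of Proposition~\ref{lem:splt}, not for the $\Kak^{2,\delta}$ case that Lemma~\ref{lem:mesem} addresses.
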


By the above lemma it follows that if we define
\be\label{def:e1}
E_\delta':=\bigcap_{j>j'_\delta}EG(\delta,j),
\ee
then $\mu(E_\delta')\geq 1-10^{6}$ if $j'_\delta:=\max(j_\delta, 10^{10})$.

We will prove Lemma \ref{lem:mesem} in the last section, let us first state the following immediate corollary:
\begin{corollary}\label{cor:mesem} For $y\in E_\delta'$ and
$x\in \Kak^{2,\delta}(2^j,\epsilon', y)$ with $j>j_\delta$, we have
$$
\phi_px \notin \Kak^{2,\delta}(2^j,\epsilon',\phi_qy),
$$
for any $p,q\in [2^{j(1+20\delta)},2^{j(1+40\delta)+1}]$.
\end{corollary}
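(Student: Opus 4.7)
The plan is to observe that this is a direct contrapositive reading of Lemma \ref{lem:mesem} combined with the definition of $E_\delta'$, so the proof amounts to carefully unpacking the definitions and noting the set-theoretic identity
\[ \phi_p x \in \Kak^{2,\delta}(2^j,\epsilon',\phi_q y) \iff x \in \phi_{-p}\big(\Kak^{2,\delta}(2^j,\epsilon',\phi_q y)\big). \]

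First I would assume for contradiction that there exist $p,q \in [2^{j(1+20\delta)}, 2^{j(1+40\delta)+1}]$ with $\phi_p x \in \Kak^{2,\delta}(2^j,\epsilon',\phi_q y)$. Rewriting this via the equivalence above, the point $x$ lies in $\phi_{-p}(\Kak^{2,\delta}(2^j,\epsilon',\phi_q y))$, and by hypothesis it also lies in $\Kak^{2,\delta}(2^j,\epsilon',y)$. By the very definition of $G(\delta,j,y)$ in equation \eqref{eq:jbadset}, this membership places $x$ in the intersection defining $G(\delta,j,y)$, so $G(\delta,j,y) \neq \emptyset$.

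Second, by the definition \eqref{def:e1} of $E_\delta'$, and since $j > j_\delta \geq j_\delta'$ (or one should read the hypothesis as $j > j_\delta'$ to be consistent with the definition of $E_\delta'$), the assumption $y \in E_\delta'$ gives $y \in EG(\delta,j)$, which by Lemma \ref{lem:mesem} exactly means $G(\delta,j,y) = \emptyset$. This contradicts the conclusion of the previous paragraph, completing the proof.

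There is no genuine obstacle here; the corollary is a formal bookkeeping statement whose entire content is already encoded in Lemma \ref{lem:mesem}. All real work has been pushed into the proof of that lemma (which, as the outline notes, is where the hypothesis $\dim G - \dim C(X) - 3 > 0$ and the Brudnyi--Ganzburg type polynomial estimates enter). The only thing to be careful about when writing the proof is to not conflate the threshold $j_\delta$ in the statement with the threshold $j_\delta'$ used in defining $E_\delta'$; taking $j > j_\delta'$ (equivalently enlarging $j_\delta$ to $\max(j_\delta, 10^{10})$) makes the argument completely immediate.
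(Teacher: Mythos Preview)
Your proof is correct and follows exactly the same approach as the paper: the paper's proof is the single sentence ``This just follows by the definition of $E'_\delta$, since for $y\in E'_\delta$, we have $G(\delta, j,y)=\emptyset$ for $j\geq j_\delta$,'' and your argument is simply a more detailed unpacking of that sentence. Your remark about the threshold $j_\delta$ versus $j'_\delta$ is a fair observation about a minor notational inconsistency in the paper, but does not affect the argument.
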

\begin{proof}
This just follows by the definition of $E'_\delta$, since for $y\in E'_\delta$, we have $G(\delta, j,y)=\emptyset$ for $j\geq j_\delta$.
\end{proof}

We can now prove Proposition \ref{lem:splt}:
\begin{proof}[Proof of Proposition \ref{lem:splt}]
Let $K_0$ be a fixed compact set of positive measure (see \eqref{lem:sys}). By ergodic theorem for $\chi_{K_0}$ there exists a set $ER_\delta$ and $N_\delta^1$ such that for every $y\in ER_\delta$ and every $|N|\geq N_\delta^1$, we have
\be\label{eq:esa}
\{g_sy\;:\; s\in [N,(1+100\delta)N]\}\cap K_0\neq \emptyset.
\ee

Let $E'_\delta=E^1_\delta\cap K_0\cap ER_\delta$. By ergodic theorem for $\chi_{E'_\delta}$ it follows that there exists a set $E_\delta \subset E_\delta' \subset F$, $\mu(E_\delta)>999/1000$ and $N_\delta>0$ such that for every $x,y\in E_\delta$ and every $R\geq N_\delta$, we have

$$
|W_R(x,y):=W_R(x)\cap W_R(y)|\geq 99/100,
$$
where, for $z\in E_\delta$,
$$
W_R(z):=\{t\in [0,R]:\phi_tz\in E'_\delta\cap K_0\}.
$$

Notice also that if $u\in h^{-1}(W_R(x,y))$, then for every $|N| \geq \max(N_\delta,N_\delta^1)$, we have  $y_u=\phi_{h(u)}y\in E'_\delta$ and, by \eqref{eq:esa}, that
\be\label{comps}
\{g_sy_u\;:\; s\in [N,(1+100\delta)N]\}\cap K_0\neq \emptyset,
\ee
If $s_0$ is in the intersection, then by Lemma \ref{lem:sys}, it follows that
\be\label{sys2}
\inf_{\gamma\in \Gamma}d_G(g_{s_0}y_u\gamma (g_{s_0}y_u)^{-1},e)>c_0.
\ee

Notice that by the definition of $W_R(x,y)$ and the set $E'_\delta$ it follows that (\textbf{a}) and (\textbf{b}) hold.  Hence we only need to show that (A) and (B) in Proposition \ref{lem:splt} hold. The methods of proof are different for (A) and (B), (B) being a simple consequence of Corollary \ref{cor:mesem}.

\textbf{Proof of (B):} Notice that by the definition of $C^2_{j,R,m}(x,y)$ (see also Definition \ref{def:matchsplit}) it follows that if $u\in C^2_{j,R,m}\cap W_R(x,y)\cap h^{-1}(W_R(x,y))$, then $y_u=\phi_{h(u)} y\in E'_\delta\subset E_\delta$ and
$$
x_u\in \Kak^{2,\delta}(2^j,\epsilon',y_u).
$$
Notice that since $h$ is an $(1/100,\cP_m)$-matching function it  follows that for every $v-u\in [2^{j(1+20\delta)},2^{j(1+40\delta)}]$ and $v\in A(x,y)$, we have
$$
|h(v)-h(u)|<2|v-u|.
$$
Therefore and by Corollary \ref{cor:mesem}  for $y=y_u$ and $x=x_u$ it follows that for any $v\in [u+2^{j(1+20\delta)},u+2^{j(1+40\delta)}]$ (notice that $|h(v)-h(u)|<2^{j(1+40\delta)+1})$
$$
\phi_{v-u}(x_u)\notin \Kak^{2,\delta}(2^j,\epsilon',\phi_{h(v)-h(u)}(y_u)).
$$
Note that $\phi_{v-u}(x_u)=x_{v}$ and $\phi_{h(v)-h(u)}(y_u)=\phi_{h(v)}y=y_{v}$.

Therefore
$$
 C^2_{j,R,m}(x,y)\cap \left(B(u,2^{j(1+40\delta)})\setminus B(u,2^{j(1+20\delta)})\right)=\emptyset.
$$
So
$$
|C^2_{j,R,m}(x,y)\cap B(u,2^{j(1+40\delta)})\cap W_R(x,y)\cap h^{-1}(W_R(x,y))|\leq 2^{j(1+20\delta)}.
$$
and this finishes the proof of (B).

\textbf{Proof of (A):} Let $u$ be the smallest element in
$C^1_{j,R,m}(x,y)\cap B(u,2^{j(1+2\delta)})\cap W_R(x,y)\cap h^{-1}(W_R(x,y))$ (if such $u$ doesn't exist, then the intersection is empty and the proof is finished).

We will show that there exists a set $ A'_u\in [h(u),h(u)+2^{j(1+2\delta)}]$, $|A'_u|\leq C'(d)2^{j(1+2\delta-2\delta/d)} $, such that for every $h(v)\in [h(u),h(u)+2^{j(1+2\delta)}]\setminus A'_u$,
\be\label{vnotin}
v\notin C^1_{j,R,m}(x,y)\cap B(u,2^{j(1+2)\delta})\cap W_R(x,y)\cap h^{-1}(W_R(x,y)).
\ee
This will finish the proof of (A) since $|h'-1|<\epsilon$ on $A(x,y)$.

Assume for contradiction that \eqref{vnotin} does not hold and let $v$ belong to the RHS of \eqref{vnotin}. By the definition of $C^1_{j,R,m}(x,y)$ and Definition \ref{def:matchsplit} and \ref{def:splittime} it follows that for $w=u,v$, we have
\be\label{eq:newqe}
x_w=\exp(a_wV)\exp(b_wX)g_w y_w,
\ee
where $|a_w|<2^{-j(1+10\delta)}$, $|b_w|<m^{-2}$ and $g_w\in \Bow(2^j,\epsilon',e)$ and $a_V(g_w) = a_X(g_w)= 0$  with $d_G(g_w,e)<m^{-2}$. Moreover, since $|a_w|\leq 2^{-j(1+10\delta)}$, by Definition \ref{def:matchsplit}, Definition \ref{def:splittime} and Definition \ref{def:kakball}, it follows that\footnote{The splitting time is defined through the $V$ coefficient and dynamical control on $g_w$. Since we are in the set $C^1$ (see the definition of $\Kak^{1,\delta}$) it follows that the $V$ coefficient is of lower order, hence the splitting has to be produced by $g_w$.}
\be\label{spzw}
g_w\notin \Bow(2^{j+1},\epsilon',e)
\ee
Notice that $\ve'$ is fixed, so we may choose $m$ sufficiently large so that the condition \\ $d_G(g_w,e) < m^{-2}$ will imply that $d_G(g_w,e) < C(d)^{-1}\ve'$, where $C(d)$ is as in Corollary \ref{corpol}. Applying this Corollary with $R=2^{j+1}$ and $\epsilon=\epsilon'$ it follows that there exists $A_u\subset [0,2^{j(1+2\delta)}]$, $|A_u|\leq C(d)2^{j(1+2\delta-2\delta/d)}$ and such that for every $r\in [0,2^{j(1+2\delta)}]\setminus A_u$, we have
$$
d_G(\exp(rU)g_u\exp(-rU),e)>10\epsilon'.
$$
Hence, by Lemma \ref{lem:nonsl}, with $R=2^j$, and $\eta'=2\delta$ we have that for every $r\in [0,2^{j(1+2\delta)}]\setminus A_u$ there exists $s_r \in [0,4(L_U+1)j\delta]$,
\be\label{crueq}
d_G(\exp(-s_rX)\exp(rU)g_u\exp(-rU)\exp(s_rX),\exp(C_r))<2^{-\delta j},
\ee
 where $C_r \in C(U)$ is as in the statement of Lemma \ref{lem:nonsl}.

Notice that $x_v=\exp((v-u)U)x_u$ and $y_v=\exp((h(v)-h(u))U)y_u$. Therefore and by \eqref{eq:newqe} (setting $p=v-u$ and $q=h(v)-h(u)$) it follows that if $\tilde{x}$ and $\tilde{y}$ are lifts of $x$ and $y$, for some $\gamma\in \Gamma$,
\be\label{impdec}
\begin{aligned}
\exp(pU)\exp(a_uV)&\exp(b_uX)g_u\exp(h(u)U)\tilde{y}\\&=\exp(a_vV)\exp(b_vX)g_v\exp(qU)\exp(h(u)U)\tilde{y}\gamma
\end{aligned}
\ee

We will consider two cases:

\textbf{Case I.}  $\gamma=e$ in \eqref{impdec}. In this case we get
\be\label{osteq}
\exp(pU)\exp(a_uV)\exp(b_uX)g_u\exp(-qU)=\exp(a_vV)\exp(b_vX)g_v
\ee
By Lemma \ref{lem:slren} and since $|a_u|<2^{-j(1+10\delta)}$, it follows that there exists $l=l(p)$ with $p/2<|l|<2p$, such that
$$
\exp(pU)\exp(a_uV)\exp(b_uX)=\exp(b_pV)\exp(c_pX)\exp(lU),
$$
and $|b_p|<2|a_u|$, $|c_p|<4m^{-2}$.
Therefore, using \eqref{osteq}, we have
\be\label{lqU}
\exp((l-q)U)\exp(qU)g_u\exp(-qU)=\exp(-c_pX)\exp(-b_pV)\exp(a_vV)\exp(b_vX)g_v.
\ee
If $q\in [0,2^{j(1+2\delta)}]\setminus A_u$, then by \eqref{crueq}, we have, that there exists $s\in [0,4(L_U+1)j\delta]$ such that
\be\label{eq:jkl}
d_G(\exp(-sX)\exp(qU)g_u\exp(-qU)\exp(sX),\exp(C))<2^{-j\delta}.
\ee

Conjugate \eqref{lqU} by $\exp(sX)$. Since $s\leq 4(L_U+1)j\delta \leq \frac{1}{2}\log 2^j$ (since $\delta$ is of lower order than $L_U$), by Lemma \ref{lem:B_Ggr}, we have
$$
d_G(\exp(-sX)g_v\exp(sX),e)<m^{-1},
$$
since $\|(z_v)_C\|\leq d_G(z_v,e)\leq m^{-2}$.
 Moreover  since $|a_v|,|b_p|\leq 2^{-(1+10\delta)j}$and $|b_v|,|c_p|\leq 4m^{-2}$ and since $s\leq 4(L_U+1)j\delta$, the RHS of \eqref{lqU} after conjugating by $\exp(sX)$ (term by term) is $40m^{-2}$ close to $e$.
 Hence and by \eqref{eq:jkl}, we get
 $$
d_G(\exp(e^{-2s}(l-q)U),\exp(-C))\leq  40m^{-2},
$$
 this however contradicts the properties of $C$ (see Lemma \ref{lem:nonsl}), if $m$ is chosen so that $\epsilon'>m^{-1/100}$. Consequently, for $q\in [0,2^{j(1+2\delta)}]\setminus A_u$ it follows that \eqref{eq:jkl} does not hold. Recall that $q=h(v)-h(u)$. This shows that for  $v\in h^{-1}([0,2^{j(1+2\delta)}]\setminus A_u)$,  \eqref{vnotin} holds. It remains to define $A'_u=h^{-1}(A_u)$ and notice that $|A'_u|\leq (1+\epsilon)|A_u|$ (since $h$ is a matching function).

\textbf{Case II.} $\gamma\neq e$ in \eqref{impdec}. In this case \eqref{impdec} is equivalent to
\begin{multline}\label{eq:neqe}
\exp(-qU)(g_v)^{-1}\exp(-b_vX)\exp(-a_vV)\exp(pU)\exp(a_uV)\exp(b_uX)g_u= \\ \exp(h(u)U)\tilde{y}\gamma \tilde{y}\exp(-h(u)U)
\end{multline}
Notice that for $s\in [\frac{1}{2} \log2^{j(1+3\delta)},\frac{1}{2}\log 2^{j(1+9\delta)}]$, we have
\be\label{eq:neq2}
d_G(\exp(-sX)\exp(pU)\exp(a_uV)\exp(b_uX)g_u\exp(sX),e)<\frac{8}{m^{1/3}}.
\ee
Indeed, this follows from applying the conjugation term-wise and by  $|p|<2^{j(1+2\delta)}$, $|a_u|<2^{-j(1+10\delta)}$, $|b_u|\leq m^{-2}$ and $d_G(\exp(-sX)g_u\exp(sX),e)<\frac{1}{m^{1/3}}$ (see Lemma \ref{lem:dirs} with $\delta'=3\delta$). By an analogous reasoning, \eqref{eq:neq2} holds also for $v$ (and $g_{v}^{-1}$ and $-q$ instead of $g_u$ and $p$). So by \eqref{eq:neqe}, for every $s\in [\frac{1}{2} \log2^{j(1+3\delta)},\frac{1}{2}\log 2^{j(1+9\delta)}]$, we have
$$
d_G((g_{-s}y_u)\gamma'(g_{-s}y_u)^{-1},e)<\frac{16}{m^{1/3}},
$$
and this contradicts \eqref{comps} (since $y_u\in ER_\delta$)  with $N= \frac{1}{2} \log2^{j(1+3\delta)}$ if $m$ and $j$ are large enough (see also \eqref{sys2}).
\end{proof}

\subsection{Proof of Lemma \ref{lem:mesem}}
Let us start by giving  outline of the proof:

\textbf{Outline of the proof:} We start by \eqref{xbel}, which is the definition of $G(j,y)$ not being empty. Next, we transform \eqref{xbel} (using the bounds on coefficients) to \eqref{eq:mesga}. Then using the properties \textbf{P1} and \textbf{P2} (which we prove in the appendix) we further transform it to \eqref{eq:mesga3}. We then have Lemma \ref{lem:mesem2}, which tells us that the set of solutions to \eqref{eq:mesga3} is $\frac{1}{T(j)j^{3}}$ small. Then using theorems on the cardinality of lattice points in balls in semisimple Lie groups,  it follows that Lemma \ref{lem:mesem2} follows by Lemma \ref{lem:mesem3} (in Lemma \ref{lem:mesem2} we are summing over $\gamma\in \Gamma$ and in Lemma \ref{lem:mesem3} the element $\gamma \in \Gamma$ is fixed). The crucial result here is Lemma \ref{lm:MeasureEstimateCrucial}. It allows to show that there are (sufficiently many) small translations of the set $S_{i,\gamma}$ which are disjoint. Therefore the measure of the set cannot be too large (this is made precise in the proof of Lemma \ref{lem:mesem3}). The crucial condition $\dim G-\dim(C(X))-3>0$ is used to show that the "sufficiently many" translates is enough to get the estimates since the cardinality of translates which are pairwise disjoint is strongly related to the number $\dim G-\dim(C(X))-3$ (see the statement of Lemma \ref{lm:MeasureEstimateCrucial})\footnote{It follows that Lemma \ref{lm:MeasureEstimateCrucial} is one of the main new tools which allows to generalize Ratner's results from \cite{Rat2}.}. In fact this is the only place in the whole proof of Theorem \ref{th1} in which we need this assumption.
The method of proving Lemma \ref{lm:MeasureEstimateCrucial} goes by a straightforward calculation using the properties of the adjoint representation of the $sl(2,\R)$ triple.

\textbf{Proof of Lemma \ref{lem:mesem}}.
Fix $\delta>0$ and $j\in \N$.  If $G(\delta, j,y)\neq \emptyset$, there exists $x\in G / \Gamma$ such that $x\in \Kak^{2,\delta}(2^j,\epsilon',y)$ and $\phi_p x\in \Kak^{2,\delta}(2^j,\epsilon',\phi_qy)$. By \eqref{emj2}
this implies that
\be\label{xbel}
x\in \exp(aV)\exp(bX)gy, \text{ where }%x=gy\text{ and }
g\in \Bow(2^j,\epsilon',e).
\ee
with $|a|\in [2^{-j(1+10\delta)},2^{-j}]$, $|b|\leq \epsilon'$ and $a_V(g) = a_X(g) = 0$. Analogously,
$$
\exp(pU)x\in\exp(cV)\exp(dX)\tau(g')\exp(qU)y, \text{ where }%x=g'y\text{ and }
g' \in \Bow(2^j,\epsilon',e)
$$
with $c, d$ satisfy the same estimates as $a,b$ and $a_V(g') = a_X(g') = 0$.
We may lift $x$ and $y$ to $\tilde{x}$ and $\tilde{y}$ so that the first equation holds for the lifts as well. We may without loss of generality assume that $\tilde{y}$ is in a fixed fundamental domain $F$. Combining the two above equations and denoting $y_q=\phi_qy$ and $\tilde{y}_q = \exp(qU)\tilde{y}$, yields for some $\gamma \in \Gamma$:
\begin{multline*}\left(\exp(pU)\exp(aV)\exp(bX)\Bow(2^j,\epsilon',e)\exp(-qU)\right)\cap\\
\left(\exp(cV)\exp(dX)\Bow(2^j,\epsilon',e)\tilde{y}_q\gamma(\tilde{y}_q)^{-1}\right)\neq \emptyset.
\end{multline*}
In what follows below, we will conjugate equations by $\exp(sX)$ and $\exp(rU)$ and use the fact that they preserve measure, hence the measure of the set of $y$ for which the above holds will be equal to that of the conjugated equation.

Conjugating the above equation by $\exp(-sX)$, with $s=\frac{1}{2}\log (2^j)$ and using $[X,U]=2U$ and $[X,V]=-2V$,  implies that for $\tilde{y}' = \exp(-sX)\tilde{y}_q$ and some $\gamma\in \Gamma$, we have

\begin{multline}\label{eq:mestest}
\left(\exp(p'U)\exp(a'V)\exp(bX)\exp(-sX)\Bow(2^{j/2},\epsilon',e)\exp(sX)\exp(-q'U)\right)\cap\\
\left(\exp(c'V)\exp(dX)\exp(-sX)\Bow(2^{j/2},\epsilon',e)\exp(sX)\tilde{y}'\gamma(\tilde{y}')^{-1}\right)\neq \emptyset,
\end{multline}

where
\be\label{p'q}
p',q'\in [2^{20j\delta},2^{40j\delta}]\; |a'|,|c'|\in [2^{-10j\delta}, 1]\text{ and }|b|,|d|\leq \epsilon'.
\ee

Moreover by Lemma \ref{lem:B_Ggr} it follows that
$$\exp(-sX)\Bow(2^{j/2},\epsilon',e)\exp(sX)\subset \exp(C(U,X)_{\epsilon'})B_G(e,2^{-j/2}).
$$

where $C(U,X)_{\ve'} = B_{\mf g}(0,\ve) \cap C(U,X)$. Denote \[ h = \exp(-dX)\exp(-c'V)\exp(p'U)\exp(a'V)\exp(bX) \mbox{ \; \; and\; \; } \tilde{B}= h B_G(e,2^{-j/2}) h^{-1}.\]
%Then we have
%\begin{equation}
%\tilde{B}=\Ad_{\exp(-dX)}\circ\Ad_{\exp(-c'V)}
%\circ\Ad_{\exp(p'U)}\circ\Ad_{\exp(a'V)}
%\circ\Ad_{\exp(bX)}(B_G(e,2^{-j/2})).
%\end{equation}
Notice that for $w\in\{d,-c',a',b\}$ and $W\in \{U,V,X\}$ (since all the numbers are small), we have for every small enought $r>0$, $\exp(wW)B_G(e,r)\exp(-wW)\subset B_G(e,Cr)$ (with a global constant $C$). Moreover, by the bound on $p'$ (see \eqref{p'q}) it follows that \\
$\exp(p'U)B_G(e,C2^{-j/2})\exp(-p'U) \subset B_G(e, 2^{-j/2+K_Uj \delta})$, for some global constant $K_U$.\footnote{Here the reasoning follows from \eqref{eq:conj-formula} with $s=0$ and $t=p'$ .} Therefore, (by enlarging $K_U$ if necessary), we have
\be\label{eq:BallExchange}
\tilde{B}\subset B_G(e,2^{-\frac{j}{2}+K_Uj\delta}).
\ee

Therefore \eqref{eq:mestest}, \eqref{eq:BallExchange} and  Lemma \ref{lem:com} imply that
\be\label{eq:mesgaOld}
\tilde{y}'\gamma(\tilde{y}')^{-1}\in B_G(e,2^{-j(\frac{1}{2}-K_U\delta)})m(p',q',a',b,c',d)\exp(C(U,X)_{\epsilon'^{1/3}})
\ee
where $m(p',q',a',b,c',d)=\exp(-dX)\exp(-c'V)\exp(p'U)\exp(a'V)\exp(bX)\exp(-q'U)$.

From now, instead of the previous equation, we consider the square of the previous equation. The reason is that in the Appendix we do the computations in $SL(2,\R)$ (and then transfer to $G$) and this allows to deal with $-\id \in SL(2,\R)$.
Notice that considering the adjoint action of components of $m(p',q',a',b,c',d)$ one by one (see e.g. \eqref{eq:conj-formula} and \eqref{eq:conj-formula2} ) on the ball $B_G(e,2^{-j(\frac{1}{2}-K_U\delta)})$ and since the elements in $C(U,X)_{\epsilon'^{1/3}}$ are small, we have if $m = m(p',q',a',b,c',d)$, then
\be
\begin{aligned}
\left(m\exp(C(U,X)_{\epsilon'^{1/3}})\right)^{-1}B_G(e,2^{-j(\frac{1}{2}-K_U\delta)})m\exp(C(U,X)_{\epsilon'^{1/3}})\subset B_G(e,2^{-j(\frac{1}{2}-2K_U\delta)}).
\end{aligned}
\ee
Then our new equation is:
\be\label{eq:mesga}
\tilde{y}'\gamma^2(\tilde{y}')^{-1}\in B_G(e,2^{-j(\frac{1}{2}-2K_U\delta)})m^2(p',q',a',b,c',d)\exp(C(U,X)_{2\epsilon'^{1/3}}).
\ee

%Notice that $M=M^2(p',q',a',b,c',d)$ is a matrix in the $SL(2,\R)$- triple and in the {Appendix we show that there exists a constant $C_U$ (depending on $U$ only) such that for every numbers satisfying \eqref{p'q}, we have
%\be\label{estre}
%M\in \bigcup_{|A|,|B|,|C|=0}^{2^{C_Uj\delta}}\exp(AU+BX+CV)
%\ee
%}

In the Appendix we will show that for every $p',q',a',b,c',d$ as above we have the following: there exists $K'_U>0$ (depending only on $U$) such that

\textbf{P1.} $d_G(m^2(p',q',a',b,c',d),e)< K'_Uj\delta$,

and

\textbf{P2.} $m^2(p',q',a',b,c',d)=h\exp(sX)h^{-1}$, where $40j\delta-5\leq|s|\leq 160j\delta+6$, $h$ commutes with $C(U,X)$ and
\be\label{pcon}
{h}^{-1}B_G(e,2^{-j(1/2-2K_U\delta)+1})h\subset B_G(e,
2^{-j(\frac{1}{2}-K'_U\delta))}).
\ee

Let $(m_i)_{i=1}^{T(j)}\in G$, %$m_i=\exp(\alpha_iX+\beta_iV+\gamma_iU)$
be a $2^{-j/2}$ dense set in $m^2(p',q',a',b',c',d)$, i.e. for every $m^2$, there exists $m_i$ such that $d(m^2,m_i)\leq 2^{-j/2}$.
Notice that by \textbf{P1} and Lemma \ref{lem:vol-entropy} it can be done with
\be\label{tjbound}
T(j)\leq 2^{K''_U j\delta+3j/2}.
\ee

for some $K''_U$ depending only on $U$, since $B_G(x,\ve) \supset B_{SL(2,\R)}(x,\ve)$. Then  \eqref{eq:mesga} implies that for some $i\in \{1,\ldots T(j)\}$, we have
\be\label{eq:mesga2}
\tilde{y}'\gamma^2(\tilde{y}')^{-1}\in B_G(e,2^{-j(\frac{1}{2}-2K_U\delta)+1})m_i\exp(C(U,X)_{2\epsilon'^{1/3}}).
\ee

By \textbf{P2} it follows that $m_i=g_i\exp(s_iX)g_i^{-1}$ and moreover that $g_i$ commutes with $C(U,X)$. Therefore and by \eqref{pcon}, if we denote $\tilde{z}=g_i\tilde{y}'$, there is some $\gamma'$ such that
then \eqref{eq:mesga2} implies that
\be\label{eq:mesga3}
\tilde{z}\gamma'(\tilde{z})^{-1}\in B_G(e,2^{-j(\frac{1}{2}-K'_U\delta)})\exp(s_iX)\exp(C(U,X)_{
2\epsilon'^{1/3}})
\ee

Since  $T(j)\leq 2^{K''_U j\delta}(2^{3j/2})$, by the above reasoning\footnote{We have assumed \eqref{xbel} and transformed the equation to \eqref{eq:mesga3}. Hence the measure of $y$ solving \eqref{xbel} (and hence also belonging to the set $EG(\delta,j)$ Lemma \ref{lem:mesem}) is no larger than the measure of $y'$- solutions to \eqref{eq:mesga3}.}, Lemma \ref{lem:mesem} follows by the following lemma:

\begin{lemma}\label{lem:mesem2} For $j>j_\delta$ and for every $i\in \{1,\ldots T(j)\}$
\be\label{eq:lgbe}
\mu_F(\{\tilde{z}\in F: d_G(\tilde{z},e)<40j\delta\text{ and} \;\exists \gamma'\in \Gamma,\text{ such that } \tilde{z} \text{ satisfies } \eqref{eq:mesga3}\})\leq \frac{1}{T(j)j^3}.
\ee
\end{lemma}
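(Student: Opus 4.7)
The plan is to follow the strategy outlined in the sketch just before the statement: for each $\gamma \in \Gamma$ separately, define
\[ S_{i,\gamma} := \{\tilde{z} \in F : d_G(\tilde{z},e) < 40 j\delta \text{ and } \eqref{eq:mesga3} \text{ holds for this } \gamma\}, \]
so that the set in the lemma equals $\bigcup_{\gamma \in \Gamma} S_{i,\gamma}$. It then suffices to (a) control how many $\gamma$ can contribute, and (b) bound $\mu_F(S_{i,\gamma})$ uniformly in $\gamma$.

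For (a), if $S_{i,\gamma}$ is nonempty then $\tilde{z}\gamma\tilde{z}^{-1}$ is within bounded distance of $\exp(s_iX)$, and by property \textbf{P2} we have $|s_i| \leq 160 j\delta + 6$. Combined with $d_G(\tilde{z},e) < 40 j\delta$ and the right-invariance of the metric, this forces $\gamma$ to lie in a ball $B_G(e, C j\delta)$ for some absolute constant $C$. Exponential volume growth for $G$ and the standard lattice point count (in the spirit of Lemma \ref{lem:vol-entropy}) give at most $e^{C_1 j \delta}$ such $\gamma$.

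For (b), this is the heart of the argument and is the content of the auxiliary Lemma \ref{lem:mesem3} whose proof rests on Lemma \ref{lm:MeasureEstimateCrucial}. The geometric picture is: if $\tilde{z},\tilde{z}' \in S_{i,\gamma}$, then $\tilde{z}'\tilde{z}^{-1}$ approximately commutes with a conjugate of $\exp(s_iX)$ modulo a small element of $\exp(C(U,X)_{2\epsilon'^{1/3}})$, so it is forced to lie near the centralizer $C(X) \subset \mf g$. Hence $S_{i,\gamma}$ is a tube of transverse thickness at most $2^{-j(1/2 - K'_U\delta)}$ around a single coset of $\exp(C(X))$. One exploits this by producing a family of $N$ small translates $h_k \cdot S_{i,\gamma}$ that are pairwise disjoint, with each $h_k$ obtained by moving in root directions transverse to $C(X)$ supplied by the $\mf{sl}(2,\R)$-triple representation structure from Section \ref{sec:sl2}. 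Since all translates remain inside a fixed bounded region, $\mu_F(S_{i,\gamma}) \leq C_2 / N$, and an explicit computation using the $\ad_X$-eigenvalues of the chain basis yields $N$ of order $2^{j\alpha}$ with $\alpha = (\dim G - \dim C(X))/2$.

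Combining (a) and (b) gives $\mu_F\!\left(\bigcup_\gamma S_{i,\gamma}\right) \leq e^{C_1 j\delta}\, C_2\, 2^{-j\alpha}$, which beats $1/(T(j) j^3) \geq 2^{-K''_U j\delta - 3j/2}/j^3$ once $\alpha > 3/2$, i.e. $\dim G - \dim C(X) > 3$, which is precisely the assumption $GR(U) > 3$ via Lemma \ref{th2}. The main obstacle is the disjointness step in (b): one must choose the $h_k$ large enough (using the dilation by $\exp(sX)$ with $s$ of order $j$) to separate distinct transverse tubes, yet still keep all $N$ translates inside a uniform compact region so that the packing bound applies. This is exactly where the strict inequality $GR(U) > 3$ enters, and as the paper notes, it is the only point in the proof of Theorem \ref{th1} where this strict inequality is needed.
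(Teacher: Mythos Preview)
Your proposal follows the paper's approach closely: the decomposition into $S_{i,\gamma}$, the lattice point count in (a), and the disjoint-translates packing argument in (b) via Lemma~\ref{lm:MeasureEstimateCrucial} are exactly what the paper does (the paper cites Gorodnik--Nevo for (a) rather than a volume argument, but the effect is the same).

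There is one point you gloss over that the paper treats as nontrivial. In (b) you write ``all translates remain inside a fixed bounded region, so $\mu_F(S_{i,\gamma}) \le C_2/N$.'' The translates $\exp(\sum t_{kl}X_l^k)S_{i,\gamma}$ need not stay inside $F$, and the ambient set $\exp(B_{\mf g}(0,2^{-D_Uj\delta}))F_{40j\delta}$ is not fixed: it grows with $j$. What the packing argument needs is that its \emph{Haar measure} stays bounded independently of $j$, and for non-cocompact $\Gamma$ this is not automatic---near the cusp, even tiny perturbations of $F$ can hit many $\Gamma$-translates. The paper isolates this as Proposition~\ref{prop:measureOfPertubationSet} and devotes Section~\ref{sec:siegel-sets} (Siegel sets, reduction theory) to proving it. Your sketch is correct for cocompact $\Gamma$; for the general case you should replace ``fixed bounded region'' by ``region of uniformly bounded $\mu_G$-measure'' and invoke that proposition.
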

\begin{proof}[Proof of Lemma \ref{lem:mesem}]
Notice that by the reasoning above, and Lemma \ref{lem:mesem2}, we have
\be
\begin{aligned}
&\mu((G/\Gamma)\setminus EG(\delta,j))\leq \mu_F(\{\tilde{z}\in F:d_G(\tilde{z},e)\geq 40j\delta\})+\\
&T(j)\mu_F(\{\tilde{z}\in F: d_G(\tilde{z},e)<40j\delta\;\text{and}\;\exists \gamma'\in \Gamma,\text{ such that } \tilde{z} \text{ satisfies } \eqref{eq:mesga3}\})\\&\leq
\mu_F(\{\tilde{z}\in F:d_G(\tilde{z},e)\geq 40j\delta\})+j^{-3}.
\end{aligned}
\ee
Moreover, by Corollary \ref{cor:exp-cusp-decay}, we have for some $c>0$
$$
\mu_F(\{\tilde{z}\in F:d_G(\tilde{z},e)\geq 40j\delta\})\leq 2^{-40cj\delta},
$$
what finishes the proof.
\end{proof}

We will now show Lemma \ref{lem:mesem2}:

Notice that if $\tilde{z} \in F$ is a solution of \eqref{eq:mesga3}, then by triangle inequality and the bound on $s_i$ (see \textbf{P2}) it follows that
$$
d_G(\gamma',e)\leq 2d_G(\tilde{z},e)+2+|s_i|\leq 81j\delta.
$$
By \cite{Gor-Nev} Theorem $1.7$ it follows that for some constant $C_\Gamma>0$
\be\label{cardg}
\left| \gamma\in \Gamma: d_G(\gamma,e)\leq 81 j\delta\right|\leq
2^{C_\Gamma j\delta}.
\ee
Define
\be\label{sigam}
S=S_{i,\gamma}:=\{\tilde{z} \in F , d_G(\tilde{z},e)<40j\delta:\:\text{ such that } \tilde{z}\text{ satisfies } \eqref{eq:mesga3} \text{ with }\gamma'=\gamma\}.
\ee
By \eqref{cardg}, Lemma \ref{lem:mesem2} follows by showing:

\begin{lemma}\label{lem:mesem3} For every $j \ge j_\delta$ and $\gamma\in \Gamma$, with $d_G(\gamma,e)<81 j\delta$ and every $i\in\{1,\ldots, T(j)\}$, we have
\be\label{eq:lgbe2}
\mu_F(S_{i,\gamma})\leq \frac{1}{j^3T(j)2^{C_\Gamma j\delta}}
\ee
\end{lemma}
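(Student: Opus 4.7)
The strategy is to exhibit a large family of left translates $\{h_k S_{i,\gamma}\}_{k=1}^N$ that are pairwise disjoint but all contained in a Haar ball of modest volume; the volume estimate then forces $\mu_F(S_{i,\gamma})$ to be small. The translation directions will be transverse to $C(X)$, and it is precisely here that the hypothesis $GR(U)>3$ (equivalently $\dim G-\dim C(X)>3$ by Lemma~\ref{th2}) is essential.

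Decompose $\mf g=C(X)\oplus \mf n$, where $\mf n$ is the sum of the nonzero $\ad_X$-eigenspaces; note $d:=\dim\mf n\ge 4$ (and is even by the $\lambda\leftrightarrow -\lambda$ symmetry inherited from the $\mf{sl}(2,\R)$-triple). For $Y\in\mf n$ small, set $h:=\exp(Y)$. If $\tilde z\in S_{i,\gamma}$ and $m:=\tilde z\gamma\tilde z^{-1}$, then by BCH
\[
(h\tilde z)\gamma(h\tilde z)^{-1}=hmh^{-1}=m\exp\bigl((\Ad(m^{-1})-\id)Y+O(\norm{Y}^2)\bigr).
\]
By the definition of $S_{i,\gamma}$ and property \textbf{P2}, up to a factor of size $2^{-j(\frac12-K'_U\delta)}$ and a factor in $\exp(C(U,X)_{2\epsilon'^{1/3}})$ one may replace $m$ by $\exp(s_iX)$. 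Since $\ad_X$ acts diagonally on $\mf n$ with nonzero integer eigenvalues $\lambda$ and $|s_i|\ge 40j\delta-5$, every eigenvalue of $(\Ad(\exp(-s_iX))-\id)|_{\mf n}$ has modulus $|e^{-\lambda s_i}-1|\ge \tfrac12$ for $j$ large. Hence the $\mf n$-component of the perturbation has norm $\ge c\norm{Y}$. For $h\tilde z$ to remain in $S_{i,\gamma}$, this perturbation must be absorbable by $B_G(e,2^{-j(\frac12-K'_U\delta)})\exp(C(U,X))$, whose projection to $\mf n$ has radius at most $2^{-j(\frac12-K'_U\delta)+O(j\delta)}$ after the $\Ad(\exp(\pm s_iX))$-dilation; here Lemma~\ref{lem:com} ($C(U,X)=C(U,V,X)\subset C(X)$) guarantees that $C(U,X)$ is transverse to $\mf n$. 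Consequently, any $2^{-j(\frac12-O(\delta))}$-separated set $\{Y_k\}_{k=1}^N\subset\mf n\cap B_{\mf g}(0,1/10)$ yields pairwise disjoint translates $\{\exp(Y_k)S_{i,\gamma}\}$.

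Standard packing (as in Lemma~\ref{lem:vol-entropy}) gives $N\ge c\cdot 2^{jd(\frac12-O(\delta))}$, while all translates sit inside $B_G(e,40j\delta+1)$, of Haar volume $\le 2^{O(j\delta)}$. Bi-invariance of Haar measure on a semisimple group then gives
\[
\mu_F(S_{i,\gamma})\le \frac{\vol(B_G(e,40j\delta+1))}{N}\le 2^{-jd/2+O(j\delta)}\le 2^{-2j+O(j\delta)}.
\]
Since $T(j)\le 2^{K''_U j\delta+3j/2}$, for $\delta<\delta_0$ sufficiently small and $j\ge j_\delta$ this bound is dominated by $\bigl(j^3 T(j)2^{C_\Gamma j\delta}\bigr)^{-1}$, as required. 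The main obstacle is the quantitative disjointness claim above, which is the content of the referenced Lemma~\ref{lm:MeasureEstimateCrucial}: it requires bookkeeping the linearization, the $\Ad$-distortion (whose $O(j\delta)$ losses come from $|s_i|\lesssim 40j\delta$), and the complementarity of $C(U,X)$ with $\mf n$. The inequality $d/2\ge 2>3/2$ is exactly what beats the growth $3/2=\lim\log_2 T(j)/j$ inherited from the $2^{-j/2}$-density of the cover $\{m_i\}$ of $m^2(p',q',a',b',c',d)$; this is the algebraic incarnation of the strict hypothesis $GR(U)>3$, and it is used only here in the whole proof.
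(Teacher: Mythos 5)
Your proposal follows essentially the same strategy as the paper's: decompose $\mf g = C(X) \oplus \mf n$ (where $\mf n$ is the span of the chain basis vectors $X_l^k$ with $l \not= m_k/2$, i.e.\ the nonzero $\ad_X$-eigenspaces), pack $\mf n$ with a separated family of translation vectors, invoke Lemma~\ref{lm:MeasureEstimateCrucial} for disjointness of the translated sets $\exp(Y_k)S_{i,\gamma}$, bound the total volume of the containing set, and divide. The identity $\dim\mf n = \dim G - \dim C(X) \ge 4$ (from Lemma~\ref{th2} and evenness) supplying the margin $jd/2 - 3j/2 \ge j/2$ is exactly the paper's computation. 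Where you differ is the volume bound for the containing set: you use the crude estimate $\mu_G(B_G(e,40j\delta+1)) \le 2^{O(j\delta)}$ from exponential volume growth, while the paper invokes Proposition~\ref{prop:measureOfPertubationSet} (Siegel-set geometry) to get a \emph{constant} bound $D_2$ for $\mu_G(\exp(B_{\mf g}(0,2^{-D_Uj\delta}))F_{40j\delta})$. Your cruder bound is perfectly adequate here, since the $O(j\delta)$ loss is absorbed by the $j/2$ margin for $\delta$ small; this is a genuine (if modest) simplification for this particular lemma, though the paper needs the Siegel-set estimates elsewhere (Corollary~\ref{cor:exp-cusp-decay} inside Lemma~\ref{lem:mesem}).

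One imprecision: you pack a $2^{-j(\frac12 - O(\delta))}$-separated set inside $\mf n \cap B_{\mf g}(0,1/10)$, but Lemma~\ref{lm:MeasureEstimateCrucial} only asserts disjointness for coefficients $t_{kl},s_{kl} \in [0,2^{-D_Uj\delta}]$; the smallness of the cube is essential to control the BCH error terms $O(\norm{Y}^2)$ and $Y',Y'',Y'''$ in that lemma's proof. Packing $[0,2^{-D_Uj\delta}]^d$ with the same separation still yields $\approx 2^{jd(\frac12 - 2D_U\delta)}$ points, which is all you need, so the final estimate is unaffected — but the disjointness claim as you state it (over the larger ball of fixed radius) is not what the cited lemma provides and would require a separate argument.
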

%In what follows we will denote $y'$ by $y$.

For the remainder of the proof, we will let $U,X,V$ be denoted by $X_0^0$, $X_1^0$ and $X_2^0$ so as to simplify notation (our index on the superscript began at 1 before). In particular, $\set{X_l^k : k= 0,\dots,n \mbox{ and } l = 0,\dots, m_k}$ is a basis of $\mf g$. Observe that $C(X) = \langle X_{m_k/2}^k : k=1,\dots,n \mbox{ and }m_k \in 2\Z\rangle$. To prove Lemma \ref{lem:mesem3}, we need the following lemma:

\begin{lemma}\label{lm:MeasureEstimateCrucial}
There exists a constant $D_U$ (depending on $U$ and larger than $\frac{D_1}{40}$, where $D_1$ is specified in Proposition \ref{prop:measureOfPertubationSet}) such that  for $0\leq t_{kl},s_{kl}\leq 2^{-D_Uj\delta}$ for $k=1,\dots,n$ and $l = 0,\dots m_k$, $l\not= m_k/2$, if there exists $(k_0,l_0)$ such that $|t_{k_0l_0}-s_{k_0l_0}|\geq 2^{-j(\frac{1}{2}-D_U\delta)}$, then
\begin{equation}\label{eq:disjointProperty}
\exp\left(\sum_k\sum_{l\not=m_k/2}t_{kl}X_l^k\right)S_{i,\gamma}\cap\exp\left(\sum_k \sum_{l\not=m_k/2}s_{kl}X_l^k\right)S_{i,\gamma}=\emptyset,
\end{equation}
%where $\{V_i\}_{i=1}^{\dim G-\dim C(X)}$ are the chain basis elemenst not in $C(X)$.
\end{lemma}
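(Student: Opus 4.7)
The plan is to argue by contradiction. Assume the two translated sets share a common point, so there exist $\tilde{z},\tilde{z}'\in S_{i,\gamma}$ with $T\tilde{z}=S\tilde{z}'$, where $T=\exp\bigl(\sum t_{kl}X_l^k\bigr)$ and $S=\exp\bigl(\sum s_{kl}X_l^k\bigr)$. Setting $R:=S^{-1}T=\exp(Z)$, Baker--Campbell--Hausdorff gives $Z=\sum_{k,l}r_{kl}X_l^k+O(2^{-2D_Uj\delta})$ with $r_{kl}=t_{kl}-s_{kl}$. By the definition of $S_{i,\gamma}$ we can write $\tilde{z}\gamma\tilde{z}^{-1}=b_1\exp(s_iX)c_1$ and $\tilde{z}'\gamma(\tilde{z}')^{-1}=b_2\exp(s_iX)c_2$ with $b_i\in B_G(e,2^{-j(1/2-K'_U\delta)})$ and $c_i\in\exp(C(U,X)_{2\epsilon'^{1/3}})$; combining these with $\tilde{z}'\gamma(\tilde{z}')^{-1}=R(\tilde{z}\gamma\tilde{z}^{-1})R^{-1}$, and using that $c_i$ commutes with $\exp(s_iX)$ (since $C(U,X)\subset C(X)$), yields the key identity
\be\label{eq:plan-key}
R\,b_1c_1\,R^{-1}\cdot R\exp(s_iX)R^{-1}=b_2c_2\exp(s_iX).
\ee

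The core calculation is $R\exp(s_iX)R^{-1}$: by Lemma \ref{lem:adjoint} and the relation $\Ad(\exp(s_iX))X_l^k=e^{(m_k-2l)s_i}X_l^k$,
\[
R\exp(s_iX)R^{-1}=\exp(s_iX)\,\exp\!\left(\sum_{k,l}r_{kl}\bigl(e^{-(m_k-2l)s_i}-1\bigr)X_l^k+E'\right),
\]
with $E'$ collecting higher-order BCH corrections. Crucially, every index has $l\neq m_k/2$ and $k\geq 1$, so each $X_l^k$ lies in a nonzero eigenspace of $\ad_X$. Since $C(U,X)$ is spanned by the trivial-chain elements $X_0^{k'}$ with $m_{k'}=0$ (all in the zero eigenspace of $\ad_X$), the displacement in the exponent is linearly independent from $C(U,X)$ and cannot be absorbed by $c_1,c_2$; any equality in \eqref{eq:plan-key} must match it against the remaining non-centralizer components of $b_2c_2c_1^{-1}b_1^{-1}$ after conjugation by $\exp(-s_iX)$.

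Projecting the rearranged form of \eqref{eq:plan-key} onto the $X_{l_0}^{k_0}$ coordinate in the chain basis produces, at leading order,
\[
r_{k_0l_0}\bigl(e^{-(m_{k_0}-2l_0)s_i}-1\bigr)=\bigl[\Ad(\exp(-s_iX))\log(b_2c_2c_1^{-1}b_1^{-1})\bigr]_{X_{l_0}^{k_0}}+(\text{lower-order BCH terms}).
\]
By property P2, $s_i\in[40j\delta-5,\,160j\delta+6]$, so both sides carry the amplification factor $|e^{-(m_{k_0}-2l_0)s_i}|$ in a matched way: when $l_0<m_{k_0}/2$ the factor is exponentially small, while when $l_0>m_{k_0}/2$ it is exponentially large. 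The key inequality thus reduces to comparing $|r_{k_0l_0}|\geq 2^{-j(1/2-D_U\delta)}$ against $|\log(b_2c_2c_1^{-1}b_1^{-1})_{X_{l_0}^{k_0}}|$, which is bounded by $O(2^{-j(1/2-K'_U\delta)})$ from the $b_i$ contribution together with transverse commutator contributions $[Z,\log c_i]$ of size $O(2^{-D_Uj\delta}\epsilon'^{1/3})$. Taking $D_U>K'_U$ and $D_U>D_1/40$ (with $D_1$ from Proposition \ref{prop:measureOfPertubationSet}), and $\delta_0$ small relative to $D_U$ and the depth $L_U$ of the longest chain, ensures that the right-hand side is strictly dominated by the left, contradicting the equation. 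The main technical obstacle is controlling the transverse components generated by $R\,c_i\,R^{-1}$: since $R$ does not commute with $C(U,X)$, conjugation produces $X_l^k$-components of size $\epsilon'^{1/3}\cdot\max|r_{kl}|$, and one must rule out any conspiratorial cancellation of the main term by these; this is precisely the quantitative content of Proposition \ref{prop:measureOfPertubationSet}, which underlies the choice $D_U>D_1/40$.
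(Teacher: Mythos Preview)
Your overall strategy is correct and coincides with the paper's: assume a common point in the two translates, use the defining structure $\tilde{z}\gamma\tilde{z}^{-1}\in B_G(e,2^{-j(1/2-K'_U\delta)})\exp(s_iX)\exp(C(U,X)_{2\epsilon'^{1/3}})$ for both preimages, eliminate $\tilde{x}\gamma\tilde{x}^{-1}$ to obtain an identity involving $T^{-1}S$, and reach a contradiction by comparing the $X_{l_0}^{k_0}$--coefficient (which is outside $C(U,X)$) on the two sides. Your computation of $R\exp(s_iX)R^{-1}$ and the observation that $c_i\in\exp(C(U,X))$ commutes with $\exp(s_iX)$ are exactly the ingredients the paper uses; your rearrangement is just a cosmetic variant of the paper's equation \eqref{eq:lmMainTarget}, and the decisive estimate $|r_{k_0l_0}(1-e^{(m_{k_0}-2l_0)s_i})|\ge \tfrac12|r_{k_0l_0}|$ is the paper's \eqref{,000}.

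There is, however, one genuine confusion in your write-up. You invoke Proposition~\ref{prop:measureOfPertubationSet} as ``precisely the quantitative content'' that rules out cancellation of the main term by the transverse commutator contributions $[Z,\log c_i]$, and you tie the constraint $D_U>D_1/40$ to this. That is not what Proposition~\ref{prop:measureOfPertubationSet} does: it is a measure estimate for thickened Siegel sets and plays no role whatsoever in the proof of the present lemma. The bound $D_U>D_1/40$ is recorded in the statement solely so that, \emph{after} this lemma is proved, one can apply Proposition~\ref{prop:measureOfPertubationSet} in the proof of Lemma~\ref{lem:mesem3} to bound $\mu_G(\exp(B_{\mf g}(0,2^{-D_Uj\delta}))F_{40j\delta})$. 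Inside the present proof the only requirement on $D_U$ is that it dominate the structural constants $K'_U,K''_U,\dots$ coming from conjugation and BCH, exactly as in the paper's final line ``if $D_U$ is large enough (in terms of $K''''_U$\dots) we get a contradiction.'' The commutator terms $[Z,Y_i]$ are handled not by Proposition~\ref{prop:measureOfPertubationSet} but by the elementary bound $\|[Z,Y_i]\|\le C\epsilon'^{1/3}\|Z\|$ together with the fact that all BCH error terms are at most $\epsilon'^{1/100}$ times the full main displacement $\|\sum(s_{kl}-t_{kl})(1-e^{(m_k-2l)\alpha})X_l^k\|$ (the paper's \eqref{eqfff}). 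If you remove the incorrect appeal to Proposition~\ref{prop:measureOfPertubationSet} and instead bound the errors as in \eqref{eqfff}, your argument goes through.
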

Before we prove Lemma \ref{lm:MeasureEstimateCrucial}, let us show how it implies Lemma \ref{lem:mesem3} and hence also finishes the proof of Lemma \ref{lem:mesem}.

\begin{proof}[Proof of Lemma \ref{lem:mesem3}] We take a maximal $2^{-j(\frac{1}{2}-D_U\delta)}$-separated set $A$ inside the cube

$[0,2^{-D_Uj\delta}]^{\dim G-\dim C(X)}$. Then by definition of maximal separated sets, we have
\be\label{cardA}
|A|\geq2^{(j(\frac{1}{2}-2D_U\delta)-1)(\dim G-\dim C(X))}
\ee

Suppose $s,t\in A$, then they differ in one direction by at least $2^{-j(\frac{1}{2}-D_U\delta)}$. Notice that since $(t_{kl})\in [0,2^{-D_Uj\delta}]^{\dim G-\dim C(X)}$, we have
$$\exp\left(\sum t_{kl}X_l^k\right)S_{i,\gamma}\subset \exp(B_{\mf g}(0,2^{-D_Uj\delta}))F_{40j\delta},$$
where $F_{t}:=\{z\in F:\; d_G(z,e)<t\}$.

Then we have the following proposition to control the measure of the set $$\exp(B_{\mf g}(0,2^{-D_Uj\delta}))F_{40j\delta},$$ which will be proved in Section \ref{sec:siegel-sets}:
\begin{proposition}\label{prop:measureOfPertubationSet} There exists a fundamental domain $F\subset G$ (for the lattice $\Gamma$) and two  constants $D_1,D_2>0$ such that for every for every $t$, we have
$$
 \mu_G\left( \exp(B_{\mf g}(0,2^{-D_1t}))F_t\right)<D_2.
$$
\end{proposition}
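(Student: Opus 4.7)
The plan is to take $F$ to be a fundamental domain arising from a Siegel-set decomposition of $G/\Gamma$, provided by the Borel--Harish-Chandra reduction theory. For such a choice one has $\mu_G(F)=\mathrm{vol}(G/\Gamma)<\infty$, and the set of ``neighbors'' $\{\gamma\in\Gamma : F\cap F\gamma\neq\emptyset\}$ is finite (Siegel's finiteness theorem). The key analytic input will be the classical injectivity-radius estimate for Siegel sets: there exist constants $C_0,C_1>0$, depending only on $G$ and $\Gamma$, such that
\[
\mathrm{inj}(\pi(z))\;\geq\;C_0\,e^{-C_1\,d_G(z,e)}\qquad\text{for every }z\in F.
\]
Intuitively, the shortest nontrivial lattice translates near a point at depth $t$ in a cusp come from a unipotent horospherical subgroup and have size of order $e^{-\alpha t}$ in the right-invariant metric on $G$.

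Next I would choose $D_1$ so that $D_1\log 2>C_1+1$ (with a harmless additive adjustment to absorb $C_0$). Then for every $z\in F_t$ one has $2^{-D_1 t}<\tfrac{1}{10}\,\mathrm{inj}(\pi(z))$, so the left-translate $\exp(B_{\mf g}(0,2^{-D_1 t}))\cdot z$ projects injectively under $\pi$ for every such $z$. The next step is to bound the multiplicity
\[
M(t)\;:=\;\#\bigl\{\gamma\in\Gamma\;:\;\exp(B_{\mf g}(0,2^{-D_1 t}))F_t\cap F\gamma\neq\emptyset\bigr\}
\]
by an absolute constant $M_0$. Indeed, if such a $\gamma$ contributes, then $gz=z'\gamma$ for some $z\in F_t$, $z'\in F$, and $g\in\exp(B_{\mf g}(0,2^{-D_1 t}))$, whence $z'\gamma z^{-1}=g\in\exp(B_{\mf g}(0,2^{-D_1 t}))$. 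Combined with the injectivity-radius estimate, this forces $\gamma$ to lie within the finite set of neighbors of $F$ furnished by Siegel's theorem: the small-$\ve$ thickening introduces no new $\gamma$ because the gap between non-adjacent tiles $F\gamma'$ in the cusp at depth $t$ is at least of order $\mathrm{inj}(\pi(z))$, which dominates $\ve=2^{-D_1 t}$ by our choice of $D_1$. Hence $M(t)\leq M_0$, independently of $t$.

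Finally, since the tiles $\{F\gamma\}_{\gamma\in\Gamma}$ partition $G$ up to measure zero,
\[
\mu_G\!\left(\exp(B_{\mf g}(0,2^{-D_1 t}))F_t\right)=\sum_{\gamma\in\Gamma}\mu_G\!\left(\exp(B_{\mf g}(0,2^{-D_1 t}))F_t\cap F\gamma\right)\;\leq\;M_0\cdot\mu_G(F),
\]
and setting $D_2:=M_0\cdot\mu_G(F)$ completes the proof, uniformly in $t$. The main technical obstacle lies in establishing the injectivity-radius estimate together with the uniform multiplicity bound in the cusps: while these are classical consequences of the structure of Siegel sets, they need to be carefully stated in the right-invariant metric on $G$ used throughout the paper rather than in the more familiar intrinsic quantities on $G/\Gamma$, which is presumably the content of Section \ref{sec:siegel-sets}.
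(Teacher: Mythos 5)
Your proposal takes a genuinely different route from the paper's, and the place where it differs is exactly where the difficulty lies. The paper never invokes an injectivity-radius bound or counts tiles. After reducing to a coarse fundamental domain built from finitely many Siegel sets, it proves a \emph{containment} statement (Lemma \ref{lem:siegel-balls-rank1}): balls of radius $\kappa e^{-\alpha(x)}$ around points of a Siegel set $\mc S_{\eta}$ remain inside a slightly enlarged Siegel set $\mc S_{\eta'}$, where $\alpha$ is the depth parameter coming from the $K\cdot S\cdot P$ decomposition. A quasi-isometry estimate (Lemma \ref{lem:quasi-isom1}) gives $\alpha(x)\lesssim d_G(x,e)$, and finiteness of $\mu(\mc S_{\eta'})$ (Lemma \ref{lem:siegel-finite}) closes the argument with $D_2=\mu(\mc S_{\eta'})$. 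No multiplicity count is needed.

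The genuine gap in your proposal is the uniform multiplicity bound $M(t)\le M_0$. You derive it from two claims: (1) an exponential lower bound $\inj(\pi(z))\gtrsim e^{-C_1 d_G(z,e)}$, and (2) the assertion that the gap between non-adjacent tiles $F\gamma'$ at depth $t$ is at least of order $\inj(\pi(z))$, so that thickening by $\ve=2^{-D_1t}\ll\inj$ does not bridge the gap. Claim (1) is classical. Claim (2), however, is where the substance is, and it is neither proved nor a formal consequence of (1): the injectivity radius detects only the shortest nontrivial $\gamma$ at $z$, whereas the gap to a non-adjacent tile is governed by the geometry of the full horospherical part of the Siegel set, which can have several ``thin'' directions decaying at different exponential rates. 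Establishing (2) in the right-invariant metric requires essentially the same $KSP$-decomposition analysis the paper carries out in Lemma \ref{lem:siegel-balls-rank1}; once that is in hand, containment in $\mc S_{\eta'}$ is both cleaner to state and strictly stronger (it yields your multiplicity bound via the Siegel property, but not conversely). You acknowledge deferring (1) and (2) to Section \ref{sec:siegel-sets}, but that section does not in fact contain them — it contains a different technical lemma which bypasses the multiplicity count entirely.
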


Therefore, by \eqref{eq:disjointProperty} and Proposition \ref{prop:measureOfPertubationSet}, we have
\begin{equation}\label{eq:estimateofC1}
\begin{aligned}
\mu_G(\exp(B_{\mf g}(0,2^{-D_Uj\delta}))F_{40j\delta})&\geq\mu_G\left(\bigcup_{(t_{kl})\in A}\exp\left(\sum t_{kl}X_l^k\right)S_{i,\gamma}\right)\\ &=\sum_{(t_{kl})\in A}\mu_G\left(\exp\left(\sum t_{kl}V_l^k\right)S_{i,\gamma}\right)\\&=|A|\mu_F(S_{i,\gamma}),
\end{aligned}
\end{equation}
Letting $C_\Gamma' = \mu_G(\exp(B_{\mf g}(0,2^{-D_Uj\delta}))F_{40j\delta})<D_2$, the above inequality and \eqref{cardA} implies:
\be\label{eqfer}
\mu(S_{i,\gamma})\leq C_\Gamma'|A|^{-1} \leq C_\Gamma'2^{-(j(\frac{1}{2}-2D_U\delta)-1)(\dim G-\dim C(X))}.
\ee
Since by assumption $\dim G-\dim(C(X))\geq 4$, for small enough $\delta$ (smallness depending only on constants $D_U,K'_U$ and $C_\Gamma$ ), we have
\begin{multline*}
(j(\frac{1}{2}-2D_U\delta)-1)(\dim G-\dim C(X))\geq 4(j (\frac{1}{2}-2D_U\delta)-1)\geq\\
[ C_\Gamma j\delta+K_U'j\delta+\frac{3}{2}j]+ \frac{j}{10}.
\end{multline*}
So by the bound on $T(j)$ (see \eqref{tjbound}), if $2^{j/10}> C_\Gamma'^{-1}j^3$ (which is always true for large $j$), we have
$$
C_\Gamma 2^{-(j(\frac{1}{2}-2D_U\delta)-1)(\dim G-\dim C(X))}\leq
\frac{1}{j^3T(j)2^{C_\Gamma j\delta}}.
$$
 This and \eqref{eqfer} finish the proof of Lemma \ref{lem:mesem3}.
\end{proof}

So it only remains to prove Lemma \ref{lm:MeasureEstimateCrucial}.
\begin{proof}[Proof of Lemma \ref{lm:MeasureEstimateCrucial}]
Suppose that for $t=(t_{kl}),s=(s_{kl})$ there is a $(k_0,l_0)$ such that $|t_{k_0l_0}-s_{k_0l_0}|\geq2^{-j(\frac{1}{2}-D_U\delta)}$ and suppose by contradiction that there is a $\tilde{x}$ (different than the $\tilde{x}$ from the start of this section) such that $$\tilde{x}\in \exp\left(\sum t_{kl}X_l^k\right)S_{i,\gamma}\cap\exp\left(\sum s_{kl}X_l^k\right)S_{i,\gamma}.$$

By definition (see \eqref{sigam} and \eqref{eq:mesga3}), that means there exists $g_1,g_2\in B(e,2^{-j(\frac{1}{2}-K'_U\delta)})$, $160j\delta+6>|\alpha|>40j\delta-5$ (in fact $\alpha=s_i$) and $h_1=\exp(Y_1)\text{, }h_2=\exp(Y_2)$ with $Y_1,Y_2\in C(U,X)_{2\epsilon'^{1/3}}$, such that
\begin{equation}
\begin{aligned}
&g_1h_1\exp(\alpha X)=\exp\left(-\sum t_{kl}X_l^k\right)\tilde{x}\gamma \tilde{x}^{-1}\exp\left(\sum t_{kl}X_l^k\right),\\
&g_2h_2\exp(\alpha X)=\exp\left(-\sum s_{kl}V_l^k\right)\tilde{x}\gamma \tilde{x}^{-1}\exp\left(\sum s_{kl}X_l^k\right).
\end{aligned}
\end{equation}
Computing $\tilde{x}\gamma \tilde{x}^{-1}$ from the second equation and plugging into the first one, we get
\begin{multline}\label{eq:lmInitialSetting}
g_1h_1\exp(\alpha X)= \\ \exp\left(-\sum t_{kl}X_l^k\right)\exp\left(\sum s_{kl}X_l^k\right)g_2h_2\exp(\alpha X)
\exp\left(-\sum s_{kl}X_l^k\right)\exp\left(\sum t_{kl}X_l^k\right).
\end{multline}

\begin{lemma}\label{eq:adeq}
Let
\[ M : (Y_1,\dots,Y_n) \mapsto \alglog(\exp(Y_1)\dots\exp(Y_2)) .\]

Then there exists $\kappa > 0$, $\sigma > 0$ such that if $\norm{(Y_1,\dots,Y_n)} < \sigma$, then $$\norm{M(Y_1,\dots,Y_n) - \sum Y_i} < \kappa \max_{i\not=j} \norm{Y_i}\cdot \norm{Y_j}.$$
\end{lemma}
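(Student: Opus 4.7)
The plan is to deduce the estimate from the Baker--Campbell--Hausdorff (BCH) formula together with induction on $n$. Recall BCH says that for $X, Y \in \mf g$ of sufficiently small norm,
\[ \alglog(\algexp(X)\algexp(Y)) = X + Y + B(X,Y), \]
where $B(X,Y)$ is a convergent series of iterated Lie brackets in $X$ and $Y$. The crucial observation is that every term of $B(X,Y)$ contains at least one occurrence of $X$ and at least one occurrence of $Y$, since $B(X,0) = B(0,Y) = 0$. Hence on a small ball one obtains $\|B(X,Y)\| \le C_0 \|X\|\|Y\|$, which is precisely the desired bound in the base case $n=2$.

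For the induction step, set $Z := M(Y_2, \dots, Y_n) = \alglog(\algexp(Y_2)\cdots\algexp(Y_n))$. The inductive hypothesis provides
\[ \|Z - \textstyle\sum_{i=2}^n Y_i \| \le \kappa_{n-1} \max_{2 \le i\not=j \le n} \|Y_i\|\|Y_j\| \]
for $\|(Y_2,\dots,Y_n)\|$ small enough, and in particular gives a linear bound $\|Z\| \le 2\sum_{i\ge 2}\|Y_i\|$. By shrinking the tolerance $\sigma_n$ further if necessary, one ensures $\|Y_1\|$ and $\|Z\|$ lie in the domain where the two-variable BCH estimate applies to the pair $(Y_1, Z)$. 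Then
\[ M(Y_1,\dots,Y_n) - \textstyle\sum_{i=1}^n Y_i = \left(Z - \sum_{i=2}^n Y_i\right) + B(Y_1, Z), \]
and the first summand is controlled by the inductive hypothesis, while $\|B(Y_1, Z)\| \le C_0\|Y_1\|\|Z\| \le 2C_0 \|Y_1\| \sum_{i\ge 2}\|Y_i\| \le 2(n-1)C_0 \|Y_1\| \max_{i\ge 2}\|Y_i\|$. Both contributions are dominated by $\max_{i\ne j}\|Y_i\|\|Y_j\|$, so a suitable $\kappa_n$ exists.

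There is no substantive obstacle here: the only technical point is making sure that $\|Z\|$ remains inside the radius of convergence of BCH after each step of the recursion, which is handled by shrinking $\sigma_n$ along with $n$. The estimate is essentially a quantified form of the vanishing statement that $M(Y_1,\dots,Y_n) - \sum Y_i$ is zero whenever all but one of the $Y_i$ vanish, packaged via BCH into a multiplicative bound.
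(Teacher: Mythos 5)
Your proof is correct, but it takes a more explicit and constructive route than the paper. The paper dispenses with this lemma in one line: it applies Taylor's theorem to the smooth map $(Y_1,\dots,Y_n) \mapsto M(Y_1,\dots,Y_n)-\sum Y_i$, noting that it vanishes (together with its differential) at the origin, and that restricting to a single nonzero coordinate gives the identity $\alglog(\algexp(Y_i)) = Y_i$, so the pure second-order terms in $Y_i$ vanish and only the cross terms $\|Y_i\|\|Y_j\|$ with $i\neq j$ survive. You instead run induction on $n$ using the BCH expansion, reducing each step to the two-variable bound $\|B(X,Y)\| \le C_0\|X\|\|Y\|$, which in turn rests on the same vanishing observation $B(X,0)=B(0,Y)=0$. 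Your version makes explicit the bookkeeping the paper leaves implicit (tracking $\|Z\|$ and shrinking $\sigma_n$ to stay in the BCH convergence radius), at the cost of accumulating a constant $\kappa_n$ that depends on $n$; for the paper's application $n$ is fixed, so this is harmless. Both arguments hinge on the same idea — the error is a second-order cross term — and either is adequate here.
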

\begin{proof}
This is an easy corollary of Taylor's theorem with the knowledge that $[Y_i,Y_i] = 0$ for every $i$.
\end{proof}

As $0\leq s_{kl},t_{kl}\leq 2^{-D_Uj\delta}$, by Lemma \ref{eq:adeq} it follows that there exists $K_U'''>0$ and $g_2'\in B(e,2^{-j(\frac{1}{2}-K''_U\delta)})$ such that
$$\exp\left(-\sum t_{kl}X_l^k\right)\exp\left(\sum s_{kl}X_l^k\right)g_2=
g_2'\exp\left(-\sum t_{kl}X_l^k\right)\exp\left(\sum s_{kl}X_l^k\right).$$

Using this and multiplying \eqref{eq:lmInitialSetting} by  $\exp(\alpha X)^{-1}$ from the right, we have
\begin{multline}\label{eq:lmMainTarget}
g_2'^{-1} g_1\exp(Y_1)=\exp\left(-\sum t_{kl}X_l^k\right)\exp\left(\sum s_{kl}X_l^k\right)\exp(Y_2)\exp(\alpha X)\cdot \\
\exp\left(-\sum s_{kl}X_l^k\right)\exp\left(\sum t_{kl}X_l^k\right)\exp(-\alpha X).
\end{multline}

%Notice that by Lemma \ref{eq:adeq}, we have
%$$
%\exp(-\sum_is_iV_i)\exp(\sum_it_iV_i)=\exp\left(\sum_{i\in V_i}(t_i-s_i)V_i+O(\kappa\max_{i}(t_is_i)\right).
%$$
%Therefore, since the chain basis elements are eigenvectors for $ad_X$, we have
%\begin{multline*}
%\exp(lX)\exp(-\sum_is_iV_i)\exp(\sum_it_iV_i)\exp(-lX)=\\
%\exp\left(\sum_{i\in V_i}e^{\lambda_il}(t_i-s_i)V_i+
%O(e^{l\max_{i}|\lambda_i|}\kappa\max_{i}(t_is_i)\right)
%\end{multline*}
%Moreover, since $|l|\leq 160j\delta$ it follows that the term above is close to $e$. Therefore, plugging the above term into \eqref{eq:lmMainTarget} and using Lemma \ref{eq:adeq} (which we can do since all terms are close to $e$), we get

We apply Lemma \ref{eq:adeq} to write $\exp\left(-\sum t_{kl}X_l^k\right) \exp\left(\sum s_{kl} X_l^k\right)$ as $$\exp\left( Y' + \sum(s_{kl} - t_{kl})X_l^k \right),$$ where $\norm{Y'} < \kappa \max\set{\abs{s_{k_1l_1}t_{k_2l_2}}}$. Notice that  since the chain basis elements are eigenvectors for $\ad_X$, we have by applying the conjugation to $\exp\left(Y' + \sum (s_{kl} - t_{kl})X_l^k\right)$:
\begin{multline}
\exp(\alpha X)\exp\left(-\sum s_{kl}X_l^k\right)\exp\left(\sum t_{kl}X_l^k\right)\exp(-\alpha X)=\\
\exp\left(Y'' + \sum (s_{kl}-t_{kl})e^{(m_k-2l)\alpha} X_l^k\right)
\end{multline}

where if $Y' = \sum y_{kl}X_l^k$, $Y'' = \sum e^{(m_k-2l)\alpha}y_{kl}X_l^k$, so

\[\norm{Y''-Y'} < \kappa\max\set{\abs{1-e^{(m_k-2l)\alpha}}}\max\set{\abs{s_{k_1l_1}t_{k_2l_2}}}.\]

 Therefore, since all terms are small

\begin{multline}\label{eq:combine}
\exp\left(-\sum t_{kl}X_l^k\right)\exp\left(\sum s_{kl}X_l^k\right)\exp(Y_2)\exp(\alpha X)\cdot \\
\exp\left(-\sum s_{kl}X_l^k\right)\exp\left(\sum t_{kl}X_l^k\right)\exp(-\alpha X)=\\
\exp\left( Y' + \sum (s_{kl} - t_{kl})X_l^k\right) \exp(Y_2) \exp\left(-Y'' - \sum (s_{kl}-t_{kl})e^{(m_k-2l)\alpha}X_l^k\right)  =\\
\exp\left( (Y'-Y'') +Y''' + \sum (s_{kl}-t_{kl})(1-e^{(m_k-2l)\alpha})X_l^k + Y_2\right)
\end{multline}
where

\begin{multline*}
\norm{Y'''} \le \kappa\max\left\lbrace \norm{Y' + \sum (s_i - t_j)V_i}\norm{Y_2},  \norm{Y'' - \sum (s_i-t_j)e^{\lambda_il}V_i } \norm{Y_2},\right. \\
\qquad \left. \norm{Y' + \sum (s_i - t_j)V_i}\norm{Y'' - \sum (s_i-t_j)e^{\lambda_il}V_i}\right\rbrace.
\end{multline*}

Thus if we set $Y = Y''' + Y' - Y''$, we get the following very rough bound since all terms in $Y'''$ and $Y' - Y''$ are products of terms bounded by this small number:
\begin{equation}\label{eqfff}
\|Y\|\leq \epsilon'^{1/100}\left\|
\sum (s_{kl}-t_{kl})(1-e^{(m_k-2l)\alpha})X_l^k\right\|.
\end{equation}
Finally notice that since $40j\delta-5<|\alpha|<160j\delta+6$ and by the choice of $k_0,l_0$ (recall $|s_{k_0l_0}-t_{k_0l_0}|\geq 2^{-j(\frac{1}{2}-D_U\delta)}$), we have
\begin{multline}\label{,000}
\max_{k,l} |(s_{kl}-t_{kl})(1-e^{(m_k-2l)\alpha})|\geq  |(s_{k_0l_0}-t_{k_0l_0})(1-e^{(m_{k_0}-2l_0)\alpha})|\geq\\
\frac{1}{2}|s_{k_0l_0}-t_{k_0l_0}| \geq \frac{1}{2}\cdot2^{-j(\frac{1}{2}-D_U\delta)}.
\end{multline}

Recall that $g_1\in B(e,2^{-j(\frac{1}{2}-K''_U\delta)})$, $g_2'\in B(e,2^{-j(\frac{1}{2}-K'''_U\delta)})$ and then there exists a constant $K''''_U>0$ only depends on chain structure such that $$g_2'^{-1} g_1\in B(e,2^{-j(\frac{1}{2}-K''''_U\delta)}).$$

Thus, by Lemma \ref{eq:adeq}, we have $${g_2}'^{-1}g_1\exp(Y_1)=\exp\left(Y_1+
\sum c_{kl}X_l^k\right)$$
where $|c_i|\leq
2^{-j(\frac{1}{2}-K'''_U\delta)}$.
It follows that the $X_k^l$ coefficients of the LHS of \eqref{eq:lmMainTarget} are less than $2^{-j(\frac{1}{2}-K'''_U\delta)}$ (since $Y_1\in C(U,X)\subset C(X)$). However, the coefficient by $k_0,l_0$ of the RHS of \eqref{eq:lmMainTarget} is by the definition of $Y$, \eqref{,000}, \eqref{eqfff} and \eqref{eq:combine}  bounded below by $\frac{1}{2}\cdot2^{-j(\frac{1}{2}-D_U\delta)}$, hence if $D_U$ is large enough (in terms of $K''''_U$ and larger than $\frac{D_1}{40}$) we get a contradiction with \eqref{eq:lmMainTarget}. This finishes the proof.
\end{proof}

\section{Coarse Fundamental Domains and Siegel Sets}
\label{sec:siegel-sets}

Let $G$ be a real semisimple Lie group and $\Gamma \subset G$ a lattice. We treat the case of rank one groups and higher-rank groups separately. Every semisimple group splits as a product of simple groups, and every lattice will split as a direct product of irreducible lattices after passing to a finite index subgroup. See Remark \ref{rem:irreducible-lattice}. Therefore, in this section, we assume that the lattice is irreducible in $G$ and not cocompact (notice that if $\Gamma$ is cocompact, Proposition \ref{prop:measureOfPertubationSet} follows trivially, as $F$ is a compact set), and we treat the case of $\rank(G) = 1$ and $\rank(G) > 1$. In both cases, we will seek something slightly weaker than a fundamental domain, which in our case is sufficient.

\begin{definition}
If $\Gamma \curvearrowright X$ is a properly discotinuous action of a discrete group on a metric space $X$, a {\it coarse fundamental domain} for the action is a subset $F \subset X$ such that  if $\pi : X \to X /\Gamma$ is the projection to the quotient,
\begin{enumerate}[(1)]
\item $\pi|_F$ is onto, and
\item $\set{ \gamma \in \Gamma : F \cdot \gamma \cap F \not= \emptyset}$ is finite.
\end{enumerate}

If $\Gamma \subset G$ is a discrete subset of a Lie group $G$, a coarse fundamental domain for $\Gamma$ is a coarse fundamental domain for the right action of $\Gamma$ on $G$.
\end{definition}

Notice that if the set in (2) is $\set{e}$. then $F$ is a fundamental domain. While (2) implies that $\pi$ is finite-to-one, it is slightly stronger (since the preimage can be reached by finitely many $\gamma \in \Gamma$ which are independent of $x \in F$).

\begin{remark}
\label{rem:irreducible-lattice}
Notice that because in the definition of a coarse fundamental domain, we only require that $\set{ \gamma \in \Gamma : F \cdot \gamma \cap F \not= \emptyset}$ is finite, if $\Gamma'$ is a finite index subgroup of $\Gamma$, and $\gamma_1,\dots,\gamma_s$ are representatives of $\Gamma / \Gamma'$, $F' = \bigcup_{i=1}^s F \gamma_i$ is a coarse fundamental domain for $\gamma_i$. Furthermore, because our estimates for measures are only designed to guarantee finiteness, we may assume that $\Gamma=  \Gamma_1 \times \dots \times \Gamma_n$ is a product of irreducible lattices in factor groups $G_i$, and producing coarse domains for each of the terms in the product $G_i / \Gamma_i$ will give a coarse comain for $\Gamma$. This justifies our assumption that $\Gamma$ is irreducible.
\end{remark}

\subsection{Geometry of Siegel Sets}

If $\rank_\R(G) = 1$, we define a Siegel set in the following way: fix a split Cartan subgroup $\R \cong A \subset G$. Then $\Lie(A)$ is generated by some unit vector $X$. Set $a_t = \exp(tX)$ and $A_+ = \set{ a_s : 0 < s < \infty}$. Let $K \subset G$ denote the maximal compact subgroup. There are two subgroups, $U_+$ and $U_-$, the stable and unstable subgroups, characterized by the property that $\ad_X$ preserves $\Lie(U_\pm)$ with only positive or negative eigenvalues, respectively. Given $\eta \subset A \cdot U_- =: P$ which is relatively compact in $P$, let $\mc S_{\eta} = K\cdot A_+ \cdot \eta$.

In the case of $\rank_\R(G) > 1$, because we have asume the lattice is irreducible, it must be arithmetic by the Margulis arithmeticity theorem. Therefore, after taking a compact extension if necessary of $G$, we may assume that $G = \mbf{G}(\R)$ and $\Gamma = \mbf{G}(\Z)$ for some $\Q$-algebraic group $\mbf{G}$ (since our original $\Gamma$ must be commensurable with $\mbf{G}(\Z)$ after taking a compact extension). Under these assumptions, one defines a Siegel set in the following way: Let $S \subset G$ be a maximal $\Q$-split torus. That is, $S$ is a maximal abelian subgroup which is diagonalizable over $\Q$ (ie, such that the corresponding $\Q$-subgroup is diagonalizable). Let $A \supset S$ be a maximal $\R$-split torus containing $S$.%, and $B = C_G(A) \cdot N$ be the corresponding Borel subgroup.
$\mf a = \Lie(A)$ has a canonical set of weights $\Delta \subset \mf a^*$, and a splitting $\mf g = C_{\mf g}(\mf a)\bigoplus_{\alpha \in \Delta} \mf g^\alpha$, where $\mf g^\alpha = \set{ Y \in \mf g : \ad_X(Y) = \alpha(X) Y \mbox{ for all } X \in \mf a}$. Then there exists $a \in A$ such that if $\Delta_+ = \set{ \alpha \in \Delta : \alpha(a) > 0}$, $N$ is the simply connected subgroup with algebra $\bigoplus_{\alpha \in \Delta_+} \mf g^\alpha$. Let $\Delta_{S,+} = \set{\alpha \in \Delta_+ : \alpha|_S \not\equiv 0}$, and note that since $S$ is $\Q$-split, $\Delta_{S,+}$ consists of rational functionals. Let $P$ be the minimal $\Q$-parabolic subgroup containing $S$. Such a $P$ is the weak-stable manifold of some $a \in S$ acting on $G / \Gamma$. More explicitly:

\begin{equation}
\label{eq:parabolic-subgroup}
 P = \set{ g \in G : d_G(a^n,a^ng) < \infty \mbox{ for all }n \in \Z_+ }
\end{equation}

We then build the associated {\it Weyl chamber} $S_- \subset S$ corresponding to $\Delta_{S,+}$. $S_-$ is exactly the set $\set{a \in S : \beta(a) \le 0 \mbox{ for all }\beta \in \Delta_{S,+}}$. Let $S_t = \set{ a \in S : \beta(a) \le t \mbox{ for all }\beta \in \Delta_{S,+}}$. % If some $\R$-split torus is also $\Q$-split, then $S$ is exactly the usual $\R$-split tous inside $G$, and $P$ is the corresponding Borel subgroup (in the case of $SL(d,\R)$, the diagonal matrices and (weakly) upper triangular matrices, respectively).

\begin{definition}
Given $S_-$ and $B$ a positive Weyl chamber and corresponding minimal parabolic subgroup as defined above, and a relatively compact subset $\eta \subset P$, the {\it Siegel set} for $\eta$ is the set $\mc S_{t,\eta} = K \cdot S_t \cdot \eta$.
\end{definition}

Siegel sets are the basic building blocks of coarse fundamental domains.  This follows for certain classical groups and lattices from the early works of Siegel \cite{siegel}, for rank one groups from Garland and Raghunathan, and for higher-rank groups (where lattices are known to be arithmetic) by works of Borel \cite{borel} and Harish-Chandra \cite{borel-harish-chandra}. An accessible summary of this topic can be found in \cite[Chapter 19]{witte-morris}.%, where the definition is slightly different, using parabolic subgroups. However, absorbing the compact part of Langlands decomposition (which commutes with $S$) for the parabolic subgroup yields our description.

\begin{theorem}[Garland, Raghunathan, Siegel, Borel, Harish-Chandra]
\label{thm:coarse-domain}
There exists some $t \in \R$ and $\eta \subset P$, and $b_1,\dots,b_n \in G$ such that $\bigcup_{i=1}^n \mc S_{t,\eta} \cdot b_i$ is a coarse fundamental domain for $\Gamma \subset G$.
\end{theorem}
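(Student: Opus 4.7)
The plan is to invoke the classical reduction theory cited in the references, separately handling $\rank_\R(G)=1$ and $\rank_\R(G)\ge 2$. In both regimes the strategy rests on two pillars that together reproduce the two axioms of a coarse fundamental domain: a \emph{covering property}, guaranteeing that finitely many right translates of a single Siegel set surject onto $G/\Gamma$; and the \emph{Siegel finiteness property}, guaranteeing that $\{\gamma\in\Gamma:\mc S_{t,\eta}\cdot\gamma\cap\mc S_{t,\eta}\neq\emptyset\}$ is finite. The translates $b_1,\dots,b_n$ in the statement arise because one Siegel set is attached to a fixed minimal parabolic $P$, whereas $\Gamma$ may have several conjugacy classes of such parabolics (geometrically, several cusps).

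In the rank one case I would follow the Garland--Raghunathan approach via the geometry of cusps of the symmetric space $K\backslash G$. Each cusp of $G/\Gamma$ corresponds to a $\Gamma$-orbit of fixed points of parabolic elements on the visual boundary, and there are finitely many such orbits; choosing one representative parabolic $P$ and elements $b_i$ conjugating $P$ to each class gives the $b_i$. For $\eta\subset P = A\cdot U_-$ relatively compact and sufficiently large, the Siegel set $\mc S_\eta = K\cdot A_+\cdot \eta$ captures a neighborhood of the cusp, and the covering follows from reducing the thick part to a compact set. The Siegel finiteness property is then a direct consequence of the contracting action of $A_+$ on $U_-$ combined with relative compactness of $\eta$ and discreteness of $\Gamma$.

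In the higher rank case, as already set up in the excerpt, by Margulis arithmeticity we may pass (after a compact extension) to $G=\mbf{G}(\R)$ with $\Gamma$ commensurable with $\mbf{G}(\Z)$, and apply Borel--Harish-Chandra reduction theory. The three ingredients needed are: (a) finiteness of the number of $\Gamma$-conjugacy classes of minimal $\Q$-parabolic subgroups, which yields the $b_i$; (b) the refined Iwasawa-type decomposition $G=K\cdot S\cdot (\text{Levi factor of }P)$ attached to the chosen $P$; (c) for $t$ large enough (pushing $S_t$ deep into the Weyl chamber in the $-\Delta_{S,+}$ direction) and $\eta\subset P$ large enough, $\mc S_{t,\eta}$ surjects onto the component of $G/\Gamma$ corresponding to its conjugacy class. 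The Siegel finiteness property in this case is again a consequence of the structure of $P$, the contracting action of $S_-$ on $N$, and discreteness of $\Gamma$.

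The main obstacle is the covering property, which is the nontrivial content of reduction theory. In the arithmetic setting the classical proof passes through a Mahler-type compactness criterion: one shows that for any $g\in G$ there exists $\gamma\in\Gamma$ such that $g\gamma$ lies in $K\cdot S_t\cdot \eta$ for uniform $t,\eta$, by constructing a ``minimal vector'' in the $\mbf{G}(\Z)$-orbit. This is the heart of Borel--Harish-Chandra and requires genuine work using the $\Q$-root system and the structure of $P$. Once covering is established, the Siegel finiteness property follows by a much softer argument: the set of $\gamma$ with $\mc S_{t,\eta}\cap \mc S_{t,\eta}\gamma^{-1}\neq\emptyset$ lies in a bounded subset of $G$ depending only on $t$ and $\eta$, which meets $\Gamma$ in only finitely many points.
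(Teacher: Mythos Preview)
The paper does not supply its own proof of this theorem; it is stated as a classical result and attributed to Siegel, Garland--Raghunathan, Borel, and Borel--Harish-Chandra, with a pointer to \cite[Chapter 19]{witte-morris} for an accessible exposition. Your outline correctly identifies the two regimes the paper distinguishes (rank one via Garland--Raghunathan, higher rank via Margulis arithmeticity and Borel--Harish-Chandra reduction theory) and the two properties one needs (covering and Siegel finiteness), so it is entirely consistent with what the paper cites rather than proves.
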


The following is classical:

\begin{lemma}
\label{lem:siegel-finite}
For any $t \in \R$ and $\eta \subset P$, $\mu(\mc S_{t,\eta}) < \infty$.
\end{lemma}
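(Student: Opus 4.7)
The plan is to decompose the Haar measure of $G$ in coordinates adapted to the parabolic $P = M_P S N$ that defines the Siegel set, and then observe that the integrand acquires a modular factor $e^{2\rho_S(\log s)}$ which decays exponentially on the truncated Weyl chamber $S_t$ in the cusp direction. This will reduce the computation to elementary convergent integrals.

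First I would set up the parabolic decomposition $G = K \cdot M_P \cdot S \cdot N$: every $g \in G$ admits an essentially unique expression $g = k \, m \, s \, n$ with $k \in K$, $m \in M_P$, $s \in S$, $n \in N$. A standard calculation of the modular function of $P$, equivalently the determinant of $\mathrm{Ad}(s)$ acting on $\mf n = \bigoplus_{\alpha \in \Delta_{S,+}} \mf g^\alpha$, yields
\[
dg \;=\; dk\, dm\, e^{2\rho_S(\log s)}\, ds\, dn,\qquad 2\rho_S \;=\; \sum_{\alpha\in\Delta_{S,+}} \dim(\mf g^\alpha)\,\alpha.
\]
Since $\eta \subset P$ is relatively compact, its projections onto the $M_P$, $S$ and $N$ factors are relatively compact, so after translating $S_t$ by the bounded $S$-part of $\eta$ the contributions from the $K$, $M_P$ and $N$ factors are bounded by finite constants $C_K \cdot C_M \cdot C_N$, uniformly in $s$.

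Next I would bound $\int_{S_t'} e^{2\rho_S(\log s)}\,ds$, where $S_t'$ is a bounded enlargement of $S_t$ still described by inequalities $\beta(\log s) \le T_\beta$ for every $\beta \in \Delta_{S,+}$. Choose linear coordinates $x_1,\dots,x_r$ on $\log S$ dual to the simple $\Q$-roots $\beta_1,\dots,\beta_r \in \Delta_{S,+}$, so that $\log S_t'$ becomes a product of half-lines $\{x_i \le T_i\}$. Because every positive $S$-root is a nonnegative integer combination of the simple ones and each simple root occurs in itself, $2\rho_S = \sum_i c_i\beta_i$ with every $c_i > 0$. Hence
\[
\int_{S_t'} e^{2\rho_S(\log s)}\,ds \;=\; J\prod_{i=1}^r \int_{-\infty}^{T_i} e^{c_i x_i}\,dx_i \;=\; J\prod_{i=1}^r \frac{e^{c_iT_i}}{c_i} \;<\;\infty,
\]
with $J$ a constant Jacobian. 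Multiplying the three bounded contributions together gives $\mu(\mc S_{t,\eta}) < \infty$.

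The hard part will be mostly bookkeeping: one must verify that the parabolic decomposition of Haar measure produces exactly the weight $e^{2\rho_S(\log s)}$ (rather than its inverse or a differently scaled cousin), and that $2\rho_S$ lies in the strict interior of the cone generated by $\Delta_{S,+}$ so that the weight decays precisely in the direction in which $S_t$ opens up toward the cusp. Both are standard structural facts about reductive groups, so once the notation is in place the decay of the weight matches the truncation and the integrability is immediate.
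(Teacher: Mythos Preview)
Your proposal is correct and follows essentially the same strategy as the paper's sketch: decompose Haar measure via the $K \times S \times P$ (or $K \times M_P \times S \times N$) map, pick up the exponential modular weight $e^{2\rho_S}$, and observe that it decays in exactly the direction in which $S_t$ is unbounded, reducing the question to a convergent exponential integral. The only cosmetic differences are that the paper integrates in polar-type coordinates on $S$ (parameterizing $S_-$ as rays through a level set $\tilde{S} = \{\rho = -1\}$, yielding $\int_{-\infty}^t e^{\rho(\tau)}\tau^{\dim S - 1}\,d\tau$), whereas you use Cartesian coordinates dual to the simple $\Q$-roots and obtain a product of one-dimensional integrals; your version is arguably cleaner and more explicit about why the weight lies in the interior of the positive cone.
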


We recall a sketch of the proof, aspects of which we shall use later. For a complete proof, see, for instance, \cite[Lemma 12.5]{borel}

\begin{proof}[Sketch of Proof]
Notice that the map $\pi : K \times S \times P \to G$ defined by $\pi(k,s,p) = ksp$ is onto $G$. Furthermore, if $dk$, $ds$ and $dp$ are corresponding Haar measures on $K$, $S$ and $P$, respectively, then $dg = ce^{\rho(s)} \pi_*(dk \wedge ds \wedge dp)$, where $\rho(s) = \sum_{\alpha \in \Delta_{S,+}} \alpha(s)$. One sees this since the pushforward measure will be invariant under right translations by $s$, $p$ and left translations by $k$. Let $\tilde{S} = \set{ s \in S_- : \rho(s) = -1}$, so that any $s \in S_-$ is a multiple of some $s_0 \in \tilde{S}$. Therefore, the measure of $\mc S_{t,\eta}$ is at most a constant (the total measure of $K$) times $\int_{-\infty}^t e^{\rho(t)} \vol(\tilde{S})\cdot t^{\dim(S)-1} \cdot \vol(\eta) \, dt$. This is clearly finite.
\end{proof}

%Let $B \subset P$ be a maximal solvable subgroup, which is a Borel subgroup of $G$.
Given $x \in \mc S_{t,\eta}$, we may write $x = ksu$, with $k \in K$, $s = \exp(X) \in S$ and $u \in \eta$. Define $\alpha(x) = \min \norm{\ad_X|_{\Lie(P)}}$, where the minimum is taken over any such presentation of $x$. Any two such presentations for $s$ must differ by varying the choice of $k$ and $p$ over compact sets. Therefore, given any such a presenation $\alpha(x) \ge \norm{\ad_X|_{\Lie(P)}} - \sigma$ for some fixed $\sigma$ which depends only on $\eta$. Notice that since $\Delta_+$ contains a basis of $A^*$, $\Delta_{S,+}$ contains a basis of $S_+$. Therefore, $\alpha|_S$ acts like an $L^\infty$ norm, but fails to be a norm only by a constant $\sigma$ (by identical reasons to non-uniqueness of presentations as descibed above). That is, there exists $\lambda,\sigma > 0$ (with $\lambda$ depending only on the choice of norm, and $\sigma$ depending on the choice of norm and the choice of $\eta$) such that

\begin{equation}
\label{eq:alpha-norm}
\lambda^{-1}\alpha(\exp(X))-\sigma \le d_G(e,\exp(X)) = \norm{X} \le \lambda \alpha(\exp(X)) +\sigma.
\end{equation}

\begin{lemma}
\label{lem:siegel-balls-rank1}
For any $\eta \subset P$, there exists $\kappa > 0$ and $\eta \subset \eta' \subset P$ such that:

\[ \bigcup_{x \in \mc S_{\eta}} B(x, \kappa e^{-\alpha(x)}) \subset \mc S_{\eta'}.\]
\end{lemma}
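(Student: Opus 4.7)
The plan is to take $y \in B(x, \kappa e^{-\alpha(x)})$ with $x = kau \in \mc S_{\eta}$ (writing the Cartan factor as $a \in A_+$ for clarity) and rewrite $y$ explicitly in the form $k'a'u'$ witnessing $y \in \mc S_{\eta'}$. Because the metric on $G$ is right-invariant, $B(x, r) = B(e, r) \cdot x$, so I would begin by writing $y = gx = gkau$ for some $g \in G$ with $\|g\| < \kappa e^{-\alpha(x)}$.

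The next step is a localized Iwasawa decomposition. Since $K$ is compact, $\Ad(k^{-1})$ is norm-preserving, and $k^{-1}gk$ is close to the identity. Using the local diffeomorphism $K \times A \times U_- \to G$ near $e$ (recalling $P = AU_-$ in the rank-one setting), write $k^{-1}gk = \tilde k \tilde a \tilde n$ with $\tilde k \in K$, $\tilde a \in A$, $\tilde n \in U_-$, each of norm $\lesssim \|g\|$. Moving $\tilde a$ past $a$ (they commute, both lying in the abelian $A$) and $\tilde n$ past $a$ via $\tilde n a = a(a^{-1}\tilde n a)$, one obtains
\[ y = k(k^{-1}gk)au = k\tilde k \tilde a \tilde n a u = (k\tilde k)(\tilde a a)(a^{-1}\tilde n a)\, u. \]
The critical estimate concerns the norm of $a^{-1}\tilde n a$. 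Since $\tilde n \in \Lie(U_-) \subset \Lie(P)$, conjugation by $a^{-1}$ expands its norm by at most $\|\Ad(a^{-1})|_{\Lie(P)}\|$, which is bounded by $e^{\alpha(x)}$ by the very definition of $\alpha(x)$. Thus
\[ \|a^{-1}\tilde n a\| \lesssim e^{\alpha(x)}\cdot\|\tilde n\| \lesssim e^{\alpha(x)}\cdot\kappa e^{-\alpha(x)} = \kappa, \]
uniformly in $\alpha(x)$. This is exactly the cancellation that the hypothesis $\|g\| < \kappa e^{-\alpha(x)}$ is designed to produce.

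Taking $\kappa$ sufficiently small (depending only on $\eta$), the last factor $(a^{-1}\tilde n a)u$ lies in $P$ with uniformly bounded norm, hence in some fixed relatively compact enlargement $\eta' \supset \eta$. Since $\tilde a \in A$ is of norm $\lesssim \kappa$, the product $\tilde a a$ remains in $A_+$ (any shallow boundary cases being absorbed into $\eta'$ by further enlargement), and combined with $k\tilde k \in K$ this exhibits $y \in K \cdot A_+ \cdot \eta' \subset \mc S_{\eta'}$. The main technical point of the proof is precisely the cancellation above --- controlling the $\Ad(a^{-1})$-expansion of the Iwasawa-$N$ perturbation uniformly in $\alpha(x)$ --- and once this is in place, the remaining steps (existence of the local Iwasawa decomposition, commutativity within $A$, and Lipschitz control of the Siegel parameters under bounded perturbation) are routine.
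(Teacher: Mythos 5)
Your argument is correct and follows essentially the same route as the paper's. You both hinge on the local diffeomorphism $K\times A\times P\to G$ (equivalently $K\times S\times P\to G$), on the fact that conjugation by $K$ preserves norms, and on the key cancellation $\|\mathrm{Ad}(a^{-1})|_{\mathrm{Lie}(P)}\|\cdot\kappa e^{-\alpha(x)}\lesssim\kappa$ to show the $P$-factor of $y$ stays inside a bounded enlargement of $\eta$; the paper phrases this by decomposing $y=k's'u'$ and estimating the factors of $yx^{-1}=(k'k^{-1})(k(s's^{-1})k^{-1})(k(su'u^{-1}s^{-1})k^{-1})$, while you construct the new factorization explicitly via $y=(k\tilde k)(\tilde a a)(a^{-1}\tilde n a)u$, but the mechanism and the place where $\alpha(x)$ enters are identical. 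The only minor point worth noting is that the inequality $\|\mathrm{Ad}(a^{-1})|_{\mathrm{Lie}(P)}\|\le e^{\alpha(x)}$ holds only up to a fixed multiplicative constant $e^{\sigma}$ (because $\alpha(x)$ is a minimum over presentations and the presentation is unique only up to compact ambiguity, as noted around equation~\eqref{eq:alpha-norm}); this is harmless since the constant is absorbed into $\kappa$, but it should be stated.
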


\begin{proof}
%Since $\eta$ is compact, we may without loss of generality assume that $\eta = A_0 \cdot N_0$ such that \textcolor[rgb]{1.00,0.00,1.00}{$A_0 \subset C_G(A)$} and $N_0 \subset N$ are compact. Then we
Let $\eta' = B(\eta,r)$ be the ball of radius $r$ around $\eta$. If $x \in \mc S_{t,\eta}$, we may write $x$ as $x = k  \cdot s \cdot u$, with $k \in K$, $s \in S_t$, $u \in \eta$. Then if $y \in B(x,e^{-\alpha(x)})$, since $K \times S \times P \to G$ is an open map, $y = k' \cdot s' \cdot u'$, with $k' \in K$, $s' \in S$ and $u' \in P$, and each $k'$, $s'$ and $u'$ are close to $k$, $s$ and $u$, respectively (we will examine the degree of closeness soon). Notice that:

\[ d_G(x,y) = \norm{\log(yx^{-1})} = \norm{\log(k'(k)^{-1}\cdot k(s's^{-1})k^{-1}\cdot k(s(u'u^{-1})s^{-1})k^{-1})} \]

Since conjugation by $k$ is an isometry and $S$ normalizes $P$ the above expression is in the image of $K \times kSk^{-1} \times kPk^{-1}$ and within $\ve$ of $e$. Therefore,, we get that if $d_G(x,y) < e^{-\alpha(x)}$, then $d_G(k,k') < ce^{-\alpha(x)}$,  $d_G(s,s') < c e^{-\alpha(x)}$ and $d_G(su,su') < c e^{-\alpha(x)}$ for some $c$. Then since $\alpha(x) \geq \norm{\ad_{\log(s)}|_{\Lie(B)}} - \sigma'$, we get that $d_G(u,u') < c$. Therefore, if we take $r = c \cdot( \abs{t} +  \max \set{\alpha(p) : p \in \eta}) < \infty$, we get the result.

%Fix $r > 0$ (to be determined later) and let $\eta' = B(\eta,r)$ be the ball of radius $r$ around $\eta$ as a subset of $P$. Suppose that $y \in B(x,e^{\alpha(x)})$, with $x = kau$, $y = k' b v$. Since the metric is right invariant, we know that:

%\[ d(x,y) = \norm{\log(yx^{-1})} = \norm{\log(k'(k)^{-1}\cdot k(a_{\sigma-s})k^{-1}\cdot k(a_s(vu^{-1})a_s^{-1})k^{-1})} \]

%Since conjugation by $k$ is an isometry, and again by uniqueness of the Iwasawa decomposition (this time $K \cdot k'A(k')^{-1} \cdot k'(U_+)k'^{-1}$), we get that if $d(x,y) < e^{\alpha(x)}$, then  $\abs{s-\sigma} < r e^{\lambda \alpha(x)}$ and $a_s(vu^{-1})a_s^{-1} \in B(e,re^{\alpha(x)})$. Conjugation by $a_s$ will multiply the distance between $vu^{-1}$ and $e$ by at most $e^{-s}$, so $d(u,v) < r$ and $v \in \eta'$. This implies the result, setting $t' = t + re^{\lambda t}$.
\end{proof}

%\begin{corollary}
%There exists a coarse fundamental domain $F$ for $\Gamma$
%\end{corollary}

\begin{lemma}
\label{lem:quasi-isom1}
There exists $c,L > 0$ such that if $x \in \mc S_{t,\eta}$, $\alpha(x) \le Ld_G(x,e) + c$.
\end{lemma}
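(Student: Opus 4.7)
The plan is to first bound the $S$-component of any Siegel presentation of $x$ in terms of $d_G(x,e)$, and then invoke the two-sided comparison \eqref{eq:alpha-norm} between $\alpha(\exp X)$ and $\|X\|$. Fix $x = ksu \in \mc S_{t,\eta}$ with $k \in K$, $s = \exp(X) \in S_t$, and $u \in \eta$. Using right-invariance of $d_G$ (so that $d_G(g,h) = d_G(gh^{-1},e)$) together with the triangle inequality,
\[
\|X\| \;=\; d_G(e,s) \;\le\; d_G(e,x) + d_G(x,s) \;=\; d_G(e,x) + d_G\bigl(k \, sus^{-1},\, e\bigr).
\]

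The main point to verify is that $d_G(ksus^{-1}, e)$ is uniformly bounded by some constant $C_0 = C_0(K,t,\eta)$. This uses the geometry of the Siegel set: $K$ is compact, and because $P$ is the weak-stable parabolic \eqref{eq:parabolic-subgroup} attached to $\Delta_{S,+}$, the subalgebra $\Lie(P)$ decomposes into $S$-weight spaces of non-negative weight (those in $\Delta_{S,+} \cup \{0\}$). Hence for $s \in S_t$ the operator norm of $\Ad(s)$ restricted to $\Lie(P)$ is bounded by $e^t$, so $sus^{-1} = \exp(\Ad(s)\log u)$ stays in a fixed compact subset of $G$ as $(s,u)$ varies over $S_t \times \eta$; multiplying on the left by the compact set $K$ preserves boundedness.

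Combining the displayed inequality with \eqref{eq:alpha-norm} applied to the $X$ of our presentation, together with the observation that $\alpha(x) \le \|\ad_X|_{\Lie(P)}\|$ (by definition of $\alpha$ as an infimum over presentations), one obtains
\[
\alpha(x) \;\le\; \lambda\|X\| + \lambda\sigma \;\le\; \lambda\, d_G(e,x) + \lambda(C_0 + \sigma),
\]
giving the lemma with $L = \lambda$ and $c = \lambda(C_0 + \sigma)$. The only nontrivial step is the uniform boundedness of the conjugates $sus^{-1}$, which is not a real obstacle: it follows directly from the fact that $P$ has only non-negative $S$-weights on its Lie algebra, together with the defining bound $\beta(\log s) \le t$ on the Weyl chamber $S_t$ in the Siegel set.
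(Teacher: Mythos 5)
Your proof is correct and follows essentially the same route as the paper: write $x = ksu$ in Siegel coordinates, use the triangle inequality and right-invariance to reduce the bound on $\|X\| = d_G(e,s)$ to the uniform boundedness of $d_G(ksus^{-1},e)$, and then apply \eqref{eq:alpha-norm}. You are somewhat more explicit than the paper about why the conjugate $sus^{-1}$ stays bounded (the non-negative $S$-weight structure of $\Lie(P)$ together with the bound $\beta \le t$ on $S_t$), which is a genuine gap in the terse statement "from the choice of $c'$" in the paper's proof and is indeed exactly what makes that step work for the right-invariant metric.
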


\begin{proof}
Let $c' = \diam(K) + \diam(\eta)$. Notice that $t \mapsto \exp(tX)$ is geodesic in $G / \Gamma$ for any unit vector $X \in S_-$, so $d_G(\exp(tX),e) = t$. Then if $x = kau$ with $k \in K$, $a = \exp(tX) \in S_-$ and $u \in \eta$:

\begin{eqnarray*}
\alpha(x) & \le & \lambda t  + \sigma \lambda\\
 & = & \lambda d_G(e,a) + \sigma \lambda\\
 & \le & \lambda (d_G(k,ka) - d_G(e,k) - d_G(ka,kau) + c') + \sigma\lambda \\
 & \le & \lambda (d_G(e,x) + c') + \sigma \lambda
\end{eqnarray*}

The first inequality follows from \eqref{eq:alpha-norm}, the second from the choice of $c'$ and the third from the reverse triangle inequality.
% $d(x,e) = d(kau,e) \ge C_1(d(k,e) + d(k\exp(tX),k) + \textcolor[rgb]{1.00,0.00,1.00}{d(k\exp(tX)u,k\exp(tX)) })$, where $C_1$ is $ \le d(k,e) + d(\exp(tX),e) +e^{\alpha(x)} d(u,e) \le c' + \lambda \alpha(x) + c'\cdot m$, where $m = \max\set{\textcolor[rgb]{1.00,0.00,1.00}{e^{\alpha(x)}} : x \in A_0} < \infty$. Then $\alpha(x) \ge \lambda^{-1}d(x,e) - \lambda^{-1}(m+1)c'$.
\end{proof}

\begin{proof}[Proof of Proposition \ref{prop:measureOfPertubationSet}]
Fix a coarse fundamental domain which is a union of Siegel sets, and choose a fundamental domain contained in the coarse fundamental domain. Then each Siegel set can be expanded by Lemma \ref{lem:siegel-balls-rank1} to include balls which decay at exponential rates according to the function $\alpha(x)$. Then if $x$ belongs to the coarse fundamental domain, $x = b_i x'$ for some $b_i$ in the finite set of Theorem \ref{thm:coarse-domain} and $x' \in S_{\eta}$. Let $D'$ be the Lipschitz constant of multiplication by $b_i$. Then if $y \in B(x,De^{-Ld_G(e,x)}) \subset b_iB(x,D'De^{-Ld_G(e,x')})$, by Lemma \ref{lem:quasi-isom1}, $y \in B(x,De^{-(\alpha(x) + c)}) \subset b_iB(x',e^{-\alpha(x')}) \subset b_iS_{\eta'}$ for sufficiently small $D$ (one easily sees that $\abs{\alpha(x) - \alpha(x')}$ is uniformly bounded above and below). Since $\mu(S_{\eta'}) <\infty$, we get the desired result.
\end{proof}

\begin{corollary}
\label{cor:exp-cusp-decay}
If $F$ is any fundamental domain chosen inside a coarse fundamental domain obtained from Theorem \ref{thm:coarse-domain}, then there exists $c > 0$ and $\kappa > 0$ such that  \[\mu(\set{ z \in F : d_G(e,z) \ge t}) \le c e^{-\kappa t}.\]
\end{corollary}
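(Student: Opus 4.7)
The plan is to reduce the statement to an exponential measure estimate on a single Siegel set, and then extract the decay from the Haar measure decomposition already used in the proof sketch of Lemma \ref{lem:siegel-finite}. By Theorem \ref{thm:coarse-domain} we may write $F \subset \bigcup_{i=1}^n \mc S_{t,\eta}\cdot b_i$. Since the metric on $G$ is right-invariant, right translation by $b_i$ is an isometry, so if $C_0 = \max_i d_G(e,b_i)$ then the set $\{z \in F : d_G(e,z)\geq T\}$ is contained in $\bigcup_i \big(\mc S_{t,\eta}\cap\{d_G(e,\cdot)\geq T-C_0\}\big)b_i$. Since right translations also preserve $\mu$, it is enough to prove $\mu\big(\mc S_{t,\eta}\cap\{d_G(e,\cdot)\geq T\}\big)\leq c e^{-\kappa T}$.

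Next I will translate the condition $d_G(e,x)\geq T$ into a lower bound on the Cartan component. Write $x = ksu$ with $k\in K$, $s=\exp(X)\in S_t$, $u\in\eta$. Using right-invariance and the triangle inequality,
\[
d_G(e,x) \leq d_G(e,k)+d_G(e,kak^{-1})+d_G(ks,ksu) \leq 2\operatorname{diam}(K)+\|X\|+\operatorname{diam}(\eta),
\]
where the bound $d_G(e,\exp(X))\leq\|X\|$ comes from the fact that $\exp$ of a ray is a curve of length $\|X\|$ in the right-invariant Riemannian metric. Thus $d_G(e,x)\geq T$ forces $\|X\|\geq T-C_1$ for a constant $C_1$ depending only on $K$ and $\eta$.

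Now I will exploit the measure decomposition recorded in the proof of Lemma \ref{lem:siegel-finite}: $dg = c\,e^{\rho(s)}\,dk\wedge ds\wedge dp$ with $\rho(X)=\sum_{\alpha\in\Delta_{S,+}}\alpha(X)$. The key observation is that on $\mathrm{Lie}(S_-)$ one has $\rho(X)\leq -c_0\|X\|$ for some $c_0>0$. Indeed, every $\alpha\in\Delta_{S,+}$ is a non-negative integral combination of the simple roots $\alpha_1,\dots,\alpha_r$; setting $y_i = -\alpha_i(X)\geq 0$ for $X\in\mathrm{Lie}(S_-)$, one gets $\rho(X) = -\sum_i\lambda_i y_i$ with each $\lambda_i\geq 1$ (since $\alpha_i$ itself contributes its own coefficient $1$). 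The functionals $\alpha_1,\dots,\alpha_r$ form a basis of $S^*$ (this is where irreducibility and the $\Q$-split assumption on $S$ are used), so $\sum y_i$ is comparable to $\|X\|$. Combining these ingredients,
\[
\mu\big(\mc S_{t,\eta}\cap\{d_G(e,\cdot)\geq T\}\big) \leq c\cdot\mathrm{vol}(K)\mathrm{vol}(\eta)\int_{X\in \mathrm{Lie}(S_t),\,\|X\|\geq T-C_1} e^{\rho(X)}\,dX,
\]
and the integral on the right is dominated by $\int_{T-C_1}^\infty e^{-c_0 r} r^{\dim S-1}\,dr \leq c''e^{-\kappa T}$ after passing to polar-type coordinates on $\mathrm{Lie}(S)$.

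The only delicate step is the positivity claim $\lambda_i\geq 1$ (equivalently $\rho$ being strictly negative on the relative interior of the negative Weyl chamber and vanishing only at $0$ among $S_-$). All other steps are routine triangle-inequality and integration estimates. Verifying this positivity using the simple-root expansion of positive roots is the main substantive ingredient, and it is where the $\Q$-split Cartan structure of $S$ is crucially used.
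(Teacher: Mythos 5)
Your proposal is correct and follows essentially the same route as the paper: reduce to a single Siegel set, convert the condition $d_G(e,z)\geq T$ into a lower bound on the Cartan coordinate, and then observe that the Haar-measure decomposition from the proof sketch of Lemma \ref{lem:siegel-finite} yields exponential decay. Where you differ is in the level of detail, and this is worth noting because you fill in what is at best an imprecise citation in the paper's proof. The paper cites Lemma \ref{lem:quasi-isom1}, which gives $\alpha(x)\leq L\,d_G(e,x)+c$; but for the corollary one actually needs the \emph{complementary} inequality $d_G(e,x)\leq L'\alpha(x)+c'$ (i.e., large $d_G$-distance forces a large Cartan component), which follows from the triangle inequality and the compactness of $K$ and $\eta$ together with \eqref{eq:alpha-norm} --- this is precisely the inequality you derive in your second paragraph. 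Your explicit verification that $\rho\leq -c_0\|X\|$ by expanding positive roots in simple roots is the right ingredient and is implicit in the paper's ``exponentially decaying function times a polynomial'' sentence. One small inaccuracy in your write-up: the Siegel set uses $S_t$, not $S_-$, so the linear upper bound $\rho(X)\leq -c_0\|X\|$ holds only after allowing an additive error $C_2 t$ (coming from the bounded positive contributions $\alpha_i(X)\in(0,t]$); since $t$ is a fixed constant of the Siegel set, this is harmlessly absorbed into the prefactor, but it should be noted.
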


\begin{proof}
It suffices to show the claim for a single Siegel set. By Lemma \ref{lem:quasi-isom1}, it suffices to replace the set with $S_{t,\eta}$. But the proof sketch of Lemma \ref{lem:siegel-finite}, we saw that this was given by the integral of an exponentially decaying function times a polynomial, which was exponentially decaying. Therefore, we conclude the desired decay rate.

\end{proof}

\appendix
\section{Proof of \textbf{P1} and \textbf{P2}.}
Recall that we have the homomorphism $\phi:\mf{sl}(2,\R)\to \mf{g}$ taking the standard horocyclic generator of $\mf{sl}(2,\R)$ to $U$. %Since $G$ is a linear group, it follows that this homomorphism extends to  a homomorphism $\Phi:SL(2,\R)\to G$.
By Lemma \ref{lem:sl2-lift}, $\phi$ extends to $\tilde{\phi}$. Therefore to prove \textbf{P1} and \textbf{P2} it is enough to make the computations in $SL(2,\R)$.

Recall that
\be\label{masd}m(p,q,a,b,c,d)=\exp(-dX)\exp(-cV)\exp(pU)\exp(aV)\exp(bX)\exp(-qU),
\ee
where $U,V,X$ with
\be\label{eq:newqwe}
p,q\in [2^{20j\delta},2^{40j\delta}],\abs{a},\abs{c}\in [2^{-10j\delta},1], |b|,|d|\leq \epsilon '.
\ee
%Let $m=m(p,q,a,b,c,d)=\exp(-dx)\exp(-cv)\exp(pu)\exp(av)\exp(bx)\exp(-qu)$ be the corresponding matrix in $SL(2,\R)$.
We can consider this matrix in $SL(2,\R)$ or $G$ equivalently. We have
\begin{lemma}\label{lem:eds} For every $p,q,a,b,c,d$ as in \eqref{eq:newqwe}, we have that
%\begin{enumerate}
%\item
%$m^2=\exp(\alpha_u U+\beta_xX+\gamma_vV)$, where $|\alpha_u|,|\beta_x|,|\gamma_v|\leq 2^{K_U'j\delta}$, for some global constant $K'_U$.
%\item
$m^2=h\exp(sX)h^{-1}$, where $|s|\in [40j\delta-5,160j\delta+6]$ and $h=k\exp(\alpha U)$, with $k \in SO(2)$ and $\abs{\alpha} \le 2^{K_U'\delta j}$ for some fixed $K_U'$
%\end{enumerate}
\end{lemma}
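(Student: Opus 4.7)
By Lemma \ref{lem:sl2-lift} the homomorphism $\phi:\mf{sl}(2,\R)\to\mf g$ lifts to a Lie group homomorphism $\tilde\phi:SL(2,\R)\to G$, so it suffices to perform the computation in $SL(2,\R)$ and transport the conclusion by $\tilde\phi$. The plan is straightforward: normalize $m$, compute its trace to extract $s$, and use the Iwasawa decomposition to extract the form of $h$. First I would commute the two small factors $\exp(bX)$ and $\exp(-dX)$ to the right using $\exp(sX)\exp(tU)\exp(-sX)=\exp(te^{2s}U)$ and $\exp(sX)\exp(tV)\exp(-sX)=\exp(te^{-2s}V)$; since $\abs{b},\abs{d}\le\epsilonp$ this only rescales $p,q,a,c$ by factors $1+O(\epsilonp)$ and puts $m$ into the form
\[
m=\exp(-\tilde cV)\exp(\tilde pU)\exp(\tilde aV)\exp(-\tilde qU)\exp((b-d)X),
\]
with $\tilde p,\tilde q,\tilde a,\tilde c$ satisfying the same bounds as in \eqref{eq:newqwe}. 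A direct $2\times 2$ multiplication then yields an explicit matrix whose trace equals $(1+\tilde p\tilde a)e^{b-d}+(1-\tilde a\tilde q-\tilde c\tilde p+\tilde c\tilde q+\tilde c\tilde p\tilde a\tilde q)e^{-(b-d)}$.

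The key observation is that the quartic term $\tilde c\tilde p\tilde a\tilde q$ dominates the trace: the sum of the other nontrivial contributions equals $(\tilde p-\tilde q)(\tilde a-\tilde c)$, and the ratio $\tilde c\tilde p\tilde a\tilde q/\bigl((\tilde p-\tilde q)(\tilde a-\tilde c)\bigr)=\tilde c\tilde q/\bigl((1-\tilde q/\tilde p)(1-\tilde c/\tilde a)\bigr)\cdot\tilde p\tilde a/(\tilde p\tilde a)$ is bounded below by $2^{10j\delta}/4$ whenever the middle term is nonzero, by the hypotheses in \eqref{eq:newqwe}. Hence $\abs{\Tr(m)}=\abs{\tilde c\tilde p\tilde a\tilde q}\bigl(1+O(2^{-10j\delta})+O(\epsilonp)\bigr)\in[c\cdot 2^{20j\delta},\,C\cdot 2^{80j\delta}]$, so $m$ is hyperbolic and $m^2=h\exp(sX)h^{-1}$ for some $h\in SL(2,\R)$ with $\abs{s}=2\log\bigl(\tfrac12\abs{\Tr(m)}+\tfrac12\sqrt{\Tr(m)^2-4}\bigr)\approx 2\log\abs{\tilde c\tilde p\tilde a\tilde q}$. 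This falls inside $[40j\delta-5,160j\delta+6]$ once the additive slack $5,6$ absorbs the change-of-base factor $\log 2$ and the lower-order error above. To obtain the required form of $h$, apply the Iwasawa decomposition $h=k a n$ with $k\in SO(2)$, $a\in\exp(\R X)$, $n\in\exp(\R U)$; since $a$ centralizes $\exp(sX)$ I may replace $h$ by $ha^{-1}=kn$, which gives $h=k\exp(\alpha U)$.

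Bounding $\alpha$ is then a short eigenvector computation: in the orthonormal frame rotated by $k^{-1}$, the matrix $m$ becomes upper triangular with diagonal $(\pm e^{s/2},\pm e^{-s/2})$ and some off-diagonal entry $\beta$, and the identity $h(0,1)^T=k(\alpha,1)^T$ together with the eigenvector equation forces $\alpha=\mp\beta/(e^{s/2}-e^{-s/2})$. Since rotations preserve the operator norm, $\abs\beta\le\norm m$ is of order $\abs{\tilde c\tilde p\tilde a\tilde q}\approx e^{s/2}$, so $\abs\alpha$ is in fact bounded by an absolute constant, which trivially implies $\abs\alpha\le 2^{K_U'\delta j}$. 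The main obstacle I anticipate is purely bookkeeping: verifying that cancellation-prone parameter choices (in particular $\tilde p$ close to $\tilde q$ together with $\tilde a$ close to $\tilde c$) do not erode the $2^{10j\delta}$ gap that makes the quartic dominate---handled by the ratio computation above---and that the additive slack of $5$ and $6$ genuinely suffices to cover both the $\log 2$ rescaling and the $O(\epsilonp)$ perturbation coming from $e^{\pm(b-d)}$; everything else reduces to routine $SL(2,\R)$ linear algebra.
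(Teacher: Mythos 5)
Your proposal follows the same overall strategy as the paper: reduce to $SL(2,\R)$ via Lemma \ref{lem:sl2-lift}, compute the trace explicitly, extract the eigenvalue $\lambda$ from $\Tr(m)=\lambda+\lambda^{-1}$, and use the Iwasawa decomposition $h'=kan$ together with the observation that the diagonal factor $a$ can be discarded because it centralizes $\exp(sX)$. Your grouping of the sub-leading trace terms as $(\tilde p-\tilde q)(\tilde a-\tilde c)$ is correct and your ratio bound $\abs{acpq}/\abs{(p-q)(a-c)}\ge 2^{10j\delta}/4$ holds (since $\abs{1-q/p}\le 2\max(1,q/p)$, $\abs{1-c/a}\le 2\max(1,\abs{c/a})$, and $\abs{cq}/\bigl(\max(1,q/p)\max(1,\abs{c/a})\bigr)=\min(\abs{p},\abs{q})\min(\abs{a},\abs{c})\ge 2^{10j\delta}$); this is a slightly different organization of the same estimate the paper runs term by term.

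The one place you take a genuinely different route is the bound on $\alpha$. The paper applies the triangle inequality to $d\bigl(\pm\exp((1-\lambda^{-2})\alpha U),e\bigr)$, using $d(m,e)+\abs{s}$ on one side and the logarithmic growth of $d(\exp(\cdot U),e)$ on the other. You instead read $\alpha$ off the upper-triangular form $k^{-1}mk=\begin{pmatrix}\lambda&\beta\\0&\lambda^{-1}\end{pmatrix}$ via $\alpha=\beta/(\lambda^{-1}-\lambda)$ and bound $\abs\beta\le\norm{m}$. This is a clean and valid alternative. However, your claim that $\norm{m}$ is ``of order $\abs{\tilde c\tilde p\tilde a\tilde q}$'' and hence that $\abs\alpha$ is bounded by an absolute constant is not quite right: the $(1,2)$ entry of $m$ is $\approx \abs{apq}=\abs{acpq}/\abs{c}$, which can exceed $\abs{acpq}$ by a factor as large as $2^{10j\delta}$, so the correct conclusion is only $\abs\alpha\lesssim 2^{10j\delta}$. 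This still delivers the required $\abs\alpha\le 2^{K_U'j\delta}$, so the lemma is proved, but the ``absolute constant'' assertion should be dropped. Separately, your remark that the additive slack $5,6$ ``absorbs the change-of-base factor $\log 2$'' is not quite accurate as stated (an additive constant cannot absorb a multiplicative one), though this is a bookkeeping issue shared with the paper's own choice of numerical constants and does not affect the structure of the argument.
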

Before we prove the above Lemma let us show how it implies \textbf{P1} and \textbf{P2}:

\begin{proof}[Proof of \textbf{P1} and \textbf{P2}]
Notice that by right invariance of $d$ and triangle inequality, we have
\begin{multline*}
\frac{1}{2}d(m^2,e)\leq d(m,e)\leq  d(\exp(-dX),e)+d(\exp(-cV),e)+ \\d(\exp(pU,e)+d(\exp(aV),e)+d(\exp(bX),e)+d(\exp(-qU),e)
\end{multline*}
Notice that by \eqref{eq:newqwe} it follows that the terms of the RHS with $d,c,a,b$ are bounded. Moreover, by Lemma \ref{lem:dist-computation} it follows
that
$$d(\exp(pU),e),d(\exp(-qU),e)\leq C\max(\log p,\log q)\leq 40Cj\delta.
$$
 This finishes the proof of \textbf{P1}.

Notice also that the first part of \textbf{P2} follows from Lemma \ref{lem:eds} with\\
 $h = k \exp(\alpha U)$. Let us now show \eqref{pcon}. Notice that
\begin{multline}
k\exp(\alpha U) B_G(e,2^{-j(1/2-2K_U\delta)+1})\exp(-\alpha U)k^{-1} = \\ \exp(\alpha U) B_G(e,2^{-j(1/2-2K_U\delta)+1})\exp(-\alpha U).
\end{multline}

Futhermore, by the bound on $\alpha$ and \eqref{eq:conj-formula}, we get that the right hand side above is contained in $B_G(e,2^{-j(1/2-K_U''\delta)})$ for sufficiently large $j$. This finishes the proof.
\end{proof}

So we only need to prove Lemma \ref{lem:eds}
\begin{proof}[Proof of Lemma \ref{lem:eds}]
Then by direct computation, we have
\begin{equation}
m=\left(
\begin{array}{cc}
 e^b \left(a e^{-d} p+e^{-d}\right) & e^{-b-d} p-e^b \left(a e^{-d} p+e^{-d}\right) q \\
 e^b \left(a \left(e^d-c e^d p\right)-c e^d\right) & e^{-b} \left(e^d-c e^d p\right)-e^b \left(a \left(e^d-c e^d p\right)-c e^d\right) q \\
\end{array}
\right).
\end{equation}

The trace of this matrix is $$e^{-b-d} \left(q e^{2 (b+d)} (a (c p-1)+c)+e^{2 b} (a p+1)+e^{2 d} (1-c p)\right).$$
Notice that $\abs{acpq}e^{b+d}$ is the dominating term above (see \eqref{eq:newqwe}), we have $\frac{9}{10}\abs{acpq}e^{b+d} \le \Tr(m) \le \frac{11}{10}\abs{acpq}e^{b+d}$. Therefore,
\begin{equation}\label{eq:MEstimateTrace}
2^{20j\delta-1}\leq|\operatorname{Tr}(m)|\leq 2^{80j\delta+1}.
\end{equation}

Since $|\operatorname{Tr}(m)|>2$, $m$ is diagonalizable. As a result we have $\operatorname{Tr}(m)=\lambda^{-1}+\lambda$ (suppose $|\lambda|>1$), with $2^{20j\delta-2}\leq|\lambda|\leq 2^{80j\delta+2}$. The estimate of $m^2$'s eigenvalue will follow from this.

So $m$ can be diagonalized as $m = h'\begin{pmatrix} \lambda & \\ & \lambda^{-1} \end{pmatrix}h'^{-1}$. Write $h' = kan'$, with $k \in SO(2,\R)$, $a$ a diagonal matrix, and $n' = \exp(\alpha' U)$ for some $\alpha' \in \R$. Then $h' = kna$, and notice that if $\pm h'\exp(sX)h'^{-1} = m$, then if $h = kn$, $\pm h\exp(sX) h^{-1} = m$.

Now, $$d(\pm h\exp(sX)h^{-1},e) = d(\pm kn\exp(sX)n^{-1}k^{-1},e) = d(\pm n\exp(sX)n^{-1},e).$$ If $n = \exp(\alpha U)$, then $n\exp(sX)n^{-1} = \exp((1-\lambda^{-2})\alpha U)\exp(sX)$, so

\begin{equation}
\begin{aligned}
d(\pm \exp((1-\lambda^{-2})\alpha U),e) &= d(n\exp(sX)n^{-1}\exp(-sX),e) \\ &\le d(n\exp(sX)n^{-1},e) + d(\exp(-sX),e) \\&=  d(m,e) + \abs{s} \\&\le 160Cj\delta + 160j\delta+4.
\end{aligned}
\end{equation}

On the other hand, $d(\pm \exp((1-\lambda^{-2})\alpha U),e) \ge c\log \abs{(1-\lambda^{-2})\alpha}-\pi \ge c\log \abs{\alpha} + c\log\abs{1-\lambda^{-2}}-\pi$ (we get a $\pi$ because we may need to multiply by $-\id$ which has distance $\pi$ to $e$). Therefore, $\log \abs{\alpha} \le 2^{K'_Uj\delta}$.

%\[ \log \abs{\alpha} \le 40Cj\delta + \]

\end{proof}

\end{document}